\renewcommand{\ge}{\geqslant}
\renewcommand{\leq}{\leqslant}
\renewcommand{\le}{\leqslant}
\theoremstyle{plain}
\newtheorem{theorem}{Theorem}[section]
\newtheorem{definition}[theorem]{Definition}
\newtheorem{lemma}[theorem]{Lemma}
\newtheorem{prop}[theorem]{Proposition}
\newtheorem{remark}[theorem]{Remark}
\newtheorem{corollary}[theorem]{Corollary}
\numberwithin{equation}{section}
\newcommand\R{\mathbb R} 
\newcommand\N{\mathbb N} 
\renewcommand\S{\mathbb S} 
\newcommand{\diam}{{ \rm diam}}
\newcommand{\supp}{{ \rm supp}}
\newcommand{\dist}{{ \rm dist}}
\newcommand\p{\partial}
\newcommand\e{\varepsilon}
\renewcommand{\epsilon}{\e}
\renewcommand\H{\mathscr H}
\newcommand{\I}[1]{\chi_{\{#1>0\}}}
\newcommand{\po}[1]{\{#1>0\}}
\newcommand{\fb}[1]{\partial\{#1>0\}}
\newcommand{\fbs}[1]{\partial_{\rm{sing}}\{#1>0\}}
\newcommand\Om{\Omega}
\newcommand\na{\nabla}
\newcommand\HD{{\rm{HD}}} 
\tikzstyle arrowstyle=[scale=1]
\tikzstyle directed=[postaction={decorate,decoration={markings,
    mark=at position .65 with {\arrow[arrowstyle]{stealth}}}}]
\tikzstyle reverse directed=[postaction={decorate,decoration={markings,
    mark=at position .65 with {\arrowreversed[arrowstyle]{stealth};}}}]
\thanks{2010 Mathematics Subject Classification:
31A30, 31B30, 35R35.\\ Keywords: Biharmonic operator, free boundary,
regularity theory, monotonicity formula, free boundary conditions.}
\begin{document}

\title{A free boundary problem driven by the biharmonic operator}

\author{Serena Dipierro}
\address{Serena Dipierro: Department of Mathematics
and Statistics,
University of Western Australia,
35 Stirling Hwy, Crawley WA 6009, Australia}
\email{serena.dipierro@uwa.edu.au}

\author{Aram Karakhanyan}
\address{Aram Karakhanyan:
School of Mathematics, The University of Edinburgh,
Peter Tait Guthrie Road, EH9 3FD Edinburgh, UK}
\email{aram.karakhanyan@ed.ac.uk}

\author{Enrico Valdinoci}
\address{Enrico Valdinoci:
Department of Mathematics
and Statistics,
University of Western Australia,
35 Stirling Hwy, Crawley WA 6009, Australia}
\email{enrico.valdinoci@uwa.edu.au}

\begin{abstract}
In this paper we consider the minimization of the functional 
\begin{equation*}
J[u]:=\int_\Om \Big(|\Delta u|^2+\I u\Big)
\end{equation*}
in the admissible class of functions 
\begin{equation*}
\mathcal A:=
\left\{u\in W^{2, 2}(\Om) {\mbox{ s.t. }} u-u_0\in W^{1,2}_0(\Omega) \right\}.
\end{equation*}
Here, $\Omega$ is a smooth and bounded domain of~$\R^n$
and $u_0\in W^{2,2}(\Om)$ is a given function
defining the Navier type boundary condition. 

When $n=2$, the functional~$J$ can be interpreted as a sum of
the linearized Willmore energy of the graph of~$u$ and the area of~$\po u$ on 
the~$xy$ plane. 

\medskip 

The regularity of
a minimizer~$u$ and that of the free boundary~$\fb u$ are very
complicated problems. 
The most intriguing part of this is to study the structure of~$\fb u$ near 
singular points, where $\na u=0$ (of course,
at
the nonsingular free boundary points where~$\na u\not =0$
the free boundary is locally $C^1$ smooth). 

The scale invariance of the problem
suggests that, at the singular points of the free boundary,
quadratic growth of~$u$ is expected.
We prove that $u$ is quadratically nondegenerate at the
singular free boundary points 
using a refinement of Whitney's cube decomposition, which applies,
if, for instance, the set~$\po u$ is a John domain. \medskip

The optimal growth is linked with the approximate symmetries of the free boundary.
More precisely, if at small scales the free boundary can be approximated by
zero level sets of 
a quadratic degree two homogeneous  polynomial, then we say that $\fb u$ is rank-2 flat.
\medskip 
 
Using a dichotomy method for nonlinear free boundary problems,
we also show that, at the free boundary points~$x\in \Om$ where~$\na u(x)=0$,
the free boundary is either well approximated by 
zero sets of quadratic polynomials,
i.e. $\fb u$ is rank-2 flat, or $u$ has quadratic growth.
\medskip 
 
More can be said if $n=2$, in which case 
we obtain a monotonicity formula and
show that, at the singular points of the 
free boundary 
where the free boundary is not well approximated by level sets of
quadratic polynomials,
the blow-up of the minimizer is a homogeneous function of degree two.  
 
In particular, if~$n=2$ and $\po u$  is a John domain,
then we 
get that the blow-up of the free boundary is a cone,
and in the one-phase case it follows that $\fb u$ possesses
a tangent line in the measure theoretic sense. 

Differently from the classical free boundary problems
driven by the Laplacian operator, the one-phase minimizers
present structural differences with respect to the minimizers,
and one notion is not included into the other. In addition,
one-phase minimizers arise from the combination of a volume type free boundary
problem and an obstacle type problem, hence their growth condition
is influenced in a non-standard way by these two ingredients.
\end{abstract}

\date{\hbox{\today}}
\maketitle

\setcounter{tocdepth}{2}
\tableofcontents

\section{Introduction}\label{sec-intro} 

\subsection{Mathematical framework and motivations}

In this paper we consider the problem of minimizing the functional 
\begin{equation}\label{defJ}
J[u]=J[u,\Omega]:=\int_\Om \Big(|\Delta u|^2+\I u\Big)
\end{equation}
over the admissible class of functions 
\begin{equation}\label{ADMI}
\mathcal A:=
\left\{u\in W^{2, 2}(\Om) {\mbox{ s.t. }} u-u_0\in W^{1,2}_0(\Omega) \right\}.
\end{equation}
Here, $\Omega$ is a smooth and bounded domain of~$\R^n$
and $u_0\in W^{2,2}(\Om)$ is a given function
defining the Navier type boundary condition
(see e.g. the ``hinged problem'' 
on the right hand side of Figure 1(a) and
on page~84
of~\cite{SW}, or
Figure~1.5 on page~6 of~\cite{GANGULI},
or the monograph~\cite{GAZ} for additional
information of this condition, which can be interpreted
as a weak form of two boundary conditions: $u=u_0$ along~$\partial\Omega$
and~$\Delta u=0$
along~$\partial\Omega\cap\{u\ne0\}$).  

The functional in~\eqref{defJ} is clearly related to the biharmonic
operator, which provides classical models for rigidity
problems with concrete applications, for instance,
in the construction of suspension bridges, see e.g.~\cite{MR866720}
and the references therein.
Other classical applications of the biharmonic
operator arise in the study of
steady state incompressible fluid flows at small Reynolds
numbers under the Stokes flow
approximation assumption, see e.g. formula~(1)
in~\cite{MR3512704}
and the references therein.
In our setting, we will provide a simple mechanical interpretation
of the model in Section~\ref{INTER}.\medskip

Moreover, the functional in~\eqref{defJ}
provides a linearized model for the Willmore problem
which asks to find an immersion/embedding~$M$
in~$\R^3$ that minimizes the Willmore energy
\[
W(M)=\int_M H^2 \,dA,
\]
where~$H$ denotes the mean curvature.
The linearization of this energy density gives  
$$
H^2\,dA=\frac14(\Delta u)^2 \,dx\,dy +\text{lower order terms}.
$$
In this context our problem can be regarded as a free boundary problem for the linearized
Willmore energy, where the surface $M$ has a flat part on the $xy$ plane. 

We also refer to the very recent work in~\cite{DALIO}
for a problem related to the minimization of the Willmore energy 
functional with prescribed boundary, boundary Gauss
map and area. See also
the recent contributions in~\cites{MR3456944, MR3683120}
for the one-dimensional analysis of the global properties
of the solutions of free boundary problems involving
the curvature of a curve.\medskip

In the setting of~\eqref{defJ}, an additional motivation
for us comes from the study of
the degenerate/unstable obstacle problem, see~\cite{Caff-cpde, Monneau-W}. Indeed,
we will see in Corollary \ref{lem-subham}
that~$u$ is globally almost subharmonic in $\Omega$, i.e. 
there exists~$\hat C>0$ (possibly depending also on the energy
of the minimizer)
such that $\Delta u\ge -\hat C.$
Therefore
the function $\Delta u:=f$ is bounded from below. 
Accordingly, we can relate our problem to an obstacle 
problem with unknown right hand side, namely 
determine $u$ and $f\ge -\hat C$ such that 
\begin{equation}\label{hatcdelta}
\left\{
\begin{array}{lll}
\Delta u =f &\mbox{in}\ \ \Omega,\\
u=|\nabla u|=0 & \mbox{on}\ \ \fb u,\\
f=1 &\mbox{on}\ \ \fb u.
\end{array}
\right.
\end{equation}
The principal difference from the classical obstacle problem is that 
$f$ may change sign in $\Om$ and degenerate on the
free boundary points, since the last condition in~\eqref{hatcdelta} 
is satisfied in a generalized sense: for this reason,
it does not follow from the classical obstacle problem theory that
$u$ is quadratically nondegenerate.  \medskip

Another motivation for the problem
in~\eqref{defJ} comes from
the limit as~$\e{{\to}}0$ of the singularly perturbed bi-Laplacian equation
\begin{equation}\label{eq-sing-pert}
\Delta^2 u^\e=-\frac1\e\beta\left(\frac{u^\e}\e\right),
\end{equation}
where $\beta$ is a
compactly supported nonnegative function 
with  finite total mass, see~\cite{MR4026596}. Equation~\eqref{eq-sing-pert} can be seen as the
biharmonic counterpart of classical combustion models,
see e.g.~\cite{MR1900562}.

\subsection{Comparison with the existing literature}\label{sibsec:cmp}

Free boundary problems
are of course a classical topic of investigation,
nevertheless only few results are available concerning the case
of equations of order higher than two,
and there seems to be no investigation at all for
the free boundary problem in~\eqref{defJ}.

Other types of free boundary problems for higher order operators have been
considered in~\cite{mawi}. Moreover, obstacle problems involving
biharmonic operators have been studied in~\cites{frehse,
caffa, MR620427, MR705233, adams,
pozzo, novaga1, novaga2, 2016arXiv160306819A}, but till now
we are not aware of any previous investigation
of free boundary problems dealing with higher order operators
combined with ``bulk'' volume terms as in~\eqref{defJ} here.

Of course, one of the striking differences
in our framework, as opposed to the case of
the Alt-Caffarelli functional (see~\cite{MR618549})
\[
J_{\text{AC}}[u]:=\int_{\Om}\Big(|\na u|^2+\I u\Big),
\]
is the lack of Maximum Principle and Harnack inequality for higher order operators. This,
in our setting, reflects to the fact that
the set~$\{u<0\}$ may be nonempty,
even under the boundary condition~$u_0\ge 0$.
This is one of the peculiars of the
situations involving the bi-Laplacian and
it makes the mathematical treatment of the
problem extremely difficult (and this is likely to be the reason for which
there are not many results in the direction of
free boundary regularity in the framework that we consider here). \medskip

Thus, the main difficulties
in our setting, in comparison with the existing literature, follow from the fact that major elliptic
methods based on Maximum Principle,
Harnack inequality and propagation of ellipticity cannot be applied.
Moreover,
many classical
tools, such
as domain variations, have not been fully analyzed yet
and, in any case, cannot provide consequences which are as strong as in the classical framework.
For instance, the main result that we obtain by domain variation
(given in details in Lemma~\ref{CONFF})
is that, for any~$\phi=(\phi^1, \dots, \phi^n)\in C^\infty_0(\Omega)$,
\begin{equation}\label{stat-point}
2\int_\Omega \Delta u(x)\sum_{m=1}^n\Big( 2\nabla u_m(x)\cdot \nabla\phi^m(x)
+u_m(x)\Delta\phi^m(x)\Big)\,dx=
\int_\Omega
\Big( |\Delta u(x)|^2+\chi_{\{u>0\}}(x)\Big)\mbox{\rm div}\phi(x)\,dx.
\end{equation}
As customary, we denote by~$u_m=\partial_mu=\partial_{x_m}u$
the partial derivative of~$u$ with respect to the~$m$th variable.
Then, in the classical literature,
the standard argument
leading to the monotonicity formula
for the Alt-Caffarelli problem would be to choose $\phi$ of a particular form, see~\cite{MR1620644}.
More precisely, for $\e>0$, the classical idea would be to consider
\begin{displaymath}
\eta(x):=\left\{ \begin{array}{lll}
1 &{\rm if}\ x\in B_r(x_0),\\
\displaystyle
\frac{r+\e-|x-x_0|}{\e}\ & {\rm if}\ x\in B_{r+\e}(x_0)\setminus B_r(x_0),\\
0\ & {\rm otherwise},
\end{array}\right.
\end{displaymath}
where $x_0\in \fb u$, and take $\phi(x):=x\eta(x)$ in identity \eqref{stat-point}.
Note that 
\begin{displaymath}
\na \phi(x)=\left\{ \begin{array}{lll}
\mathbb I &{\rm if}\ x\in B_r(x_0),\\
\displaystyle \mathbb I \eta-\frac1\e\frac{(x-x_0)\otimes (x-x_0)}{|x-x_0|} & 
{\rm if}\ x\in B_{r+\e}(x_0)\setminus B_r(x_0),\\
0\ & {\rm otherwise,}
\end{array}\right.
\end{displaymath}
where~$\mathbb I\in{\rm{Mat}}_{n\times n}$ is the identity matrix.
However, in our case, identity \eqref{stat-point} contains the term~$\Delta \phi$ which is not defined on the boundary of the ring 
$B_{r+\e}(x_0)\setminus B_r(x_0)$ and this creates an important conceptual
difficulty. Thus, to overcome
this issue,
one needs to perform a series of ad-hoc
integration by parts. This strategy has however to
confront with the possible 
generation of
third order derivatives of the minimizers,
which also cannot be controlled, therefore these terms need
to be suitably smoothened and simplified via appropriate cancellations.

In this setting, the lack of monotonicity formulas
can also be seen as a counterpart of a lack of Pohozhaev type inequalities,
and our approach bypasses this kind of difficulty.

As a matter of fact, we will establish a new
monotonicity formula
in dimension~$2$ which will lead to
Theorem~\ref{lemma:F}.
\medskip

In addition, differently from the harmonic case,
there are no estimates available in the literature for the biharmonic measure,
and this makes the free boundary analysis significantly more
complicated. We will overcome these difficulties by Theorem~\ref{thm-Hausdorff}.

Moreover, in terms of barrier and test functions,
an additional difficulty of the biharmonic setting is given by the fact that
the function~$\max\{u, v\}$
is not an admissible competitor, having possibly infinite energy,
so we cannot consider the maximal and minimal solutions. 

The analysis of nondegeneracy and optimal regularity of minimizers
and of their free boundary is also a novel ingredient
with respect to the classical literature, and nothing seemed to be known before about
these important questions.

\subsection{Main results}

In what follows, we will denote by~$\{u>0\}$ the positivity set of~$u$ and
by~$\partial\{u>0\}$ its free boundary.
The main results of this paper are the following:

\begin{itemize}
\item If $z\in \fb u$ and $\na u(z)=0$,
then either $\fb u$ can be approximated by the zero level sets of a
quadratic homogeneous polynomial of degree two, or~$u$
has quadratic growth at~$z$. 

\item If $n=2$, there exists a monotonicity formula and we can 
classify the homogeneous one-phase solutions of degree two.

\item We also provide various sufficient conditions for strong nondegeneracy
in terms of a suitable
refinement of Whitney's cube decomposition ($c$-covering). For instance, we
show that if 
$\po u$ is a John domain (see the definition in Subsection~\ref{WHYWTH}),
then $\fb u$ possesses a measure theoretic tangent line.
\end{itemize}


\begin{figure}
\tikzstyle{decision} = [diamond, draw, top color=red!10, bottom color=red!50, 
    text width=7.5em, text badly centered, node distance=3cm, inner sep=0pt, drop shadow]
\tikzstyle{block} = [rectangle, draw, top color =cyan!60, 
    text width=11em, text centered, rounded corners, minimum height=4em, drop shadow]
\tikzstyle{line} = [draw, -latex']
\tikzstyle{cloud} = [draw, ellipse,fill=yellow!60, node distance=3cm,
    minimum height=2em,  text width=11em, text centered, drop shadow]
\tikzstyle{cloud1} = [draw, ellipse,fill=yellow!60, node distance=3cm,
    minimum height=2em]



\scalebox{.9}{
\begin{tikzpicture}[node distance = 2cm, auto, remember picture]
    \node [decision] (init) {\small {\bf Analysis of the free boundary}};
    \node [decision, left of=init, node distance=4.6cm] (tbplus) {\small\bf Regularity of minimizers};
    \node [block, below of=tbplus, node distance=3cm] (uminlin) {\small {\bf$\Delta u$ in BMO}
    \\ (Theorem~\ref{thm-BMO})};
     \node [decision, right of=init, node distance=4.6cm] (tbplus1) {\small\bf Special features in the plane};
    \node [block, below of=tbplus1, node distance=3cm] (uminlin1) {\small {\bf Monotonicity formula} \\(Theorem~\ref{lemma:F})};
    \node [block, below of=uminlin1, node distance=3cm] (upluslin1) {\small {\bf Classification of blow-up limits} \\ (Theorem~\ref{thm-hom-blow})};
    \node [block, right of=upluslin1, node distance=4.6cm] (asym1) {\small {\bf Existence of measure theoretic tangent lines}\\ (Theorem~\ref{BYPA})};
\node [block, below of=init, node distance=-4.4cm] (step1) {\small {\bf Nondegeneracy and growth from below}\\ (Theorem~\ref{thm-nondeg})};
\node [block, right of=step1, node distance=4.6cm] (kks1) {\small {\bf
Measure theoretic properties of free boundary points} \\ (Theorem~\ref{thm-Hausdorff})};
\node [block, below of=uminlin, node distance=-7.4cm] (kks2) {\small {\bf Free boundary \\ condition}\\ (Theorem~\ref{FREE BOU COND})};
\node [decision, below of =init, node distance=7.2cm] (existk0) {\small {\bf Classification of free boundary points}};
\node [block, below of=existk0, node distance=4.6cm, 
top color=green!20,
bottom color=green!80] (s2) {\small {\bf Rank-2 flat points}};
\node [block, right of=s2, node distance=4.5cm, top color=green!20,
bottom color=green!80] (s1) {\small {\bf Singular, rank-2 non-flat points}};
\node [cloud, below of=s2, node distance=2.5cm, top color=yellow!20,
bottom color=yellow!60] (flat) {\small {\bf Stratification \\
of the free boundary} \\ (Theorem~\ref{thm-strata})};
\node [block, left of=s2, node distance=4.5cm, top color=green!20,
bottom color=green!80] (scale) {\small{\bf Nonsingular points} \\($\nabla u\ne0$)};
\node [block, right of=s1, node distance=5cm] (s3) {\small {\bf Quadratic growth from above}
\\ (Theorem~\ref{growth})};
    %
    %
    \path [line] (init) -- (tbplus);
    \path [line] (tbplus) -- (uminlin);
    \path [line] (init) -- (step1);
   \path [line] (init) -- (tbplus1);
   \path [line] (init) -- (kks1);
   \path [line] (init) -- (kks2);
    \path [line] (tbplus1) -- (uminlin1);
    \path [line] (uminlin1) -- (upluslin1);
    \path [line] (uminlin1) -- (asym1);
    \path [line] (existk0) -- (scale);
    \path [line] (scale) -- (flat);
    \path [line] (s1) -- (flat);
    \path [line] (s2) -- (flat);
        \path [line] (s1) -- (s3);
    \path [line] (init) -- (existk0);  
    \path [line] (existk0) -- (s1);  
    \path [line] (existk0) -- (s2); 
\end{tikzpicture}
}
\caption{A road map of this article}
\label{x345-00-r3jrfmn1asdf1}
\end{figure}
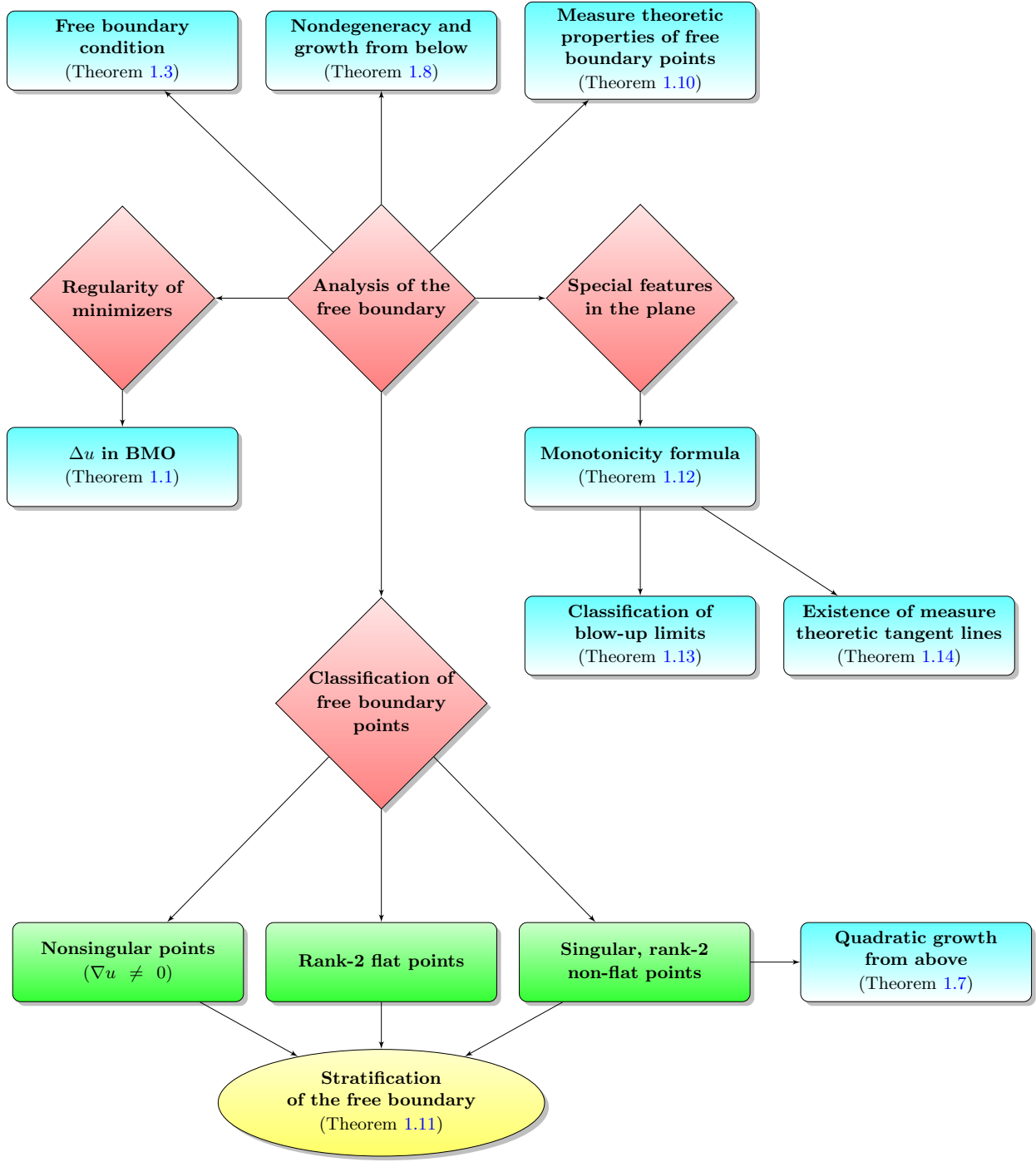

A road map of this article is displayed in Figure~\ref{x345-00-r3jrfmn1asdf1}.

\subsubsection{BMO estimates for the Laplacian of the minimizers,
and free boundary conditions}

In further details, the first regularity result that we establish is a BMO estimate
on the Laplacian of the minimizers. Namely, we prove that:

\begin{theorem}\label{thm-BMO}
Let~$u$ be a minimizer of the functional~$J$ defined in~\eqref{defJ}.
Then, we have that~$\Delta u\in BMO_{{\rm{loc}}}(\Om)$.
\end{theorem}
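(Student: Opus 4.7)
The plan is to rely on the Campanato characterization of $BMO$: it suffices to show that, for every compact $K\Subset\Omega$, there exists a constant $C=C(K)$ such that
\[
\phi(x_0,\rho):=\int_{B_\rho(x_0)}\bigl|\Delta u-(\Delta u)_{x_0,\rho}\bigr|^2\,dx\;\le\;C\,\rho^n
\]
for every $x_0\in K$ and every $\rho$ small enough. To achieve this I would run a standard biharmonic replacement argument combined with harmonic decay estimates and an iteration lemma.

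First, fix a ball $B_r=B_r(x_0)$ with $\overline{B_{2r}}\subset\Omega$ and let $w$ be the biharmonic replacement of $u$ on $B_r$, namely the unique solution in $u+W^{2,2}_0(B_r)$ of $\Delta^2 w=0$ in $B_r$. Extending $w$ by $u$ outside $B_r$ yields an admissible competitor, and the minimality of $u$ together with the trivial bound $|\{w>0\}\cap B_r|-|\{u>0\}\cap B_r|\le |B_r|$ give
\[
\int_{B_r}|\Delta u|^2-|\Delta w|^2\,dx\;\le\; C r^n.
\]
Since $u-w\in W^{2,2}_0(B_r)$ and $\Delta^2 w=0$, two integrations by parts produce the orthogonality $\int_{B_r}\Delta w\,\Delta(u-w)\,dx=0$, so the above becomes the key $L^2$ closeness estimate
\[
\int_{B_r}|\Delta u-\Delta w|^2\,dx\;\le\; C r^n.
\]

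Next, observe that $\Delta w$ is harmonic in $B_r$, hence it satisfies the classical decay
\[
\int_{B_\rho}|\Delta w-(\Delta w)_{x_0,\rho}|^2\,dx\le C\Bigl(\tfrac{\rho}{r}\Bigr)^{n+2}\int_{B_r}|\Delta w-(\Delta w)_{x_0,r}|^2\,dx
\]
for $\rho\le r$. Using the triangle inequality and the minimizing property of the average, I would combine this with the closeness estimate to obtain
\[
\phi(x_0,\rho)\;\le\; C\Bigl(\tfrac{\rho}{r}\Bigr)^{n+2}\phi(x_0,r)+C r^n\qquad\text{for every }\rho\le r.
\]
Because the exponent $n+2$ exceeds $n$, the standard iteration lemma (see e.g.\ Campanato/Giaquinta) upgrades this to $\phi(x_0,\rho)\le C\rho^n$ uniformly in $x_0\in K$ for $\rho$ up to a fixed $r_0=r_0(K)$. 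This is precisely the Campanato $\mathcal L^{2,n}$ condition, which coincides with $BMO_{\mathrm{loc}}$, and the theorem follows.

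The main technical point to be careful about is the step in which one converts the competitor inequality into the $L^2$ closeness of $\Delta u$ to $\Delta w$: one must justify the vanishing of the cross term $\int_{B_r}\Delta w\,\Delta(u-w)$ even though $u-w$ only lies in $W^{2,2}_0(B_r)$ and the third derivatives of $w$ are a priori not in $L^2$ up to the boundary. This is handled either by approximation of $u-w$ by $C^\infty_0(B_r)$ functions, or by using the weak biharmonic formulation $\int\Delta w\,\Delta\psi=0$ directly for all $\psi\in W^{2,2}_0(B_r)$, which is exactly the Euler--Lagrange equation for the biharmonic replacement. Once this point is settled, everything else is standard.
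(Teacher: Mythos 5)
Your proposal is correct and follows essentially the same route as the paper: biharmonic replacement $h$ in a ball, minimality plus the trivial bound on the characteristic-function term to get $\int|\Delta u-\Delta h|^2\le Cr^n$ via the $W^{2,2}_0$ orthogonality, harmonic (Campanato) decay for $\Delta h$, and the standard iteration lemma of Giaquinta to conclude $\int_{B_\rho}|\Delta u-(\Delta u)_{x_0,\rho}|^2\le C\rho^n$. The technical point you flag (justifying the cross-term cancellation and the admissibility of the extended competitor) is handled in the paper exactly as you suggest, via the weak formulation for $W^{2,2}_0$ test functions and Sobolev extension.
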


We also introduce a notion of one-phase minimizer, in the following setting:

\begin{definition}\label{LA:HDEF1ph}
We say that~$u$ is a one-phase minimizer of~$J$
if it minimizes the functional~$J$ in~\eqref{defJ}
among the nonnegative admissible functions~$\{u\in
\mathcal A {\mbox{ s.t. }}u\ge 0{\mbox{ in }}\Omega\}$,
$\mathcal A $ being~as in~\eqref{ADMI}.
\end{definition}

Interestingly, one-phase minimizers, as given in Definition~\ref{LA:HDEF1ph},
arise from a combination of a biharmonic free boundary problem
and an obstacle problem. We also observe that, in general,
minimizers of~$J$ which happen to be nonnegative do not naturally develop
open regions in which the minimizer vanishes (see Proposition~\ref{NOONE} for a concrete result),
while one-phase minimizers do (hence, the notion of
minimizers that are nonnegative and the notions of one-phase minimizers
are structurally very different in this framework, due to the lack of maximum principle).\medskip

We stress that one-phase minimizers, as given
in Definition~\ref{LA:HDEF1ph}, are not necessarily minimizers over $\mathcal A$.
This fact produces significant differences with respect to the classical
case of free boundary problems driven by the Laplacian,
and requires some non-standard techniques to
overcome the lack of structure provided, in the classical case,
by super-harmonic functions.\medskip

We also observe that, in the classical 
Alt-Caffarelli problem~\cite{MR618549} a nonnegative
boundary datum produces, in general, considerable portions
of the domain in which the minimizer vanishes, but in our case
minimizers with nonnegative (and even strictly positive)
boundary data may produce region with considerable negative
phases. This difference between zero and strictly negative phases
is indeed one of the typical features of our problem
and it is also due to the characteristic function in~\eqref{defJ}.
Specifically, the Alt-Caffarelli problem~\cite{MR618549}
with nonnegative datum typically produces large zero phases,
while in most of the situations that one can imagine
our minimizers with nonnegative data have negligible zero sets
(but non-negligible negative sets): the role of
one-phase minimizers in our setting is precisely to
create natural conditions to produce non-negligible
zero sets
(the reader may also consider looking immediately
at the examples in Section~\ref{sec:ex2}
to see these phenomena of zero and negative phases
in simple, but concrete, cases).
\medskip

Given the higher order structure of the biharmonic functional,
the minimizers satisfy a free boundary condition which is richer, and more complicated,
than in the harmonic case. 
To express it in a general form, suppose that
the free boundary (locally) separates two regions, say~$\Omega^{(1)}$
and~$\Omega^{(2)}$, of the domain~$\Omega$, with~$\partial\Omega^{(1)}=
\partial\Omega^{(2)}=\partial\{u>0\}$:
in this case, the minimizer~$u$ can be seen as the result
of the junction of two functions, say~$u^{(1)}$ and~$u^{(2)}$, from each side of
the free boundary, 
with~$u^{(1)}$ and~$u^{(2)}$ not changing
sign.
In this notation, for~$i\in\{1,2\}$, we set
\begin{equation}\label{LAMDEFI}\lambda^{(i)}:=\begin{cases}
1 & {\mbox{ if $u^{(i)}>0$ in $\Omega^{(i)}$}},\\
0 & {\mbox{ if $u^{(i)}\le0$ in $\Omega^{(i)}$}}.
\end{cases}\end{equation}
Then, we have the following result describing the free boundary condition in this framework:

\begin{theorem} \label{FREE BOU COND}
Let~$u$ be either a minimizer 
or a continuous one-phase minimizer of the functional~$J$ defined in~\eqref{defJ}.
Assume that 
\begin{equation}\label{ZEROM}
{\mbox{~$\partial\{|u|>\e\}$ is of class~$C^1$,}}\end{equation} 
for all~$\e\in(0,\e_0)$, for some~$\e_0>0$.
Then,
for any~$\phi=(\phi^1, \dots, \phi^n)\in C^\infty_0(\Omega)$,
\begin{equation}\label{PALA}\begin{split}&
\lim_{\e{{\to}}0}
\int_{\partial(\Omega \cap\{ |u|>\e\})}
\Bigg(
\big(|\Delta u^{(1)}|^2+\lambda^{(1)}\big)\phi\cdot\nu-
2\sum_{m=1}^n \Big(
\phi^m\big(\Delta u^{(1)}\nabla u_m^{(1)}-u_m^{(1)}\nabla\Delta u^{(1)}\big)\cdot\nu
+ \Delta u^{(1)} u_m^{(1)}
\nabla\phi^m\cdot\nu\Big)\Bigg)
\\ &\qquad=
\lim_{\e{{\to}}0}
\int_{\partial(\Omega \cap\{ |u|>\e\})}\Bigg(\big(
|\Delta u^{(2)}|^2+\lambda^{(2)}\big)\phi\cdot\nu-
2\sum_{m=1}^n \Big(
\phi^m\big(\Delta u^{(2)}\nabla u_m^{(2)}-u_m^{(2)}\nabla\Delta u^{(2)}\big)\cdot\nu
+ \Delta u^{(2)} u_m^{(2)}
\nabla\phi^m\cdot\nu\Big)\Bigg)
,\end{split}\end{equation}
where~$\nu$ is the exterior normal to~$\Omega^{(1)}$.

Furthermore, 
if~$u\in C^1(\Omega)\cap C^3\big(\overline{\Omega^{(1)}}\big)\cap C^3\big(\overline{\Omega^{(2)}}\big)$
and~$\partial\{ |u|>\e\}$ approaches $\partial\{ |u|>0\}=\partial\{ u>0\}=\partial\{ u<0\}=\{u=0\}$
in the $C^1$-sense, we have that
\begin{equation}\label{Nu1}
\left\{\quad
\begin{split}
& \Delta u^{(1)} u_m^{(1)}=
\Delta u^{(2)} u_m^{(2)}\\
\\ {\mbox{and }}\qquad&
\big(|\Delta u^{(1)}|^2+\lambda^{(1)}\big)\nu_m-
2\big(\Delta u^{(1)}\nabla u_m^{(1)}-u_m^{(1)}\nabla\Delta u^{(1)}\big)\cdot\nu
\\&\qquad=
\big(|\Delta u^{(2)}|^2+\lambda^{(2)}\big)\nu_m-
2\big(\Delta u^{(2)}\nabla u_m^{(2)}-u_m^{(2)}\nabla\Delta u^{(2)}\big)\cdot\nu
,\end{split}\right.
\end{equation}
for any~$m\in\{1,\dots,n\}$, on~$\partial\{u>0\}$.
\end{theorem}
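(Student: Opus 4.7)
The plan is to extract \eqref{PALA} and \eqref{Nu1} from the first variation identity \eqref{stat-point} of Lemma~\ref{CONFF}. That identity was derived for minimizers; for a continuous one-phase minimizer it remains valid, because the competitor $v_t(x):=u(x+t\phi(x))$ is admissible for small $|t|$: since $\phi$ has compact support, $x\mapsto x+t\phi(x)$ is a diffeomorphism of $\Omega$ fixing $\partial\Omega$, so the Navier boundary condition is preserved, and $v_t\ge 0$ whenever $u\ge 0$. Differentiating $J[v_t]$ at $t=0$ then yields \eqref{stat-point} verbatim.

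To convert \eqref{stat-point} into boundary data, I use the pointwise identity $2\nabla u_m\cdot\nabla\phi^m+u_m\Delta\phi^m=\Delta(u_m\phi^m)-\phi^m\Delta u_m$, so that the left-hand side of \eqref{stat-point} becomes a sum of $2\int\Delta u\,\Delta(u_m\phi^m)$ and $-2\int\Delta u\,\phi^m\partial_m\Delta u=-\int\phi^m\partial_m|\Delta u|^2$. Split $\Omega=\Omega_\e^{(1)}\cup\Omega_\e^{(2)}\cup\{|u|\leq\e\}$ with $\Omega_\e^{(i)}:=\Omega^{(i)}\cap\{|u|>\e\}$. By hypothesis~\eqref{ZEROM} each $\partial\Omega_\e^{(i)}$ is $C^1$, so Green's second identity and the divergence theorem apply in each $\Omega_\e^{(i)}$. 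The Euler-Lagrange equation forces $\Delta^2 u\equiv 0$ there (since $\chi_{\{u>0\}}$ is locally constant, one may test against arbitrary $\eta\in C_0^\infty(\Omega_\e^{(i)})$), making the bulk term $2\int\Delta^2 u\cdot u_m\phi^m$ vanish; the two bulk terms $\int|\Delta u|^2\operatorname{div}\phi$ (one coming from each side of \eqref{stat-point}) cancel exactly; and the divergence theorem converts $\int\lambda^{(i)}\operatorname{div}\phi$ into a pure boundary term. The integrals over the thin set $\{|u|\leq\e\}$ are $o(1)$ as $\e\to 0$, because $|\{|u|\leq\e\}|\to 0$ by continuity of $u$ and $\Delta u\in L^2(\Omega)$.

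What remains is a sum of surface integrals on each $\partial\Omega_\e^{(i)}$. Expanding $\partial_\nu(u_m\phi^m)=\phi^m\nabla u_m\cdot\nu+u_m\nabla\phi^m\cdot\nu$ and discarding the contribution on $\partial\Omega$ (where $\phi$ vanishes), these assemble into the two sides of \eqref{PALA} once one records that the exterior normals of $\Omega_\e^{(1)}$ and $\Omega_\e^{(2)}$ collapse to opposite directions on the common limiting free boundary, so that moving the $i=2$ contribution to the other side of the equation produces precisely the sign convention of the theorem. For the pointwise identities \eqref{Nu1}, the additional $C^1\cap C^3$ hypothesis allows a clean passage to the limit $\e\to 0$, and the arbitrariness of $\phi$ finishes the argument: on the smooth free boundary, $\phi^m$ and $\partial_\nu\phi^m=\nabla\phi^m\cdot\nu$ may be prescribed independently, so matching the coefficient of $\partial_\nu\phi^m$ (which appears only through the term $\Delta u^{(i)}u_m^{(i)}\nabla\phi^m\cdot\nu$) yields $\Delta u^{(1)}u_m^{(1)}=\Delta u^{(2)}u_m^{(2)}$, and matching the coefficient of $\phi^m$ then yields the second identity of \eqref{Nu1}. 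The principal obstacle throughout is the treatment of the terms containing $\nabla\Delta u$, which is not a~priori meaningful on the free boundary and is accessed only through the approximating $C^1$ level sets $\{|u|=\e\}$ supplied by \eqref{ZEROM}; this is also why the pointwise statement requires $C^3$ regularity up to the free boundary.
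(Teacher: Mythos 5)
Your overall route is the same as the paper's: start from the domain--variation identity \eqref{stat-point} of Lemma~\ref{CONFF} (which the paper already states for both minimizers and one-phase minimizers), restrict to $\{|u|>\e\}$ where $u$ is biharmonic by Lemma~\ref{POBIA}, integrate by parts (your Green-identity bookkeeping with $\Delta(u_m\phi^m)-\phi^m\Delta u_m$ is an equivalent rearrangement of the paper's divergence-form computation, and the cancellation of the $|\Delta u|^2\,\mathrm{div}\,\phi$ terms you observe is correct), and finally match the coefficients of $\phi^m$ and of $\nabla\phi^m\cdot\nu$ to get \eqref{Nu1}. That part is fine.

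The genuine gap is in your treatment of the set $\{|u|\le\e\}$. You claim its contribution is $o(1)$ ``because $|\{|u|\le\e\}|\to0$ by continuity of $u$''. This is false in general: $\{|u|\le\e\}\supseteq\{u=0\}$ and $|\{|u|\le\e\}|\to|\{u=0\}|$, and the zero set may well have positive Lebesgue measure --- indeed for the one-phase problem, which is included in the statement, a nontrivial zero phase is precisely the expected behaviour (see Example~1 of Section~\ref{sec:ex2}). Continuity of $u$ gives you nothing here, and without this step the passage from $\int_\Omega$ in \eqref{stat-point} to $\lim_{\e\to0}\int_{\Omega\cap\{|u|>\e\}}$ is unjustified, so \eqref{PALA} does not follow. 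What is actually needed, and what the paper proves as its key preliminary step (\eqref{STAMPA}--\eqref{STAMPA7}), is that the first-variation integrand vanishes a.e.\ on $\{u=0\}$: by the fact that a $W^{1,1}$ function has vanishing gradient a.e.\ on its zero level set (\eqref{STAMPA2}, i.e.\ Theorem~6.19 of \cite{MR1817225}), applied first to $u$ and then to each $\partial_j u$, one gets $\nabla u=0$ and $D^2u=0$ a.e.\ on $\{u=0\}$, so that $|\Delta u|^2\,\mathrm{div}\,\phi$, $\Delta u\,\nabla u_m\cdot\nabla\phi^m$, $\Delta u\,u_m\Delta\phi^m$ and $\chi_{\{u>0\}}\mathrm{div}\,\phi$ all vanish there; the remaining sets $\{0<|u|\le\e\}$ do shrink to the empty set, and their contribution vanishes by absolute continuity of the integral. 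With this insertion your argument closes; a minor related point is that you should only rewrite the left-hand side of \eqref{stat-point} using $\phi\cdot\nabla|\Delta u|^2$ \emph{after} restricting to $\Omega_\e^{(i)}$, since $\nabla\Delta u$ has no meaning in all of $\Omega$.
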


Concrete examples of this free boundary condition, together with some applications
from mechanics, will be also presented in Sections~\ref{sec:ex2} and~\ref{INTER}
(of course, the reader is welcome
to {\em jump to these examples right away}, before
diving into all the rather technical details of this paper,
if she or he wants to have
immediately
a close-to-intuition approach to
the model and the problems discussed in this paper,
and to develop some feeling on how minimizers may be
expected to look like).\medskip

As already discussed in Subsection~\ref{sibsec:cmp},
one of the principal features of the problem that
we consider in the present work is that it does not share the standard 
properties of its ``sibling'' Alt-Caffarelli problem~\cite{MR618549},
such as non degeneracy, linear growth, etc. 
Moreover, the existing techniques fail because
of the involvement of higher order derivatives.

However, the scale invariance of the functional suggests
that the optimal regularity of $u$ must be~$C^{1,1}$.
This is also supported by the computations
that we have for the one-dimensional case (see Remark~\ref{REMBa}
and the explicit examples in Section~\ref{sec:ex2}).

\subsubsection{Notion of rank-2 flatness, the role played by quadratic
polynomials, and dichotomy arguments}

To study the free boundary points
of the minimizers, it is useful to distinguish
between regular and singular points.
Related to this, suppose that~$x\in \fb u$, then there are two possible cases:
\begin{itemize}
\item $\na u(x) \not =0$, then $\fb u$ is $C^1$ near $x$.
\item $\na u(x)=0$, then we expect $u$ to grow quadratically and the free boundary may
have self-intersections.
\end{itemize}
To analyze these situations,
we introduce the following setting:

\begin{definition}\label{def:sing}
If $x\in \fb u$ and $\na u(x)=0$, then we say that~$x$ is a
singular free boundary point. 
The set of singular points is denoted by~$\fbs u$.
\end{definition}

Clearly the singular points are the most interesting points of the free boundary to 
study.
In order to overcome all the difficulties mentioned in Subsection~\ref{sibsec:cmp}
and
study the regularity of~$u$ and that of the free boundary~$\fb u$, we 
employ a dichotomy argument which was introduced in~\cite{DK}.
The idea is to exploit a suitable notion of ``flatness'' and distinguish
between points where the free boundary is flat and points where it is non-flat,
according to this {\em new notion}.

To this aim, we let 
\begin{equation}\label{defHD}
\HD(A, B):=\max\left\{\sup\limits_{a\in A}\dist(a, B),\; 
\sup\limits_{b\in B}\dist(b, A)\right\}
\end{equation}
be the 
Hausdorff distance of two sets $A$, $B\subset \R^n$. 

We also let $P_2$ be the set of all homogeneous polynomials of degree two, i.e.
\begin{equation}\label{poliu7u65}
P_2:=\left\{p(x)=\sum_{i,j=1}^n a_{ij}x_ix_j, {\mbox{ for any }}
x\in \R^n, {\mbox{ with }} \|p\|_{L^\infty(B_1)}=1 \right\},
\end{equation}
where $a_{ij}$ is a symmetric $n\times n$ matrix. Moreover, given~$p\in P_2$
and~$x_0\in\R^n$, we set~$p_{x_0}(x):=p(x-x_0)$ and 
\begin{equation}\label{spx}
S(p, x_0):=\{x\in \R^n : p_{x_0}(x)=0\}.\end{equation}
We observe that the set $S(p, x_0)$ defined in~\eqref{spx}
is a cone with vertex at~$x_0$, i.e. if~$x\in S(p, x_0)$
then, for every~$t>0$, it holds that~$x_0+t(x-x_0)\in S(p, x_0)$. 

With this notation, we set:

\begin{definition}\label{def1}
Let~$\delta>0$,~$R>0$ and~$x_0\in \fb u$.
We say that~$\fb u$ is $(\delta, R)$-rank-2 flat at~$x_0$
if, for every~$r\in (0, R]$, 
there exists~$p\in P_2$ such that
\[
\HD\Big(\fb u\cap B_r(x_0), S (p, {x_0})\cap B_r(x_0)\Big)< \delta\, r.
\]
\end{definition}

Now, given~$r>0$, $x_0\in\partial\{u>0\}$ and~$p\in P_2$, we let
\begin{equation}\label{defflat:BIS} h_{\rm{min}}(r,x_0,p):=
\HD\Big(\fb u\cap B_r(x_0), S(p, x_0)\cap B_r(x_0)\Big).
\end{equation}
Then, we define the rank-2
flatness at level $r>0$ of $\fb u$ at~$x_0$ as follows.
We set
\begin{equation}\label{flatdef}
h(r, x_0):=\inf_{p\in P_2}h_{\rm{min}}(r,x_0, p)
\end{equation}
and we introduce the following notation:

\begin{definition}\label{def:flat}
Let~$\delta>0$,~$r>0$ and~$x_0\in\partial\{u>0\}$.
We say that $\fb u$ is $\delta$-rank-2 flat at level $r$ at~$x_0$
if~$h(r, x_0)<\delta r$.
\end{definition}

In view of Definitions~\ref{def1} and~\ref{def:flat}, we can say that~$
\fb u$ is $(\delta, R)$-rank-2 flat at~$x_0\in\partial\{u>0\}$
if and only if, for every~$r\in (0, R]$, it is 
$\delta$-rank-2 flat at level $r$ at~$x_0$.\medskip

We stress that the notion of ``flatness'' introduced
in Definitions~\ref{def1} and~\ref{def:flat} do not refer to a geometric
property of being ``close to linear'',
but rather to a proximity to level sets of quadratic polynomial
(that is, from the linguistic perspective, one should not
separate the adjective ``flat'' from its own specification ``rank-2'').
Roughly speaking, our objective is to exploit quadratic
objects to describe the minimizers, and our typical strategy would
be to distinguish between points of the free boundary where
the free boundary itself ``looks like the level set of a quadratic polynomial''
(i.e., it is in some sense rank-2 flat), and the ``other points'' of the free
boundary, proving in the latter case that then it is the minimizer
itself to possess some similarities, in terms of growth, with
``quadratic objects''.
The reason for which we used the terminology
of ``flatness'' to describe these ``quadratic''
(rather than ``linear'') scenarios is to maintain
some jargon coming from the classical case in~\cite{MR618549},
and to interpret the notion of flatness as the one describing
the ``deviation'' from a well-understood case (that is, the linear case
in~\cite{MR618549}, and the quadratic case here).
\medskip

Of course, making precise these results in our setting requires
the development of a rather technical terminology,
and detailed formulations of these ideas will be provided
in Theorems~\ref{growth}, \ref{thm-nondeg},
\ref{thm-Hausdorff} and~\ref{thm-strata}.
\medskip

In this framework, we now state the following result
concerning the quadratic growth of~$u$ at $\delta$-rank-2 non-flat
points of the
free boundary.

\begin{theorem}\label{growth}
Let $n\ge 2$ and $u$ be a minimizer of the functional~$J$ defined in~\eqref{defJ}.
Let~$D\subset\subset\Omega$, $\delta>0$ and let~$x_0\in \fb u\cap D$
such that~$|\na u(x_0)|=0$ and~$\fb u$
is not~$\delta$-rank-2 flat at~$x_0$ at any level~$r>0$.

Then, $u$ has at most quadratic growth at~$x_0$, bounded from above
in dependence on~$\delta$.
\end{theorem}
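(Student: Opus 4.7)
The plan is to argue by contradiction via a quadratic blow-up scheme in the spirit of the dichotomy method introduced in~\cite{DK}. Suppose the conclusion fails: then for every $C > 0$ there is a scale $r = r(C) > 0$ with $\sup_{B_r(x_0)} |u| > C\, r^2$. A ``maximum-scale'' selection applied to the quantity $F(r) := \sup_{\rho \in [r, r_0]} \rho^{-2}\sup_{B_\rho(x_0)}|u|$ produces a sequence $r_k \downarrow 0$ with $m_k := r_k^{-2}\sup_{B_{r_k}(x_0)} |u| \to +\infty$ and with the one-sided control $\sup_{B_\rho(x_0)} |u| \le C_0\, m_k\, \rho^2$ for every $\rho \in [r_k, r_0]$ and a universal constant $C_0\ge 1$. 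The quadratic rescalings
\[
v_k(x) := \frac{u(x_0 + r_k x)}{m_k\, r_k^2}
\]
then satisfy $v_k(0) = 0$, $\nabla v_k(0) = 0$, $\sup_{B_1}|v_k| = 1$, and the scale-invariant quadratic bound $\sup_{B_\rho}|v_k| \le C_0\,\rho^2$ for $\rho \in [1, r_0/r_k]$.

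Next, a direct change of variables in~\eqref{defJ} shows that $v_k$ locally minimizes the functional $\int (|\Delta v|^2 + m_k^{-2}\chi_{\{v>0\}})\,dx$, whose volume-penalty weight $m_k^{-2}$ tends to zero. Invoking Theorem~\ref{thm-BMO} together with interior regularity for the biharmonic equation off the free boundary, I would extract a subsequence $v_k \to v_\infty$ converging locally uniformly (and strongly in $W^{2,2}_{\mathrm{loc}}$) to an entire function $v_\infty : \R^n \to \R$ which, the penalization having disappeared in the limit, is biharmonic on $\R^n$ and inherits $v_\infty(0) = 0$, $\nabla v_\infty(0) = 0$, $\sup_{B_1}|v_\infty| = 1$, and the global quadratic growth $|v_\infty(x)| \le C_0|x|^2$ for $|x| \ge 1$. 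The classical Liouville theorem for entire biharmonic functions of polynomial growth then forces $v_\infty$ to be a polynomial of degree at most two; combined with the vanishing conditions at the origin, this identifies $v_\infty = p_\star$ with a nonzero homogeneous quadratic polynomial, which after the normalization $\sup_{B_1}|p_\star| = 1$ belongs to the class~$P_2$ defined in~\eqref{poliu7u65}.

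Finally, I would promote the uniform convergence $v_k \to p_\star$ on $B_1$ to Hausdorff convergence of free boundaries,
\[
\fb v_k \cap B_1 \longrightarrow S(p_\star, 0) \cap B_1,
\]
with $S(p_\star, 0)$ as in~\eqref{spx}. Rescaling this back to $B_{r_k}(x_0)$ and using the scale invariance of the zero set of the homogeneous polynomial $p_\star$ then yields
\[
\HD\bigl(\fb u \cap B_{r_k}(x_0),\; S(p_\star, x_0) \cap B_{r_k}(x_0)\bigr) = o(r_k) \qquad\text{as }k \to \infty,
\]
which for $k$ large violates the hypothesis that $\fb u$ is not $\delta$-rank-2 flat at~$x_0$ at level $r_k$. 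The main technical obstacle is precisely this last step: while the ``upper'' inclusion that accumulation points of $\fb v_k$ belong to $\{p_\star = 0\}$ is immediate from uniform convergence, the converse ``lower'' inclusion requires a quantitative nondegeneracy of $v_k$ at free boundary points (to be extracted from the companion Theorem~\ref{thm-nondeg} in tandem with the quadratic upper bound just established for $v_k$) so as to exclude ghost components of $\fb v_k$ collapsing away from $\{p_\star = 0\}$ and to trap the zero level set of $v_k$ between comparable sublevel sets of the target quadratic.
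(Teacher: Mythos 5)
Your overall skeleton is the same as the paper's: argue by contradiction, select scales so that the rescaled functions $v_k$ have unit size, vanishing gradient at the origin and a one-sided quadratic bound at larger scales, observe that they minimize $\int\big(|\Delta v|^2+m_k^{-2}\chi_{\{v>0\}}\big)$ with vanishing volume weight, pass to a limit by the $W^{2,2}$/BMO compactness (the paper packages this in Lemma~\ref{lemma:blowup}, whose hypotheses it verifies via Corollary~\ref{lem-subham} and Lemma~\ref{eq-Hessian}), and use Liouville twice to identify the blow-up with a homogeneous quadratic polynomial $p_\star\in P_2$. Up to that point your proposal matches the paper's proof of the dyadic inequality~\eqref{eq-dyadic-0}.

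The genuine gap is in your final step, and it is exactly the one you flag yourself. You propose to upgrade $v_k\to p_\star$ to full Hausdorff convergence $\HD\big(\fb{v_k}\cap B_1, S(p_\star,0)\cap B_1\big)\to0$, and you plan to obtain the needed ``lower inclusion'' (every point of $S(p_\star,0)\cap B_1$ is close to $\fb{v_k}$) from Theorem~\ref{thm-nondeg}. That theorem cannot supply this: its part $\bf 1^\circ$ requires the positive-density hypothesis~\eqref{iehnfnb0094}, which is not among the assumptions of Theorem~\ref{growth}, and its part $\bf 2^\circ$ is only the weak nondegeneracy~\eqref{ureebj} on balls contained in $\po{u}$, which does not force the rescalings to develop free boundary near a prescribed point of $\{p_\star=0\}$; indeed the whole of Subsection~\ref{WHYWTH} is devoted to \emph{additional} geometric conditions (weak $c$-covering, John domains, cone conditions) under which strong nondegeneracy at free boundary points holds, precisely because it is not known unconditionally at this stage. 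So, as written, the plan ``full Hausdorff convergence of $\fb{v_k}$ plus rescaling back'' would stall. The fix — and the route the paper takes — is that full Hausdorff convergence is not needed: the non-flatness hypothesis, applied at the levels $r_k$ to the specific polynomial $p_\star$ produced by the blow-up, gives $\HD\big(\fb{v_k}\cap B_1,S(p_\star,0)\cap B_1\big)\ge\delta$, from which the paper extracts free boundary points $z_k\in\fb{v_k}\cap B_1$ with $\dist(z_k,S(p_\star,0))\ge\delta$; by the uniform convergence (the direction you correctly call immediate) any accumulation point $z_0$ satisfies $p_\star(z_0)=0$, i.e. $z_0\in S(p_\star,0)$, which is the desired contradiction. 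In other words, the hypothesis is used to produce points of $\fb{v_k}$ staying $\delta$-away from the limit zero set, rather than to approximate the whole zero set by $\fb{v_k}$, and this removes the need for any nondegeneracy input in the proof of Theorem~\ref{growth}.
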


\subsubsection{Further results on the quadratic growth of
the minimizers}

Now we turn our attention to the nondegeneracy properties of the minimizers.
First of all, setting as usual~$u^+(x):=
\max\{u(x),0\}$, we provide a weak form of nondegeneracy, investigating the validity
of statements of this form:
\begin{equation}\label{ureebj}
{\mbox{if~$B\subset \po u$ is a ball touching~$\fb u$, then
$\sup_B u^+\ge C[\diam(B)]^2$}}
\end{equation}  for some~$C>0$ (possibly depending on dimension, on the domain and
on the datum~$u_0$).

We consider this as a 
weak form of nondegeneracy as opposed to
the one in which $B$ is
centered at free boundary points, which we call strong nondegeneracy.

We establish that \eqref{ureebj} is satisfied, and, more generally, that the
positive density of the positivity set is sufficient to ensure at least quadratic growth
from the free boundary. The precise result that we obtain is the following:

\begin{theorem}\label{thm-nondeg}
Let $u$ be a minimizer of the functional~$J$ defined in~\eqref{defJ}. Then:
\begin{itemize} 
\item[$\bf 1^\circ$] If~$x_0\in \fb  u$ and
\begin{equation}\label{iehnfnb0094}
\liminf_{\rho\to 0}\frac{\big|B_\rho(x_0)\cap \po u\big|}{|B_\rho|}\ge \theta_*
\end{equation}
for some $\theta_*>0$, then 
$$\sup_{B_r(x_0)} |u| \ge \bar c r^2$$
as long as~$B_r(x_0)\subset\subset\Om$,
for some~$\bar c>0$ depending on $\theta_*$,
$n$, $\dist(B_r(x_0),\Om)$ and~$ \|u_0\|_{W^{2, 2}(\Om)}$.
\item[$\bf 2^\circ$] If~$x_0\in\{u>0\}$ and~$r:=\dist(x_0, \fb u)$, then 
$$\sup_{B_r(x_0)}u^+\ge \bar cr^2,$$
as long as~$B_r(x_0)\subset\subset\Om$,
for some~$\bar c>0$ depending on
$n$, $\dist(B_r(x_0),\Om)$ and~$ \|u_0\|_{W^{2, 2}(\Om)}$.
\end{itemize}
\end{theorem}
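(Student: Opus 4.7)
The plan is to prove both parts by a single competitor argument: assuming the claimed quadratic lower bound fails at scale $r$, I will construct an admissible downward perturbation $v$ of $u$ supported in $B_r(x_0)$ whose total energy is strictly less than $J[u]$, contradicting minimality.

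Fix once and for all a radial $\Psi\in C^\infty_c(B_1)$ with $0\le\Psi\le 1$ and $\Psi\equiv 1$ on $B_{1/2}$. For a small threshold $\bar c>0$ to be chosen at the end, set
\begin{equation*}
\varphi_r(x):=2\bar c\, r^{2}\,\Psi\!\left(\tfrac{x-x_0}{r}\right)\in C^\infty_c(B_r(x_0))
\quad\text{and}\quad v:=u-\varphi_r;
\end{equation*}
since $\varphi_r\in C^\infty_c(\Omega)$, the function $v$ lies in $\mathcal A$. Suppose for contradiction that $\sup_{B_r(x_0)}|u|<\bar c r^{2}$ in case $\mathbf{1^\circ}$, or $\sup_{B_r(x_0)}u^+<\bar c r^{2}$ in case $\mathbf{2^\circ}$ (in the latter $B_r(x_0)\subset\{u>0\}$, so $|u|=u=u^+<\bar c r^{2}$ on $B_r(x_0)$ as well). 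On $B_{r/2}(x_0)$ we then have $v\le \sup_{B_r(x_0)} u-2\bar c r^{2}<-\bar c r^{2}<0$, so $\chi_{\{v>0\}}\equiv 0$ there; and since $\varphi_r\ge 0$, we also have $\{v>0\}\subseteq\{u>0\}$ throughout $\Omega$. Consequently, the volume contribution satisfies
\begin{equation*}
\int_\Omega\bigl(\chi_{\{v>0\}}-\chi_{\{u>0\}}\bigr)\le-\bigl|\{u>0\}\cap B_{r/2}(x_0)\bigr|\le -\kappa\, r^{n},
\end{equation*}
with $\kappa=\kappa(n)>0$ in case $\mathbf{2^\circ}$ (since $B_r(x_0)\subset\{u>0\}$) and $\kappa=\kappa(n,\theta_*)>0$ in case $\mathbf{1^\circ}$ via \eqref{iehnfnb0094} applied at $\rho=r/2$ (taken small enough, which dictates an implicit smallness restriction on the scale).

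For the biharmonic cost, I expand
\begin{equation*}
\int_\Omega\bigl(|\Delta v|^2-|\Delta u|^2\bigr)=-2\int_\Omega\Delta u\,\Delta\varphi_r+\int_\Omega|\Delta\varphi_r|^2.
\end{equation*}
The crucial device is that $\varphi_r\in C^\infty_c(\Omega)$: integrating by parts twice (no boundary terms arise) gives $\int_\Omega\Delta u\,\Delta\varphi_r=\int_\Omega u\,\Delta^{2}\varphi_r$. Rescaling yields $\|\Delta\varphi_r\|_{L^\infty}+r^{2}\|\Delta^{2}\varphi_r\|_{L^\infty}\le C(n,\Psi)\,\bar c$, and combining with the hypothetical smallness $\sup_{B_r(x_0)}|u|<\bar c r^{2}$,
\begin{equation*}
\Bigl|\int_\Omega u\,\Delta^2\varphi_r\Bigr|+\int_\Omega|\Delta\varphi_r|^2\le C'(n,\Psi)\,\bar c^{2}\,r^{n}.
\end{equation*}
Minimality then forces $0\le J[v]-J[u]\le 3C'\bar c^{2}r^{n}-\kappa r^{n}$, and choosing $\bar c$ strictly below $\sqrt{\kappa/(3C')}$ violates this inequality, yielding the desired contradiction. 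The stated dependence of $\bar c$ on $\theta_*$ (in case $\mathbf{1^\circ}$) and on $n$ follows; any further dependence on $\|u_0\|_{W^{2,2}(\Omega)}$ and on $\dist(B_r(x_0),\partial\Omega)$ enters only when one has to upgrade the small-scale conclusion to arbitrary admissible radii via the global energy bound $\|\Delta u\|_{L^2(\Omega)}\le C\|u_0\|_{W^{2,2}(\Omega)}$ inherited from minimality.

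The main obstacle will be the absence of a maximum principle or Harnack inequality for $\Delta^{2}$, which rules out the ``superharmonic replacement'' used in the classical Alt--Caffarelli proof of nondegeneracy; as a result, the cross term $\int\Delta u\,\Delta\varphi_r$ cannot be discarded by sign or by a pointwise comparison. What rescues the strategy is the double integration-by-parts identity, legitimate precisely because $\varphi_r$ is compactly supported inside $B_r(x_0)\subset\subset\Omega$: it replaces the unknown $\Delta u$ by $u$ itself integrated against $\Delta^{2}\varphi_r$, packaging the biharmonic cost in a form controlled purely by the hypothetical smallness of $u$ on $B_r(x_0)$ and producing the clean $\bar c^{2}$-versus-$\kappa$ competition at the common scale $r^{n}$ that closes the argument.
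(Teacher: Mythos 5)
Your argument is correct, but it is genuinely different from the proof in the paper. You run a contradiction/competitor argument with the \emph{additive} perturbation $v=u-2\bar c\,r^2\Psi(\tfrac{\cdot-x_0}{r})$, which stays in the class \eqref{ADMI}, and you neutralize the cross term by moving all derivatives onto the smooth bump, $\int\Delta u\,\Delta\varphi_r=\int u\,\Delta^2\varphi_r$, so that the biharmonic cost is controlled solely by the assumed smallness $\sup_{B_r}|u|<\bar c r^2$; the gain $-|\{u>0\}\cap B_{r/2}|\lesssim-\kappa r^n$ then beats the cost $C\bar c^2 r^n$ for $\bar c$ small. The paper instead uses the \emph{multiplicative} truncation $v=\psi u$ (with $\psi$ vanishing near $x_0$), invoking local minimality (Lemma~\ref{HANSSSL}) and then estimating $\int|\Delta(\psi u)|^2$ via the Caccioppoli/Hessian bounds of Lemma~\ref{eq-Hessian}, which in turn rest on super-biharmonicity and the lower bound $\Delta u\ge-\hat C$ of Corollary~\ref{lem-subham}; this yields directly the inequality $\gamma(1+\gamma)\ge c$ for $\gamma=r^{-2}\sup_{B_r}|u|$ without positing a threshold in advance. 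Your route is more elementary and self-contained (no Caccioppoli-type estimates, no almost-subharmonicity, and global minimality suffices), at the price of being a pure contradiction argument; the paper's route reuses machinery needed elsewhere and gives the quantitative two-sided relation for $\gamma$ in one pass. Two minor points: in case $\bf 2^\circ$ your inclusion $B_r(x_0)\subset\{u>0\}$ implicitly uses the continuity of $u$ (available by \eqref{3287uUUSp}), exactly as in the paper; and your use of \eqref{iehnfnb0094} at scale $r/2$ is only justified for $r$ small (with $\theta_*$ slightly lowered), but this caveat is shared by the paper's own proof, which applies the density hypothesis at scale $r/16$ in the same way, so it does not distinguish your argument from theirs.
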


We observe that the claim in $\bf 2^\circ$ is exactly the statement
in~\eqref{ureebj}. 

Sufficient conditions for the density estimate in~\eqref{iehnfnb0094} to hold will
be discussed in Subsection~\ref{WHYWTH}, where we also recall and compare
the notions of weak $c$-covering condition and Whitney's covering.
In addition, in Subsection~\ref{BIHAMOE}
we will relate the nondegeneracy properties 
with a fine analysis of the biharmonic measure, which in turn produces
some regularity results on the free boundary.
\medskip

It is also convenient to consider ``vanishing'' free boundary points,
in the following sense:

\begin{definition}\label{def-vanish}
Let $u$ be a minimizer of the functional~$J$ defined in~\eqref{defJ}
and let~$x_0\in \fb u \cap B_1$.
We say that $\fb u$ is vanishing rank-2
flat at~$x_0$ if there exist 
sequences~$\delta_k\to 0$ and~$r_k\to 0$ such that
\begin{equation}\label{gerghgh068} h(r_k,x_0)\le \delta_k r_k,\end{equation}
where~$h$ is defined in~\eqref{flatdef}.
\end{definition}

Notice, in particular, that condition~\eqref{gerghgh068} is equivalent to
$$ \lim_{k\to+ \infty}\frac{ h(r_{k}, x_0)}{r_k}=0,$$
and this justifies the name of ``vanishing'' in Definition~\ref{def-vanish}.

Then, we have:

\begin{theorem}\label{thm-Hausdorff} 
Let~$u$ be a minimizer of the functional~$J$ defined
in~\eqref{defJ}. Then:
\begin{itemize}
\item[$\bf 1^\circ$] 
The set
of vanishing rank-$2$ flat points of the free boundary
has zero measure in $\Omega$.

\item[$\bf 2^\circ$] If $D\subset\subset \Om$ and there exists~$\bar c>0$
such that 
\begin{equation}\label{9eybn-sdfsn}\liminf_{r\to 0}\frac{\sup_{B_r(x)}|u|}{r^2}\ge \bar c
\end{equation}
for every $x\in \fb u\cap \overline D$,
then $\fb u$ has zero measure, and for any~$\delta>0$,
the set of free boundary points
that are not
$\delta$-rank-2 flat
has finite $(n-2)$-dimensional Hausdorff measure.
\end{itemize}
\end{theorem}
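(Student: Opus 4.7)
The plan is a Lebesgue density argument exploiting the fact that the zero sets of homogeneous quadratic polynomials are thin. Let $E$ denote the set of vanishing rank-$2$ flat points and fix $x_0\in E$. By Definition~\ref{def-vanish}, there exist $\delta_k\to0$, $r_k\to0$ and $p_k\in P_2$ with
$$\HD\bigl(\fb u\cap B_{r_k}(x_0),\ S(p_k,x_0)\cap B_{r_k}(x_0)\bigr)\le\delta_k r_k.$$
The normalization $\|p_k\|_{L^\infty(B_1)}=1$ in~\eqref{poliu7u65} guarantees that the algebraic hypersurface $S(p_k,x_0)$ has a $\delta_k r_k$-tubular neighborhood inside $B_{r_k}(x_0)$ of Lebesgue measure at most $C(n)\,\delta_k\,|B_{r_k}|$, uniformly in $p_k$ (this reduces to the standard slicing estimate $|\{|p|<\delta\}\cap B_1|\le C(n)\delta$ for normalized homogeneous quadratics). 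Since $E\cap B_{r_k}(x_0)\subset\fb u\cap B_{r_k}(x_0)$ lies in this tubular neighborhood, we obtain $|E\cap B_{r_k}(x_0)|/|B_{r_k}|\le C(n)\delta_k\to0$, so $x_0$ is not a Lebesgue density point of $E$ and hence $|E|=0$.

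\textbf{Part $\bf 2^\circ$, zero measure of $\fb u\cap D$.}
Theorem~\ref{thm-BMO} together with John-Nirenberg gives $\Delta u\in L^p_{{\rm loc}}$ for every $p<\infty$, hence $u\in W^{2,p}_{{\rm loc}}\subset C^{1,\alpha}_{{\rm loc}}$ for every $\alpha<1$. I split $\fb u\cap\overline D$ into the vanishing rank-$2$ flat points (null by Part $\bf 1^\circ$) and their complement. At a point $x_0$ of the complement there exist $\delta_0>0$ and $r_k\to0$ with $h(r_k,x_0)\ge\delta_0 r_k$; combining the quadratic nondegeneracy~\eqref{9eybn-sdfsn} along the scales $r_k$, the $C^{1,\alpha}$ control of $u$, and the almost-subharmonicity $\Delta u\ge-\hat C$ from Corollary~\ref{lem-subham}, one exhibits at every such scale a definite volume of $\{u\ne0\}$ inside $B_{r_k}(x_0)$, which prevents $x_0$ from being a Lebesgue density point of $\fb u$. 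Consequently $|\fb u\cap D|=0$.

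\textbf{Part $\bf 2^\circ$, $(n-2)$-Hausdorff bound and main obstacle.}
Let $F_\delta$ be the set of $x_0\in\fb u\cap\overline D$ which are not $\delta$-rank-$2$ flat in the sense of Definition~\ref{def1}: for each such $x_0$ and threshold $\eta>0$, select $r_{x_0}\in(0,\eta]$ with $h(r_{x_0},x_0)\ge\delta r_{x_0}$. Vitali's lemma applied to $\{B_{r_{x_0}}(x_0)\}_{x_0\in F_\delta}$ extracts a disjoint subfamily $\{B_{r_{x_i}/5}(x_i)\}_{i\in I}$ whose fivefold dilations cover $F_\delta$, reducing the task to the uniform packing estimate $\sum_{i\in I}r_{x_i}^{n-2}\le C(\delta)$. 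I plan to derive this by a quantitative dimension-reduction scheme: the nondegeneracy~\eqref{9eybn-sdfsn} and $\delta$-non-flatness together force the rescalings $u_{x_i,r_{x_i}}(x):=r_{x_i}^{-2}u(x_i+r_{x_i}x)$ to stay quantitatively away from every homogeneous quadratic polynomial, and that deviation, coupled with the Euler-Lagrange identity~\eqref{stat-point}, the free boundary condition of Theorem~\ref{FREE BOU COND} and the finite energy $J[u]<\infty$, should convert into the required codimension-two packing bound. This last step is by far the most delicate, standing in for the monotonicity-plus-Federer-reduction machinery of classical elliptic free boundary theory; since no monotonicity formula is available in dimension $n\ge 3$, the entire weight of the argument must fall on the BMO regularity of Theorem~\ref{thm-BMO}, the free boundary condition of Theorem~\ref{FREE BOU COND}, and a careful quantitative blow-up analysis tailored to the second-order nature of the functional.
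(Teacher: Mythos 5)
Your Part $\bf 1^\circ$ is correct and essentially the paper's own argument: the paper also rests on the estimate that, at a scale where $\fb u\cap B_r(x)$ lies in a $\delta r$-neighborhood of $S(p,x)$, one has $|\fb u\cap B_{r}(x)|\le C(n)\delta r^{n}$, and then concludes with a Besicovitch covering (giving $|\mathcal F_\delta\cap D|\le C(n)\delta|D|$ for the set $\mathcal F_\delta$ of points that are $\delta$-flat at arbitrarily small levels, and letting $\delta\to0$) rather than with the Lebesgue density theorem; the two routes are interchangeable.

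In Part $\bf 2^\circ$, however, there is a genuine gap: the Vitali reduction to the packing bound $\sum_i r_{x_i}^{n-2}\le C(\delta)$ is fine, but the bound itself is exactly the hard content of the theorem, and your ``quantitative dimension-reduction scheme'' (non-flatness plus \eqref{stat-point}, the free boundary condition of Theorem~\ref{FREE BOU COND}, and finite energy) is a plan, not a proof -- you say so yourself. The paper's actual device is different and much more elementary: since minimizers are weakly super-biharmonic (Lemma~\ref{POBIA}), $\mathscr M_u:=-\Delta^2u$ is a \emph{nonnegative} Radon measure with finite mass on compact subsets, and Lemma~\ref{lem-blya-don4} shows, by a compactness/blow-up argument combined with the Liouville theorem (if the density degenerated along a sequence of $\delta$-non-flat singular points satisfying \eqref{9eybn-sdfsn}, the blow-up limit would be a nontrivial entire biharmonic function of quadratic growth, hence a quadratic polynomial, contradicting $h(1,0)\ge\delta$), that $\liminf_{r\to0}\mathscr M_u(B_r(x))/r^{n-2}\ge c_0(\delta)$ at every such point. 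A Besicovitch covering then converts $\mathscr M_u(D')<\infty$ into precisely your packing estimate, yielding $\H^{n-2}(\mathcal N_\delta^{\text{sing}}\cap D)<\infty$; the non-flat points with $\nabla u\ne0$ are disposed of separately via the local $C^1$ regularity of $\fb u$. This is the replacement for the ``monotonicity-plus-Federer'' machinery you were looking for, and without it (or an equivalent lower density bound for some locally finite measure) your argument does not close. A smaller remark: the zero-measure statement for $\fb u\cap D$ follows in the paper directly from $|\fb u\cap D|\le|\mathcal F_\delta\cap D|+|\mathcal N_\delta\cap D|\le C(n)\delta|D|$ once the Hausdorff bound kills $|\mathcal N_\delta\cap D|$; your alternative density-point argument at non-vanishing-flat points (producing a definite volume of $\{u\ne0\}$ in $B_{r_k}(x_0)$) is plausible, but as written it is also only asserted, since it needs a quantitative gradient bound at scale $r_k$, e.g. via Theorem~\ref{growth} and scaled elliptic estimates, to turn the nondegenerate value into a ball where $u\ne0$.
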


In general, we can restate the previous results
in a dichotomy form: roughly speaking, the free boundary
in the vicinity of singular points
is either ``flat'' with respect to the level sets
of homogeneous polynomial of degree two, being ``close'' to the level sets of quadratic polynomials,
or ``non-flat'' and in this case the growth from the free boundary is quadratic.
To formalize these notions,
we decompose the class $P_2$ introduced in~\eqref{poliu7u65}
as $$ P_2=\bigcup_{i=1}^n P_2^i,$$
where 
$$P_2^i:=\left\{p\in P_2 : \text{Rank} (D^2 p)=i\right\} .$$ 
As we will see, in our setting, the above notion will play
a useful role since if $x_0\in \fb u$, with
$|\na u(x_0)|=0$, and $\partial\{ u>0\}$ is rank-2 flat at $x_0$,
then there exists $p\in P_2$ 
such that the blow-up of 
$\fb u$ at $x_0$ is the zero set of $p$. 
We separate out some interesting cases:
\begin{itemize}
\item If $\text{Rank}(D^2 p)=n$ and $D^2p\ge 0$ then the free boundary is a singleton.
\item If $\text{Rank}(D^2p )=1$ then the free boundary is a hyperplane in $\R^n$, i.e. 
a codimension 1 plane in $\R^n$ and after some rotation of coordinates we can write 
$p(x)=\alpha(x_1^+)^2$, where $\alpha\in \R$ is a normalizing constant.
\item If $\text{Rank}(D^2 p)=n$ and $D^2p$ has eigenvalues of 
opposite signs   then the free boundary has self intersection. For instance, 
if $n=2$ then $p(x)=\alpha(x_1^2-x_2^2)$, where~$\alpha\in \R$ is a normalizing constant.  
\end{itemize}
Roughly speaking, in this setting
the classes $P_2^i$ detect the approximate symmetries of the free boundary at small scales.
\medskip

Now, we let $\mathcal F\subseteq\fbs u$ be the set of singular
free boundary points that are vanishing rank-2 flat.
Let also
\begin{eqnarray*} \mathcal N&:=&\big(\fb u\setminus \mathcal F\big)\cap\left\{|\nabla u|=0 \right\}\\&=&
\fbs u\setminus\mathcal{F}.\end{eqnarray*}
In this framework, the main result in the stratification setting reads as follows:

\begin{theorem}\label{thm-strata}Let~$u$ be a minimizer of~$J$.
We have that
\begin{itemize}
\item for any~$z\in \mathcal F$, there exist $r_k{{\to}} 0$ and $p\in P_2^i$, for some~$i\in
\{1,\dots, n\}$,
such that \begin{equation}\label{FOR1}
\lim_{k\to+ \infty}\HD\big((\partial E_k) \cap B_R, \{p=0\}\cap B_R\big)=0\end{equation}
for every fixed $R>0$,
where $$E_k:=\left\{x\in \R^n : z+r_kx\in \po u\right\}.$$
Furthermore, $u^+$ is strongly nondegenerate at $z$, namely
$$ \sup_{B_r(z)} u^+\ge cr^2,$$
for some~$c>0$, as long as~$B_{r}(z)\subset\subset\Omega$, with~$c$ possibly depending on~$n$,
$\dist(z,\partial\Omega)$ and~$u$;
\item for any~$z\in \mathcal N,$ there exists~$C_z>0$,
possibly depending on~$n$,
$\dist(z,\partial\Omega)$ and~$\|u\|_{W^{2,2}(\Omega)}$,
such that 
\begin{equation}\label{FOR2} |u(x)|\le C_z|x-z|^2\end{equation} near $z$. 
\end{itemize}
\end{theorem}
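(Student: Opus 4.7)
The statement naturally splits into the two bullets.

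\textbf{The case $z\in\mathcal N$.} Since $z\in\fbs u\setminus\mathcal F$, the point $z$ has $\na u(z)=0$ and is \emph{not} vanishing rank-$2$ flat in the sense of Definition~\ref{def-vanish}: no sequences $r_k,\delta_k\to 0$ can satisfy $h(r_k,z)\le \delta_k r_k$. Consequently $\liminf_{r\to 0^+} h(r,z)/r>0$, so I fix $\delta_0>0$ with $h(r,z)>\delta_0 r$ at all sufficiently small levels $r$. By Definition~\ref{def:flat} this precisely says $\fb u$ fails to be $\delta_0$-rank-$2$ flat at $z$ at every small enough level, so Theorem~\ref{growth} (which, being a local statement, only needs the quantitative non-flatness at small scales) applied with $\delta=\delta_0$ gives the local bound $|u(x)|\le C_z|x-z|^2$ near $z$, namely~\eqref{FOR2}.

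\textbf{The case $z\in\mathcal F$.} Vanishing rank-$2$ flatness supplies sequences $r_k\to 0$, $\delta_k\to 0$ and $p_k\in P_2$ with
\[
\HD\big(\fb u\cap B_{r_k}(z),\;S(p_k,z)\cap B_{r_k}(z)\big)\le \delta_k r_k.
\]
Rescaling via $E_k:=\{x\in\R^n:\,z+r_kx\in\po u\}$ and using that each $p_k$ is $2$-homogeneous (so $S(p_k,z)$ rescales to the cone $\{p_k=0\}$ itself) one obtains
\[
\HD\big(\p E_k\cap B_1,\;\{p_k=0\}\cap B_1\big)\le \delta_k\longrightarrow 0.
\]
The class $P_2$ is a compact subset of the finite-dimensional space of degree-$2$ homogeneous polynomials (because of the normalisation $\|p\|_{L^\infty(B_1)}=1$), so a subsequence gives $p_k\to p$ uniformly on compact sets, with $i:=\mathrm{Rank}(D^2p)\in\{1,\dots,n\}$ (since $\|p\|_{L^\infty(B_1)}=1$ forces $p\not\equiv 0$), hence $p\in P_2^i$. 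To pass from Hausdorff convergence on $B_1$ to Hausdorff convergence on every $B_R$ — i.e.~to get~\eqref{FOR1} — I exploit that $\{p_k=0\}$ and $\{p=0\}$ are cones with apex at the origin, so comparison on $B_R$ reduces to comparison on $B_1$ via the dilation $x\mapsto x/R$; a diagonal extraction across $R=1,2,\dots$, combined with the uniqueness of the cone-limit, pins down a single $p\in P_2^i$ that works at every fixed scale.

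It remains to establish the strong nondegeneracy $\sup_{B_r(z)} u^+\ge c\, r^2$. Since $p\in P_2^i$ is a nontrivial quadratic polynomial, the open set $\{p\ne 0\}$ has strictly positive density at the origin; the Hausdorff convergence $\p E_k\to \{p=0\}$ transfers this into a strictly positive lower density of $\po u$ at $z$, triggering Theorem~\ref{thm-nondeg}$({\bf 1^\circ})$ and yielding $\sup_{B_r(z)}|u|\ge \bar c\, r^2$. To upgrade $|u|$ to $u^+$ I would identify, through the Hausdorff convergence of the free boundaries, the connected component of $\{p\ne 0\}$ on which the positive phase of $u$ lives, and rule out subquadratic behaviour of $u^+$ in that component by a local biharmonic competitor argument (a tailored modification of the one behind Theorem~\ref{thm-nondeg}, adapted to the fact that $\max\{u,v\}$ is not admissible for $J$ due to possibly infinite biharmonic energy).

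\textbf{Main obstacle.} Case $\mathcal N$ reduces essentially for free to Theorem~\ref{growth}; the genuine difficulty lies in case $\mathcal F$, where two interlinked issues must be handled. First, upgrading Hausdorff convergence of the rescaled free boundaries from $B_1$ to every $B_R$ is delicate because quantitative flatness is available \emph{only} at the specific scales $r_k$ and must be transported across scales by leveraging the cone structure of the limit together with a careful diagonal argument. Second, upgrading weak nondegeneracy from $|u|$ to $u^+$ requires detecting the positive phase among the components of $\{p\ne 0\}$ and then constructing a bespoke biharmonic competitor — conceptually more subtle than in the Alt–Caffarelli framework, precisely because $u$ can change sign and the standard truncation $\max\{u,v\}$ is no longer an admissible competitor.
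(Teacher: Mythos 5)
Your handling of the case $z\in\mathcal N$ and of \eqref{FOR1} is essentially the paper's route: both are derived there from the dyadic refinement of the dichotomy (Theorem~\ref{thm-growth-refine}), which is exactly the justification for your parenthetical claim that only non-flatness at small scales is needed, and \eqref{FOR1} is obtained, as you sketch, from the compactness of the normalized class $P_2$ together with the conical structure of $S(p,z)$.

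The genuine gap is in the strong nondegeneracy of $u^+$ at $z\in\mathcal F$, which is the only part the paper proves in detail. First, your appeal to Theorem~\ref{thm-nondeg}~$\bf 1^\circ$ is not justified: Hausdorff closeness of $\partial E_k$ to $\{p=0\}$ constrains where the free boundary sits, but it does not identify which complementary regions carry the positive phase; a priori $\{u>0\}\cap B_{r_k}(z)$ could be confined to the thin $\delta_k r_k$-neighbourhood of the cone, so no uniform lower bound on $|B_\rho(z)\cap\{u>0\}|/|B_\rho|$ follows, and in any case \eqref{iehnfnb0094} requires such a bound for \emph{all} small $\rho$, while your information lives only along the sequence $r_k$. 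Second, and decisively, the step you defer --- upgrading from $|u|$ to $u^+$ by ``identifying the positive component'' and a ``bespoke biharmonic competitor'' --- is exactly the missing content, is never carried out, and is in fact unnecessary. The paper argues as follows: rescale to $U_r(x)=r^{-2}u(z+rx)$; show (formula \eqref{POLY2}) that $\{p=0\}$ is a finite union of hypersurfaces, so the corresponding set of directions is negligible in $\S^{n-1}$; since $|\{U_r>0\}\cap B_{1/2}|>0$ (otherwise $u\le0$ near $z$, contradicting $z\in\partial\{u>0\}$), pick $x_*$ with $U_r(x_*)>0$ whose direction $\hat x_*$ avoids those hypersurfaces; by \eqref{FOR1}, for $r=r_k$ small the ball $B_{r(x_*)/2}(x_*)$ misses $\partial\{U_r>0\}$ and therefore lies entirely in $\{U_r>0\}$; then the already established weak nondegeneracy $\bf 2^\circ$ of Theorem~\ref{thm-nondeg}, applied in that ball, bounds $\sup U_r^+$ from below, and scaling back gives $\sup_{B_{r_k}(z)}u^+\ge \hat C r_k^2$. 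No new competitor construction is required; what is required, and what your proposal lacks, is this geometric selection of a positivity point whose direction avoids the zero cone, combined with the weak nondegeneracy statement that already concerns $u^+$.
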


\subsubsection{Monotonicity formula, and classification of blow-up limits}

To analyze and classify the free boundary properties of the minimizers
of~$J$ and their blow-up limits, it would be extremely desirable to have suitable
monotonicity formulas. Differently from the classical case,
in our setting no general result of this type is available in the literature.
To overcome this difficulty, we focus on the two-dimensional case,
for which we prove that:

\begin{theorem}\label{lemma:F}
Let~$n=2$ and~$\tau>0$ such that~$B_\tau\subset\subset\Omega$.
Let~$u:\Omega\to\R$, with~$0\in\partial\{ u>0\}$ and~$\nabla u(0)=0$,
be 
\begin{itemize}
\item either: a minimizer of the functional~$J$,
with~$0$ not $(\delta, \tau)$-rank-$2$ flat in
the sense of Definition~\ref{def1},
\item or: a one-phase minimizer of the functional~$J$
with~$u\in C^{1,1}(\Omega)$,
and such that~$\partial\{u>0\}$ has null Lebesgue measure. 
\end{itemize}
Then, there exists a function~$E:(0,\tau)\to\R$, which is bounded,
nondecreasing and
such that, for any~$\tau_2>\tau_1>0$,
\begin{equation}\label{MONOFORMULA}
E(\tau_2)-E(\tau_1)=
\int_{\tau_1}^{\tau_2}\left\{
\frac1{r^2}\int_{\partial B_r}
\left[\left( \frac{u_{\theta r}}{r}-\frac{2u_\theta}{r^2}\right)^2
+\left( u_{rr}-\frac{3u_r}{r}+\frac{4u}{r^2}\right)^2\right]
\right\}\,dr.
\end{equation}
The explicit value of the function~$E$ is given by
\begin{equation}\label{EMMEDE}
E(r) = 
\int_{\partial B_r}
\left(\frac{\Delta u\,u_r}{2r^2}-
\frac{5u_r^2}{2r^3}
-\frac{\Delta u u}{r^3}+\frac{6u u_r}{r^4}+
\frac{u_{\theta}u_{\theta r}}{r^4}
-\frac{4u^2}{r^5}
-\frac{3u_\theta^2}{2r^5}
\right)
+
\frac1{4r^2}\int_{B_r} \big( |\Delta u|^2+\chi_{\{u>0\}}\big)
.\end{equation}
Furthermore, if~$E$ is constant in~$(0,\tau)$,
then~$u$
is a homogeneous function of degree two in~$B_\tau$. 
\end{theorem}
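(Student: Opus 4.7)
The plan is to derive~\eqref{MONOFORMULA} from the domain variation formula~\eqref{stat-point} specialized to a radial test vector field, and then to read off the explicit form of $E$ in~\eqref{EMMEDE} from the resulting identity. Concretely, I would insert $\phi(x)=\psi(|x|)\,x$ into~\eqref{stat-point}, where $\psi\in C^\infty_c([0,\tau))$ is a radial cutoff. In two dimensions this gives $\operatorname{div}\phi = 2\psi(r)+r\psi'(r)$ and $\Delta\phi^m=(3\psi'(r)+r\psi''(r))\,x_m/r$, and all the terms $\nabla u_m\cdot\nabla\phi^m$ and $u_m\Delta\phi^m$ expand in polar coordinates $(r,\theta)$ in terms of $u_r$, $u_\theta$, $u_{rr}$ and $u_{r\theta}$. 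After collecting like terms and integrating by parts in $\theta$ on $\partial B_r$ to eliminate the $u_{r\theta}$ factors multiplying $\psi'$, the identity~\eqref{stat-point} takes the shape $\int_0^\tau(\psi'' F_2+\psi' F_1+\psi F_0)\,dr=0$ for suitable $F_0,F_1,F_2$ built from circular integrals over $\partial B_r$ of quadratic expressions in $u$ and its first and second derivatives, together with the bulk integral $\int_{B_r}(|\Delta u|^2+\I u)$.

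Taking $\psi=\psi_\varepsilon$ approximating the indicator of $[0,r_0]$ and letting $\varepsilon\to 0$ then produces, for a.e.\ $r_0\in(0,\tau)$, a pointwise identity which, after antidifferentiation in $r_0$, can be rewritten as $E(r_2)-E(r_1)=\int_{r_1}^{r_2} G(r)\,dr$, with $E$ the explicit expression in~\eqref{EMMEDE}. The heart of the proof is the algebraic rearrangement of the integrand $G(r)$ into a sum of two squares: the purely angular cross-terms collect into $(u_{\theta r}/r-2u_\theta/r^2)^2$, while the mixed radial--Laplacian contribution, after substituting $\Delta u=u_{rr}+u_r/r+u_{\theta\theta}/r^2$ at the last step, collapses into $(u_{rr}-3u_r/r+4u/r^2)^2$, matching~\eqref{MONOFORMULA}. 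I expect this to be the main obstacle: tracking all cross terms and eliminating the third-order derivatives of $u$ that appear when $\psi''$ is integrated by parts against $\Delta u$, via suitable cancellations (in the spirit of the ad-hoc integrations by parts discussed in Subsection~\ref{sibsec:cmp}), is the core technical difficulty.

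Boundedness of $E$ would follow from two ingredients. Monotonicity gives $E(r)\ge \lim_{s\to 0^+}E(s)$, and this limit is finite in both settings of the theorem: in the minimizer case, the non-flatness hypothesis, via (a localized version of) Theorem~\ref{growth}, furnishes the quadratic growth $|u|\le Cr^2$ and $|\nabla u|\le Cr$ near $0$, which controls the boundary integrals in~\eqref{EMMEDE} uniformly in $r$, while the bulk term $\tfrac{1}{4r^2}\int_{B_r}(|\Delta u|^2+\I u)$ is bounded using Theorem~\ref{thm-BMO} (John--Nirenberg applied to $\Delta u\in\mathrm{BMO}_{\rm loc}$ yielding $\int_{B_r}|\Delta u|^2\le Cr^2$) together with the trivial bound $\I u\le 1$; in the one-phase case, the assumptions $u\in C^{1,1}(\Omega)$ and $|\partial\po u|=0$ give the same control directly. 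The upper bound of $E$ near $\tau$ is handled analogously using only the local $W^{2,2}$ information.

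For the rigidity statement, assume $E$ is constant on $(0,\tau)$. Then by~\eqref{MONOFORMULA} both squared integrands vanish almost everywhere in $B_\tau$. The vanishing of $u_{\theta r}/r-2u_\theta/r^2=\partial_r(u_\theta/r^2)$ forces $u_\theta(r,\theta)=r^2 g'(\theta)$, and hence $u(r,\theta)=h(r)+r^2 g(\theta)$ for some $h$ and $g$. Substituting into $u_{rr}-3u_r/r+4u/r^2=0$, and noting that the Euler operator on the left annihilates the piece $r^2 g(\theta)$ automatically, one is left with $h''-3h'/r+4h/r^2=0$ for $h$, whose general solution is $h(r)=Ar^2+Br^2\log r$. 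The mode $r^2\log r$ has second derivatives unbounded near the origin, contradicting either the quadratic growth bound inherited from Theorem~\ref{growth} (minimizer case) or the assumption $u\in C^{1,1}$ (one-phase case); hence $B=0$ and $u(r,\theta)=(A+g(\theta))\,r^2$ is homogeneous of degree two in $B_\tau$, as claimed.
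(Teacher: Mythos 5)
Your plan for deriving \eqref{MONOFORMULA} has a genuine gap: the radial domain variation alone does not produce the formula. Inserting $\phi(x)=\psi(|x|)\,x$ into \eqref{stat-point} and localizing $\psi$ to the indicator of $B_r$ yields precisely the paper's identity \eqref{71qy81qush}, in which the quantity $R(r)$ of \eqref{R1r} still contains the \emph{solid} integral $r^{-(n+1)}\int_{B_r}|\Delta u|^2$; no rearrangement in polar coordinates or integration by parts in $\theta$ on $\partial B_r$ can convert this bulk term into circle integrals, so the derivative of your candidate $E$ cannot collapse to the two squares from the variation identity alone. The paper needs a second, independent ingredient: the Green--Rellich identity \eqref{9:9:ia1ap}, which trades $\int_{B_r}|\Delta u|^2$ for boundary terms at the price of the term $\int_{B_r}u\,\Delta^2 u$, and then the fact that this term contributes nothing, i.e.\ \eqref{6gaTGV}. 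This is exactly where the structural hypotheses of Theorem~\ref{lemma:F} enter the \emph{identity} itself: for minimizers one uses super-biharmonicity and the nonnegative measure $\mathscr M_u$ of \eqref{7yh0128eUDIS} together with Lemma~\ref{LEMMA M} (which uses the rank-$2$ non-flatness), while for one-phase minimizers one uses Lemma~\ref{NEBVERAMAL} (which uses $u\in C^{1,1}$ and the null measure of $\partial\{u>0\}$). In your proposal those hypotheses appear only in the boundedness and rigidity steps, which signals that the derivation of \eqref{MONOFORMULA} is incomplete; moreover the third-order terms are not removed by ``cancellations'' directly, but by mollifying and recognizing them as exact $r$-derivatives (the quantities $T$ and $V$ in the paper), which is what makes the formal manipulation rigorous.

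There is also a secondary gap in the boundedness argument for the minimizer case: you invoke pointwise bounds $|\nabla u|\le Cr$ and uniform-in-$r$ control of the circle integrals in \eqref{EMMEDE}, but these integrals contain second derivatives of $u$ on $\partial B_r$, and only $C^{1,\alpha}$ regularity plus the quadratic growth of Theorem~\ref{growth} are available, so no such pointwise or trace control holds. The paper instead derives the averaged estimate \eqref{BOU SPH} along a sequence $r_k\to 0$ by contradiction from the solid bounds of Corollary~\ref{eq-Hessian:cor}, bounds $E(r_k)$, and only then uses monotonicity to bound $E$ on all of $(0,\tau)$ (in the one-phase $C^{1,1}$ case your argument is fine). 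Your rigidity step, by contrast, is correct and essentially equivalent to the paper's: vanishing of the two squares, the Euler equation $h''-3h'/r+4h/r^2=0$ with double indicial root $2$, and exclusion of the $r^2\log r$ mode by the growth bound reproduce the paper's conclusion that $u$ is homogeneous of degree two.
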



We stress that the $C^1$
assumption on~$u$ in Theorem~\ref{lemma:F}
is taken only in the case of one-phase minimizers,
while for minimizers no additional regularity assumption is required
in Theorem~\ref{lemma:F}.\medskip

Given~$x_0\in\fb u$ 
we consider the blow-up
sequence of~$u$ at~$x_0$, defined as 
\begin{equation}\label{blow}
u_k(x):=\frac{u(x_0+\rho_k x)}{\rho_k^2},\end{equation}
where~$\rho_k\to0$ as~$k\to+\infty$.

In this setting, we can classify blow-up limits of minimizers in the plane,
according to the following result:

\begin{theorem}\label{thm-hom-blow}
Let~$n=2$.
Let $B_r\subset\subset\Omega$. Let~$x_0\in\Omega$
and~$u:\Omega\to\R$, with~$x_0\in \fbs u$.

Assume that either~$u$
is a minimizer of the functional~$J$, with
\begin{equation}\label{34934jsq92358858678}
{\mbox{$\fb u$ not $\delta$-rank-2 flat at $x_0$
at any level,}}\end{equation} for some~$\delta>0$, or that
$u$ is a one-phase minimizer of the functional~$J$
with~$u\in C^{1,1}(\Omega)$,
and such that~$\partial\{u>0\}$ has null Lebesgue measure.

Then every blow-up limit of $u$ at $x_0$ is
either a homogeneous function of degree two, or it is identically zero.
\end{theorem}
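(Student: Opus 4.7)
The strategy rests on combining the monotonicity formula of Theorem~\ref{lemma:F} with the scale invariance of the quantity $E$ defined in~\eqref{EMMEDE} under the rescaling $u\mapsto u_\lambda(x):=u(x_0+\lambda x)/\lambda^2$. A direct computation in polar coordinates, checking each of the seven boundary terms and the bulk term (via the change of variable $y=x_0+\lambda x$, which supplies a factor $\lambda^{-2}$ matching the $\lambda^{-2}$ produced by differentiating $u_\lambda$), gives
\[
E_{u_\lambda}(r)\;=\;E_u(\lambda r,\,x_0).
\]
Theorem~\ref{lemma:F}, which is applicable at all sufficiently small scales thanks to the blanket non-flatness assumption~\eqref{34934jsq92358858678} in the minimizer case (and to the $C^{1,1}$ and measure-zero hypotheses in the one-phase case), therefore provides a bounded nondecreasing function $r\mapsto E_u(r,x_0)$ on some interval $(0,\tau)$, so the limit $E_0:=\lim_{r\to 0^+}E_u(r,x_0)$ exists in $\R$.

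Given the blow-up sequence $u_k$ defined in~\eqref{blow}, I would first extract a convergent limit. Theorem~\ref{growth} in the minimizer case, or the $C^{1,1}$ hypothesis in the one-phase case, yields the uniform quadratic bound $|u_k(x)|\le C|x|^2$ on compact subsets of $\R^2$. Since the functional $J$ is critical in dimension two, the associated interior $W^{2,2}$ estimates are scale-invariant, and a standard compactness argument yields, along a subsequence, $u_k\rightharpoonup u_\infty$ weakly in $W^{2,2}_{{\rm loc}}(\R^2)$ and strongly in $W^{1,p}_{{\rm loc}}(\R^2)$ for every finite $p$, with $u_\infty$ inheriting the global bound $|u_\infty(x)|\le C|x|^2$. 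Now fix $0<R_1<R_2<\infty$. By the scale invariance just established,
\[
E_{u_k}(R_2)-E_{u_k}(R_1)\;=\;E_u(\rho_k R_2,\,x_0)-E_u(\rho_k R_1,\,x_0)\;\longrightarrow\;E_0-E_0\;=\;0,
\]
and identity~\eqref{MONOFORMULA} applied to $u_k$ itself (which is a rescaled minimizer, and whose origin inherits the non-flatness or measure-zero hypothesis from $x_0$ by scale invariance) forces
\[
\int_{R_1}^{R_2}\frac{1}{r^2}\int_{\partial B_r}\!\left[\left(\frac{(u_k)_{\theta r}}{r}-\frac{2(u_k)_\theta}{r^2}\right)^{\!2}+\left((u_k)_{rr}-\frac{3(u_k)_r}{r}+\frac{4u_k}{r^2}\right)^{\!2}\right]dr\;\longrightarrow\;0.
\]

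I would then rewrite the left-hand side via Fubini as a weighted $L^2$ integral over the annulus $B_{R_2}\setminus B_{R_1}$ with bounded weight $|x|^{-2}$; weak lower semicontinuity of $L^2$-norms under the weak $W^{2,2}$ convergence of $u_k$ then forces both differential expressions to vanish identically for $u_\infty$ on the annulus, and since $R_1,R_2$ are arbitrary they vanish on all of $\R^2\setminus\{0\}$. A short ODE computation in polar coordinates shows that these two linear equations are precisely annihilated by functions of the form $u_\infty(r,\theta)=r^{2}g(\theta)+c\,r^{2}\log r$; the logarithmic mode is ruled out by the global bound $|u_\infty(x)|\le C|x|^2$ (since $r^2\log r$ is not dominated by $Cr^2$ as $r\to\infty$), so $u_\infty(r,\theta)=r^{2}g(\theta)$ is homogeneous of degree two, with $g\equiv 0$ recovering the identically zero case.

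The main obstacle is ensuring the convergence $u_k\rightharpoonup u_\infty$ is strong enough to pass to the limit on the right-hand side of~\eqref{MONOFORMULA}, which mixes second derivatives on concentric spheres. The Fubini reformulation collapses the boundary-trace issue into a weighted-$L^{2}$ semicontinuity statement on an annulus with bounded weight, which in turn reduces matters to a scale-invariant $W^{2,2}_{{\rm loc}}$ bound on $u_{k}$; in the minimizer case this rests essentially on Theorem~\ref{growth}, and hence on the standing non-flatness hypothesis~\eqref{34934jsq92358858678}, while in the one-phase case it is secured by the assumed $C^{1,1}$ regularity. A secondary delicate point is excluding the $r^{2}\log r$ mode from the final ODE classification, which again uses the global (rather than merely local) quadratic bound on $u_\infty$ and therefore traces back to Theorem~\ref{growth}.
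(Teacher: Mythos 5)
Your argument is correct and is essentially the paper's proof: both exploit the quadratic scale invariance of $E$, the existence of $\lim_{r\to0^+}E(r)$ guaranteed by the boundedness and monotonicity in Theorem~\ref{lemma:F}, and the consequent vanishing of the monotonicity integrand along the blow-up sequence, together with compactness from Theorem~\ref{growth} (resp.\ the $C^{1,1}$ hypothesis) to pass to the limit and conclude degree-two homogeneity. The only differences are cosmetic: the paper invokes the final claim of Theorem~\ref{lemma:F} (constant $E$ implies homogeneity) rather than re-solving the polar ODE and excluding the $r^2\log r$ mode as you do, and it passes to the limit in the integrand $Q$ directly where you make the Fubini/weighted weak lower semicontinuity step explicit.
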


One of the main issues in the free boundary analysis is
that, even in the one-phase problem, the topological and measure theoretic boundaries of 
$\po u$ may not coincide. 
On the other hand, following is a regularity result for the
one-phase free boundary in the plane:

\begin{theorem}\label{BYPA} Let $n=2$.
Suppose that~$B_1\subset\subset\Omega$.
Assume that~$u$ is a one-phase
minimizer for~$J$, that
\begin{equation}\label{7uhn7yhnb7yhb02394}
u\in C^{1,1}(B_1),\end{equation}
and that~$\partial\{u>0\}$ has null Lebesgue measure.

Suppose that $0\in \fbs u$.
Assume also
that, for every~$\bar x\in\fb u\cap B_1$,
\begin{equation}\label{SUPu} 
\liminf_{\rho\to0^+}\frac{ \sup_{B_\rho(\bar x)} u}{\rho^2}\ge c,
\end{equation}
for some~$c>0$, for all~$\rho\in(0,1)$,
and that
\begin{equation}\label{LIMSUP-0}
\limsup_{\rho\to 0}\frac{|B_\rho\cap \po u|}{|B_\rho|}<1.\end{equation}
Then there exists $r_0>0$ such that at  every
point $\bar x$ of~$ \fb u\cap B_{r_0}$ the free boundary possesses
a unique approximate tangent line in measure theoretic sense, namely
if~$D$ is the symmetric difference of the sets~$\{u>0\}$ and
a suitable rotation of~$\{ (x-\bar x)\cdot e_1>0\}$, we have that
$$ \lim_{\rho\to0^+} \frac{|B_\rho(\bar x)\cap D|}{|B_\rho(\bar x)|}=0.$$
\end{theorem}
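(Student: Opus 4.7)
My strategy is to perform a blow-up at $\bar x$ and classify the resulting limits. At a nonsingular point $\bar x$ (where $|\nabla u(\bar x)|\ne 0$), the implicit function theorem applied to the $C^{1,1}$ function $u$ immediately produces a $C^1$ parametrization of $\fb u$ near $\bar x$, so the approximate tangent line exists. I therefore concentrate on a singular $\bar x\in\fbs u\cap B_{r_0}$; up to translation I treat $\bar x=0$, noting that, once $r_0$ is chosen small enough, the half-plane structure obtained at $0$ effectively transfers the density bound \eqref{LIMSUP-0} to nearby singular points.

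By the $C^{1,1}(B_1)$ regularity \eqref{7uhn7yhnb7yhb02394}, the rescalings $u_\rho(y):=\rho^{-2}u(\rho y)$ are uniformly bounded in $C^{1,1}_{\rm loc}$ and thus precompact in $C^{1,\alpha}_{\rm loc}$. The nondegeneracy \eqref{SUPu} makes every subsequential limit $u_\infty$ nontrivial, while \eqref{LIMSUP-0} prevents $\{u_\infty>0\}$ from filling $\R^2$. By Theorem~\ref{thm-hom-blow}, $u_\infty$ is a nonzero homogeneous function of degree two, so $u_\infty(r,\theta)=r^2 f(\theta)$ with $f\ge 0$.

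In polar coordinates $\Delta u_\infty=4f+f''$, and biharmonicity of $u_\infty$ on each connected component $(\alpha,\beta)$ of $\{f>0\}$ reduces to the ODE $f^{(4)}+4f''=0$, whose general solution is $f(\theta)=A+B\theta+C\cos 2\theta+D\sin 2\theta$. The boundary conditions are $f(\alpha)=f(\beta)=0$ and $f'(\alpha)=f'(\beta)=0$ (since $u_\infty\in C^{1,1}$ vanishes together with its gradient on $\fb u_\infty$), plus $f''(\alpha)=f''(\beta)=1$: indeed, specializing the second relation in \eqref{Nu1} to the one-phase case ($u^{(2)}\equiv 0$, $\lambda^{(1)}=1$, $\lambda^{(2)}=0$) and using that $\nabla u=0$ on $\fb u$ forces the one-sided Hessian to equal $(\Delta u)\,\nu\otimes\nu$, the free boundary condition collapses to $\Delta u=1$ along $\fb u$, hence $f''=1$ at the endpoints. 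Solving this overdetermined system (normalized with $\alpha=0$), the only admissible possibility is $\beta=\pi$ with $f(\theta)=\tfrac12\sin^2\theta$; the alternative branch $\cos L=4D\sin L$ forces $\tan L=L$, which substituted into $f''(L)=1$ yields the impossibility $-1=1$. Since \eqref{LIMSUP-0} rules out two disjoint sectors of length $\pi$ (they would fill $\partial B_1$), $\{u_\infty>0\}$ is a single half-plane, and $u_\infty(y)=\tfrac12(y\cdot\nu)_+^2$ for some $\nu\in\S^1$.

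The $C^{1,\alpha}$ convergence $u_{\rho_k}\to u_\infty$, together with the nondegeneracy, upgrades to Hausdorff and hence to $L^1$ convergence of the positivity sets, so, along $\rho_k$, the symmetric difference of $\rho_k^{-1}\{u>0\}\cap B_1$ and the half-disc $\{y\cdot\nu>0\}\cap B_1$ has vanishing measure. The hardest remaining step, and the main obstacle, is showing that the direction $\nu$ does not depend on the chosen subsequence; for this I appeal to the two-dimensional monotonicity formula in Theorem~\ref{lemma:F}. The monotonicity and boundedness of $E$ yield $L^1$-integrability over $(0,\tau)$ of the ``deviation-from-homogeneity'' integrand in \eqref{MONOFORMULA}, which in turn forces the whole family $\{u_\rho\}$ (not merely subsequences) to converge to a single half-plane profile. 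This furnishes the unique approximate tangent line and completes the argument.
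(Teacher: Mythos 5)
Your overall strategy (blow up, use Theorem~\ref{thm-hom-blow} to get $2$-homogeneity, solve the ODE for $f(\theta)$ on the positivity components, and conclude a half-plane profile) parallels the paper's, and your ODE classification is essentially the paper's Theorem~\ref{gDg0} (with the additional normalization $\Delta u_\infty=1$ from the free boundary condition, which the paper does not need). But there is a genuine gap at the step you dispose of in one clause: the density hypothesis \eqref{LIMSUP-0} is assumed \emph{only at the origin}, and it does not ``effectively transfer'' to nearby free boundary points. At a point $\bar x\neq 0$ the relevant radii are $\rho\ll|\bar x|$, and at those scales the half-plane structure of the blow-up at $0$ gives no control on $|B_\rho(\bar x)\cap\{u>0\}|/|B_\rho(\bar x)|$; a priori the blow-up at $\bar x$ could be one of the full-density profiles ($\tfrac{a_1x_1^2+a_2x_2^2}{2}$ or $\tfrac{a x_1^2}{2}$), and nothing in your argument excludes this. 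This exclusion is precisely the core of the paper's proof: one computes the Weiss-type energy of the blow-ups via Lemma~\ref{7udjhHHANSLL}, obtaining $E_{0,\bar x}=\pi/4$ for the polynomial types and $\pi/8$ for the half-plane type (see \eqref{MINIMTYPE}), uses the scaling identity $E_{k,\bar x}(r)=E_{\bar x}(\rho_k r)$ to identify $E_{0,\bar x}(r)$ with $E_{\bar x}(0)$, and then exploits the upper semicontinuity \eqref{SEMICOE} of $\bar x\mapsto E_{\bar x}(0)$: since the origin attains the minimal value $\pi/8$, all free boundary points in some $B_{r_0}$ must also be of half-plane type. Without this (or an equivalent) mechanism your proof only covers $\bar x=0$.

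Your final step is also not a valid deduction as stated. The integrand in \eqref{MONOFORMULA} measures deviation from $2$-homogeneity and vanishes identically on \emph{every} $2$-homogeneous profile, irrespective of the direction $\nu$ or the amplitude; hence monotonicity and boundedness of $E$, i.e.\ integrability of that term, cannot by themselves force the whole family $u_\rho$ to select a single half-plane direction (the family $\tfrac{a}{2}((x\cdot\nu)^+)^2$, $\nu\in\S^1$, is a continuum on which $E$ is constant). The role the monotone quantity actually plays in the paper is the energy-level dichotomy and semicontinuity described above, not a direct uniqueness-of-direction argument; so you should either replace this step by the paper's argument or supply a separate argument for the independence of $\nu$ from the subsequence, which does not follow from the boundedness of $E$ alone.
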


We think that it is an interesting open problem to detect
suitable conditions guaranteeing that
the $C^{1,1}$-assumptions
taken in Theorems~\ref{lemma:F}, \ref{thm-hom-blow} and~\ref{BYPA}
are fulfilled.\medskip

Moreover, in our setting, Theorems~\ref{thm-BMO},
\ref{growth}, \ref{thm-nondeg}, \ref{thm-Hausdorff}
and~\ref{thm-strata} are obtained specifically for the minimizers,
and Theorem~\ref{BYPA} specifically for the one -phase
minimizers, while Theorems~\ref{FREE BOU COND},
\ref{lemma:F} and~\ref{thm-hom-blow} are valid for
both minimizers and one -phase
minimizers. Though the minimization setting is, in our case,
structurally different from that of one-phase
minimization, due to the lack of Maximum Principle,
we think that it is an interesting open problem
to unify as much as possible the theory of
minimizers with that of one-phase minimizers.

It is also an interesting problem to detect the optimal regularity
of the solutions and of their free boundaries.

\subsection{Organization of the paper}

The rest of the paper is organized as follows.
Section~\ref{sec:ex} contains the main existence result. 
In Section~\ref{sec:BMO} we provide the proof of the local BMO
estimate for the Laplacian of the minimizers, as given by Theorem~\ref{thm-BMO}.

In Section~\ref{sec:first} we present some structural properties
of the minimizers which are based on the first variation of
the functional~$J$.
As a consequence, we also obtain the free boundary condition
and we prove Theorem~\ref{FREE BOU COND}.

In Section~\ref{sec:ex2}, we discuss some one-dimensional examples, 
and in Section~\ref{INTER} we provide a mechanical interpretation
of the free boundary condition.

Section~\ref{sec:dic} contains a dichotomy argument
which leads to the proof of Theorem~\ref{growth}.

Section~\ref{sec-nondeg} is devoted to 
nondegeneracy considerations and to the proof of
Theorems~\ref{thm-nondeg} and \ref{thm-Hausdorff}.

In Section~\ref{sec:stra} we consider the stratification
of the free boundary, reformulating some results obtained in
Section~\ref{sec:dic},
and, in particular, we prove Theorem~\ref{thm-strata}.

Section~\ref{PF:MO}
focuses on the monotonicity formula
and contains the proof of
Theorem~\ref{lemma:F}.

In Section~\ref{sec:class}
we present an application of such a monotonicity
formula, proving the homogeneity of the blow-up limits, and
establishing Theorem~\ref{thm-hom-blow}.

Then, Section~\ref{sec:reg} focuses on explicit
two-dimensional regularity and classification results
and contains the proof of Theorem~\ref{BYPA}.

The paper ends with two appendices which collect some ancillary observations.

\section{Existence of minimizers}\label{sec:ex}

The following result exploits the direct method
of the calculus of variations to obtain the existence
of the minimizers for our problem. Due to the presence
of several technical aspects in the proof, we provide the argument in 
full details:

\begin{lemma}\label{existence}
The functional in~\eqref{defJ} attains a minimum over~${\mathcal{A}}$.
\end{lemma}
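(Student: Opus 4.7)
The plan is to apply the direct method of the calculus of variations. Since~$u_0\in \mathcal A$ and~$J\ge 0$, the infimum~$m:=\inf_{\mathcal A} J\in[0,J[u_0]]$ is finite, and I fix a minimizing sequence~$\{u_k\}\subset\mathcal A$ with~$J[u_k]\to m$. I then aim to produce compactness for~$\{u_k\}$, extract a weak limit~$u\in\mathcal A$, and show that both pieces of~$J$ are sequentially lower semicontinuous along the chosen subsequence.

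For the compactness step, the bound~$J[u_k]\le m+1$ for large~$k$ controls~$\|\Delta u_k\|_{L^2(\Om)}$. To upgrade this to a full~$W^{2,2}$ bound, I write~$u_k=u_0+v_k$ with~$v_k\in W^{1,2}_0(\Om)$, so that~$\Delta v_k=\Delta u_k-\Delta u_0\in L^2(\Om)$ has uniformly bounded~$L^2$-norm. Since~$\Om$ is smooth, the standard~$W^{2,2}$-regularity for the Poisson equation with zero Dirichlet datum yields~$\|v_k\|_{W^{2,2}(\Om)}\le C\,\|\Delta v_k\|_{L^2(\Om)}$, hence a uniform bound on~$\|u_k\|_{W^{2,2}(\Om)}$. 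Extracting a subsequence (not relabeled), I obtain~$u_k\rightharpoonup u$ weakly in~$W^{2,2}(\Om)$, and the Rellich--Kondrachov compact embedding gives, up to a further subsequence, strong convergence in~$W^{1,2}(\Om)$ and pointwise a.e.\ convergence in~$\Om$. Since~$u_k-u_0\in W^{1,2}_0(\Om)$ and~$W^{1,2}_0(\Om)$ is closed in~$W^{1,2}(\Om)$, the~$W^{1,2}$-limit~$u-u_0$ lies in~$W^{1,2}_0(\Om)$, so~$u\in\mathcal A$.

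The final step is the sequential lower semicontinuity~$J[u]\le\liminf_{k\to\infty} J[u_k]$. The quadratic part is routine: weak~$L^2$-convergence~$\Delta u_k\rightharpoonup\Delta u$ together with convexity of~$t\mapsto t^2$ yields
\begin{equation*}
\int_\Om |\Delta u|^2\le \liminf_{k\to\infty}\int_\Om|\Delta u_k|^2.
\end{equation*}
The main obstacle is the non-convex volume term~$\int_\Om\chi_{\{u>0\}}$, which is not weakly lower semicontinuous in any of the ambient Sobolev spaces. The a.e.\ pointwise convergence produced above is exactly what is needed: if~$u(x)>0$ then~$u_k(x)>0$ for all sufficiently large~$k$, while if~$u(x)\le 0$ the inequality is trivial, so
\begin{equation*}
\chi_{\{u>0\}}(x)\le \liminf_{k\to\infty}\chi_{\{u_k>0\}}(x)\quad\text{for a.e.\ }x\in\Om.
\end{equation*}
Fatou's lemma then gives~$\int_\Om \chi_{\{u>0\}}\le \liminf_k\int_\Om \chi_{\{u_k>0\}}$. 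Adding the two inequalities yields~$J[u]\le m$, so~$u$ is a minimizer, which concludes the proof.
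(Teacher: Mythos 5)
Your proposal is correct and follows essentially the same route as the paper: the direct method with a uniform $W^{2,2}$ bound obtained by applying elliptic regularity for the zero Dirichlet problem to $u_k-u_0$, weak $W^{2,2}$ compactness plus Rellich to get strong $W^{1,2}$ and a.e.\ convergence, weak lower semicontinuity of $\int|\Delta\cdot|^2$, and Fatou for the characteristic-function term. The only cosmetic difference is that the paper proves the lower semicontinuity of the quadratic part by expanding $\int|\Delta(u_k-u)|^2\ge 0$, whereas you invoke convexity of the $L^2$ norm, which amounts to the same thing.
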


\begin{proof}
Let $u_k\in{\mathcal{A}}$ be a minimizing sequence, namely
\begin{equation}\label{ElX}
\lim_{k\to+\infty}J[u_k]=\inf_{ v\in{\mathcal{A}}}J[v].\end{equation}
For large~$k$, we can suppose that
\begin{equation}\label{El0} J[u_k]\le J[u_0]+1\le \int_\Omega (|\Delta u_0|^2+1)\le C,\end{equation}
for some~$C>0$. Also,
since~$u_k\in{\mathcal{A}}$, we know from~\eqref{ADMI}
that~$u^*_k := u_k-u_0\in W^{2,2}(\Omega)\cap W^{1,2}_0(\Omega)$.
Let also~$v^*_{k}:=\Delta u^*_{k}\in L^2(\Omega)$.
In this way, we have that
$$ \left\{\begin{matrix}
\Delta u^*_{k}=v^*_{k} & {\mbox{ in }}\Omega,
\\ u^*_k=0 & {\mbox{ on }}\partial\Omega.
\end{matrix}
\right. $$
Consequently,
by elliptic regularity (see Theorem~4 on page~317 of~\cite{EVANS98})
we know that
\begin{equation}\label{El1} \| u^*_{k}\|_{W^{2,2}(\Omega)}\le C'\,\big(
 \| v^*_{k}\|_{L^{2}(\Omega)}+
 \| u^*_{k}\|_{L^{2}(\Omega)}
\big),\end{equation}
for some~$C'>0$.
Also (see Theorem~6 on page~306 of~\cite{EVANS98}),
one has that
\begin{equation}\label{El2} \| u^*_{k}\|_{L^2(\Omega)}
\le C''\, \| v^*_{k}\|_{L^{2}(\Omega)},\end{equation}
for some~$C''>0$.
Therefore, in light of~\eqref{El1} and~\eqref{El2} we conclude that
$$ \| u^*_{k}\|_{W^{2,2}(\Omega)}\le C'''\,\| v^*_{k}\|_{L^{2}(\Omega)}=
C'''\,\| \Delta u^*_{k}\|_{L^{2}(\Omega)}$$
for some~$C'''>0$. This and~\eqref{El0} imply that
$$ \| u^*_{k}\|_{W^{2,2}(\Omega)}\le C''''$$
for some~$C''''>0$. Therefore, we can suppose, up to a subsequence,
that
\begin{equation}\label{El4}
{\mbox{$u_k^*$ converges to some~$u^*$ weakly in~$W^{2,2}(\Omega)$}}\end{equation}
and then, by compact embedding, 
\begin{equation}\label{El4BIS}
{\mbox{$u_k^*$ converges
strongly to~$u^*$ in~$W^{1,2}(\Omega)$.}}\end{equation}
Since~$u_k^*\in W^{1,2}_0(\Omega)$, this implies that
also~$u^*\in W^{1,2}_0(\Omega)$. As a consequence, recalling~\eqref{ADMI},
we know that
\begin{equation}\label{qweurfcbxzq1idhfer}
{\mbox{$u:=u^*+u_0$ belongs to~${\mathcal{A}}$}}.
\end{equation}
Furthermore, by~\eqref{El4}, it holds that
$u_k$ converges to~$u$ weakly in~$W^{2,2}(\Omega)$.
In particular, $u_k$ is bounded in~$W^{2,2}(\Omega)$
and therefore, for any~$i\in\{1,\dots,n\}$,
it holds that~$\partial^2_i u_k$ is bounded in~$L^2(\Omega)$.
This yields that~$\partial^2_i u_k$ converges to some~$w_i$
weakly in~$L^2(\Omega)$.
This and 
\begin{equation}\label{El4TRIS}
{\mbox{the strong convergence of~$u_k$ to~$u$ in~$
W^{1,2}_0(\Omega)\subset L^2(\Omega)$}}\end{equation}
(recall~\eqref{El4BIS})
imply that, for any~$\varphi\in C^\infty_0(\Omega)$,
$$
\int_\Omega w_i \, \varphi=
\lim_{k\to+\infty}\int_\Omega \partial^2_i u_k\, \varphi
=
\lim_{k\to+\infty}\int_\Omega u_k\, \partial^2_i\varphi
=\int_\Omega u\, \partial^2_i\varphi,$$
which shows that~$w_i=\partial^2_i u$.

Accordingly, we have that~$
\partial^2_i u_k$ converges to~$\partial^2_i u$
weakly in~$L^2(\Omega)$.
Therefore, we have that
\begin{equation}\label{0912eyru}
\begin{split}&
0\le\lim_{k\to+\infty}\int_\Omega |\Delta(u_k-u)|^2
=
\lim_{k\to+\infty}\int_\Omega |\Delta u_k|^2
+\int_\Omega |\Delta u|^2-2
\int_\Omega \Delta u_k\Delta u\\&\qquad\qquad=
\lim_{k\to+\infty}\int_\Omega |\Delta u_k|^2-\int_\Omega |\Delta u|^2.\end{split}
\end{equation}
Now, up to a subsequence, recalling~\eqref{El4TRIS},
we can suppose that~$u_k$ converges to~$u$ a.e. in~$\Omega$
and therefore
$$ \liminf_{k\to+\infty} \chi_{\{u_k>0\}}\ge
\chi_{\{u>0\}}$$
a.e. in~$\Omega$. Consequently, by Fatou Lemma,
$$ \liminf_{k\to+\infty} \int_\Omega\chi_{\{u_k>0\}}\ge\int_\Omega
\chi_{\{u>0\}}.$$
Combining this with~\eqref{0912eyru}, we see that~\eqref{ElX} gives that
$$ J[u]\le \liminf_{k\to+\infty}J[u_k]=\inf_{ v\in{\mathcal{A}}}J[v].$$
This and~\eqref{qweurfcbxzq1idhfer} imply that~$u$
is the desired minimizer.
\end{proof}

By taking into account a nonnegative constraint in the minimizing sequence
in the proof of Lemma~\ref{existence}, one also obtains an existence result
for the one-phase problem.

\section{BMO estimates and proof of Theorem~\ref{thm-BMO}}\label{sec:BMO}

The goal of this section is to show that the minimizers
of~\eqref{defJ} have a Laplacian which is a function of locally bounded
mean oscillation, and thus prove Theorem~\ref{thm-BMO}. 

\begin{proof}[Proof of Theorem~\ref{thm-BMO}]
We fix~$R_0> R>  r>0$ and~$x_0\in\Omega$ such that the ball~$
B_{{2R_0}}(x_0)$ is contained in~$\Om$, and we
consider the function~$h$ that solves
\[
\left\{
\begin{array}{lll}
\Delta^2 h=0 &\mbox{in}\ \ B_{{2R}}(x_0),\\
h=u &  {\mbox{ on }}\partial B_{{2R}}(x_0),\\
\nabla h=\nabla u &  {\mbox{ on }}\partial B_{{2R}}(x_0).
\end{array}
\right.
\]
The existence of $h$ follows from the Green's formula for biharmonic functions,
see page 48 in~\cite{GAZ}, or by minimizing energy with
\begin{equation}\label{BOBB}
h-u\in W^{2,2}_0(B_{{2R}}(x_0)). 
\end{equation}
We also extend~$h$ outside~$B_{{2R}}(x_0)$ to be equal to~$u$ in~$\Omega\setminus B_{2R}(x_0)$.
We observe that the function~$h$ is an admissible competitor for~$u$, since
\begin{equation}\label{BRAV}
h\in W^{2,2}(\Omega).
\end{equation}
Indeed, if~$v:=h-u$, we see from~\eqref{BOBB}
and the extension results in classical Sobolev spaces (see e.g. Proposition~IX.18
in~\cite{MR697382}) that~$v\in W^{2,2}(\Omega)$. Since~$u\in W^{2,2}(\Omega)$,
the claim in~\eqref{BRAV} plainly follows.

Then, by the minimality of~$u$, we have that~$J[u]\le J[h]$, that is
\[
\int _{B_{{2R}}(x_0)}\left| \Delta u\right| ^{2}+\I u\leq \int _{B_{{2R}}(x_0)}\left|
\Delta h\right| ^{2}+\I h,
\]
which in turn yields
\begin{equation}\label{eiy584gb}
\int _{B_{{2R}}(x_0)}\left| \Delta u\right| ^{2}-\left|
\Delta h\right| ^{2}\leq C{{R}}^{n},
\end{equation}
for some~$C>0$.
Also, by \eqref{BOBB}, and since~$\Delta^2 h=0$ in~$B_{{2R}}(x_0)$,
we get 
\begin{eqnarray*}
\int _{B_{{2R}}(x_0)}\left| \Delta u\right| ^{2}-\left| \Delta h\right| ^{2}
&=&\int_{B_{{2R}}(x_0)}(\Delta u-\Delta h)(\Delta u+\Delta h)\\
&=&\int_{B_{{2R}}(x_0)}(\Delta u -\Delta  h)\Delta u \\
&=&\int_{B_{{{2R}}}(x_0)}|\Delta u -\Delta h|^2.
\end{eqnarray*}
{F}rom this and~\eqref{eiy584gb}, we obtain that
\begin{equation}\label{eiy584gb:2}
\int _{B_{{2R}}(x_0)}\left| \Delta u-\Delta h\right| ^{2}\leq CR^{n}.
\end{equation}
Now we introduce the notation
$$ (\Delta u)_{x_0,r}:=\fint_{B_r(x_0)} \Delta u(x)\,dx,$$
and we observe that, by H\"older's inequality,
$$
\left| (\Delta u)_{x_0,r}- (\Delta h)_{x_0,r}\right|^2
\le \left( \fint_{B_r(x_0)}|\Delta u-\Delta h|\right)^2
\le \fint_{B_r(x_0)}|\Delta u-\Delta h|^2
$$
which implies that
\begin{equation}\label{eog8yu437}
\int_{B_r(x_0)}|(\Delta h)_{x_0, r}
-(\Delta u)_{x_0, r}|^2\le \int_{B_r(x_0)}|\Delta u -\Delta h|^2.
\end{equation}
Moreover, since the function~$H:=\Delta h $ is harmonic
in~$B_{{2R}}(x_0)$, we have the following Campanato type estimate: 
there exists~$\alpha>0$ and a universal constant~$C>0$ such that
\[
\fint_{B_r(x_0)}|\Delta h -(\Delta h)_{x_0, r}|^2
\le C\left(\frac{r}R\right)^\alpha\fint_{B_R}|\Delta h
-(\Delta h)_{x_0, R}|^2,
\] 
see e.g. 
formula~(1.13) on page~96 in~\cite{MR717034}
(see also the notation on page~92 there).

Hence, using also the triangle inequality and
recalling~\eqref{eiy584gb:2} and~\eqref{eog8yu437}, 
\begin{equation}\label{ALS12678ghj88634oqpe}
\begin{split}
&\int_{B_r(x_0)}|\Delta u -(\Delta u)_{x_0, r}|^2
\\=\;&
\int_{B_r(x_0)}|\Delta u -\Delta h+\Delta h-(\Delta h)_{x_0, r}+
(\Delta h)_{x_0, r}-(\Delta u)_{x_0, r}|^2\\
\le\;& C\left(
\int_{B_r(x_0)}|\Delta u -\Delta h|^2+\int_{B_r(x_0)}|
\Delta h-(\Delta h)_{x_0, r}|^2+\int_{B_r(x_0)}|(\Delta h)_{x_0, r}
-(\Delta u)_{x_0, r}|^2\right)\\
\le\;& 
C\left({{R}}^n+\left(\frac{r}R\right)^{\alpha+n}\int_{B_R(x_0)}
|\Delta h -(\Delta h)_{x_0, R}|^2\right)\\
=\;&
C\left({{R}}^n+\left(\frac{r}R\right)^{\alpha+n} \int_{B_R(x_0)}
|\Delta h -\Delta u+\Delta u-(\Delta u)_{x_0,R}
+(\Delta u)_{x_0, R}-(\Delta h)_{x_0, R}|^2\right)\\
\le\;&
C\left[{{R}}^n+\left(\frac{r}R\right)^{\alpha+n} \left(\int_{B_R(x_0)}
|\Delta h -\Delta u|^2+\int_{B_R(x_0)}
|\Delta u-(\Delta u)_{x_0, R}|^2
+\int_{B_R(x_0)}|(\Delta u)_{x_0, R}-(\Delta h)_{x_0, R}|^2\right)\right]\\
\le\;&
C\left[{{R}}^n+\left(\frac{r}R\right)^{\alpha+n}\left(\int_{B_R(x_0)}
|\Delta h -\Delta u|^2
+\int_{B_R(x_0)}|\Delta u-(\Delta u)_{x_0, R}|^2\right)\right]\\
\le\;&
C\left[{{R}}^n+\left(\frac{r}R\right)^{\alpha+n}\left(R^n
+\int_{B_R(x_0)}|\Delta u-(\Delta u)_{x_0, R}|^2\right)\right]\\ \le\;&
C\left[{{R}}^n+\left(\frac{r}R\right)^{\alpha+n}
\int_{B_R(x_0)}|\Delta u-(\Delta u)_{x_0, R}|^2\right].
\end{split}\end{equation}
We can therefore exploit Lemma~2.1
in Chapter 3 on page~86 of~\cite{MR717034}
(see also Lemma~3.1 in~\cite{DK} and
Theorem~1.1 in~\cite{selfdriven}), 
used here with
\begin{eqnarray*}
&&\phi(\rho):=\int_{B_\rho(x_0)}|\Delta u -(\Delta u)_{x_0, \rho}|^2,
\\&&\beta:=n,\\&&a:=\alpha+n,\\&&\e:=0,\\&&A:=C\\ {\mbox{and }}
&&B:=C
\end{eqnarray*}
thus writing~\eqref{ALS12678ghj88634oqpe} in the form
$$ \phi(r)\le
C\left[{{R}}^\beta+\left(\frac{r}R\right)^{a}\phi(R)\right]=
A\left[\left(\frac{r}R\right)^{a}+\e\right]\phi(R)+BR^\beta
, $$
and hence deducing that
\begin{equation*} \phi(r)\le C\left[\left(\frac{r}R\right)^{\beta}\phi(R)+r^\beta\right],\end{equation*}
up to renaming constants, that gives that
\begin{equation}\label{90-90-9024hchi457}
\int_{B_{{r}}(x_0)}|\Delta u-(\Delta u)_{x_0, {{r}}}|^2\le C{{r}}^n,
\end{equation}
for a suitable~$C>0$, possibly depending on~$u$, $x_0$, $R_0$,
which gives the desired result and finishes
the proof of Theorem~\ref{thm-BMO}.
\end{proof}

\section{First variation of $J$, free boundary condition,
and proof of Theorem~\ref{FREE BOU COND}}\label{sec:first}

In this section,
we consider the first variation of the functional in~\eqref{defJ}.
Of course, the main problem is to take into account variations
performed by a test function whose support intersects the free
boundary of~$u$, since in this case the lack of regularity
of the characteristic function plays an important role.
Therefore, it is useful to know that 
the set $\po u$ is an open subset of $\Om$,
which, in the case of minimizers, follows from the fact
that
\begin{equation}\label{3287uUUSp}
{\mbox{$u\in C^{1,\alpha}_{\rm loc}(\Omega)$ for any~$\alpha\in(0,1)$, }}\end{equation} 
which, in turn, follows from the fact that
\begin{equation}\label{wduep}
{\mbox{$u\in W^{2,p}_{\rm loc}(\Omega)$
for any~$p\in(1,+\infty)$,}}\end{equation} 
in virtue of Theorem~\ref{thm-BMO} and the 
Calder\'on-Zygmund
regularity theory (we think that
it is an interesting open problem to establish whether~\eqref{3287uUUSp} and~\eqref{wduep} are also fulfilled by
one-phase minimizers).\medskip

The main structural properties of the minimizers which are based
on the first variation of the functional are given by the following result:

\begin{lemma}\label{POBIA} Let~$u$ be a minimizer of~$J$. Then
$u$ is weakly super-biharmonic in $\Om$ (i.e.~$\Delta^2u\le0$ in the sense of distributions)
and
biharmonic in $\po u\cup\{u< 0\}^\circ$, where 
$E^\circ$ denotes the interior of $E$.

Similarly, if~$u$ is a one-phase minimizer of~$J$
and~$B$ is an open ball contained in~$\{ u\ge a\}$, with~$a>0$,
then~$u$ is
biharmonic in $B$.
\end{lemma}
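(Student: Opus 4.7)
The plan is to establish all three claims by a single type of one-parameter variational argument $u\mapsto u+t\phi$, choosing the test function $\phi$ in three different ways so as to control the non-smooth term in $J$. The starting identity is
$$J[u+t\phi]-J[u]=2t\int_\Omega\Delta u\,\Delta\phi+t^2\int_\Omega|\Delta\phi|^2+\int_\Omega\bigl(\chi_{\{u+t\phi>0\}}-\chi_{\{u>0\}}\bigr),$$
and the whole game is to pick $\phi$ so that the last term is either pointwise zero (yielding a biharmonic equation) or carries a favourable sign (yielding the super-biharmonic inequality). Continuity of $u$, which follows from \eqref{3287uUUSp}, is used at a single point: to know that $\{u>0\}$ is open and that $u$ attains a definite strict sign on compact subsets of $\{u>0\}$ and of $\{u<0\}^\circ$.

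For biharmonicity in $\{u>0\}$, first I would take $\phi\in C^\infty_0(\{u>0\})$; openness of $\{u>0\}$ and compactness of $\supp\phi$ give a $\delta>0$ with $u\ge\delta$ on $\supp\phi$, so that for $|t|\le\delta/(2\|\phi\|_{L^\infty})$ one has $u+t\phi\ge\delta/2$ on $\supp\phi$ and $u+t\phi=u$ elsewhere. In particular $\chi_{\{u+t\phi>0\}}=\chi_{\{u>0\}}$ pointwise, so the volume term drops out and $J[u+t\phi]-J[u]$ becomes a smooth quadratic polynomial in $t$; minimality forces its linear coefficient to vanish, which is precisely $\int\Delta u\,\Delta\phi=0$, i.e.~$\Delta^2u=0$ distributionally in $\{u>0\}$. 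The argument on $\{u<0\}^\circ$ is identical, now with $u\le-\delta$ on $\supp\phi$ so that $u+t\phi$ stays strictly negative and again the characteristic function is locally frozen. The one-phase assertion is the same argument with the role of continuity played by the hypothesis $B\subset\{u\ge a\}$: for $\phi\in C^\infty_0(B)$ and $|t|<a/(2\|\phi\|_{L^\infty})$, the perturbed function $u+t\phi$ is nonnegative on $\Omega$ (hence an admissible one-phase competitor) and satisfies $u+t\phi\ge a/2>0$ on $B$, so $\chi_{\{u+t\phi>0\}}=\chi_{\{u>0\}}$ on $B$ and the same computation gives $\Delta^2u=0$ weakly in $B$.

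For the super-biharmonic inequality $\Delta^2u\le 0$ on all of $\Omega$, I would switch to a one-sided variation: fix $\phi\in C^\infty_0(\Omega)$ with $\phi\ge0$ and consider $u-t\phi$ for $t>0$. The pointwise inclusion $\{u-t\phi>0\}\subseteq\{u>0\}$, which holds simply because $\phi\ge 0$, yields $\chi_{\{u-t\phi>0\}}\le\chi_{\{u>0\}}$ a.e., so minimality gives
$$0\le J[u-t\phi]-J[u]\le -2t\int_\Omega\Delta u\,\Delta\phi+t^2\int_\Omega|\Delta\phi|^2.$$
Dividing by $t>0$ and letting $t\to 0^+$ produces $\int_\Omega\Delta u\,\Delta\phi\le 0$ for every nonnegative $\phi\in C^\infty_0(\Omega)$, which is exactly the distributional statement $\Delta^2 u\le 0$.

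The main conceptual obstacle is that the volume term $\int\chi_{\{u>0\}}$ is nondifferentiable in $u$ precisely at the free boundary, so one cannot simply take an Euler--Lagrange derivative across $\fb u$. The common device in all three cases is to arrange that this non-smooth term is either locally frozen under the variation — which requires $\phi$ to be supported in an open region where $u$ has a uniform strict sign, and so uses continuity crucially — or monotone in $t$ along the variation, by choosing $\phi\ge 0$ and decreasing $u$. This dichotomy is what turns the single variational identity above into one equality (biharmonicity inside each phase) and one inequality (super-biharmonicity globally).
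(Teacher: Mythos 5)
Your proposal is correct and follows essentially the same route as the paper: a one-parameter competitor $u\mp t\phi$, with the sign and support of $\phi$ chosen so that the bulk term $\int\chi_{\{\cdot>0\}}$ is either nonincreasing along the variation (giving $\int_\Om\Delta u\,\Delta\phi\le0$ for $\phi\ge0$, i.e.\ super-biharmonicity) or does not contribute (giving $\int_\Om\Delta u\,\Delta\phi=0$ inside each phase and in $B$ for the one-phase case). The only minor difference is that for biharmonicity in $\po u$ the paper does not freeze the characteristic function via continuity: it just uses that $\chi_{\{u-\e\phi>0\}}\le\chi_{\{u>0\}}$ on $\supp\phi\subset\po u$ for either sign of $\e$, whereas you invoke the uniform bound $u\ge\delta$ on $\supp\phi$ --- both are legitimate, since continuity of $u$ is already available from Theorem~\ref{thm-BMO} and is not needed at all in your one-phase argument, where the hypothesis $B\subset\{u\ge a\}$ plays that role.
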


\begin{proof} We prove the claims assuming that~$u$ is a minimizer
(the one-phase problem can be treated similarly).
Define $u_\e:=u-\e \phi$, where $0\le \phi\in W^{2,2}(\Om)\cap W^{1,2}_0
(\Omega)$
and~$\e$ is a small parameter to be fixed below. Using the comparison  
of the energies of~$u$ and~$u_\e$, and recalling \eqref{defJ},
we get 
\[
\int_\Om\Big(|\Delta u|^2-|\Delta u-\e \Delta \phi|^2\Big)
\le \int_\Om\Big(\I{u-\e\phi}-\I u\Big).
\]
Note that $\{u-\e\phi>0\}\subset \po u$, provided that~$\e>0$.
Consequently, we have that 
\begin{equation}\label{58hgspgm}
0\ge
\int_\Om\Big(|\Delta u|^2-|\Delta u-\e \phi|^2\Big)=
2\e \int_\Om \Delta u\,\Delta \phi-\e^2\int_\Om(\Delta u)^2.
\end{equation}
Dividing both sides of the last inequality by~$\e>0$ and then letting~$\e\to 0$,
we get that 
\begin{equation*}
\int_\Om\Delta u\,\Delta \phi\le 0.
\end{equation*}
If we take~$\phi\in C^\infty_0(\Omega)$, this gives that~$u$ is
super-biharmonic.
In addition,
if we suppose that~$\supp\phi\subset \po u$, then from~\eqref{58hgspgm}
we deduce, without any sign assumption on~$\e$, that
$$ \int_\Om\Delta u\,\Delta \phi=0,$$
which completes the proof of Lemma~\ref{POBIA}.
\end{proof}

Concerning the statement of
Lemma~\ref{POBIA},
it is interesting to remark that one-phase minimizers
are not necessarily super-biharmonic (an explicit counterexample to this fact is discussed
on page~\pageref{PAGP}).

The basic analytic
structure of the minimizers is then completed by the following result:

\begin{corollary}\label{lem-subham} Let~$u$ be a minimizer of~$J$. 
For every bounded subdomain~$\Omega'\subset \subset \Om$,
there exists~$C>0$, depending only on~$n$,
such that
$$\Delta u\ge -\frac{C\,\|\Delta u\|_{L^1(\Omega)}}{
\big(\dist(\Om', \p \Om)\big)^n} \quad {\mbox{ in }}\Om'.$$
\end{corollary}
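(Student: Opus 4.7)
The plan is to reduce the statement to a standard sub/super-harmonic mean value inequality, using the super-biharmonic property already established.

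First I would invoke Lemma~\ref{POBIA}, which tells us that $\Delta^2 u \le 0$ in the distributional sense on $\Omega$. Rewriting this as $\Delta(\Delta u) \le 0$, we see that $w:=\Delta u$ is distributionally super-harmonic on $\Omega$. Moreover, by Theorem~\ref{thm-BMO}, $w\in BMO_{\mathrm{loc}}(\Omega)\subset L^1_{\mathrm{loc}}(\Omega)$, so $w$ is (after modification on a negligible set) a classical super-harmonic function in the sense of potential theory. In particular, $w$ enjoys the super-mean-value property: for every $x_0\in\Omega$ and every $r>0$ with $B_r(x_0)\subset\Omega$,
\begin{equation}\label{smvp-plan}
w(x_0)\;\ge\;\fint_{B_r(x_0)} w(y)\,dy.
\end{equation}
Equivalently, $-w$ is sub-harmonic and the usual sub-mean-value inequality yields \eqref{smvp-plan}.

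Next I would fix $\Omega'\subset\subset\Omega$, set $d:=\dist(\Omega',\partial\Omega)$, and for any $x_0\in\Omega'$ apply \eqref{smvp-plan} with $r=d$, obtaining
\[
\Delta u(x_0)\;\ge\;\frac{1}{|B_d|}\int_{B_d(x_0)}\Delta u(y)\,dy
\;\ge\;-\frac{1}{|B_d|}\int_{B_d(x_0)}|\Delta u(y)|\,dy
\;\ge\;-\frac{\|\Delta u\|_{L^1(\Omega)}}{|B_d|}.
\]
Since $|B_d|=\omega_n d^n$ with $\omega_n$ depending only on $n$, this is exactly the claimed estimate with $C=1/\omega_n$.

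The only delicate point, which I would address carefully, is the justification of \eqref{smvp-plan} starting from a merely distributional inequality $\Delta w\le 0$. If one prefers not to appeal to the general potential-theoretic fact directly, a mollification argument works: set $w_\varepsilon:=w\ast\rho_\varepsilon$ on $\Omega_\varepsilon:=\{x\in\Omega:\dist(x,\partial\Omega)>\varepsilon\}$; then $w_\varepsilon\in C^\infty(\Omega_\varepsilon)$ and $\Delta w_\varepsilon\le 0$ classically, so $w_\varepsilon$ satisfies \eqref{smvp-plan} on each ball $B_r(x_0)\subset\Omega_\varepsilon$. Sending $\varepsilon\to 0$, using $w_\varepsilon\to w$ in $L^1_{\mathrm{loc}}$, the right-hand side converges to $\fint_{B_r(x_0)} w$ and the left-hand side converges to $w$ at every Lebesgue point; since a.e.~point is a Lebesgue point, \eqref{smvp-plan} holds a.e., which is all we need for the pointwise (a.e.) conclusion of the corollary.
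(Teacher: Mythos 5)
Your argument is correct and follows essentially the same route as the paper: Lemma~\ref{POBIA} gives that $\Delta u$ is weakly super-harmonic, the super-mean-value inequality is applied on balls of radius comparable to $\dist(\Omega',\partial\Omega)$, and the average is bounded below by $-\|\Delta u\|_{L^1(\Omega)}/|B_r|$; the paper justifies the mean-value step by citing Littman's approximation of weak super-harmonic functions by smooth ones, while your mollification argument is an equivalent, self-contained justification of the same standard fact. The only cosmetic difference is your choice $r=\dist(\Omega',\partial\Omega)$, which at points realizing this distance requires shrinking $r$ slightly and passing to the limit (or simply taking half the distance, as the paper does), affecting only the dimensional constant $C$.
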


\begin{proof} 
Let $r:=\frac12\dist(\Om', \p \Om)$ and, for all~$y\in\Om'$, define the function
\[
\phi(y):=\fint_{B_r(y)}\Delta u(x)\,dx.
\]
Thanks to~\eqref{wduep}, we see that~$\phi$
is continuous on the compact set~$\overline{\Om'}$.
Therefore, there exists~$y_0\in \overline{\Om'}$
such that~$\min_{\overline{\Om'}}\phi(y)=\phi(y_0)$.
Then, for any~$y\in\Omega'$,
\begin{equation}\label{J123e2ef4Ah345d}
\phi(y)\ge\phi(y_0)\ge-\fint_{B_r(y_0)}|\Delta u(x)|\,dx\ge
-\frac{\| \Delta u\|_{L^1(\Om)}}{|B_r|}.\end{equation}
As a consequence, since~$u$ is super-biharmonic, thanks to Lemma~\ref{POBIA},
we obtain the desired estimate by the mean value inequality for weak subsolutions
of the Laplace equation (see e.g.~\cite{MR2906766}
and~\cite{MR177186}). More precisely, if~$v$ is weakly super-harmonic in~$\Om$,
we know from 
Theorem~A in~\cite{MR177186} that there exists a sequence of smooth super-harmonic
functions~$v_h$ in~$\Om'$ that converge to~$v$ a.e. in~$\Om'$ and in~$L^1(\Om')$.
Consequently, a.e. $y\in\Om'$,
\begin{equation}\label{SUODFer} v(y)=\lim_{h\to 0}v_h(y)\ge\lim_{h\to 0}
\fint_{B_r(y_0)} v_h(x)\,dx=\fint_{B_r(y_0)} v(x)\,dx.\end{equation}
Then, choosing~$v:=\Delta u$ and applying~\eqref{J123e2ef4Ah345d}, we find that
$$\Delta u(y)\ge
\fint_{B_r(y)}\Delta u(x)\,dx=
\phi(y)\ge
-\frac{\| \Delta u\|_{L^1(\Om)}}{|B_r|}
,$$
as desired.
\end{proof}

For the sake of completeness, we observe that the statement of
Corollary~\ref{lem-subham} can be strengthen by showing,
under additional regularity assumptions, that minimizers
are super-harmonic, according to next result:

\begin{prop}\label{PALLS3u545}
Let~$u$ be a minimizer of~$J$. Assume that
\begin{equation}\label{GA72GA0234}
u\in C(\overline{\Omega}).
\end{equation}
Assume also that
\begin{equation}\label{dodcvdfgm7mse}
{\mbox{$\Delta u$ is $C^1$
in a neighborhood of~$\partial\Omega$,}}\end{equation}
and that
\begin{equation}\label{dodcvdfgm7mse-BIS}
{\mbox{
$\partial\Omega\cap\{|u|>0\}$ is dense in~$\partial\Omega$.}}\end{equation}
Then,
\begin{equation}\label{APS566dfkg2345r}
\Delta u\ge0\qquad{\mbox{a.e. in }}\Omega.\end{equation}
\end{prop}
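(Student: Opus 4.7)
The plan rests on the observation from Lemma~\ref{POBIA} that any minimizer is weakly super-biharmonic, i.e.\ $\Delta^2 u \le 0$ in the sense of distributions. Reading this as $\Delta(\Delta u) \le 0$, we have that the function $w := \Delta u$ is weakly super-harmonic in $\Omega$. Hence, by the minimum principle for super-harmonic functions (in the distributional formulation already used in the proof of Corollary~\ref{lem-subham}, via approximation of $w$ by smooth super-harmonic $w_h$), the conclusion \eqref{APS566dfkg2345r} will follow as soon as we show that $w \ge 0$ on $\partial\Omega$ in a suitable sense; combined with \eqref{dodcvdfgm7mse}, we will in fact establish the stronger pointwise identity $w \equiv 0$ on $\partial\Omega$.

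To obtain the boundary value, I would pick any point $x_0 \in \partial\Omega \cap \{|u|>0\}$ and exploit the continuity assumption \eqref{GA72GA0234} to find a neighborhood $U$ of $x_0$ on which $u$ has a definite sign, so that $\chi_{\{u>0\}}$ is locally constant in $U \cap \Omega$. Then the first variation performed as in Lemma~\ref{POBIA}, but now with a test function $\phi \in W^{2,2}(\Omega) \cap W^{1,2}_0(\Omega)$ supported near $x_0$ and with free normal trace $\partial_\nu\phi$ on $\partial\Omega$ (which is admissible because $u + t\phi \in \mathcal{A}$ for all $t \in \mathbb{R}$ small), gives the Euler–Lagrange relation $\int_\Omega \Delta u\, \Delta\phi = 0$. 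Integrating by parts twice, using $\Delta^2 u = 0$ in $U \cap \Omega$, $\phi = 0$ on $\partial\Omega$, and the $C^1$ regularity \eqref{dodcvdfgm7mse} of $\Delta u$ near $\partial\Omega$, I reduce the identity to
\[
\int_{\partial\Omega} \Delta u \cdot \partial_\nu \phi\, d\mathscr{H}^{n-1} = 0,
\]
for every admissible $\phi$ localized near $x_0$. Since $\partial_\nu\phi$ can be prescribed arbitrarily on $\partial\Omega \cap U$, this forces $\Delta u(x_0) = 0$. Hence $\Delta u$ vanishes on $\partial\Omega \cap \{|u|>0\}$, and then the density hypothesis \eqref{dodcvdfgm7mse-BIS} together with the continuity of $\Delta u$ along $\partial\Omega$ coming from \eqref{dodcvdfgm7mse} upgrade this to $\Delta u \equiv 0$ on all of $\partial\Omega$.

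The proof is then completed by the minimum principle: mimicking \eqref{SUODFer}, one approximates $w = \Delta u$ by smooth super-harmonic $w_h$ defined on a slightly smaller subdomain, extends these up to the boundary using \eqref{dodcvdfgm7mse}, and concludes that $\min_{\overline{\Omega}} w_h$ is attained on $\partial\Omega$; passing to the limit gives $w \ge \inf_{\partial\Omega} w = 0$ a.e.\ in $\Omega$, which is \eqref{APS566dfkg2345r}.

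The main technical obstacle is the rigorous derivation of the Navier-type boundary identity $\Delta u = 0$ on $\partial\Omega \cap \{|u|>0\}$: one must show that the integration by parts performed on the first variation produces only the boundary term involving $\Delta u\, \partial_\nu \phi$, with no uncontrolled contributions from the characteristic function $\chi_{\{u>0\}}$ or from third-order derivatives of $u$ on $\partial\Omega$. This is exactly where the local sign of $u$ near the boundary (which freezes $\chi_{\{u>0\}}$) together with \eqref{dodcvdfgm7mse} and \eqref{GA72GA0234} are used in an essential way, and where the reasoning is conceptually close to (but simpler than) the derivation of \eqref{PALA} in Theorem~\ref{FREE BOU COND}.
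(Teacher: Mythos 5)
Your plan coincides essentially step for step with the paper's proof: near a boundary point where $u$ has a definite sign (so that $\chi_{\{u>0\}}$ is frozen), two-sided perturbations vanishing on $\partial\Omega$ but with free normal derivative give $\int \Delta u\,\Delta\phi=0$, and an integration by parts using the $C^1$ regularity of $\Delta u$ near $\partial\Omega$ leaves only $\int_{\partial\Omega}\Delta u\,\partial_\nu\phi$, forcing $\Delta u=0$ on $\partial\Omega\cap\{|u|>0\}$ and hence, by density and continuity, on all of $\partial\Omega$, after which the super-harmonicity of $\Delta u$ from Lemma~\ref{POBIA} yields the sign. The only cosmetic difference is the last step: the paper concludes by contradiction via the strong maximum principle of~\cite{MR177186} applied to the weakly subharmonic function $-\Delta u$, whose supremum would have to be attained in the interior because $|\Delta u|$ is small near $\partial\Omega$, rather than through your weak-minimum-principle approximation, but this is the same idea in a slightly different dress.
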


We think that the result of Proposition~\ref{PALLS3u545} is helpful
to understand the geometric structure of the minimizers:
nevertheless, since it is not used in the rest of this paper,
we deferred its proof to Appendix~\ref{PALLS3u545-APP}.

In Example~4 of Section~\ref{sec:ex2}
(see page~\pageref{HAskdkjdjfjfagbqew4445tf}), we will further discuss the result
of Proposition~\ref{PALLS3u545},
also in view of the free boundary conditions provided by
Theorem~\ref{FREE BOU COND} and of the bi-harmonicity 
properties outside the free boundary discussed in Lemma~\ref{POBIA}.
\medskip

Next we compute the first domain variation (for this, we use the notation
in which subscripts denote differentiation and superscripts denote coordinates).

\begin{lemma}\label{CONFF}
Let~$u$ be either a minimizer or a one-phase minimizer of~$J$. 
For any~$\phi=(\phi^1, \dots, \phi^n)\in C^\infty_0(\Omega)$
it holds that
\begin{equation}\label{AS D}
2\int_\Omega \Delta u(x)\sum_{m=1}^n\Big( 2\nabla u_m(x)\cdot \nabla\phi^m(x)
+u_m(x)\Delta\phi^m(x)\Big)\,dx=
\int_\Omega
\Big( |\Delta u(x)|^2+\chi_{\{u>0\}}(x)\Big)\mbox{\rm div}\phi(x)\,dx.
\end{equation}
\end{lemma}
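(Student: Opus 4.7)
The strategy is an inner (domain) variation. Given a test field $\phi=(\phi^1,\dots,\phi^n)\in C^\infty_0(\Omega)$, for small $|t|$ I would consider the diffeomorphism $\Phi_t(x):=x+t\phi(x)$, which coincides with the identity outside a compact subset of $\Omega$ and hence maps $\Omega$ onto itself. Writing $\Psi_t:=\Phi_t^{-1}$, I would take $u_t:=u\circ\Psi_t$ as competitor. Since $u_t=u$ near $\partial\Omega$, $u_t\in\mathcal{A}$, and $u_t\geq 0$ whenever $u\geq 0$, so $u_t$ is admissible both in the minimizer and in the one-phase setting. Two-sidedness of the variation together with minimality then force $\frac{d}{dt}J[u_t]\big|_{t=0}=0$, and the whole task reduces to computing this derivative.

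For the volume term I would use $\{u_t>0\}=\Phi_t(\{u>0\})$ together with the expansion $J_t(x):=\det D\Phi_t(x)=1+t\,\mathrm{div}\,\phi(x)+O(t^2)$ to get, by change of variables,
\[
\int_\Omega\chi_{\{u_t>0\}}(y)\,dy=\int_\Omega\chi_{\{u>0\}}(x)\,dx+t\int_\Omega\chi_{\{u>0\}}\,\mathrm{div}\,\phi\,dx+O(t^2).
\]
For the biharmonic term, applying the chain rule to $u_t=u\circ\Psi_t$ with $D\Psi_t(y)=I-tD\phi(y)+O(t^2)$ and $\partial^2_{y_iy_i}\Psi_t^j=-t\,\phi^j_{ii}+O(t^2)$, the key pointwise identity is
\[
\Delta_y u_t(y)=\Delta u(\Psi_t(y))-2t\sum_{m=1}^n\nabla u_m(y)\cdot\nabla\phi^m(y)-t\sum_{m=1}^n u_m(y)\,\Delta\phi^m(y)+O(t^2).
\]
Squaring, integrating, and reducing the leading term $\int_\Omega|\Delta u(\Psi_t(y))|^2\,dy=\int_\Omega|\Delta u|^2 J_t\,dx$ to $x$-coordinates yields
\[
\int_\Omega|\Delta_y u_t|^2\,dy=\int_\Omega|\Delta u|^2+t\int_\Omega|\Delta u|^2\,\mathrm{div}\,\phi-2t\int_\Omega\Delta u\sum_{m=1}^n\bigl(2\nabla u_m\cdot\nabla\phi^m+u_m\,\Delta\phi^m\bigr)+O(t^2).
\]
Adding the two contributions and setting the $t$-coefficient equal to zero produces exactly \eqref{AS D}.

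The main technical obstacle is the limited regularity $u\in W^{2,p}_{\mathrm{loc}}(\Omega)$ provided by \eqref{wduep}, so that $\Delta u$ is only in $L^p_{\mathrm{loc}}$ and pointwise Taylor expansion of $\Delta u(\Psi_t(y))$ is unavailable. All such terms must instead be controlled in an integrated sense by reverting to $x$-coordinates: a typical error $\int_\Omega[\Delta u(\Psi_t(y))-\Delta u(y)]\,g(y)\,dy$, with smooth $g$ built from $\phi$ and its derivatives, becomes $\int_\Omega\Delta u(x)\bigl[g(\Phi_t(x))J_t(x)-g(x)\bigr]\,dx$, which is $O(t)$ by smoothness of $g$ and local integrability of $\Delta u$. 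With this caveat, every asymptotic step above is rigorous, and the identity \eqref{AS D} follows. (In the one-phase case the same argument works since $u_t=u\circ\Psi_t\ge 0$ remains in the admissible class.)
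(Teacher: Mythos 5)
Your proposal is correct and follows essentially the same route as the paper: a domain (inner) variation along $\Phi_t(x)=x+t\phi(x)$, a change of variables with $\det D\Phi_t=1+t\,\mathrm{div}\,\phi+O(t^2)$, and a first-order expansion of both the $|\Delta u|^2$ term and the volume term, with the two-sided variation forcing the $t$-coefficient to vanish; the paper simply uses the competitor $u(x+\e\phi(x))$ rather than $u\circ\Phi_t^{-1}$, which only amounts to replacing $\phi$ by $-\phi$ at first order. Your closing remark on controlling the composition errors for $\Delta u\in L^p_{\rm loc}$ in integrated form is exactly the justification implicit in the paper's $o(\e)$ bookkeeping, so no gap remains.
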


\begin{proof} Fix~$\e\in\R$ (to be taken with~$|\e|$ small in the sequel).
Let \begin{equation}\label{Pert}
u_\e(x) :=u(x+\e \phi(x)).\end{equation} 
Notice that~$u_\e$ is an admissible competitor for~$u$
(in case we are dealing with the one-phase problem,
observe that~$u_\e\ge0$ if~$u\ge0$).

For any~$i\in\{1,\dots,n\}$, we have that
\begin{eqnarray*}
\p_i u_\e&=&
\sum_{m=1}^n
u_m(\delta_{mi}+\e \phi^m_i)\\{\mbox{and }}\;
\p_{ii} u_\e&=&\sum_{m,l=1}^nu_{ml}(\delta_{li}+\e\phi^l_i)(\delta_{mi}+\e\phi^m_i)
+\sum_{m=1}^n u_m\e\phi_{ii}^m\\
&=&
u_{ii}+\e\left[\sum_{m,l=1}^n\Big(
u_{ml}\phi^l_i\delta_{mi}+u_{ml}\phi^m_i\delta_{li}\Big)
+\sum_{m=1}^n u_m\phi^m_{ii}\right]+\e^2\sum_{m,l=1}^n u_{ml}\phi^l_i\phi^m_i\\
&=&
u_{ii}+\e\sum_{m=1}^n\Big(
2u_{mi}\phi^m_i+u_m\phi^m_{ii}\Big)+\e^2\sum_{m,l=1}^n u_{ml}\phi^l_i\phi^m_i
.\end{eqnarray*}
We use the change of variable~$y:=x+\e\phi(x)$.
In this way, noticing that
$$ \phi(x)=\phi(y-\e\phi(x))=\phi(y)+O(\e)
,$$ we get
\begin{eqnarray*}
&&J[u_\e]\\&=&\int_\Om \Bigg\{ \Bigg|
\sum_{i=1}^n\Big[
u_{ii}(x+\e\phi(x))+\e
\sum_{m=1}^n
\Big( 2u_{mi}
(x+\e\phi(x))\phi^m_i(x)+u_m(x+\e\phi(x))\phi^m_{ii}(x)\Big)\Big]
+o(\e)\Bigg|^2\\&&\qquad +\chi_{\{u>0\}}(x+\e\phi(x))\Bigg\}\,dx\\
&=&\int_\Om
\Bigg\{ \Bigg|
\sum_{i=1}^n\Big[
u_{ii}(y)+\e
\sum_{m=1}^n
\Big( 2u_{mi}
(y)\phi^m_i(y)+u_m(y)\phi^m_{ii}(y)\Big)\Big]
+o(\e)\Bigg|^2+\chi_{\{u>0\}}(y)\Bigg\}\,
\Big(1-\e\mbox{\rm div}\phi(y)+o(\e)\Big)\,dy\\
&=&\int_\Om
\Bigg\{ \Bigg|
\sum_{i=1}^n
u_{ii}(y)+\e
\sum_{i,m=1}^n
\Big( 2u_{mi}
(y)\phi^m_i(y)+u_m(y)\phi^m_{ii}(y)\Big)
\Bigg|^2+\chi_{\{u>0\}}(y)\Bigg\}\,
\Big(1-\e\mbox{\rm div}\phi(y)\Big)\,dy+o(\e)\\&=&
\int_\Om
\Bigg\{ 
\sum_{i,j=1}^n
u_{ii}(y)u_{jj}(y)+2\e
\sum_{i,j,m=1}^n
\Big( 2u_{jj}(y)u_{mi}
(y)\phi^m_i(y)+u_{jj}(y)u_m(y)\phi^m_{ii}(y)\Big)
+\chi_{\{u>0\}}(y)\Bigg\}\,\\&&\qquad\cdot
\Big(1-\e\mbox{\rm div}\phi(y)\Big)\,dy+o(\e)\\
&=& J[u]-\e\int_\Omega\Bigg\{
\Big( |\Delta u(y)|^2+\chi_{\{u>0\}}(y)\Big)\mbox{\rm div}\phi(y)
-2\Delta u(y)\sum_{m=1}^n\Big( 2\nabla u_m(y)\cdot \nabla\phi^m(y)
+u_m(y)\Delta\phi^m(y)
\Big)
\Bigg\}\,dy+o(\e).
\end{eqnarray*}
Thus taking the derivative in $\e$ and evaluating it at $\e=0$ 
we obtain~\eqref{AS D},
as desired.
\end{proof}

As a consequence of Lemma~\ref{CONFF}, we obtain
the free boundary condition of Theorem~\ref{FREE BOU COND}:

\begin{proof}[Proof of Theorem~\ref{FREE BOU COND}] We use the notation
\begin{eqnarray*}
&& g(x):= |\Delta u(x)|^2+\chi_{\{u>0\}}(x),\\
&& G^m(x):=
\Delta u(x)\nabla u_{m}(x)\\
{\mbox{and }}&& H^m(x):=\Delta u(x) u_m(x)
\end{eqnarray*}
for each~$m\in\{1,\dots,n\}$.

We let~$\phi\in C^\infty_0(\Omega)$ and we claim that
\begin{equation}\label{STAMPA}
g\,\mbox{\rm div}\phi-
4\sum_{m=1}^n G^m\cdot\nabla\phi^m-2\sum_{m=1}^n H^m\Delta\phi^m=0\qquad{\mbox{a.e. in }}\Omega.
\end{equation}
To check this, we recall that
\begin{equation}\label{STAMPA2}
{\mbox{if $f\in W^{1,1}_{\rm{loc}}(\Omega)$ then~$\nabla f=0$
a.e. in~$\{x\in\Omega {\mbox{ s.t. }}f=0 \}$,}}
\end{equation}
see e.g. Theorem~6.19 in~\cite{MR1817225}
(used here with~$A:=\{0\}$).
Then, first of all, since~$u\in W^{2,2}(\Omega)$,
we deduce from~\eqref{STAMPA2} that
\begin{equation}\label{STAMPA3}
{\mbox{$\nabla u(x)=0$
for all~$x\in\{u=0\}\setminus Z$,}}\end{equation}
for a suitable~$Z$ of null measure.
Furthermore, for every~$j\in\{1,\dots,n\}$, we have that~$\partial_j u\in
W^{1,2}(\Omega)$.
Accordingly, using~\eqref{STAMPA2} once again,
we find that
\begin{equation}\label{STAMPA4}
{\mbox{$\nabla\partial_j u(x)=0$ for all~$x\in\{\partial_ju=0\}\setminus Z_j$,}}\end{equation}
with~$Z_j$ of null measure.

We also remark that
$$\{\partial_ju=0\} \supseteq \{u=0\}\setminus Z,$$ thanks
to~\eqref{STAMPA3}, and therefore~\eqref{STAMPA4} yields that
\begin{equation}\label{STAMPA5}
{\mbox{$\nabla\partial_j u(x)=0$ for all~$x\in\{ u=0\}\setminus(Z\cup Z_j)$.}}\end{equation}
Hence, defining~$Z^\star:=Z\cup Z_1\cup\dots Z_n$,
we have that~$Z^\star$ has null measure and, by~\eqref{STAMPA3}
and~\eqref{STAMPA5},
\begin{equation}\label{STAMPA7}
D^2u(x)=0\qquad{\mbox{for every }}x\in \{u=0\}\setminus Z^\star. \end{equation}
Moreover, if~$x\in\{u=0\}$, then~$\chi_{\{u>0\}}(x)=0$. This and~\eqref{STAMPA7}
give that~$g=G^m=H^m=0$ in~$\{u=0\}\setminus Z^\star$,
which in turn yields~\eqref{STAMPA}, as desired.

As a consequence of~\eqref{STAMPA} and of the Monotone Convergence
Theorem, we deduce that
$$ \int_\Omega \Bigg( g\,\mbox{\rm div}\phi-
4\sum_{m=1}^n G^m\cdot\nabla\phi^m-2\sum_{m=1}^n H^m\Delta\phi^m\Bigg)\\
=\lim_{\e{{\to}}0}
\int_{\Omega \cap\{ |u|>\e\}}\Bigg(
g\,\mbox{\rm div}\phi-
4\sum_{m=1}^n G^m\cdot\nabla\phi^m-2\sum_{m=1}^n H^m\Delta\phi^m\Bigg).$$
Therefore, recalling~\eqref{AS D} and~\eqref{ZEROM}, we find that
\begin{equation}\label{AS D2}
\begin{split}
0 \;&= \int_\Omega \Bigg( g\,\mbox{\rm div}\phi-
4\sum_{m=1}^n G^m\cdot\nabla\phi^m-2\sum_{m=1}^n H^m\Delta\phi^m\Bigg)\\
&=\lim_{\e{{\to}}0}
\int_{\Omega \cap\{ |u|>\e\}}\Bigg(
g\,\mbox{\rm div}\phi-
4\sum_{m=1}^n G^m\cdot\nabla\phi^m-2\sum_{m=1}^n H^m\Delta\phi^m\Bigg)
\\&=\lim_{\e{{\to}}0}
\int_{\Omega \cap\{ |u|>\e\}}\Bigg(
\mbox{\rm div}(g\,\phi)-
4\sum_{m=1}^n \mbox{\rm div}(\phi^m G^m)-2\sum_{m=1}^n \mbox{\rm div}(H^m\nabla\phi^m)
\\&\qquad\qquad+4\sum_{m=1}^n \phi^m\mbox{\rm div} G^m
+2\sum_{m=1}^n \nabla H^m\cdot\nabla\phi^m
-\nabla g\cdot\phi\Bigg).
\end{split}
\end{equation}
We remark that, in~$\{ |u|>\e\}$,
\begin{eqnarray*}
&&4\sum_{m=1}^n \phi^m\mbox{\rm div} G^m
+2\sum_{m=1}^n \nabla H^m\cdot\nabla\phi^m
-\nabla g\cdot\phi\\&=&
\sum_{m=1}^n \Bigg(4\phi^m\big( \nabla\Delta u\cdot\nabla u_m+\Delta u\Delta u_m\big)
+ 2\big(u_m\nabla\Delta u+\Delta u\nabla u_m\big)\cdot\nabla\phi^m-2\Delta u
\Delta u_m\phi^m\Bigg)
\\&=&
\sum_{m=1}^n \Bigg(4\nabla\Delta u\cdot\nabla u_m\phi^m+
2\Delta u\Delta u_m\phi^m
+ 2\big(u_m\nabla\Delta u+\Delta u\nabla u_m\big)\cdot\nabla\phi^m\Bigg)\\&=&
\sum_{m=1}^n \Bigg(4\nabla\Delta u\cdot\nabla u_m\phi^m+
2\Delta u\Delta u_m\phi^m
+ 2\mbox{\rm div}\Big(\phi^m
\big(u_m\nabla\Delta u+\Delta u\nabla u_m\big)\Big)-
2\mbox{\rm div}\big(u_m\nabla\Delta u+\Delta u\nabla u_m\big)\phi^m
\Bigg)\\
&=& 2\sum_{m=1}^n\Bigg(
\mbox{\rm div}\Big(\phi^m
\big(u_m\nabla\Delta u+\Delta u\nabla u_m\big)\Big)-
u_m\Delta^2 u\phi^m
\Bigg)\\
&=& 2\sum_{m=1}^n
\mbox{\rm div}\Big(\phi^m
\big(u_m\nabla\Delta u+\Delta u\nabla u_m\big)\Big)
,\end{eqnarray*}
by virtue of Lemma~\ref{POBIA}. 
As a consequence, we see that
\begin{eqnarray*}
&&\int_{\Omega \cap\{ |u|>\e\}}\Bigg(4\sum_{m=1}^n \phi^m\mbox{\rm div} G^m
+2\sum_{m=1}^n \nabla H^m\cdot\nabla\phi^m
-\nabla g\cdot\phi\Bigg)\\
&=& 2\sum_{m=1}^n \int_{\Omega \cap\{ |u|>\e\}}
\mbox{\rm div}\Big(\phi^m
\big(u_m\nabla\Delta u+\Delta u\nabla u_m\big)\Big)\\&=&
2\sum_{m=1}^n \int_{\partial(\Omega \cap\{ |u|>\e\})}
\phi^m\big(u_m\nabla\Delta u+\Delta u\nabla u_m\big)\cdot\nu,
\end{eqnarray*}
where~$\nu$ is the exterior normal to~$\Omega \cap\{ |u|>\e\}$.
Hence, using this information in~\eqref{AS D2},
we obtain that
\begin{equation*}
\begin{split}
0 \;&= \lim_{\e{{\to}}0}
\int_{\partial(\Omega \cap\{ |u|>\e\})}\Bigg(
g\,\phi\cdot\nu-\sum_{m=1}^n\Big(
4 \phi^m G^m\cdot\nu
+2 H^m\nabla\phi^m\cdot\nu
-2\phi^m\big(u_m\nabla\Delta u+\Delta u\nabla u_m\big)\cdot\nu\Big)\Bigg)\\&=
\lim_{\e{{\to}}0}
\int_{\partial(\Omega \cap\{ |u|>\e\})}\Bigg(
\big(|\Delta u|^2+\chi_{\{u>0\}}\big)\phi\cdot\nu-
2\sum_{m=1}^n \Big(
\phi^m\big(\Delta u\nabla u_m-u_m\nabla\Delta u\big)\cdot\nu
+ \Delta u u_m
\nabla\phi^m\cdot\nu\Big)\Bigg).
\end{split}
\end{equation*}
This gives~\eqref{PALA}.
Then, to obtain~\eqref{Nu1},
one uses the two different scales of the test function~$\phi^m$
and of its derivative.
\end{proof}

\begin{remark} \label{REMBa}{\rm We point out that if~$n=1$,
when the free boundary divides regions of positivity and nonpositivity of~$u$
(say,~$u^{(1)}>0$ and the interior of~$u^{(2)}\le0$),
formula~\eqref{Nu1}
gives the free boundary conditions
\begin{eqnarray}\label{FB1dim1}
&& \ddot u^{(1)} \dot u^{(1)}=\ddot u^{(2)}\dot u^{(2)}\\{\mbox{and }}\;&&\label{FB1dim2}
2\dot u^{(1)}\dddot u^{(1)}-|\ddot u^{(1)}|^2+1
=2\dot u^{(2)}\dddot u^{(2)} -|\ddot u^{(2)}|^2.
\end{eqnarray}
Also, since~$u\in W^{2,2}(\Omega)$
and~$n=1$, by standard embedding results we already know that~$u\in C^1(\Omega)$.
This, in view of~\eqref{FB1dim1}, implies that
either~$\dot u=0$ at a free boundary point, or~$\ddot u^{(1)} =\ddot u^{(2)}$.
That is, either~$u$ has horizontal tangent at a free boundary point,
or it is~$C^2$ across the free boundary point. Hence, from~\eqref{FB1dim2},
we have the following one-dimensional dichotomy for the free boundary points:
\begin{eqnarray}\label{FB:X1}
&& {\mbox{either: $\dot u=0$ and~$|\ddot u^{(1)}|^2-|\ddot u^{(2)}|^2=1$,}}\\
\label{FB:X2}
&&{\mbox{or: $\dot u\ne0$, $u$ is $C^2$ across and~$
\dddot u^{(1)} =\dddot u^{(2)}-\displaystyle\frac{1}{2\dot u}$.}}\end{eqnarray}
}\end{remark}

\section{Some examples in dimension~$1$}\label{sec:ex2}

\noindent{\bf Example 1.}
To better understand Remark~\ref{REMBa}, we can sketch
some one-dimensional computations.
Namely, we let $n=1$, consider
an interval~$\Omega:=(0,A)$, with~$A>0$,
and prescribe
the Navier conditions~$u(0)=\ddot u(0)=0$,
$u(A)=1$ and~$\ddot u(A)=0$. We look for one-phase
minimizers of~$J$
with such boundary conditions.

In this case, by the finiteness of the energy and Sobolev embedding, we know
that the one-phase minimizer is~$C^1(0,A)$; also
the free boundary points are
minimal point for~$u$, and therefore
\begin{equation}\label{Joi}
{\mbox{$\dot u=0$ at any free boundary point.}}\end{equation} Accordingly,
condition~\eqref{FB:X1} prescribes that
\begin{equation}\label{TBCHE}
\ddot u^+=1
.\end{equation}
Let us see how such condition emerges from energy considerations.
We suppose that the problem develops a free boundary
and we denote by~$a\in(0,A)$ the largest free boundary point,
i.e. $u(a)=0$ and~$u>0$ in~$(a,A)$.
{F}rom Lemma~\ref{POBIA}, we know that~$\ddddot u=0$ in~$(a,A)$
and so~$u$ is a polynomial of degree~$3$ in~$(a,A)$.
Consequently, we can write, for any~$x\in(a,A)$,
$$ u(x)=\alpha (x-a)+\beta(x-a)^2+\gamma(x-a)^3.$$
Then, recalling~\eqref{Joi}, we conclude that~$\alpha=0$.
Imposing the boundary conditions at the point~$x=A$, we find that
$$ \beta=\frac{3}{2(A-a)^2}\quad{\mbox{ and }}\quad
\gamma=-\frac{1}{2(A-a)^3},$$
and therefore
\begin{equation}\label{m3a}
u(x)=\frac{3(x-a)^2}{2(A-a)^2}-\frac{(x-a)^3}{2(A-a)^3}.\end{equation}
The goal is then to choose~$a\in(0,A)$ in order to minimize
the energy contribution of~$u$ in~$(a,A)$, namely we want
to minimize the function
\begin{eqnarray*}
\Phi(a)&:=&\int_a^A |\ddot u(x)|^2\,dx+(A-a)\\&=&
\int_a^A \left|
\frac{3}{(A-a)^2}-\frac{3(x-a)}{(A-a)^3}
\right|^2\,dx+(A-a)\\&=&9
\int_a^A \left|
\frac{(A-a)-(x-a)}{(A-a)^3}
\right|^2\,dx+(A-a)
\\&=&\frac{9}{(A-a)^6}
\int_a^A \left|
A-x
\right|^2\,dx+(A-a)\\&=&\frac{3}{(A-a)^3}+(A-a),
\end{eqnarray*}
which attains its minimum for
\begin{equation}\label{89k934-601-139498-1}
a=A-\sqrt{3}.\end{equation}
That is, comparing with the linear function~$\ell(x):=\frac{x}{A}$,
we have that
$$ A=J[\ell]\ge J[u]\ge \Phi(a)\ge\Phi(A-\sqrt{3})=\frac{1}{\sqrt{3}}+\sqrt{3}.$$
This means that when~$A<\frac{1}{\sqrt{3}}+\sqrt{3}=:B$,
the problem does not develop any free boundary;
when~$A=B$ the problem
has two minimizers, and when~$A>B$
the minimizer in~\eqref{m3a} becomes
\begin{equation}\label{U:CA} u(x)=\frac{(x-a)^2}{2}-\frac{(x-a)^3}{2\cdot 3^{3/2}},\end{equation}
for which~$\ddot u(a^+ )=1$.
This checks \eqref{TBCHE} in this case.

\begin{figure}
\begin{center}
 \begin{tikzpicture}

    \coordinate (O) at (0,0) ;
        \coordinate (X) at (0,7.5) ;
            \coordinate (Y) at (4,0) ;
    
    \coordinate (A) at (0,4) ;
    \coordinate (B) at (0,-4) ;

    \draw[->,black,ultra thick] (O) node[below left] {$O$}-- (90:3.0); 
       \draw[->, black,ultra thick] (O) -- (0:8.5); 

               \draw[blue, thick, dash pattern=on5pt off3pt] (O) -- (2,2) ;
               \draw[blue, thick, dash pattern=on5pt off3pt] (O) -- (3,2) ;
               \draw[blue, thick, dash pattern=on5pt off3pt] (O) -- (4,2) ;
               \draw[blue, thick, dash pattern=on5pt off3pt] (O) -- (5,2) ;    
                              \draw[red, thick, dash pattern=on5pt off3pt] (O) -- (6,2) ;

               \draw[thick, dash pattern=on5pt off3pt] (0,2) node[below left] {$1$}-- (8.5,2) ; 
                              \draw[thick,dash pattern=on5pt off3pt] (6,0)node[below right] {$B$} -- (6,2.7) ; 

  \draw[purple, thick] (4.3,0) parabola (8,2);
\draw[red, thick] (2,0) parabola (6,2);
\draw[purple, thick] (3,0) parabola (7,2);

\end{tikzpicture}
\end{center}
\caption{\it {{The minimizers of a one-dimensional one-phase problem,
in dependence of the right endpoint.}} }\label{1K2}
\end{figure}
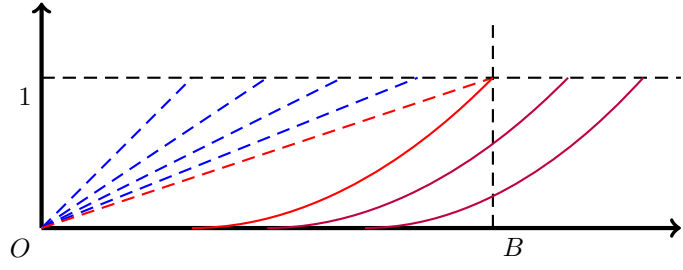

The description of the different one-phase minimizers
in dependence of the endpoint~$A$ is sketched in Figure~\ref{1K2}.\medskip

It is also worth pointing out \label{PAGP}
that 
\begin{equation}\label{NOSI}
{\mbox{the one-phase minimizers described here are {\em not}
super-biharmonic,}}\end{equation}
and this creates a major difference with respect to the case
of minimizers, compare with Lemma~\ref{POBIA}: indeed, if~$\varphi\in C^\infty_0((0,A),[0,+\infty))$
and~$A>\frac1{\sqrt3}+\sqrt3$, from~\eqref{89k934-601-139498-1} and~\eqref{U:CA}
we see that
\begin{eqnarray*}&&
\int_0^A \ddot u\ddot\varphi=
\int_a^A \left(1-\frac{x-a}{\sqrt3}\right)\,\ddot\varphi=
\left(1-\frac{A-a}{\sqrt3}\right)\,\dot\varphi(A)
-\dot\varphi(a)
-\int_a^A \frac{d}{dx}\left(1-\frac{x-a}{\sqrt3}\right)\,\dot\varphi\\&&\qquad=
0-\dot\varphi(a)
+\frac{1}{\sqrt3}\,\int_a^A \dot\varphi=-\dot\varphi(a)-\frac{\varphi(a)}{\sqrt3},
\end{eqnarray*}
which has no sign, thus proving~\eqref{NOSI}.
\bigskip

\noindent{\bf Example 2.}
Having clarified condition~\eqref{FB:X1} in a concrete example,
we aim now at clarifying the role of condition~\eqref{FB:X2}.
Such condition is, in a sense, more unusual, since
it prescribes the matching of the second derivatives
at the free boundary points with nontrivial slopes, with
the bulk term of the energy producing a discontinuity on the third derivatives.

To understand this phenomenon in a concrete example, we fix a small parameter~$\e>0$
and minimize the energy functional
$$ J[u]=\int_{-1}^{1} \left( |\ddot u(x)|^2+\e\chi_{\{u>0\}}(x)\right)\,dx,$$
subject to the Navier conditions
\begin{equation}\label{Imap}
u(-1)=-1,\qquad\ddot u(-1)=0,\qquad u(1)=1,\qquad\ddot u(1)=0.\end{equation}
If we call~$u_\e$ such minimizer, we can bound the energy of~$u_\e$
with that of the identity function. This produces a uniform bound for~$u_\e$
in~$W^{2,2}((-1,1))$, which implies that~$u_\e$ converges in~$C^1((-1,1))$
to the identity function as~$\e{{\to}}0$.
Consequently, for a fixed and small~$\e>0$, we can find
some~$a\in(-1,1)$, which depends on~$\e$, such that
$$ u_\e(x)=\left\{
\begin{matrix}
\underline\alpha (a-x)+\underline\beta (a-x)^2+\underline\gamma (a-x)^3 &{\mbox{ if }} x\in(-1,a),\\
\\
\overline\alpha(x-a)+\overline\beta(x-a)^2+\overline\gamma(x-a)^3 &{\mbox{ if }} x\in[a,1).
\end{matrix}
\right. $$
The condition that~$u_\e\in C^1((-1,1))$ (with derivative close to~$1$
when~$\e$ is small) implies that~$-\underline\alpha=\overline\alpha=\alpha$,
for some~$\alpha>0$ (which depends on~$\e$
and it is close to~$1$ when~$\e$ is small).
Imposing the boundary conditions in~\eqref{Imap}, we find
\begin{equation}\label{PAlalga1} \underline\beta=-
\frac{3(1-\alpha (1+a))}{2(1+a)^2},\qquad
\underline\gamma=\frac{1-\alpha (1+a)}{2(1+a)^3},\qquad
\overline\beta=\frac{3(1-\alpha(1-a))}{2(1-a)^2},\qquad
\overline\gamma=\frac{\alpha(1-a)-1}{2(1-a)^3}.
\end{equation}
Therefore, the energy of~$u_\e$ corresponds to the function
\begin{eqnarray*}
\Psi(a,\alpha) &:=& J[u_\e]\\
&=& \int_{-1}^a |2\underline\beta +6\underline\gamma (a-x)|^2\,dx
+\int_a^1 |2\overline\beta +6\overline\gamma (x-a)|^2\,dx+\e\,(1-a)\\
&=& \left( \frac{3(1-\alpha (1+a))}{(1+a)^3}\right)^2
\int_{-1}^a |1+x|^2\,dx+
\left( \frac{3(1-\alpha (1-a))}{(1-a)^3}\right)^2
\int_a^1 |1-x|^2\,dx+\e\,(1-a)\\&=&
\frac{3(1-\alpha (1+a))^2}{(1+a)^3}
+\frac{3(1-\alpha (1-a))^2}{(1-a)^3}+\e\,(1-a).
\end{eqnarray*}
Thus, we have to minimize such function for~$(a,\alpha)\in
(-1,1)\times(0,+\infty)$,
and in fact we know that such minimum is localized
at~$(0,1)$ when~$\e=0$. Therefore, to find the minima of~$\Psi$, we solve the system
\begin{equation}\label{Sysma} \left\{\begin{matrix}
& 0=\partial_a\Psi=
\displaystyle\frac{
12a\big(\alpha a^4(\alpha+2)+2a^2(2\alpha-\alpha^2+3)+\alpha^2-6\alpha+6\big)
}{(1-a^2)^4}-\e
,\\ & 0=\partial_\alpha\Psi=12\,\displaystyle\frac{\alpha-1-a^2(1+\alpha)}{(1-a^2)^2}.
\end{matrix}\right.\end{equation}
The latter equation produces
\begin{equation}\label{a qua} a^2=\frac{\alpha-1}{1+\alpha}.\end{equation}
We notice that, by~\eqref{PAlalga1},
$$ 
\frac23\left(\overline\beta-\underline\beta\right)=
\frac{1-\alpha(1-a)}{(1-a)^2}
+\frac{1-\alpha (1+a)}{(1+a)^2}=\frac{2\big((\alpha+1) a^2-\alpha+1\big)}{(1-a^2)^2}.
$$
Hence, in view of~\eqref{a qua},
\begin{eqnarray*}
\frac23\left(\overline\beta-\underline\beta\right)=
\frac{2\left((\alpha +1)\frac{\alpha-1}{1+\alpha}-\alpha+1\right)}{(1-a^2)^2}
=
\frac{2\left( \alpha-1-\alpha+1\right)}{(1-a^2)^2}=0,
\end{eqnarray*}
and so~$\overline\beta=\underline\beta$.
This says that the second derivatives match at the free boundary point,
in agreement with the condition in~\eqref{FB:X2}.

In addition, by~\eqref{PAlalga1},
\begin{equation}\label{p023AKAqkw}
\begin{split}
& 4\alpha(\overline\gamma+\underline\gamma)=2\alpha\left(
\frac{\alpha(1-a)-1}{(1-a)^3}+\frac{1-\alpha (1+a)}{(1+a)^3}\right)\\&\qquad=
-\frac{4\alpha a\,(a^2(2\alpha+1) -2\alpha+3)}{(1-a^2)^3}
=-\frac{4\alpha a\,( -2\alpha a^4-a^4+4\alpha a^2-2a^2-2\alpha+3)}{(1-a^2)^4}
.\end{split}\end{equation}
On the other hand, the
first equation in~\eqref{Sysma} says that
$$\displaystyle\frac{
12a}{(1-a^2)^4}=\frac{\e}{
\alpha a^4(\alpha+2)+2a^2(2\alpha-\alpha^2+3)+\alpha^2-6\alpha+6
}.$$
Using this information in~\eqref{p023AKAqkw}, we deduce that
\begin{equation}\label{00a0203aa} 12\alpha(\overline\gamma+\underline\gamma)=
-\frac{\e\,\alpha \,( -2\alpha a^4-a^4+4\alpha a^2-2a^2-2\alpha+3)}{
\alpha a^4(\alpha+2)+2a^2(2\alpha-\alpha^2+3)+\alpha^2-6\alpha+6 }.\end{equation}
Moreover, in view of~\eqref{a qua},
\begin{eqnarray*}&&
-2\alpha a^4-a^4+4\alpha a^2-2a^2-2\alpha+3= \frac4{(1+\alpha)^2}\\
{\mbox{and }}\qquad&&
\alpha a^4(\alpha+2)+2a^2(2\alpha-\alpha^2+3)+\alpha^2-6\alpha+6 =
\frac{4\alpha}{(1+\alpha)^2}.\end{eqnarray*}
Hence, we insert these identities into~\eqref{00a0203aa}
and we find that
$$ 2\dot u(a)\,\big( \dddot u(a^+)-\dddot u(a^-)\big)
=
12\alpha(\overline\gamma+\underline\gamma)=-
\frac{\e\,\alpha \,\frac4{(1+\alpha)^2}}{\frac{4\alpha}{(1+\alpha)^2} }
=-\e,$$
in agreement with the third derivative prescription in~\eqref{FB:X2}.

\noindent{\bf Example 3.} As a variation of Example~2,
we point out that positive data can yield minimizers which change sign,
thus providing an important difference with respect to the classical cases
in which the energy is driven by the standard Dirichlet form.
This example is interesting also because it shows that, in our framework,
this ``loss of Maximum Principle'' can occur even when the domain is
a ball (in fact, even in one dimension, when the domain is an interval)
and even when the data are strictly positive.

In this sense, this example is instructive since it shows that, even in domains in which
the Maximum Principle holds for biharmonic equations
(such as the ball, as established in~\cite{BOGGIO}), the
Maximum Principle can be violated in our framework due to the important
role played by the ``bulk'' term in the energy functional.

To construct our example, we take~$A>0$ and we look for minimizers in~$(-A,A)$
with boundary
conditions~$u(A)=u(-A)=1$ and~$\ddot{u}(A)=\ddot{u}(-A)=0$.

First of all, we observe that
\begin{equation}\label{55:4we6}
J[u]\le C,\end{equation}
for some~$C>0$ independent of~$A$.
To this end,
we take~$\phi\in C^\infty(\R,[0,+\infty))$ such that~$\phi(x)=0$
for all~$x\le 1$ and~$\phi(x)=x-2$ for all~$x\ge 5/2$. Then, assuming~$A\ge5$,
we define
$$ v(x):=\begin{cases}
\phi(x+3-A) & {\mbox{ if $x\in(A-4,A]$,}}\\
0&{\mbox{ if $x\in[-A+4,A-4]$,}}\\
\phi(-x+3-A) & {\mbox{ if $x\in[-A,-A+4)$.}}\\
\end{cases}$$
We observe that~$v(A)=\phi(3)=3-2=1$
and~$v(-A)=\phi(-x+3-A)=\phi(3)=1$.
Moreover~$\ddot{v}(A)=\ddot{\phi}(3)=0$ and~$\ddot{v}(-A)=\ddot{\phi}(3)=0$.
Therefore
\begin{equation*}\begin{split}&
J[u]\le J[v]\le\int_{-A}^A \Big(|\ddot{v}|^2+\I v\Big)=
\int_{ [-A,-A+4)\cup(A-4,A]}\Big(|\ddot{v}|^2+\I v\Big)\\&\qquad\le
\int_{ [-A,-A+4)} |\ddot{\phi}(-x+3-A)|^2\,dx
+\int_{(A-4,A]} |\ddot{\phi}(x+3-A)|^2\,dx
\\&\qquad=
\int_{ [-1,3)} |\ddot{\phi}(y)|^2\,dx
+\int_{(-1,3]} |\ddot{\phi}(y)|^2\,dx
+8
\\&\qquad\le 8(\|\phi\|_{C^2([-1,3])}+1),\end{split}
\end{equation*}
which proves~\eqref{55:4we6}.

Now
we show that, if~$A$ is sufficiently large, then
\begin{equation}\label{gfdHJAddo23r4t}
{\mbox{the minimizer~$u$ cannot be strictly positive in~$(-A,A)$.}}
\end{equation}
To check this, we argue by contradiction, supposing that~$u>0$ in~$(-A,A)$.
Therefore~$\ddddot{u}=0$ and therefore~$u$ must be a polynomial of degree~$3$,
namely
$$ u(x)=a_0+a_1 x+a_2 x^2+a_3 x^3.$$
As a consequence,
$$ 0=\ddot{u}(\pm A)=2 a_2\pm 6a_3 A,$$
and hence
$$ 2 a_2+ 6a_3 A=0=2 a_2- 6a_3 A,$$
which yields~$a_3=0$ and as a result~$a_2=0$. Accordingly,
$$ 1=u(\pm A)=a_0\pm a_1 A,$$
giving that
$$ a_0+ a_1 A=1=a_0- a_1 A,$$
and therefore~$a_1=0$, which also implies that~$a_0=1$.
In this way, we found that~$u(x)=1$ for all~$x\in(-A,A)$, and
consequently~$ J[u]=2A$. This is in contradiction with~\eqref{55:4we6}
as long as~$A$ is sufficiently large, and so we have established~\eqref{gfdHJAddo23r4t}.

We now strengthen~\eqref{gfdHJAddo23r4t} by proving that
\begin{equation}\label{86862013960958414506-23}
{\mbox{the set~$\{u<0\}$ is nonempty.}}\end{equation}
For this, we first use~\eqref{gfdHJAddo23r4t} to find a point~$\bar x\in(-A,A)$
such that~$u(\bar x)\le0$. If~$u(\bar x)<0$ we are done,
hence we can suppose that~$0=u(\bar x)\le u(x)$ for all~$x\in(-A,A)$.
By the finiteness of the energy and Sobolev embedding, we know
that the one-phase minimizer is~$C^{1,\alpha}(0,A)$, for some~$\alpha\in(0,1)$.
In particular, we can take~$\bar x$ as large as possible in the zero set
of~$u$, finding that~$u>0$ in~$(\bar x,A]$,
and therefore we can write that
$$ 0<u(x)\le C_0\,|x-\bar x|^{1+\alpha}\qquad{\mbox{for all }}x\in(\bar x,A],$$
for some~$C_0>0$.

Notice also that
$$ 1=u(A)-u(\bar x)\le \|u\|_{C^1((-A,A))}(A-\bar x),$$
and therefore~$A-\bar x\ge c_0$, for some~$c_0>0$.

Now, given~$\e>0$, to be taken conveniently small in what follows,
we define
\begin{equation}\label{234FRAG8435hd73IS}\delta:=\left(\frac{\e}{C_0}\right)^{\frac1{1+\alpha}},\end{equation}
and in this way~$\delta<c_0$ if $\e$ is sufficiently small. Furthermore,
we observe that if~$x\in (\bar x,\bar x+\delta]\subset(\bar x,A]$ then
$$ 0<u(x)\le C_0\delta^{1+\alpha}=\e,$$
that is
\begin{equation*}
(\bar x,\bar x+\delta]\subseteq\{0<u\le\e\}.
\end{equation*}
For this reason,
\begin{equation}\label{FRAG8435hd73IS}
\delta\le|\{0<u\le\e\}|=|\{u>0\}|-|\{u>\e\}|.
\end{equation}
We now define
$$ u_\e(x):=\frac{u(x)-\e}{1-\e},$$
and we point out that
$$u_\e(\pm A)=\frac{u(\pm A)-\e}{1-\e}=\frac{1-\e}{1-\e}=1\qquad{\mbox{and}}\qquad
\ddot{u}_\e(\pm A)=\frac{\ddot{u}(\pm A)}{1-\e}=0.$$
This says that~$u_\e$ is a competitor for~$u$, hence, recalling~\eqref{55:4we6} and~\eqref{FRAG8435hd73IS},
\begin{eqnarray*}
0&\le& J[u_\e]-J[u]\\&=&
\int_{-A}^A \left( |\ddot{u}_\e|^2-|\ddot{u}|^2+\chi_{\{u_\e>0\}}-\chi_{\{u>0\}}\right)\\
&=&\int_{-A}^A \left( \left|
\frac{\ddot{u}}{1-\e}
\right|^2-|\ddot{u}|^2+\chi_{\{u>\e\}}-\chi_{\{u>0\}}\right)
\\&\le&\frac{2\e-\e^2}{(1-\e)^2}
\int_{-A}^A  |\ddot{u}|^2+|\{u>\e\}|-|\{u>0\}|\\&\le&
C_1\e-\delta,
\end{eqnarray*}
for some~$C_1>0$.

{F}rom this and~\eqref{234FRAG8435hd73IS}, it follows that
$$ C_1\ge\frac{\delta}{\e}=\frac1\e\;\left(\frac{\e}{C_0}\right)^{\frac1{1+\alpha}}=
\frac{1}{C_0^{\frac1{1+\alpha}}\;\e^{\frac\alpha{1+\alpha}}},
$$
which produces a contradiction when~$\e$ is sufficiently small and thus completes the proof
of~\eqref{86862013960958414506-23}.

\noindent{\bf Example 4.}
A natural question arising from Proposition~\ref{PALLS3u545}
(in view of \label{HAskdkjdjfjfagbqew4445tf}
of Lemma~\ref{POBIA} 
and~\eqref{FB:X1}) is whether a function~$u\in C^{1,1}([-1,1])$
satisfying
\begin{equation}\label{GDAbxarvik6m-1}
\begin{cases}
\ddddot{u}=0 \quad{\mbox{ in }} \{u>0\}\cup\{u<0\},\\
\ddot u(-1)=\ddot u(1)=0,\\
{\mbox{$u>0$ in $(0,1)$ and $u<0$ in $(-1,0)$,}}\\ 
{\mbox{$\dot u(0)=0$ and~$|\ddot u(0^+)|^2-|\ddot u(0^-)|^2=1$}}
\end{cases}
\end{equation}
needs necessarily to satisfy
\begin{equation}\label{GDAbxarvik6m-2}
\ddot u\ge0 \quad{\mbox{ a.e. in }}(-1,1).
\end{equation}
Were a statement like this true, the result of Proposition~\ref{PALLS3u545}
could be strengthened (at least in dimension~$1$) by taking into account not
only minimizers but solutions of Navier equations with
prescribed free boundary conditions.
The following example shows that this is not the case,
namely~\eqref{GDAbxarvik6m-1}
does not imply~\eqref{GDAbxarvik6m-2}.

Let
$$ u(x):=\begin{cases}
\displaystyle\frac{\sqrt2\;x^2(3-x)}{6} & {\mbox{ if }}x\in[0,1],\\
\\
-\displaystyle\frac{x^2(3+x)}{6} & {\mbox{ if }}x\in[-1,0).
\end{cases}$$
We remark that
$$ \ddot u(x)=\begin{cases}
\sqrt2(1-x) & {\mbox{ if }}x\in(0,1],\\
\\
-(1+x) & {\mbox{ if }}x\in[-1,0),
\end{cases}$$
from which the system in~\eqref{GDAbxarvik6m-1} plainly follows.

Nevertheless, the claim in~\eqref{GDAbxarvik6m-2} does not hold, since~$\ddot u<0$
in~$(-1,0)$.

\section{Mechanical interpretation of the
free boundary condition~\eqref{FB:X2}}\label{INTER}

In the classical description of
the displacement of
a thin beam, one assumes that the energy density stored by bending 
the beam is proportional to the square of the curvature.
Namely, supposing that the beam takes the form of a small
graphical deformation~$u:[0,1]\to\R$ from a horizontal segment, with
endpoints normalized at~$0$ and~$1$,
such energy takes the form of 
\begin{equation}\label{fJ1}
J_1[u]=\frac\kappa2\,\int_0^1 \frac{|\ddot u(x)|^2}{
(1+|\dot u(x)|^2)^3
}\,\sqrt{ 1+|\dot u(x)|^2 }\,dx,\end{equation}
being the first term the square of the curvature and the second
the length element.
The parameter~$\kappa>0$ takes into account the stiffness
of the specific material of the beam.
Roughly speaking, the rationale of~\eqref{fJ1}
is that the rigidity of the material
will try to prevent the beam to increase its curvature
(with a quadratic law per unit length).
For small deformations of a beam, the terms~$|\dot u(x)|^2$
are often supposed to be negligible, hence~\eqref{fJ1}
is replaced by
\begin{equation}\label{J1:EN}
J_1[u]=\frac\kappa2\,\int_0^1 |\ddot u(x)|^2\,dx.\end{equation}
We refer to
Section~1.1.1 in~\cite{GAZ}
and the references therein for additional
information on the energy theory of thin beams.

\begin{figure}
    \centering
    \includegraphics[width=10cm]{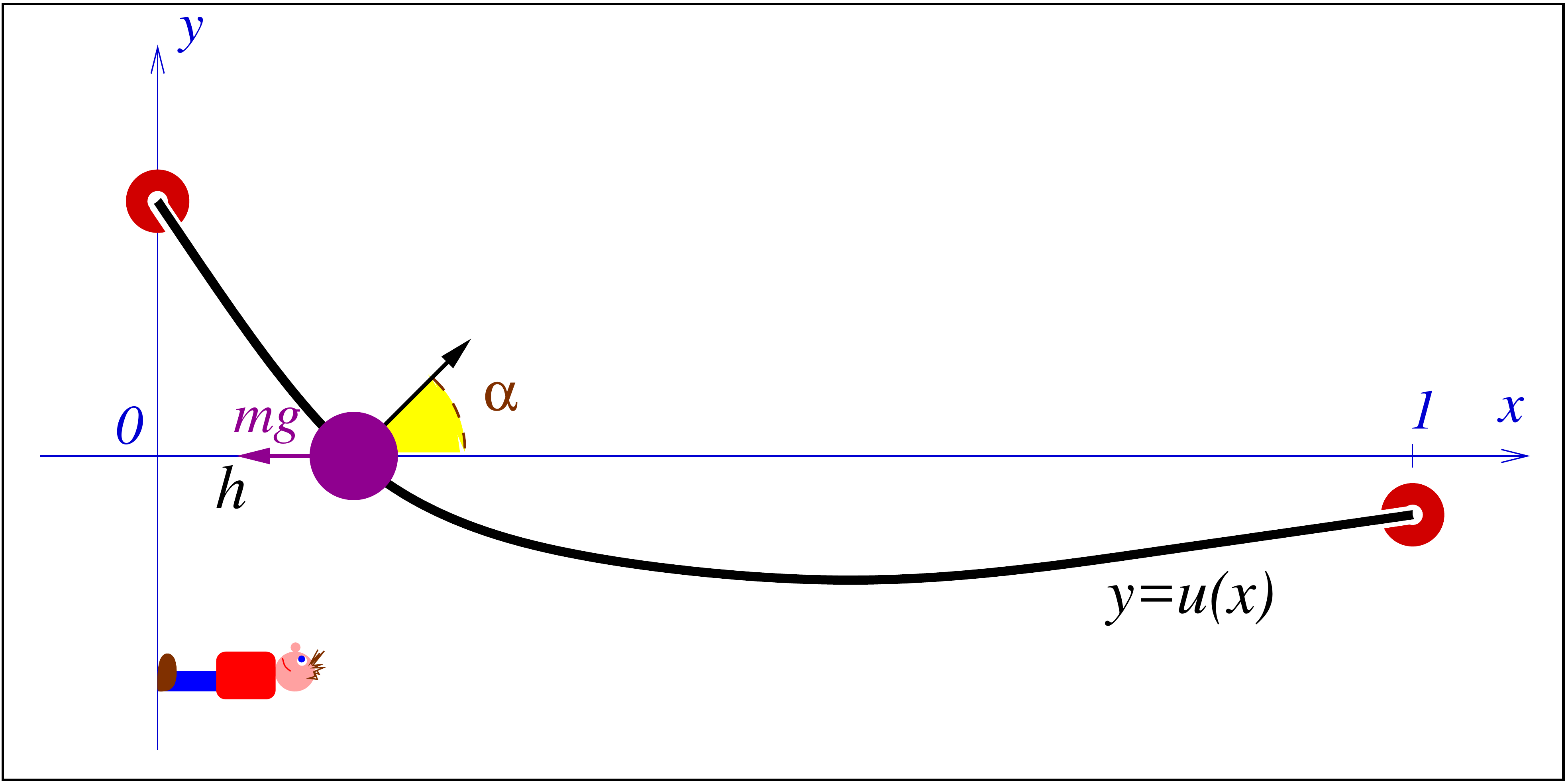}
    \caption{\it {{A simple one-dimensional mechanical
realization of~\eqref{defJ}.}}}
    \label{1K2-1923}
\end{figure}

We now consider a beam of negligible mass 
and a particle of mass~$m$
in a gravitational field with acceleration~$g$,
see\footnote{Of course for a ``real'' observer,
the $x$-axis in Figure \ref{1K2-1923}
would be ``vertical''. We prefer to draw
the picture consistently with the mathematical
formulation in~\eqref{defJ} and thus to follow
the standard convention of placing the $x$-axis
``horizontally''.}
Figure \ref{1K2-1923}.

With respect to Figure \ref{1K2-1923},
we notice that the height~$h>0$ of the particle
corresponds to the one-dimensional measure of the set~$\{u>0\}$,
being the beam represented by the graph~$\{y=u(x),\;x\in[0,1]\}$.
Hence, in this setting,
the gravitation potential energy of the particle is
\begin{equation*}
J_2[u]= mgh = mg\,\int_0^1 \chi_{\{u>0\}}(x)\,dx
.\end{equation*}
{F}rom this and~\eqref{J1:EN}, we obtain that
the full energy of the system is given by
\begin{equation*}
J[u]=J_1[u]+J_2[u]= \int_0^1 \frac\kappa2\,|\ddot u(x)|^2+mg\,
\chi_{\{u>0\}}(x)\,dx
.\end{equation*}
Of course, the functional in~\eqref{defJ}
corresponds to the choice
\begin{equation}\label{CHOICES}
\kappa=2,\qquad m=1,\qquad g=1.\end{equation}
In a balanced configuration,
at points~$x\ne h$, the beam is free and so it satisfies
the equation~$\ddddot u(x)=0$.
On the other hand,
at the point~$h$, the weight of the point mass
needs to be balanced by the force produced by the stiffness of
the beam, that is (in the distributional sense)
\begin{equation}\label{dd 0102}
\kappa \ddddot u + mg\, \Xi=0,\end{equation}
where $\Xi$ is the variation of the measure of~$\{u>0\}$
(which is a distribution concentrated at the point~$h$).
That is, if~$u(h)=0$ and~$\dot u(h)\neq 0$,
given a test function~$\varphi$
and denoting by~$h_\e=h+\e \tilde h+o(\e)$, for some~$\tilde h\in\R$, the point
such that~$(u+\e\varphi)(h_\e)=0$, we have that
\begin{equation}\label{0-1203-034-3522}\begin{split}&
\int_0^1 \Xi(x)\,\varphi(x)\,dx= \lim_{\e\to0}\frac{J_2[u+\e\varphi]
-J_2[u]}{\e}\\ &\qquad\qquad=
\lim_{\e\to0} \int_{0}^1
\frac{\chi_{\{u+\e\varphi>0\}}(x)-
\chi_{\{u>0\}}(x)}{\e}\,dx=
\lim_{\e\to0} \frac{h_\e-h}{\e}=\tilde h.\end{split}\end{equation}
Also, we have that
\begin{eqnarray*}&&0=(u+\e\varphi)(h_\e)=
u(h+\e \tilde h+o(\e))+\e\varphi(h+\e \tilde h+o(\e))\\&&\qquad=
u(h)+\e \dot u(h)\,\tilde h+\e\varphi(h)+o(\e)
=\e\Big( \dot u(h)\,\tilde h+\varphi(h)\Big)+o(\e),\end{eqnarray*}
which gives that~$\tilde h=-\varphi(h)/\dot u(h)$.
Therefore, we deduce from~\eqref{0-1203-034-3522} that
$$ \int_0^1 \Xi(x)\,\varphi(x)=
-\frac{\varphi(h)}{\dot u(h)},$$
and so
$$ \Xi=-\frac{\delta_h }{\dot u},$$
where $\delta_h$ is the Dirac's Delta at the point~$h$.
By inserting this into~\eqref{dd 0102}
we find that
\begin{equation}\label{090k8KLA03kf849775766729djgghd}
\frac{d}{dx} \dddot u=
\ddddot u =- \frac{mg\, \Xi}{\kappa}
= \frac{mg\,\delta_h }{\kappa\,\dot u}
\end{equation}
which is compatible
with
\begin{equation}\label{COWLQQQ1}
\dddot u(h^+)-\dddot u(h^-)
= \frac{mg}{\kappa\,\dot u(h)}.
\end{equation}
Indeed, if~$\e$, $\eta>0$, using~\eqref{090k8KLA03kf849775766729djgghd},
given a smooth function~$f:\R\to[0,1]$ with~$f=1$ in~$[h-\e,h+\e]$,
$f=0$ outside~$[h-\e-\eta,h+\e+\eta]$
and $f$ monotone in~$(h-\e-\eta,h-\e)$ and in~$(h+\e,h+\e+\eta)$, after an integration by parts
we see that
$$ -\int^{h-\e}_{h-\e-\eta}\left(\dot f \,\dddot u\right)-
\int_{h+\e}^{h+\e+\eta}\left(\dot f \,\dddot u\right)=-
\int_\R\left(\dot f \,\dddot u\right)
=
\int_\R\left(f\frac{d}{dx} \dddot u\right)=\int_\R \frac{fmg\,\delta_h }{\kappa\,\dot u}
=\frac{f(h)\,mg }{\kappa\,\dot u(h)}=\frac{mg }{\kappa\,\dot u(h)}.$$
Assuming~$\dddot u$ to be bounded and continuous when~$x\ne h$,
fixed~$\e$, we can write that
\begin{eqnarray*}
&& \left|\int^{h-\e}_{h-\e-\eta}\left(\dot f \;(\dddot u-\dddot u(h-\e))\right)\right|\le
\sup_{x\in[h-\e-\eta,h-\e]}|\dddot u(x)-\dddot u(h-\e)|\;
\int^{h-\e}_{h-\e-\eta}\dot f
\\&&\qquad\qquad\qquad
=\sup_{x\in[h-\e-\eta,h-\e]}|\dddot u(x)-\dddot u(h-\e)|=o(1)
,\end{eqnarray*}
and similarly
$$
\int_{h+\e}^{h+\e+\eta}\left(\dot f \;(\dddot u-\dddot u(h+\e))\right)=o(1)$$ 
as~$\eta\to0$. As a result,\begin{eqnarray*}&& \int^{h-\e}_{h-\e-\eta}\left(\dot f \,\dddot u\right)=
\int^{h-\e}_{h-\e-\eta}\left(\dot f \;(\dddot u-\dddot u(h-\e))\right)+
\int^{h-\e}_{h-\e-\eta}\left(\dot f \;\dddot u(h-\e)\right)\\&&\qquad=o(1)+
(f(h-\e)-f(h-\e-\eta))
\dddot u(h-\e)
=o(1)+
\dddot u(h-\e)
\end{eqnarray*}
and
\begin{eqnarray*}&& \int_{h+\e}^{h+\e+\eta}\left(\dot f \,\dddot u\right)=
\int_{h+\e}^{h+\e+\eta}\left(\dot f \;(\dddot u-\dddot u(h+\e))\right)+
\int_{h+\e}^{h+\e+\eta}\left(\dot f \;\dddot u(h+\e)\right)\\&&\qquad=o(1)+
(f(h+\e+\eta)-f(h+\e))
\dddot u(h+\e)=o(1)-\dddot u(h+\e),
\end{eqnarray*}
as~$\eta\to0$.

{F}rom these considerations, we find that
$$ o(1)+\dddot u(h+\e)-\dddot u(h-\e)
=\frac{mg }{\kappa\,\dot u(h)},$$
hence, sending first~$\eta\to0$ and then~$\e\to0$,
$$ \dddot u(h^+)-\dddot u(h^-)
=\frac{mg }{\kappa\,\dot u(h)},$$
that is~\eqref{COWLQQQ1}.

In particular, with the choices in~\eqref{CHOICES}, we obtain the condition
$$ \dddot u(h^+)-\dddot u(h^-)
= \frac{1}{2\dot u(h)},$$
which is~\eqref{FB:X2} (notice indeed that~$h^+$ comes
in Figure~\ref{1K2-1923} from the negative part of~$u$
and~$h^-$ comes
in Figure~\ref{1K2-1923}
from the positive part of~$u$,
therefore we have that~$\dddot u(h^+)=\dddot u^{(2)}$
and~$\dddot u(h^-)=\dddot u^{(1)}$).

It is interesting to observe that there is also a 
derivation based on elementary dynamics of \eqref{COWLQQQ1}.
Namely, the stiffness of the beam produces a force
at the point~$(h,u(h))$,
normal to the beam for small displacements, 
whose intensity is minus~$\kappa$ times the second
variation of the curvatures, that is
$$ -\kappa \frac{d^2}{dx^2} \ddot u (h)=-\kappa\ddddot u(h). $$
In the setting of Figure \ref{1K2-1923}, the projection of this force along the $x$-axis
is
$$ -\kappa\ddddot u(h) \,\cos\alpha \simeq
-\kappa\ddddot u(h) \,\cot\alpha
= \kappa\ddddot u(h) \,\dot u(h),$$
where the small displacement ansatz has been used.
At the equilibrium, this must balance the weight of the point mass,
therefore we obtain that
$$ \kappa\ddddot u(h) \,\dot u(h)=mg,$$
that is~\eqref{COWLQQQ1} at the point~$x=h$.

\section{A dichotomy argument, and proof of Theorem~\ref{growth}}\label{sec:dic}

We remark that if~$u$ is a minimizer of~$J$ in~$\Omega$
in the admissible class in~\eqref{ADMI}
and~$\Om'$ is a subdomain of~$\Om$, then it
is not necessarily true that~$u$
is a minimizer of~$J$ in~$\Omega'$
in the admissible class in~\eqref{ADMI} with~$\Om$ replaced by~$\Om'$.
This is due to the fact that
the admissible class in~\eqref{ADMI} with~$\Om$ replaced by~$\Om'$
does not prevent the Laplacian of~$u-u_0$ to become singular
at~$\partial\Om'$, and this provides an important difference with
respect to the classical cases dealing with the standard Dirichlet energy.
To circumvent this problem, we will consider local minimizers
in subdomains:

\begin{definition}
Let~$\Om'$ be a subdomain of~$\Om$ with smooth boundary.
We say that~$u$ is a local minimizer in~$\Om'$ if, in the notation of~\eqref{defJ},
$$ J[u,\Om']\le J[v,\Om']$$
for every~$v\in W^{2,2}(\Om')$ such that~$v-u\in W^{2,2}_0(\Om')$.
\end{definition}

In this way, we have:

\begin{lemma}\label{HANSSSL}
If~$u$ is a minimizer in~$\Om$, then it
is a local minimizer in every
subdomain~$\Om'\subset\subset\Om$ with smooth boundary.
\end{lemma}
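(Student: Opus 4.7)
The plan is the standard ``extend a local competitor by $u$ outside'' argument, the only subtle point being that we are dealing with $W^{2,2}$ rather than $W^{1,2}$, so we need to take care that both the trace and the normal derivative of the competitor match those of $u$ on $\partial\Omega'$.

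First, let $v\in W^{2,2}(\Omega')$ be an arbitrary competitor with $v-u\in W^{2,2}_0(\Omega')$. I would define
\begin{equation*}
\wt v(x):=\begin{cases} v(x) & \text{if } x\in\Omega',\\ u(x) & \text{if } x\in\Omega\setminus\Omega',\end{cases}
\end{equation*}
and check that $\wt v\in\mathcal{A}$. For this, write $\wt v=u+w$, where $w$ is the extension of $v-u$ by zero to $\Omega$. Since $v-u\in W^{2,2}_0(\Omega')$ and $\partial\Omega'$ is smooth, the extension by zero lies in $W^{2,2}_0(\Omega)$ (both the trace and the normal derivative of $v-u$ vanish on $\partial\Omega'$, so the extended function has $L^2$ weak first and second derivatives given by the interior derivatives on $\Omega'$ and zero on $\Omega\setminus\Omega'$). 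In particular $\wt v\in W^{2,2}(\Omega)$, and since $w\in W^{2,2}_0(\Omega)\subset W^{1,2}_0(\Omega)$, one has $\wt v-u_0=(u-u_0)+w\in W^{1,2}_0(\Omega)$, so $\wt v$ is an admissible competitor in the sense of \eqref{ADMI}.

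Second, apply the global minimality of $u$ in $\Omega$ against $\wt v$, obtaining $J[u,\Omega]\le J[\wt v,\Omega]$. Since $u$ and $\wt v$ coincide on $\Omega\setminus\Omega'$, the two integrals differ only by their contributions on $\Omega'$, giving $J[u,\Omega']\le J[v,\Omega']$, which is the desired local minimality.

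The only genuine step is ensuring $w\in W^{2,2}_0(\Omega)$; this is where the smoothness of $\partial\Omega'$ is used (so that the characterization of $W^{2,2}_0(\Omega')$ as the functions with vanishing trace and vanishing normal derivative holds, and extension by zero preserves the $W^{2,2}$ regularity). All other steps are routine, and no use is made of any Maximum Principle, so the argument is insensitive to the higher-order nature of the functional.
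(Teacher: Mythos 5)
Your proposal is correct and follows essentially the same route as the paper: extend the local competitor by $u$ outside $\Omega'$ (the zero extension of $v-u\in W^{2,2}_0(\Omega')$ lies in $W^{2,2}_0(\Omega)$, which the paper justifies by citing the classical extension result), verify admissibility in $\mathcal{A}$, and compare energies using that $u$ and the extension coincide on $\Omega\setminus\Omega'$. A small simplification: since $W^{2,2}_0(\Omega')$ is the closure of $C^\infty_c(\Omega')$, the zero-extension claim follows directly by extending the approximating test functions, without invoking the trace/normal-derivative characterization.
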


\begin{proof}
Let~$v\in W^{2,2}(\Om')$ such that~$v-u\in W^{2,2}_0(\Om')$.
By the
extension results in classical Sobolev spaces (see e.g. Proposition~IX.18
in~\cite{MR697382}), we can extend~$v$ outside~$\Om'$
by setting~$v(x):=u(x)$ for all~$x\in\Om\setminus\Om'$,
and we have that~$v-u\in W^{2,2}_0(\Om)\subseteq W^{1,2}_0(\Om)$.
In particular, recalling~\eqref{ADMI},
we have that~$v\in{\mathcal{A}}$ and thus
$$ 0\le J[v,\Om]-J[u,\Om]=
\int_{\Om\setminus\Om'} \Big(|\Delta u|^2+\I u-|\Delta v|^2+\I v\Big)+
J[v,\Om']-J[u,\Om'].
$$
Since~$u=v$ in~$\Om\setminus\Om'$, this gives that~$0\le J[v,\Om']-J[u,\Om']$,
as desired.
\end{proof}

Before proving Theorem~\ref{growth}, we show a result
concerning the convergence of the blow-up sequence of a minimizer.

\begin{lemma}\label{lemma:blowup}
Let~$D\subset\subset\Omega$.
Let~$u_k\in W^{2,2}(D)$, with~$k\in \mathbb N$, be a sequence of local
minimizers
of 
\begin{equation}\label{TiC-0}
\int_D\Big( |\Delta u_k|^2 +M_k\I {u_k}\Big),\end{equation}
with~$M_k\in (0, 1)$,
such that $0\in\partial\{u_k>0\}$ and~$|\na u_k(0)|=0$.

Fix~$R>0$ such that~$B_{5R}\subset\subset D$, and suppose that
\begin{equation}\label{QGROAD}
\sup_{B_{4R}} u_k\le C_0(R),
\end{equation}
and
\begin{equation}\label{QGROAD-PQi}
\|\Delta u_k\|_{L^1(B_{4R})}\le \hat C_0(R),
\end{equation}
for some~$C_0(R)$, $\hat C_0(R)>0$.

Then, there exists
a positive constant~$C(R)$, independent of~$k$, such that
\begin{eqnarray}\label{stima}
&& \|u_k\|_{W^{2, 2}(B_R)}\le C(R),\\
{\mbox{and }} \quad &&\|\Delta u_k\|_{BMO(B_R)}\le C(R),\label{stima2}
\end{eqnarray}
for any~$k\in\N$. 

Furthermore, 
if~$u_k:\R^n\to\R$ and the minimization property in~\eqref{TiC-0}
holds true in any
domain~$D\subset\R^n$, and the corresponding assumptions
in~\eqref{QGROAD} and~\eqref{QGROAD-PQi} are satisfied, then
there exists~$u_0:\R^n\to\R$ such that,
up to subsequences, as~$k\to+\infty$, $u_k\to u_0$
in~$W^{2,2}_{\rm{loc}}(\R^n)
\cap C^{1,\alpha}_{\rm{loc}}(\R^n)$, for any~$\alpha\in(0,1)$.
\end{lemma}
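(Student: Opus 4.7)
The strategy is to retrace the proof of Theorem~\ref{thm-BMO} for each $u_k$, carefully tracking the dependence of the constants on the data so as to obtain uniform-in-$k$ estimates, and then to invoke standard compactness to pass to the limit. The first step is to introduce, for each $k$, the biharmonic competitor $h_k$ on $B_{2R}$ with Dirichlet data $(u_k,\nabla u_k)$ on $\partial B_{2R}$, extended by $u_k$ outside. Local minimality of $u_k$ (established as in Lemma~\ref{HANSSSL}, unaffected by the positive factor $M_k \le 1$ in front of the characteristic function) together with biharmonicity of $h_k$ yields, exactly as in the proof of Theorem~\ref{thm-BMO},
\[ \int_{B_{2R}} |\Delta u_k - \Delta h_k|^2 \le M_k|B_{2R}| \le C R^n. \]

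The key technical step is a uniform $L^2$ bound on $\Delta u_k$. Since $\Delta h_k$ is harmonic on $B_{2R}$, the mean value property gives $\|\Delta h_k\|_{L^\infty(B_{3R/2})} \le C(R) \|\Delta h_k\|_{L^1(B_{2R})}$, and by the triangle inequality and Cauchy--Schwarz,
\[ \|\Delta h_k\|_{L^1(B_{2R})} \le \|\Delta u_k\|_{L^1(B_{2R})} + |B_{2R}|^{1/2} \|\Delta u_k - \Delta h_k\|_{L^2(B_{2R})} \le C(R), \]
by hypothesis \eqref{QGROAD-PQi} and the previous display. The triangle inequality then delivers the uniform bound $\|\Delta u_k\|_{L^2(B_{3R/2})} \le C(R)$.

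With this $L^2$ bound at hand, the Campanato iteration in the proof of Theorem~\ref{thm-BMO} becomes uniform in $k$, producing \eqref{stima2}. To derive \eqref{stima}, apply Lemma~\ref{POBIA} and Corollary~\ref{lem-subham} (whose proofs go through verbatim for the $M_k$-weighted functional, since $M_k>0$) to get the uniform lower bound $\Delta u_k \ge -C(R)$ on $B_{3R}$. Combining this with the hypothesis $u_k \le C_0$ and the continuity of each $u_k$ (so that $u_k(0)=0$, by the argument of \eqref{3287uUUSp} applied individually), the function $\tilde w_k(x) := (C_0 + C_2(R)) - u_k(x) - \tfrac{C(R)}{2n}|x|^2$ is nonnegative and superharmonic on $B_{3R}$ with $\tilde w_k(0)=C_0+C_2(R)$ uniformly in $k$. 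The mean value inequality for superharmonic functions then yields a uniform $L^1(B_{3R/2})$ bound on $u_k$, and a standard interior elliptic estimate of the form $\|u_k\|_{W^{2,2}(B_R)} \le C(\|\Delta u_k\|_{L^2(B_{3R/2})} + \|u_k\|_{L^1(B_{3R/2})})$, derivable by splitting $u_k$ into a Dirichlet $W^{2,2}$ contribution of $\Delta u_k$ and a harmonic part controlled via its $L^1$ norm through mean value estimates, completes \eqref{stima}.

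For the compactness statement on $\R^n$, the above bounds apply uniformly on every ball $B_R$. John--Nirenberg and Calder\'on--Zygmund upgrade the uniform BMO bound on $\Delta u_k$ to uniform $W^{2,p}_{\rm loc}$ bounds for every $p<\infty$, and hence by Morrey embedding to uniform $C^{1,\alpha}_{\rm loc}$ bounds for every $\alpha\in(0,1)$. Weak compactness in $W^{2,2}_{\rm loc}$ combined with Arzel\`a--Ascoli (via a standard diagonal argument over an exhaustion of $\R^n$) extracts a subsequence converging weakly in $W^{2,2}_{\rm loc}$ and strongly in $C^{1,\alpha}_{\rm loc}$ to some $u_0$. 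I expect the main obstacle to lie in the uniform $L^2$ bound on $\Delta u_k$ in the second paragraph, which crucially exploits harmonicity of $\Delta h_k$ together with the $L^1$ hypothesis on $\Delta u_k$ to convert an $L^1$ datum into a pointwise bound.
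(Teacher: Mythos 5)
Your argument is correct, but it reaches the key estimate \eqref{stima} by a genuinely different route than the paper. The paper stays entirely variational: it tests the super-biharmonicity from Lemma~\ref{POBIA} against $(u_k-m_k)\xi^2$ and runs a Caccioppoli-type computation, using Corollary~\ref{lem-subham} (with constant $\sim \hat C_0(R)/R^n$ coming from \eqref{QGROAD-PQi}) and the sup bound \eqref{QGROAD} to absorb the lower-order terms, which directly gives the uniform bound on $\int(\Delta u_k)^2\xi^2$; the BMO bound \eqref{stima2} is then quoted from Section~\ref{sec:BMO}, exactly as you do. You instead recycle the biharmonic-replacement comparison from the proof of Theorem~\ref{thm-BMO}: minimality gives $\|\Delta u_k-\Delta h_k\|_{L^2(B_{2R})}^2\le M_k|B_{2R}|$, and then the harmonicity of $\Delta h_k$ converts the $L^1$ hypothesis \eqref{QGROAD-PQi} into an interior $L^\infty$ bound, hence a uniform $L^2$ bound on $\Delta u_k$; afterwards you recover the lower-order norms through the superharmonic barrier built from $\Delta u_k\ge -C(R)$ together with $u_k(0)=0$ and \eqref{QGROAD}, and an elliptic splitting of $u_k$ into a Dirichlet solution plus a harmonic part. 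Both routes rest on Lemma~\ref{HANSSSL}, Lemma~\ref{POBIA} and Corollary~\ref{lem-subham} (all unaffected by the factor $M_k\le1$), but yours exploits \eqref{QGROAD-PQi} more directly and potential-theoretically, while the paper's Caccioppoli argument leans more on the one-sided bound \eqref{QGROAD}; your version has the advantage that the $L^2$ bound on $\Delta u_k$ is obtained before, and independently of, any control on $u_k$ itself, at the price of invoking the qualitative $C^{1,\alpha}$ regularity of each fixed $u_k$ to give meaning to $u_k(0)=0$ and to the mean value inequality at the origin (which is legitimate and not circular, since it is only used with $k$-dependent constants). The only point where you deliver slightly less than the statement is the final compactness: you obtain weak $W^{2,2}_{\rm loc}$ plus strong $C^{1,\alpha}_{\rm loc}$ convergence, whereas the lemma asserts convergence in $W^{2,2}_{\rm loc}\cap C^{1,\alpha}_{\rm loc}$; the paper itself only appeals to a ``customary compactness argument'' here, so this is a shared imprecision rather than a gap specific to your proof, but if strong $W^{2,2}_{\rm loc}$ convergence is needed one should upgrade it, e.g.\ by a comparison-of-energies argument using minimality.
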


\begin{proof}
To check~\eqref{stima}, we observe that,
in virtue of Lemma~\ref{POBIA},
\begin{equation}\label{jdferufgru845}
\int_{B_{2R}}\Delta u_k\,\Delta \phi\le0,\end{equation}
for any~$\phi\in W^{2, 2}_0(B_{2R})$.
Now, we take~$\xi\in C^{\infty}_0(B_{2R},[0,1])$
such that
\begin{equation}\label{08iu968yt}
{\mbox{$\xi=1$ in~$B_R$}},\qquad
|\nabla\xi|\le\frac{C}{R}
\qquad  {\mbox{and}}\qquad|D^2\xi|\le\frac{C}{R^2},
\end{equation}
for some~$C>0$, we set~$m_k:=\min_{B_{4R}}u_k$, and we
choose~$\phi:=(u_k-m_k)\xi^2\ge 0$
in~\eqref{jdferufgru845}. In this way,
setting
\begin{eqnarray*}
I_1&:=&2 \int_{B_{2R}}
\Delta u_k\,\na u_k\cdot\na \xi^2\\
{\mbox{and }} \qquad I_2&:=& \int_{B_{2R}} (u_k-m_k)\,\Delta u_k\,\Delta \xi^2,
\end{eqnarray*}
we have that
\begin{equation}\label{reuty45bhbb}
0\ge \int_{B_{2R}} \Delta u_k\,\Delta\big((u_k-m_k)\xi^2\big)=
\int_{B_{2R}} (\Delta u_k)^2\xi^2+I_1+I_2.
\end{equation}
Now, thanks to Corollary~\ref{lem-subham}, we can use
the standard method to prove
Caccioppoli inequality: 
namely we take~$\eta\in C^{\infty}_0(B_{4R},[0,1])$
such that~$\eta=1$ in~$B_{2R}$ and~$
|\nabla\eta|\le\frac{C}{R}$
and we infer from Corollary~\ref{lem-subham} and~\eqref{QGROAD-PQi} that
\begin{equation}\label{87795438ws8492875a}\begin{split}&
\widehat C \int_{B_{4R}}(u_k-m_k)\,\eta^2\ge - \int_{B_{4R}} \Delta u_k\,(u_k-m_k)\,\eta^2
=
\int_{B_{4R}} |\nabla u_k|^2\eta^2
+\int_{B_{4R}} 2\eta\,(u_k-m_k)\,\nabla\eta\cdot\nabla u_k\\ &\qquad\ge
\frac12\int_{B_{4R}} |\nabla u_k|^2\eta^2-
C\int_{B_{4R}} (u_k-m_k)^2|\nabla\eta|^2.\end{split}
\end{equation}
We remark that, in view of Corollary~\ref{lem-subham} and~\eqref{QGROAD-PQi},
we can choose here~$\widehat C$ proportional to~$\frac{\tilde C(R)}{R^n}$.
Hence, the result in~\eqref{87795438ws8492875a} yields that
\begin{equation}\label{ineq-Caccio-main}
\int_{B_{2R}}|\na u_k|^2\le
\frac{C}{R^2}\int_{B_{4R}}(u_k-m_k)^2
+ C\int_{B_{4R}}(u_k-m_k)
\end{equation}
for some~$C>0$, possibly varying from line to line. 

Hence, by Young's inequality, \eqref{08iu968yt} and~\eqref{ineq-Caccio-main}, we get 
\begin{equation}\begin{split}\label{iewtubv}
|I_1|\le\; &2\left(\e\int_{B_{2R}}(\Delta u_k)^2\xi^2
+\frac1\e\int_{B_{2R}}|\na u_k|^2\,|\na \xi|^2\right)\\
\le\; &2\left(\e\int_{B_{2R}}(\Delta u_k)^2\xi^2
+\frac{C}{\e\,R^2}\int_{B_{2R}}|\na u_k|^2\right)\\
\le\;& 2\left(\e\int_{B_{2R}}(\Delta u_k)^2\xi^2
+\frac{C}{\e\,R^4}\int_{B_{4R}} (u_k-m_k)^2+
\frac{C}{R^2}\int_{B_{4R}}(u_k -m_k)
\right).
\end{split}\end{equation}
Furthermore, noticing that~$(u_k-m_k) \Delta u_k|\na \xi|^2\ge 
-\widehat C(u_k-m_k)|\na \xi|^2$, thanks
to Corollary~\ref{lem-subham}, and
making again use of Young's inequality, we obtain that 
\begin{eqnarray*}
I_2
&=&
\int_{B_{2R}} (u_k-m_k)\,\Delta u_k\,
\Big(2\xi\Delta \xi+|\na \xi|^2\Big)\\&\ge& 2\int_{B_{2R}} (u_k-m_k)\,\Delta u_k
\,\xi\,\Delta \xi -\widehat C\int_{B_{2R}}(u_k-m_k)|\na \xi|^2\\
&\ge& 
-2\left(\e\int_{B_{2R}}(\Delta u_k)^2\xi^2
+\frac1\e\int_{B_{2R}} (u_k-m_k)^2(\Delta\xi)^2\right)
-\widehat C\int_{B_{2R}}(u_k-m_k)|\na \xi|^2\\
&\ge& 
-2\left(\e\int_{B_{2R}}(\Delta u_k)^2\xi^2
+\frac{C}{\e \,R^4}\int_{B_{2R}} (u_k-m_k)^2\right)
-\frac{C}{R^2}\int_{B_{2R}}(u_k-m_k).
\end{eqnarray*}
{F}rom this, \eqref{reuty45bhbb} and~\eqref{iewtubv}, we conclude that
\begin{eqnarray*}
\int_{B_{2R}} (\Delta u_k)^2\xi^2
&\le&
2\left(\e\int_{B_{2R}}(\Delta u_k)^2\xi^2
+\frac{C}{\e\,R^4}\int_{B_{2R}} (u_k-m_k)^2
+
\frac{C}{R^2}\int_{B_{4R}}(u_k -m_k) \right)
\\&&\qquad +
2
\left(\e\int_{B_{2R}}(\Delta u_k)^2\xi^2+
\frac{C}{\e \,R^4}\int_{B_{2R}} (u_k-m_k)^2\right)
+\frac{C}{R^2}\int_{B_{2R}}(u_k-m_k),
\end{eqnarray*}
which, in turn, implies that
$$
(1-4\e)\int_{B_{2R}} (\Delta u_k)^2\xi^2\le
\frac{C}{\e\,R^4}\int_{B_{2R}} (u_k-m_k)^2
+\frac{ C}{R^2}\int_{B_{2R}}(u_k-m_k)
\le 
\frac{C}\e+  C,
$$
where the last step follows from \eqref{QGROAD}.
Choosing $\e=\frac1{8}$ and recalling~\eqref{08iu968yt},
we obtain that
$$ \int_{B_R} (\Delta u_k)^2\le \int_{B_{2R}} (\Delta u_k)^2\xi^2
\le C,
$$
up to renaming~$C>0$, that does not depend on~$k$.
This implies the desired estimate in~\eqref{stima}.

Moreover, the estimate in~\eqref{stima2}
follows from the BMO estimates in Section~\ref{sec:BMO}.
 
{F}inally, from the uniform estimate in~\eqref{stima}, we can apply
a customary compactness argument to conclude that there exists
a function~$u_0$ such that, up to a subsequence,
$u_k\to u_0$
in~$W^{2,2}_{\rm{loc}}(\R^n)
\cap C^{1,\alpha}_{\rm{loc}}(\R^n)$, for any~$\alpha\in(0,1)$,
as~$k\to+\infty$. This completes the proof of Lemma~\ref{lemma:blowup}.
\end{proof}

With this, we are now in the position of completing the proof of
Theorem~\ref{growth}.

\begin{proof}[Proof of Theorem~\ref{growth}]
We suppose that~$B_1(x_0)\subset\subset D$, with~$x_0$ as in the statement of Theorem~\ref{growth}.
We claim that there exist an integer~$k_0>0$ and a structural
constant~$C>0$,
depending only on~$\delta$, $n$ and~${\rm{dist}}(D,\Omega)$,
such that the following inequality holds:
\begin{equation}\label{eq-dyadic-0}
\sup_{B_{2^{-k-1}}(x_0)} |u| \le \max 
\left\{
\frac C{2^{2k}}, \;\frac{\sup_{B_{2^{-k}}(x_0)}|u|}{2^2}, \ldots,
\frac{\sup_{B_{2^{-k+m}}(x_0)}|u|}{2^{2(m+1)}}, \dots,
\frac{\sup_{B_1(x_0)}|u|}{2^{2(k+1)}}
\right\}, 
\end{equation}
for any~$k\ge k_0$.

Indeed, if~\eqref{eq-dyadic-0} fails, then, for any~$j\in\N$,
there exist singular free boundary points~$x_j\in D$, integers~$k_j$
and minimizers~$u_j$ (with~$\|u_j\|_{W^{2,2}(\Om)}=\|u\|_{W^{2,2}(\Om)}$
be given)
such that 
\begin{equation}\label{eq-dyadic-1}
\sup_{B_{2^{-k_j-1}}(x_j)} |u_j| > \max 
\left\{ 
\frac j{2^{2k_j}},\; \frac{\sup_{ B_{2^{-k_j}}(x_j)}|u_j|}{2^2}, 
\dots, \frac{\sup_{B_{2^{-k_j+m}}(x_j)}|u_j|}{2^{2(m+1)}}, \dots, \frac{\sup_{B_1
(x_j)}|u_j|}{2^{2(k_j+1)}}
\right\}.
\end{equation}
We denote by~$S_j:=\sup_{B_{2^{-k_j-1}}(x_j)}|u_j|$
and we consider the scaled functions 
\begin{equation*}
v_j(x):=\frac{u_j(x_j+2^{-k_j}x)}{S_j}.
\end{equation*}
In this way, \eqref{eq-dyadic-1} gives that
\begin{equation}\label{jquesyter56} 1 > \max 
\left\{ 
\frac j{2^{2k_j}\,S_j},\; \frac{\sup_{ B_{1}} |v_j|}{2^2}, 
\dots, \frac{\sup_{B_{2^{m}}}|v_j| }{2^{2(m+1)}}, \dots,
\frac{\sup_{B_{2^{k_j}}}|v_j|}{2^{2(k_j+1)}}
\right\}.\end{equation}
{F}rom this, we have that the functions~$v_j$
satisfy the following properties:
\begin{equation}\begin{split}\label{QGROAD:0}
& \sup_{B_{1/2}}v_j=1,\\
\\ 
& v_j(0)=|\na v_j(0)|=0,\\
\\ 
& \sup_{B_{2^m}}|v_j|\le 4 \cdot
2^{2m},\quad {\mbox{for any }} m<k_j,\\
\\
& \sigma_j:=\frac1{2^{2k_j}\;S_j}<\frac1j.
\end{split}\end{equation}
We also remark that, from the scaling properties of the functional~$J$,
we have that
\begin{equation}\label{NAVIE1}
\int_{B_R}\Big(|\Delta v_j|^2+\sigma_j^2\I {v_j}\Big)
=2^{k_jn}\sigma_j^2\int_{B_{R2^{-k_j}}(x_j)}\Big(|\Delta u_j|^2
+\I {u_j}\Big),
\end{equation}
for every fixed $R<2^{k_j}$.

We claim that
\begin{equation}\label{NAVIE2}
{\mbox{$v_j$ is a local minimizer
in~$B_R$.}}
\end{equation}
Indeed, by Lemma~\ref{HANSSSL},
we know that~$u$ is a local minimizer in~$B_{R2^{-k_j}}(x_j)$.
Hence,
if~$w_j$ is such that~$w_j-v_j\in W^{2,2}_0(B_R)$, we define, for all~$y\in B_{R2^{-k_j}}(x_j)$,
$$ W_j(y):=S_j \,w_j\big(2^{k_j}(y-x_j)\big).$$
In this way, we have that~$W_j\in W^{2, 2}_0(B_{R2^{-k_j}}(x_j))$,
thus yielding, in light of~\eqref{NAVIE1}, that
\begin{eqnarray*}
0&\ge& 2^{k_jn}\sigma_j^2\left(\int_{B_{R2^{-k_j}}(x_j)}\Big(|\Delta u_j|^2
+\I {u_j}\Big)
-\int_{B_{R2^{-k_j}}(x_j)}\Big(|\Delta W_j|^2
+\I {W_j}\Big)\right)\\&=&
\int_{B_R}\Big(|\Delta v_j|^2+\sigma_j^2\I {v_j}\Big)-
\int_{B_R}\Big(|\Delta w_j|^2+\sigma_j^2\I {w_j}\Big).
\end{eqnarray*}
This completes the proof of~\eqref{NAVIE2}.

Now, by assumption, $u_j$ is not $\delta$-rank-2 flat at each
level~$r=2^{-k}$, for any~$k\ge1$, at $x_j$.
As a consequence, $v_j$ is not $\delta$-rank-2 flat in~$B_1$.
So, recalling~\eqref{flatdef} and Definition~\ref{def:flat}, this means that
\begin{equation}\label{TOU1}
h(1, 0)=\inf_{p\in P_2}h_{{\rm{min}}}(1, x_0, p)\ge \delta .
\end{equation}
Also, we have that condition~\eqref{QGROAD} is guaranteed in this case,
in view of~\eqref{QGROAD:0}. In addition, we have that~\eqref{QGROAD-PQi}
holds true here, since, in view of~\eqref{QGROAD:0},
if~$20R\in[2^{m-1},2^m]$,
$$ v_j-\min_{B_{20R}}v_j\le
2\sup_{B_{20R}}|v_j|\le
2\sup_{B_{2^m}}|v_j|\le8\cdot 2^{2m}\le 8\cdot (40 R)^{2}\le CR^2,$$
and consequently, by Lemma~\ref{eq-Hessian} and~\eqref{jquesyter56},
\begin{eqnarray*} &&\int_{B_{5R}} |\Delta v_j(x)|\,dx=\int_{B_{5R}}
\frac{2^{-2k_j}|\Delta u_j(x_j+2^{-k_j}x)|}{S_j}\,dx=
\frac{2^{(n-2)k_j}}{S_j}
\int_{B_{5R2^{-k_j}}(x_j)}
|\Delta u_j(y)|\,dy
\\&&\qquad\le \frac{2^{nk_j}}{2^{2k_j}S_j}
\int_{B_{5R2^{-k_j}}(x_j)}
|D^2 u_j(y)|\,dy=
\frac{CR^n}{2^{2k_j}S_j}
\fint_{B_{5R2^{-k_j}}(x_j)}
|D^2 u_j(y)|\,dy
\\&&\qquad\le
\frac{CR^n}{2^{2k_j}S_j}\sqrt{
\fint_{B_{5R2^{-k_j}}(x_j)}
|D^2 u_j(y)|^2\,dy}
\\&&\qquad\le
\frac{CR^n}{2^{2k_j}S_j}\left(
\frac{2^{4k_j}}{R^4}\fint_{B_{20R2^{-k_j}}(x_j)}\left(u_j-\min_{B_{20R2^{-k_j}}(x_j)}u_j\right)^2
+\frac{2^{2k_j}}{R^2}\fint_{B_{20R2^{-k_j}}(x_j)}\left(u_j-\min_{B_{20R2^{-k_j}}(x_j)}u_j\right)
\right)^{1/2}
\\&&\qquad=
\frac{CR^n}{2^{2k_j}S_j}\left(
\frac{ 2^{4k_j}S_j^2}{R^4}\fint_{B_{20R}}\left(v_j-\min_{B_{20R}}v_j\right)^2
+\frac{2^{2k_j}S_j}{R^2}\fint_{B_{20R}}\left(v_j-\min_{B_{20R}}v_j\right)
\right)^{1/2}
\\&&\qquad\le
\frac{CR^n}{2^{2k_j}S_j}\left(
2^{4k_j}S_j^2
+ 2^{2k_j}S_j
\right)^{1/2}
\le
CR^n+
\frac{CR^n}{\sqrt{2^{2k_j}S_j}}\le
CR^n+
\frac{CR^n}{\sqrt{j}}\le CR^n.
\end{eqnarray*}
Therefore, recalling~\eqref{NAVIE1},
from Lemma~\ref{lemma:blowup}, applied here with $M_j:=\sigma_j^2$,
we know that, 
up to a subsequence, still denoted by~$v_j$,
there exists a function~$v_\infty$ such that
\begin{equation}\label{doewty8he-3382}
{\mbox{$v_j\to v_\infty$ in~$W^{2, 2}(B_R)\cap
C^{1, \alpha}(B_R)$, for any~$\alpha\in(0, 1)$, as~$j\to+\infty$.}}\end{equation}
Moreover, we have that~$\Delta v_j\in BMO(B_R)$ uniformly.
Consequently~$v_\infty\in W^{2, 2}(B_R)\cap C^{1, \alpha}(B_R)$,
for all $\alpha\in(0, 1)$, and $\Delta v_\infty\in BMO(B_R)$.
Furthermore, 
\begin{equation}\begin{split}\label{yrei8y57hv}
& \Delta^2 v_\infty=0 \;{\mbox{ in }}\R^n, \qquad \sup_{B_{1/2}}v_\infty=1,
\\
& | v_\infty(x)|\le 8|x|^2 \;{\mbox{ for any }}x\in\R^n
\\
& {\mbox{ and }} \quad
v_\infty(0)=|\na v_\infty(0)|=0.
\end{split}\end{equation}
Let now~$f:=\Delta v_\infty$, then we have that $f$ is harmonic in $\R^n$.
Moreover, by
Lemma~\ref{eq-Hessian} and the second line in~\eqref{yrei8y57hv}, 
we see that, for any~$r>0$,
$$ \frac1{r^n}\int_{B_r}|D^2v_\infty|^2\le \frac{C}{r^{n+4}}\int_{B_r}
\big(v_\infty- \min_{B_{4r}} v_\infty\big)^2 +
 \frac{C}{r^{n+2}}\int_{B_r}
\big(v_\infty- \min_{B_{4r}} v_\infty\big)\le C,$$
up to renaming~$C>0$.
Thus, from the Liouville Theorem we infer that~$f$ must be constant, i.e.
$\Delta v_\infty=C_0$, for some~$C_0\in\R$.

Consequently, $v_\infty-\frac{C_0}{2n}\,|x|^2$ is harmonic in~$\R^n$ 
with quadratic growth. Hence, by using the Liouville Theorem 
once again, we have that~$v_\infty(x)=g(x)+\frac{C_0}{2n}\,|x|^2$,
where~$g$ is a second order polynomial. 
Moreover, since $\na v_\infty(0)=0$, we deduce that~$g=cp$,
for some~$c\in \R$ and~$p\in P_2$ (recall~\eqref{poliu7u65}).  

Therefore, we can write
$$ v_\infty(x)=x\cdot Ax,$$
for some constant and symmetric matrix~$A$. 
Consequently, recalling the notation in~\eqref{spx},
\begin{equation}\label{8093y9rrho}
\fb{v_\infty}= S(p, 0)
\end{equation}
for some $p\in P_2$. 
On the other hand, from our construction in~\eqref{TOU1},
we have that
$$ \HD\big(\fb{ v_j}\cap B_1, S(p, 0)\cap B_1\big)\ge \delta
$$
(recall the definitions of~$\HD$ and~$h_{{\rm{min}}}$
in~\eqref{defHD} and~\eqref{defflat:BIS}, respectively).
As a consequence,
there exist points~$z_j\in \fb{ v_j}\cap B_1$
such that
\begin{equation}\label{07445de5fh}
\dist(z_j, S(p, 0))\ge \delta.
\end{equation}
Now we extract a converging sequence, still denoted $z_j$,
such that~$z_j\to z_0$ as~$j\to+\infty$,
and we see from the uniform convergence of~$v_j$
given in~\eqref{doewty8he-3382}
that~$v_\infty(z_0)=0$, which implies that~$z_0\in S(p, 0)$,
thanks to~\eqref{8093y9rrho}.
On the other hand, we also have that~$\dist(z_0, S(p, 0))\ge \delta $,
in virtue of~\eqref{07445de5fh}.
Therefore, we reach a contradiction, and so
the proof of Theorem~\ref{growth} is finished.
\end{proof}

\section{Non-degeneracy, and proof of
Theorems~\ref{thm-nondeg} and \ref{thm-Hausdorff}}\label{sec-nondeg}

In this section we deal with weak and strong nondegeneracy properties of the minimizers.
Due to the lack of Harnack inequalities for biharmonic functions,
the strong nondegeneracy result does not follow immediately from the weak one,
unless we impose some additional conditions
on the set~$\po u$.

\subsection{Weak nondegeneracy, and proof of Theorem~\ref{thm-nondeg}}
Here we prove the weak nondegeneracy for~$u^+$,
according to the statement in Theorem~\ref{thm-nondeg}.

\begin{proof}[Proof of Theorem~\ref{thm-nondeg}]
We prove the claims in~$\bf 1^\circ$ and~$\bf 2^\circ$
together, distinguishing the different structures of the two cases when needed.

After rescaling~$u$ by defining~$r^{-2}u(x_0+r x)$,
we may assume without loss of generality that~$r=1$ and~$x_0=0$.
Also, denote by
\begin{equation}\label{defgamma00}
\gamma:=\sup_{B_{1}}|u|. 
\end{equation} 
We remark that in the setting of~$\bf 2^\circ$, we have that
\begin{equation}\label{67:698677654w123456uuofejn}
B_1\subseteq\{u>0\}
\end{equation}
and therefore
\begin{equation}\label{superur}
\gamma:=\sup_{B_{1}}u. 
\end{equation}
We also remark that, in the setting of~$\bf 1^\circ$,
in light of~\eqref{iehnfnb0094}, we have that
\begin{equation}\label{167:698677654w123456uuofejn}\big|\po u\cap B_{\frac1{16}}\big|
\ge \theta_* \,|B_{\frac1{16}}|.
\end{equation}
As a matter of fact, in case~$\bf 2^\circ$,
the statement in~\eqref{167:698677654w123456uuofejn}
is also true, with~$\theta_*:=1$,
as a consequence of~\eqref{67:698677654w123456uuofejn}.
Hence, we will exploit~\eqref{167:698677654w123456uuofejn}
in both the cases~$\bf 1^\circ$ and~$\bf 2^\circ$,
with the convention that~$\theta_*=1$ in the latter case.

We also point out that, in~$B_{1/8}$
\begin{equation}\label{267:698677654w123456uuofejn}
u-\min_{B_{\frac18}} u \le 2\gamma.
\end{equation}
Indeed, in case~$\bf 1^\circ$, the claim in~\eqref{267:698677654w123456uuofejn}
follows from~\eqref{defgamma00}. Instead, in case~$\bf 2^\circ$,
we exploit~\eqref{67:698677654w123456uuofejn} to write that
$$ u-\min_{B_{\frac18}} u\le u\le\gamma,$$
thus completing the proof of~\eqref{267:698677654w123456uuofejn}.

Now, let 
$\psi\in C^\infty(\R^n,[0,1])$ such that $\psi=0$
in~$B_{\frac{1}{16}}$, $\psi>0$ in~$\R^n\setminus \overline{
B_{\frac1{16}}}$ and 
$\psi= 1$ in~$\R^n\setminus B_{\frac1{8}}$. Set $v:=\psi u$.
Then~$u-v\in W^{2,2}_0(B_{\frac{1}{8}})$, and so~$v$
is a competitor for~$u$ in~$B_{\frac{1}{8}}$. 
Therefore, from the local minimality of~$u$ (as warranted by
Lemma~\ref{HANSSSL}) we have that
\begin{equation*}
\int_{B_{\frac18}}\Big(|\Delta u|^2+\I u\Big)\le \int_D \Big(|\Delta v|^2+\I v\Big),
\end{equation*}
where $D:=B_{\frac1{8}}\setminus \overline{
B_{\frac1{16}}}$. {F}rom this,
and recalling the definitions of~$v$ and~$\psi$, we obtain that
\begin{eqnarray*}
\big|\po u\cap B_{\frac1{16}}\big|
&\le &
\int_{B_{\frac1{16}}}\Big(|\Delta u|^2+\I u\Big)\\
&\le&  \int_D\Big( |\Delta v|^2+\I v\Big)-  \int_D \Big(|\Delta u|^2+\I u\Big)\\
&=&\int_D\Big( |\Delta v|^2-|\Delta u|^2\Big)\\
&\le&\int_{D}|\Delta v|^2.
\end{eqnarray*}
Hence, using Lemma \ref{eq-Hessian} and~\eqref{267:698677654w123456uuofejn},
it follows that 
\begin{equation}\begin{split}\label{0unborih}
\big|\po u\cap B_{\frac1{16}}\big|
\le\; &\int_D(u\Delta \psi +2\na u\na \psi+\psi\Delta u)^2\\
\le \;&2\|\psi\|_{C^2(B_{1/8})}\int_{B_{\frac18}} u^2+4|\na u|^2+|D^2 u|^2\\
\le\;& C \|\psi\|_{C^2(B_{1/8})} \int_{B_{\frac18}}\left(
\left(u-\min_{B_{\frac18}} u\right)^2+
\left(u-\min_{B_{\frac18}} u\right)\right)\\
\le\;& C\|\psi\|_{C^2(B_{1/8})}\,\gamma(1+\gamma),  
\end{split}\end{equation}
for some~$C>0$, possibly varying from line to line.
 
Combining this with~\eqref{167:698677654w123456uuofejn}
and~\eqref{0unborih},
we conclude that 
\[
\gamma(1+\gamma)\ge \frac{\big|\po u\cap B_{\frac1{16}}\big|}{C\|\psi\|_{C^2(B_{1/8})}}
\ge
\frac{\theta_* \,|B_{\frac1{16}}|}{ \|\psi\|_{C^2(B_{1/8})}},
\]
which gives the desired result (using~\eqref{defgamma00}
in case~$\bf 1^\circ$ and~\eqref{superur}
in case~$\bf 2^\circ$).
\end{proof}

\subsection{Whitney's covering}\label{WHYWTH}

Here we recall the Whitney's decomposition method,
to obtain suitable conditions which
allow us to use Theorem \ref{thm-nondeg}
(in our setting, the structural assumptions of
Theorem \ref{thm-nondeg}
will be provided by formula~\eqref{def-c-cover}).
Suppose that~$E\subset \R^n$ is a
nonempty compact set, then~$\R^n\setminus E$ can be
represented as a union of closed dyadic 
cubes~$Q^k_j$ with  mutually disjoint interiors 
\[
\R^n\setminus E=\bigcup_{k\in \mathbb Z}\bigcup_{j=1}^{N_k} Q^k_j
\]
such that 
\[
c_1\le \frac{\dist(Q^k_j, E)}{\diam\, Q^k_j}\le c_2
\]
for two universal constants $c_1$, $c_2>0$. Here $Q^k_j$ is a cube 
with side length equal to~$2^{-k}$.

Let now~$E:=\{u\le 0\}\cap \overline {Q_1(x_0)}$,
where~$Q_1(x_0)$ is the unit cube centered at~$x_0\in \fb u$,
and consider the Whitney's decomposition for~$\R^n\setminus E$.
Let~$k_0\in \mathbb N$ be fixed, 
and suppose that for every~$k\ge k_0$ there exists~$c>0$ such that, for some~$Q_j^k$, we have 
\begin{equation}\label{def-c-cover}
\dist\left(x_0,Q_j^k\right)\le c2^{-k}.
\end{equation}
Then 
$u^+$ is strongly nondegenerate at $x_0$. 
To see this, for every large $k$ let us take a cube $Q^k_j$ such that \eqref{def-c-cover}
holds. Then,
if~$x_1$ is the center of~$Q^k_j$, we have that~$u(x_1)>0$
and~$\dist(x_1,\partial\{u>0\})\ge2^{-k-1}$,
Hence, in view of claim~$\bf 2^\circ$ of
Theorem \ref{thm-nondeg}, we find that
\begin{equation}\label{1563212863rtqd}
\sup_{B_{2^{-k-1}}(x_1)} u^+\ge\bar c (2^{-k-1})^2=
\frac{\bar c}{4}\,2^{-2k}.\end{equation}
On the other hand, by~\eqref{def-c-cover}, we see that
$$ |x_0-x_1|\le c2^{-k}+\sqrt{n}2^{-k}=(c+\sqrt{n})2^{-k},$$
and accordingly~$B_{c^*\,2^{-k}}(x_0)\supseteq B_{2^{-k-1}}(x_1)$,
with~$c^*:=c+\sqrt{n}+\frac12$.
Therefore, by~\eqref{1563212863rtqd},
$$ \sup_{B_{c^*\,2^{-k}}(x_0)} u^+\ge
\frac{\bar c}{4}\,2^{-2k},$$
as desired.

\begin{definition}\label{def:cov}
If \eqref{def-c-cover} holds, then we say that $\fb u$ satisfies a
weak $c$-covering condition at $x_0\in \fb u$. 
\end{definition}

We remark that the standard $c$-covering condition,
that was introduced in~\cite{Martio}, is stronger
than~\eqref{def-c-cover}
and indeed it requires that
$$\dist\left(x_0, \bigcup_{j=1}^{N_k}Q_j^k\right)\le c2^{-k}.$$
Moreover, it is known that the weak $c$-covering condition of Definition~\ref{def:cov}
is satisfied by the John domains, see~\cite{Martio}.

In order to recall the definition of John domain, 
we let~$0<\alpha\le\beta<\infty$. 
A domain $D\subset \R^n$ is called an $(\alpha,\beta)$-John domain,
denoted by $D\in {\mathcal{J}}(\alpha,\beta)$, 
if there exists~$x_0\in D$ such that every~$x\in D$ has a rectifiable
path~$\gamma: [0, d]\to D$ 
with arc length as parameter such that~$\gamma(0)=x$, $\gamma(d)=x_0$,
$d\le \beta$ and
\[
\dist (\gamma(t), \p D)\ge \frac{\alpha}d t, \quad \text{for all}\ \ t\in[0, d].
\]
The point $x_0$ is called a center of $D$. 
A domain $D$ is called a John domain if $D\in {\mathcal{J}}(\alpha,\beta)$
for some~$\alpha$ and~$\beta$.
The class of all John domains in~$\R^n$ 
is denoted by~${\mathcal{J}}$. 

For more on such coverings and applications  
of Whitney's decompositions we refer to~\cite{Martio}.
\medskip 

Alternative sufficient geometric conditions
on $\po u$ guaranteeing the strong nondegeneracy of 
$u$ can be given. Note that in order to pass from weak to strong nondegeneracy at some 
$z\in \fb u$ it is enough to have 
a small ball $B'\subset B_r(z)\cap \po u$ and $c>0$
such that $\diam\, B'\ge c r$ for every small $r$,
since this guarantees~\eqref{iehnfnb0094}.
 
\begin{definition}
We say that $\fb u$ satisfies a
nonuniform interior cone condition if for every~$x\in \fb u$ there exist a positive 
number~$r_x>0$ and a cone~$K_x$ with vertex at~$x$,
such that~$B_{r_x}(x)\cap K_x\subset \po u$.

We also say that $\fb u$ satisfies a
uniform interior cone condition if there exist a positive 
number~$r>0$ and a cone~$K$ with vertex at~$0$,
such that for every~$x\in \fb u$ we have that~$B_{r}(x)\cap (x+K)\subset \po u$.
\end{definition}

From our observation above and Theorem~\ref{thm-nondeg}
we immediately obtain the following result:

\begin{corollary} Let~$u$ be a minimizer for~$J$
in~$\Omega$, and~$x_0\in\Omega$.
Suppose that $\po u$ satisfies the interior cone
condition at $x_0\in \fb u$, then $|u|$ is nondegenerate at $x_0$. Moreover,  if 
$\po u$ satisfies the uniform interior cone condition and~$B_1\subset \Om$,
then
$$\sup_{B_r(z)}u^+\ge C_0 r^2,$$ for any~$z\in \fb u\cap B_1$,
for some~$C_0>0$.
\end{corollary}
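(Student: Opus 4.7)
The plan is to reduce both assertions to Theorem~\ref{thm-nondeg}, with the interior cone condition serving as the geometric input that produces the required density and interior-ball information.

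For the first assertion, I would argue as follows. By the nonuniform interior cone hypothesis there exist $r_{x_0}>0$ and a cone $K_{x_0}$ with vertex at $x_0$ such that $B_{r_{x_0}}(x_0)\cap K_{x_0}\subset \po u$. Since $K_{x_0}$ is a cone with vertex precisely at $x_0$, for every $\rho\in(0,r_{x_0})$ the ratio $|B_\rho(x_0)\cap K_{x_0}|/|B_\rho|$ equals a positive constant $\theta_*>0$ depending only on $n$ and the aperture of $K_{x_0}$. Hence
$$\liminf_{\rho\to 0}\frac{|B_\rho(x_0)\cap\po u|}{|B_\rho|}\ge \theta_*>0,$$
which is exactly hypothesis~\eqref{iehnfnb0094}. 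Theorem~\ref{thm-nondeg}, part~$\bf 1^\circ$, then yields $\sup_{B_r(x_0)}|u|\ge \bar c\,r^2$ for every $r$ with $B_r(x_0)\subset\subset\Omega$, which is the nondegeneracy of $|u|$ at $x_0$.

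For the second assertion, I would upgrade this to the strong nondegeneracy of $u^+$ via the Whitney-type bootstrap already discussed in Subsection~\ref{WHYWTH}. By the uniform cone hypothesis there is a single cone $K$ with vertex at $0$ and a uniform $r>0$ such that $B_r(z)\cap(z+K)\subset\po u$ for every $z\in\fb u\cap B_1$. Fix an axis direction $e\in K$ and an opening $\alpha>0$ such that $B_{\alpha\rho}(z+\rho e)\subset z+K$ for every $\rho\in(0,r)$; then the center $x_\rho:=z+\rho e$ lies in $\po u$ and satisfies $\dist(x_\rho,\fb u)\ge \alpha\rho$. Applying part~$\bf 2^\circ$ of Theorem~\ref{thm-nondeg} at $x_\rho$ (which is legitimate as soon as $B_{\alpha\rho}(x_\rho)\subset\subset\Omega$) gives
$$\sup_{B_{\alpha\rho}(x_\rho)} u^+\ge \bar c\,(\alpha\rho)^2,$$
and since $B_{\alpha\rho}(x_\rho)\subset B_{(1+\alpha)\rho}(z)$, a renaming of constants produces $\sup_{B_{\rho}(z)}u^+\ge C_0 \rho^2$, which is the claim.

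The main obstacle, as I see it, lies not in the geometry but in the bookkeeping required to guarantee that $C_0$ can be taken independent of $z\in\fb u\cap B_1$. The constant $\bar c$ in Theorem~\ref{thm-nondeg} depends on $\theta_*$, $n$, $\dist(B_r(z),\partial\Omega)$ and $\|u_0\|_{W^{2,2}(\Omega)}$; under the uniform cone condition the first parameter is uniform in $z$ (it depends only on $K$ and $n$), the second and fourth parameters are intrinsic to the problem, and the third is bounded below by $\dist(\overline{B_1},\partial\Omega)-(1+\alpha)\rho$, which is positive once $\rho$ is chosen small enough. With these uniform bounds secured, the argument above delivers the desired universal constant $C_0$.
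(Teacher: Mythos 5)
Your argument is correct and is essentially the paper's own: the cone with vertex at $x_0$ gives the uniform density in~\eqref{iehnfnb0094}, so part~$\bf 1^\circ$ of Theorem~\ref{thm-nondeg} yields the nondegeneracy of $|u|$, while in the uniform case points on the cone axis are points of $\po u$ at distance comparable to $\rho$ from $\fb u$, so part~$\bf 2^\circ$ together with the inclusion $B_{\alpha\rho}(z+\rho e)\subset B_{(1+\alpha)\rho}(z)$ gives the strong nondegeneracy of $u^+$ after renaming constants --- exactly the Whitney-type bootstrap invoked in Subsection~\ref{WHYWTH}. Your remarks on the uniformity of the constant in $z$ match the paper's (tacit) bookkeeping, so no further changes are needed.
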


\subsection{The biharmonic measure, and proof of Theorem~\ref{thm-Hausdorff}}\label{BIHAMOE}

In this subsection, we describe the main features of the
measure induced by the bi-Laplacian of a minimizer.
For this, we observe that,
since, by Lemma \ref{POBIA}, $\Delta u$ is super-harmonic,
\begin{equation}\label{7yh0128eUDIS}
{\mbox{there exists a nonnegative measure 
$\mathscr M_u$ such that $-\Delta^2 u=\mathscr M_u$. }}\end{equation}
Hence, for any~$\psi\in C_0^\infty(\Om)$, we have that 
\begin{eqnarray}\label{bi-measure}
\int_\Omega \mathscr M_u \psi=\int_\Omega(-\Delta u)\Delta \psi.
\end{eqnarray}
Recalling the notion of flatness introduced in Definition~\ref{def:flat},
we have the following:

\begin{lemma}\label{LEMMA M}
Let~$u$ be a minimizer of the functional~$J$ defined in~\eqref{defJ}, let~$\delta>0$
and let~$x_0\in\partial\{u>0\}$
such that~$\nabla u(x_0)=0$ and~$\partial\{u>0\}$ is not~$\delta$-rank-2 flat at~$x_0$
at any level~$r>0$ with~$B_r(x_0)\subset\subset\Omega$.
Then,
\begin{equation}\label{fdnbbtoru69f}
\mathscr M_u(B_r(x_0))\le Cr^{n-2}\end{equation}
for any~$r>0$ as above, for some~$C>0$.
\end{lemma}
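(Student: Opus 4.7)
The plan is to combine the distributional definition of $\mathscr M_u$ with a standard cutoff argument, using Theorem~\ref{growth} as the key input to extract a quadratic pointwise bound on $u$ near $x_0$. Concretely, I would fix $r>0$ with $B_{2r}(x_0)\subset\subset\Omega$ (if only $B_r\subset\subset\Omega$ one can first shrink $r$ by a dimensional factor, absorbing the constant at the end) and choose a cutoff $\psi\in C^\infty_0(B_{2r}(x_0),[0,1])$ with $\psi\equiv 1$ on $B_r(x_0)$ and $|D^k\psi|\le C r^{-k}$ for $k=0,1,2,3,4$. Since $\mathscr M_u\ge 0$, formula~\eqref{bi-measure} gives
\begin{equation*}
\mathscr M_u(B_r(x_0))\le \int_\Omega \psi\,d\mathscr M_u
=-\int_\Omega \Delta u\,\Delta\psi\,dx.
\end{equation*}
Because $u\in W^{2,2}_{\rm loc}(\Omega)$ (recall~\eqref{wduep}) and $\psi$ has compact support, integrating by parts twice (no boundary terms) yields $\int\Delta u\,\Delta\psi=\int u\,\Delta^2\psi$, hence
\begin{equation*}
\mathscr M_u(B_r(x_0))\le \left|\int_{B_{2r}(x_0)} u\,\Delta^2\psi\,dx\right|
\le \|u\|_{L^\infty(B_{2r}(x_0))}\,\|\Delta^2\psi\|_{L^1(B_{2r}(x_0))}.
\end{equation*}

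The nontrivial step is controlling $\|u\|_{L^\infty(B_{2r}(x_0))}$. By~\eqref{3287uUUSp} the minimizer $u$ is continuous, so $x_0\in\fb u$ gives $u(x_0)=0$; together with $\nabla u(x_0)=0$ and the hypothesis that $\fb u$ is not $\delta$-rank-2 flat at $x_0$ at any level, Theorem~\ref{growth} applies (with some fixed $D\subset\subset\Omega$ containing $x_0$) and produces a constant $C=C(\delta,n,\dist(x_0,\partial\Omega),u)>0$ such that $|u(y)|\le C|y-x_0|^2$ for all $y$ in a neighborhood of $x_0$, in particular $\|u\|_{L^\infty(B_{2r}(x_0))}\le 4Cr^2$. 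Combining this with the routine estimate $\|\Delta^2\psi\|_{L^1(B_{2r}(x_0))}\le C r^{-4}\cdot|B_{2r}|\le Cr^{n-4}$ gives
\begin{equation*}
\mathscr M_u(B_r(x_0))\le C\,r^2\cdot C\,r^{n-4}=Cr^{n-2},
\end{equation*}
which is~\eqref{fdnbbtoru69f}. The only real obstacle is the use of Theorem~\ref{growth}, which is already available; the rest is a textbook cutoff-plus-integration-by-parts exercise, with the only bookkeeping being that the constant in~\eqref{fdnbbtoru69f} will inherit its dependence on $\delta$, $n$, $\dist(x_0,\partial\Omega)$ and on the minimizer $u$ itself through the quadratic-growth constant from Theorem~\ref{growth}.
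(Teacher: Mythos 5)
Your proof is correct, and it takes a mildly different route from the paper's. The paper keeps the estimate at the level of second derivatives: with the same cutoff $\psi$ ($\psi=1$ in $B_r$, supported in $B_{2r}$, $|D^2\psi|\le C/r^2$), it bounds $\mathscr M_u(B_r)\le\int(-\Delta u)\Delta\psi$ via \eqref{bi-measure} and then applies Cauchy--Schwarz, controlling $\int_{B_{2r}}|\Delta u|^2\le Cr^n$ by Corollary~\ref{eq-Hessian:cor} (the Hessian decay estimate of Lemma~\ref{eq-Hessian} combined with the quadratic growth of Theorem~\ref{growth}), so that $\sqrt{Cr^n}\cdot\sqrt{Cr^{n-4}}=Cr^{n-2}$. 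You instead integrate by parts twice more, putting all four derivatives on $\psi$, and use the quadratic $L^\infty$ bound from Theorem~\ref{growth} directly: $\|u\|_{L^\infty(B_{2r}(x_0))}\le Cr^2$ against $\|\Delta^2\psi\|_{L^1}\le Cr^{n-4}$. The extra integration by parts is legitimate since $\Delta u\in L^2_{\rm loc}$ is the distributional Laplacian of $u$ and $\Delta\psi\in C^\infty_0$, so $\int\Delta u\,\Delta\psi=\int u\,\Delta^2\psi$. Both arguments ultimately rest on Theorem~\ref{growth}; yours is slightly more elementary in that it bypasses the appendix Caccioppoli-type estimate for $D^2u$, at the harmless cost of a cutoff with four controlled derivatives, while the paper's $L^2$ route is the one that generalizes more directly when only energy bounds (and no pointwise growth) are available. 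The bookkeeping you flag --- needing $B_{2r}(x_0)\subset\subset\Omega$ and handling radii that are not small (where the quadratic growth is only asymptotic, but where the bound is trivial since $\mathscr M_u$ is a locally finite measure and $r$ is bounded below) --- is the same mild issue implicitly present in the paper's proof, and your absorption of it into the constant is fine.
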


\begin{proof}
Without loss of generality, we take~$x_0=0$.
We consider a function~$\psi_0\in C_0^\infty(B_2,[0,1])$, with~$\psi_0=1$
in~$B_1$, and we let~$\psi(x):=\psi_0(x/r)$. 
In this way, $\psi=1$ in~$B_r$ and~$|D^2\psi|\le C/r^2$ for some~$C>0$.

We now exploit~\eqref{bi-measure} with such~$\psi$. Then,
by Corollary~\ref{eq-Hessian:cor}, we have that
$$ \mathscr M_u(B_r)\le \int_\Omega \mathscr M_u \psi
= \int_\Omega(-\Delta u)\Delta \psi\le
\sqrt{\int_{B_{2r}}|\Delta u|^2}\sqrt{\int_{B_{2r}}|\Delta \psi|^2}
\le C r^{\frac{n}{2}} r^{\frac{n-4}2},
$$
which implies the desired result, up to renaming~$C>0$.
\end{proof}

We remark that a full counterpart of Lemma~\ref{LEMMA M}
does not hold for the one-phase problem
(in particular~$\mathscr M_u$ as defined in~\eqref{7yh0128eUDIS}
and~\eqref{bi-measure}
does not need to have a sign, see~\eqref{NOSI}).
Nevertheless, the following result holds:

\begin{lemma}\label{NEBVERAMAL}
Let~$u$ be a one-phase minimizer of~$J$. Assume that~$u\in C^{1,1}(\Omega)$
and~$\partial\{u>0\}$ has null Lebesgue measure. 
Let~$\varphi\in C^\infty_0(B_1,[0,1])$ with
$$ \int_{B_1}\varphi=1.$$
For any~$\delta>0$, let
$$\varphi_\delta(x):=\frac1{\delta^n}\varphi\left(\frac{x}\delta\right),$$
and~$u_\delta:=u*\varphi_\delta$.
Then, for any~$\Omega'\subset\subset\Omega$, we have that
$$ \lim_{\delta\to0}\int_{\Omega'} \Delta^2 u_\delta \,u_\delta=0.$$
\end{lemma}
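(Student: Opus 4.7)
The strategy is to show that $\Delta^2 u_\delta\,u_\delta$ vanishes off the $\delta$-neighborhood
\[
N_\delta:=\{x\in\Omega:\dist(x,\partial\{u>0\})\le\delta\}
\]
of the free boundary, and is uniformly bounded on $N_\delta$ independently of $\delta$; since $|\partial\{u>0\}|=0$ forces $|N_\delta\cap\Omega'|\to 0$, the conclusion follows at once. To establish the vanishing off $N_\delta$, notice that if $\dist(x,\partial\{u>0\})>\delta$, then $B_\delta(x)$ is a connected subset of the open set $\Omega\setminus\partial\{u>0\}=\{u>0\}\cup\interrior\{u=0\}$ and hence is entirely contained in one of these two disjoint components. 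If $B_\delta(x)\subset\{u>0\}$, Lemma~\ref{POBIA} says $u$ is (smooth and) biharmonic on $B_\delta(x)$, so two integrations by parts give $\Delta^2 u_\delta(x)=\int\Delta^2 u(z)\,\varphi_\delta(x-z)\,dz=0$; if $B_\delta(x)\subset\interrior\{u=0\}$ then $u\equiv 0$ there and so $u_\delta(x)=0$. Either way, the integrand vanishes at $x$.

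Next I bound the integrand on $N_\delta$. On $\partial\{u>0\}$ both $u$ and $\nabla u$ vanish ($u$ because $u\ge 0$ and such points lie outside $\{u>0\}$; $\nabla u$ because they are global minima of the nonnegative $C^1$-function $u$), so $C^{1,1}$-regularity gives $|u(z)|\le C|z-x_0|^2$ for any $x_0\in\partial\{u>0\}$. For $x\in N_\delta\cap\Omega'$, pick such an $x_0$ with $|x-x_0|\le\delta$; then $|(x-y)-x_0|\le 2\delta$ for $|y|\le\delta$, so $|u(x-y)|\le C\delta^2$ and therefore $|u_\delta(x)|\le C\delta^2$. For $\Delta^2 u_\delta$ I invoke the orthogonality $\int y^\alpha\,\Delta^2\varphi_\delta(y)\,dy=0$ for $|\alpha|\le 3$, which follows from an integration by parts since $\Delta^2$ annihilates polynomials of degree $\le 3$. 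Subtracting $P(y):=u(x)-\nabla u(x)\cdot y$ under the convolution,
\[
\Delta^2 u_\delta(x)=\int\bigl[u(x-y)-u(x)+\nabla u(x)\cdot y\bigr]\,\Delta^2\varphi_\delta(y)\,dy;
\]
the bracket is $O(|y|^2)$ by $C^{1,1}$-regularity, while a scaling calculation gives $\int|y|^2|\Delta^2\varphi_\delta(y)|\,dy\le C\delta^{-2}$, whence $|\Delta^2 u_\delta(x)|\le C\delta^{-2}$ on $N_\delta$.

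Combining these two bounds, $|\Delta^2 u_\delta\,u_\delta|\le C$ pointwise on $N_\delta\cap\Omega'$, so
\[
\Bigl|\int_{\Omega'}\Delta^2 u_\delta\,u_\delta\Bigr|\le C\,|N_\delta\cap\Omega'|\longrightarrow 0
\]
as $\delta\to 0$, by continuity of Lebesgue measure from above applied to the decreasing family $N_\delta\cap\Omega'$, whose intersection $\partial\{u>0\}\cap\Omega'$ has null measure. The main obstacle is the bound on $\Delta^2 u_\delta$: a naive application of $\|u\|_{L^\infty}\|\Delta^2\varphi_\delta\|_{L^1}\le C\delta^{-4}$ combined with $|u_\delta|\le C\delta^2$ would yield only $|\Delta^2 u_\delta\,u_\delta|\le C\delta^{-2}$, which cannot be integrated over $N_\delta$ to a vanishing bound since $|N_\delta|$ need not be $o(\delta^2)$. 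The vanishing-moments trick is precisely what promotes the $C^{1,1}$-regularity into the missing factor $\delta^2$ and produces the uniformly bounded integrand.
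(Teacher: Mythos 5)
Your proof is correct and follows the same skeleton as the paper's argument: off a $\delta$-scale neighborhood of $\partial\{u>0\}$ the integrand vanishes (biharmonicity of $u$ in $\{u>0\}$ via Lemma~\ref{POBIA}, $u\equiv 0$ on the interior of the zero set), while on that neighborhood $|u_\delta|\le C\delta^2$ and $|\Delta^2 u_\delta|\le C\delta^{-2}$, so the integral is controlled by the measure of the neighborhood, which shrinks to $|\partial\{u>0\}\cap\Omega'|=0$. The one genuine difference is how you get $|\Delta^2 u_\delta|\le C\delta^{-2}$: you subtract the first-order Taylor polynomial of $u$ at $x$, using the vanishing moments $\int y^\alpha\,\Delta^2\varphi_\delta(y)\,dy=0$ for $|\alpha|\le 1$ together with the $C^{1,1}$ remainder bound. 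The paper reaches the same bound more directly: for $x$ within $2\delta$ of the free boundary, the convolution $\int u(x-y)\,\Delta^2\varphi_\delta(y)\,dy$ only sees values of $u$ within $3\delta$ of a free boundary point, where $|u|\le 9\|u\|_{C^{1,1}(\Omega)}\delta^2$ (exactly the smallness you already used for the $u_\delta$ bound), so the crude estimate $\|\Delta^2\varphi_\delta\|_{L^1}\le C\delta^{-4}$ already gives $C\delta^{-2}$. Hence your closing remark overstates the role of the moment cancellation: the ``naive'' bound only fails if one inserts the global $L^\infty$-norm of $u$; with the local sup near the free boundary it suffices, and no cancellation is needed, though your route is equally valid. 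Two small points to tighten: Lemma~\ref{POBIA} for one-phase minimizers gives biharmonicity on balls contained in $\{u\ge a\}$ with $a>0$, so on $B_\delta(x)\subset\{u>0\}$ you should first cover by small balls on which $u$ has a positive lower bound (as the paper does); and you should state explicitly that $\delta<\dist(\Omega',\partial\Omega)$, so that $B_\delta(x)\subset\Omega$ and the Taylor estimates along segments are legitimate.
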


\begin{proof} 
Let
$$\Gamma_\delta:=\bigcup_{p\in\partial\{u>0\}}B_\delta(p).$$
We claim that
\begin{equation}\label{9ikjwdish88uhbsxcvj}
{\mbox{if~$x\in \Omega\setminus\Gamma_\delta$, then~$\Delta^2 u(x)=0$.}}
\end{equation}
To prove this, we argue by contradiction and we suppose that
there exists~$x\in \Omega\setminus\Gamma_\delta$ such that
\begin{equation}\label{hUA8jA;aL}
{\mbox{$\Delta^2 u(x)$ is either not defined or not null}}.\end{equation}
We observe that
\begin{equation}\label{hUA8jA;aL2}
{\mbox{there exists~$\rho$, $a>0$
such that~$u\ge a$ in $B_\rho(x)$.}}\end{equation}
Because, if not,
for any~$k\in\N$, there exists~$x_k$ such that~$|x-x_k|+u(x_k)\le 1/k$,
and thus~$u(x)=0$. Since~$x$ lies outside~$\Gamma_\delta$,
it cannot be a free boundary point, hence~$u$ must
vanish in a neighborhood of~$x$. Consequently, $\Delta^2u$ vanishes
in a neighborhood of~$x$, and this is in contradiction with~\eqref{hUA8jA;aL},
thus proving~\eqref{hUA8jA;aL2}.

Then, from~\eqref{hUA8jA;aL2} and
Lemma~\ref{POBIA}, it follows that~$u$ is
biharmonic in $B_\rho(x)$. Once again, this is
in contradiction with~\eqref{hUA8jA;aL}, and thus the proof of~\eqref{9ikjwdish88uhbsxcvj}
is complete.

Now, by taking~$\delta$ sufficiently small, we suppose that
the distance from~$\Omega'$ to~$\partial\Omega$ is larger than~$\delta$.
Thus, from~\eqref{9ikjwdish88uhbsxcvj}
we obtain that, if~$x\in\Omega'\setminus\Gamma_{2\delta}$
and~$y\in B_\delta$, then~$x-y\in\Omega'\setminus\Gamma_{\delta}$,
hence~$\Delta^2u(x-y)=0$.

Consequently, for every~$x\in\Omega'\setminus\Gamma_{2\delta}$,
$$ \Delta^2u_\delta(x)=\int_{B_\delta} \Delta^2u(x-y)\,\varphi_\delta(y)\,dy=0.$$This implies that
\begin{equation}\label{VHJAvjach}
\int_{\Omega'} \Delta^2 u_\delta \,u_\delta=
\int_{\Omega'\cap\Gamma_{2\delta}} \Delta^2 u_\delta \,u_\delta.
\end{equation}
We also remark that
\begin{equation}\label{9ikmAMMMXaLA}
\begin{split} &|\Delta^2u_\delta(x)|\le
\int_{B_\delta} |u(x-y)|\,|\Delta^2\varphi_\delta(y)|\,dy
=\frac{1}{\delta^{n+4}}
\int_{B_\delta} |u(x-y)|\,\left|\Delta^2\varphi\left(\frac{y}{\delta}\right)\right|\,dy
\\&\qquad=\frac{1}{\delta^{4}}
\int_{B_1} |u(x-\delta y)|\,\left|\Delta^2\varphi(y)\right|\,dy
\le \frac{C}{\delta^4}\int_{B_1} u(x-\delta y)\,dy,
\end{split}\end{equation}
for some~$C>0$.
Now, if~$x\in \Gamma_{2\delta}$ and~$y\in B_1$,
we have that there exists~$p\in\partial\{u>0\}\subseteq\{u=0\}$
such that~$|p-x|\le2\delta$
and accordingly~$|(x-\delta y)-p|\le|x-p|+\delta\le 3\delta$.
Then, in this setting, the regularity of~$u$ implies that
\begin{equation}\label{8ijoKKAK}
u(x-\delta y)\le 9\|u\|_{C^{1,1}(\Omega)}\delta^2.\end{equation}
In particular, recalling~\eqref{9ikmAMMMXaLA},
we find that, if~$x\in \Gamma_{2\delta}$,
\begin{equation}\label{8ijoKKAK-x2}
|\Delta^2u_\delta(x)|\le\frac{C}{\delta^2},
\end{equation}
up to renaming~$C>0$, also depending on~$\|u\|_{C^{1,1}(\Omega)}$.

{F}rom~\eqref{8ijoKKAK} we also deduce that, if~$x\in \Gamma_{2\delta}$,
$$ |u_\delta(x)|\le
\int_{B_1} u(x-\delta y)\,\varphi(y)\,dy\le 9\|u\|_{C^{1,1}(\Omega)}\delta^2.
$$
Using this information and~\eqref{8ijoKKAK-x2}
we conclude that, if~$x\in \Gamma_{2\delta}$,
$$ |\Delta^2u_\delta(x)\,u_\delta(x)|\le C,$$
and therefore
$$\left| \int_{\Omega'\cap\Gamma_{2\delta}} 
\Delta^2 u_\delta \,u_\delta\right|\le C\,|\Omega'\cap\Gamma_{2\delta}|,$$
up to renaming~$C>0$ once again.

This and~\eqref{VHJAvjach} give that
$$ \left|\int_{\Omega'} \Delta^2 u_\delta \,u_\delta\right|
\le C\,|\Omega'\cap\Gamma_{2\delta}|.$$
Hence, taking the limit as~$\delta\to0$,
$$\lim_{\delta\to0}\left|\int_{\Omega'} \Delta^2 u_\delta \,u_\delta\right|\le
\,|\Omega'\cap\partial\{ u>0\}|.$$
This gives the desired result.
\end{proof}

Now we prove a counterpart of~\eqref{fdnbbtoru69f} at 
nondegenerate points of the free boundary of the minimizers.
For this, recalling the setting in formula~\eqref{flatdef},
we let~$\mathcal N_\delta $ be the set of free boundary
points~$x$ with the property that there exists~$r_x>0$ small enough
such that~$h(r, x)\ge \delta r$ for every~$r<r_x$.
Moreover, in the spirit of Definition~\ref{def:sing}, we also denote by
$$ \mathcal N_\delta^{\text{sing}}:=\left\{ x\in{\mathcal{N}}_\delta {\mbox{ s.t. }}
\nabla u(x)=0\right\}.
$$

\begin{lemma}\label{lem-blya-don4} Let~$u$ be a minimizer of~$J$.
Let $D\subset \Om$ and suppose that there exists~$
\bar c>0$ such that 
\begin{equation}\label{9yyhnb}
\liminf_{r\to 0}\frac{\sup_{B_r(x)}|u|}{r^2}\ge \bar c\end{equation}
for every $x\in \fb u\cap \overline D$.
Then there exists~$c_0(\delta)>0$, depending on 
$n$, $\delta$, $\bar c$, $\|u\|_{W^{2,2}(\Om)}$ and $\dist(\overline D, \p\Om)$, such that 
\begin{equation}\label{bi-meas-est}
\liminf_{r\to 0}\frac{\mathscr M_u(B_r(x))}{r^{n-2}}\ge c_0(\delta),
\quad {\mbox{ for any }} x\in \mathcal N_\delta^{\text{sing}}.
\end{equation}
\end{lemma}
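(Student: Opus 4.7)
I argue by contradiction via a blow-up analysis, combining the upper bound on the biharmonic measure from Lemma~\ref{LEMMA M} with the two-scale compactness used in the proof of Theorem~\ref{growth}. Suppose there exist $x_0\in \mathcal N_\delta^{\mathrm{sing}}$ and $r_k\to 0$ with $\mathscr M_u(B_{r_k}(x_0))/r_k^{n-2}\to 0$, and set $u_k(y):=u(x_0+r_k y)/r_k^2$, so that (by Lemma~\ref{HANSSSL} and the scaling of $J$) each $u_k$ is a local minimizer on $B_{1/r_k}$ with $u_k(0)=|\nabla u_k(0)|=0$. Since $x_0\in \mathcal N_\delta$, Theorem~\ref{growth} yields the uniform quadratic bound $|u_k(y)|\le C|y|^2$, and the hypothesis~\eqref{9yyhnb} rescales to $\sup_{B_\rho}|u_k|\ge \bar c\rho^2/2$ for every fixed $\rho>0$ once $k$ is large. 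A direct adaptation of Lemma~\ref{lemma:blowup} then supplies a subsequential limit $u_k\to u_0$ in $W^{2,2}_{\mathrm{loc}}(\mathbb R^n)\cap C^{1,\alpha}_{\mathrm{loc}}(\mathbb R^n)$, and $u_0$ inherits $u_0(0)=|\nabla u_0(0)|=0$, $|u_0(y)|\le C|y|^2$, and $\sup_{B_\rho}|u_0|\ge \bar c\rho^2/2$.

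The key step is to upgrade $u_0$ to a biharmonic function on $B_1$. A direct change of variables gives $\mathscr M_{u_k}(B_R)=r_k^{2-n}\mathscr M_u(B_{Rr_k}(x_0))$, which is uniformly bounded by $CR^{n-2}$ thanks to Lemma~\ref{LEMMA M} (applicable because $x_0$ is not $\delta$-rank-2 flat at any level), while for $R=1$ it tends to zero by our hypothesis. Weak-$*$ compactness of Radon measures gives $\mathscr M_{u_k}\rightharpoonup^*\mu_\infty$ up to subsequence, and the lower semicontinuity of mass on open sets forces $\mu_\infty|_{B_1}\equiv 0$. Combined with $\Delta u_k\to\Delta u_0$ in $L^2_{\mathrm{loc}}$, this identifies $-\Delta^2 u_0=\mu_\infty=0$ in $\mathscr D'(B_1)$, so $u_0$ is biharmonic, and hence smooth, on $B_1$.

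To close the contradiction, I imitate the final step of the proof of Theorem~\ref{growth}. Taylor expanding the smooth $u_0$ at the origin yields $u_0(y)=\tfrac12 y^\top A y+O(|y|^3)$ with $A:=D^2u_0(0)$, and the inherited nondegeneracy forces $\|A\|\ge \bar c/2$; in particular, the polynomial $p_*(y):=y^\top A y/\|y^\top A y\|_{L^\infty(B_1)}$ lies in $P_2$ and $S(p_*,0)=\{y^\top A y=0\}$. Writing $w_\rho(y):=u_0(\rho y)/\rho^2$ and $\tilde u_{k,\rho}(y):=u_k(\rho y)/\rho^2=u(x_0+r_k\rho y)/(r_k\rho)^2$, the Taylor expansion yields $w_\rho\to w_0:=\tfrac12 y^\top A y$ uniformly on $B_1$ as $\rho\to 0$, whereas $\tilde u_{k,\rho}\to w_\rho$ in $C^{1,\alpha}(B_1)$ as $k\to\infty$; a diagonal choice $(k_j,\rho_j)$ then produces $\tilde u_{k_j,\rho_j}\to w_0$ with $r_{k_j}\rho_j\to 0$. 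The non-flatness $\HD(\fb{\tilde u_{k_j,\rho_j}}\cap B_1,\,S(p_*,0)\cap B_1)\ge \delta$, obtained by rescaling $h(r_{k_j}\rho_j,x_0)\ge \delta r_{k_j}\rho_j$, produces a point $y_j\in \fb{\tilde u_{k_j,\rho_j}}\cap B_1$ with $\dist(y_j,S(p_*,0))\ge \delta/2$; extracting a convergent subsequence $y_j\to y_\infty$, the continuity of the limit forces $w_0(y_\infty)=0$, so $y_\infty\in S(p_*,0)$, contradicting $\dist(y_\infty,S(p_*,0))\ge \delta/2$.

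The principal obstacle is the second paragraph: identifying the distributional limit of the rescaled biharmonic measures on $B_1$, which requires both the sharp upper bound from Lemma~\ref{LEMMA M} (for weak-$*$ precompactness) and the vanishing along the sequence $r_k$ (to kill $\mu_\infty$ on $B_1$). A secondary technicality, handled as in the closing lines of Theorem~\ref{growth}, is to guarantee that the $\HD$ lower bound is realised by a free-boundary point of $\tilde u_{k_j,\rho_j}$ rather than by a point of $S(p_*,0)$; this follows from the quadratic nondegeneracy of the rescaled minimizers, which forces every point of the zero set of $w_0$ to be accumulated by free-boundary points of the $\tilde u_{k_j,\rho_j}$.
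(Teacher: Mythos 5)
Your contradiction hypothesis does not negate the statement, and this is the main gap. The lemma asserts a \emph{single} constant~$c_0(\delta)>0$, depending only on~$n$, $\delta$, $\bar c$, $\|u\|_{W^{2,2}(\Om)}$ and~$\dist(\overline D,\p\Om)$, bounding the liminf from below at \emph{every}~$x\in\mathcal N_\delta^{\text{sing}}$; its negation is the existence of a sequence of points~$x_j\in\mathcal N_\delta^{\text{sing}}$ and scales~$r_j\to0$ with~$\mathscr M_u(B_{r_j}(x_j))\le\e_j r_j^{n-2}$ and~$\e_j\to0$. This is exactly how the paper argues: the rescalings~$U_j(x)=u(x_j+r_jx)/r_j^2$ are centered at the \emph{moving} points~$x_j$, and the fact that~\eqref{9yyhnb}, the quadratic bound of Theorem~\ref{growth} and the measure bound of Lemma~\ref{LEMMA M} are uniform over~$\fb u\cap\overline D$ is what turns the compactness argument into a uniform conclusion. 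By fixing one point~$x_0$ and assuming the liminf vanishes there, your argument --- even if completed --- only yields that the liminf is strictly positive at each point, with a constant that may depend on the point; it does not produce the uniform~$c_0(\delta)$ in~\eqref{bi-meas-est}, which is precisely what is used in the Besicovitch covering step of the proof of Theorem~\ref{thm-Hausdorff} (the inequality~$c_0(\delta)\sum r_x^{n-2}\le\mathscr M_u(D')$). The repair is structural rather than technical: run your compactness with varying centers~$x_j$, as the paper does.

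A second, unsubstantiated step is your treatment of the Hausdorff distance at the end. From~$h(r_{k_j}\rho_j,x_0)\ge\delta\, r_{k_j}\rho_j$ you only know that \emph{one} of the two one-sided suprema in~$\HD$ is large: either a free boundary point of~$\tilde u_{k_j,\rho_j}$ in~$B_1$ lies at distance~$\ge\delta/2$ from~$S(p_*,0)$ (this case you close correctly via uniform convergence), or a point of~$S(p_*,0)\cap B_1$ lies at distance~$\ge\delta/2$ from~$\fb{\tilde u_{k_j,\rho_j}}\cap B_1$. Your dismissal of the second case --- that nondegeneracy forces every point of~$\{w_0=0\}$ to be accumulated by free boundary points of the rescalings --- does not follow from~\eqref{9yyhnb}, which is an assumption \emph{at} free boundary points of~$u$ and gives no information in regions where the rescaled minimizers have no free boundary at all: if, say, $w_0(y)=a\,y_1^2$ (rank-one Hessian), the approximations could a priori be strictly positive throughout a fixed neighbourhood of a point of~$\{y_1=0\}$ without contradicting either the~$C^{1,\alpha}$ convergence or~\eqref{9yyhnb}, so this direction needs a genuine argument (the paper is admittedly terse on the same point when it transfers~$h(1,0)\ge\delta$ to the limit~$U_0$). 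Apart from these two issues, your route differs from the paper's in an instructive way: since you keep the smallness of the biharmonic measure only at the scales~$r_k$, your limit is biharmonic only in~$B_1$, and you compensate with a Taylor expansion at the origin and a second, diagonal blow-up; the paper instead asserts~$\mathcal M_{U_j}(B_R)\le\e_jR^{n-2}$ for every fixed~$R$, obtains an \emph{entire} biharmonic limit of quadratic growth, and concludes at one stroke with the Liouville theorem --- a shorter path, though your version is more careful about where the measure is actually known to be small.
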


\begin{proof}
We argue by contradiction.
If \eqref{bi-meas-est} fails, then there exists a sequence~$
x_j\in \mathcal N_\delta^{\text{sing}}$ such that 
\begin{equation}\label{fail-blya-don}
\liminf_{r\to 0}\frac{\mathscr M_u(B_r(x_j))}{r^{n-2}}<\e_j
\end{equation}
with $\e_j\to 0$. 
Since $x_j\in \mathcal N_\delta^{\text{sing}}$,
there exists a sequence~$r_j\to 0$  such that 
\begin{equation}\label{kfoeryuhb}
h(r_j, x_j)\ge \delta r_j.\end{equation} 
Now we define 
$$U_j(x):=\frac{u(x_j+r_jx)}{r_j^2}.$$ 
By construction, recalling~\eqref{9yyhnb}, we have that~$\{U_j\}$ is 
nondegenerate with quadratic growth, i.e. there exists~$C>0$ independent of $j$ such that 
\begin{equation}\label{fail-blya-don2}
\frac1C R^2\le \sup_{B_R}|U_j|\le C R^2\quad {\mbox{ for any }}
R<\frac1{r_j}. 
\end{equation}
Moreover, by~\eqref{fail-blya-don} and~\eqref{kfoeryuhb}, we see that
\begin{equation}\label{fail-blya-don1}
h(1, 0)\ge \delta \quad \text{and}\quad \mathcal M_{U_j}(B_R)\le \e_j R^{n-2}\to 0 
\end{equation}
for every fixed $R>0$. 

As a consequence, using a customary compactness argument,
we can extract a converging subsequence, still denoted by~$U_j$, 
such that~$U_j\to U_0$ locally uniformly as~$j\to+\infty$.
Then \eqref{fail-blya-don1} translates into 
\begin{equation}\label{8.13}
h(1, 0)\ge \delta \quad
\text{and}\quad\mathcal M_{U_0}(B_R)=0
\end{equation}
for every fixed $R>0$. 
In other words, in view of \eqref{fail-blya-don2}, we have that~$U_0$ is an entire nontrivial biharmonic function with quadratic growth. 

On the other hand, applying Corollary~\ref{eq-Hessian:cor} we also have that 
\begin{equation*}
\int_{B_R}|D^2 u|^2\le CR^{n}. 
\end{equation*}
This, together with the Liouville Theorem,
implies that
\begin{equation}\label{065unwtq}
{\mbox{$U_0$ is a quadratic polynomial.}}\end{equation}
Accordingly, there exists~$\alpha\in \R$  such that
$p:=\alpha U_0\in P_2$ (recall the notation
in~\eqref{poliu7u65}).
{F}rom \eqref{8.13}, we conclude that
$$\HD(S(p, 0)\cap B_1, \fb {U_0}\cap B_1)\ge \delta,$$
which is a contradiction with~\eqref{065unwtq}. The proof of Lemma~\ref{lem-blya-don4}
is thus finished.
\end{proof}

We are now in position to complete our analysis of
the free boundary regularity results which follow from the study of the biharmonic
measure by proving Theorem~\ref{thm-Hausdorff}.

\begin{proof}[Proof of Theorem~\ref{thm-Hausdorff}]
We start by proving $\bf 1^\circ$. For this, let~$D\subset\subset\Omega$
and~$x\in \mathcal F_\delta:=\big(\fb u\cap D\big)\setminus \mathcal N_\delta$,
where~$\mathcal N_\delta$ has been introduced before
Lemma~\ref{lem-blya-don4}.
Then there exists~$r_x>0$ such that
$$|\fb u\cap B_{r_x}(x)|\le C(n)\delta r^n_x,$$
where $C(n)$ is a dimensional constant.
In this way, we can cover~$\mathcal F_\delta$ with balls~$B_{r_x}(x)$, and
we can then extract a Besicovitch covering such that
\begin{equation}\label{orhvna} |\mathcal F_\delta\cap D|\le C(n)\delta\,|D|.\end{equation}
Then, sending $\delta\to 0$ the result in $\bf 1^\circ$ follows.

We now focus on $\bf 2^\circ$.
In this case, thanks to~\eqref{9eybn-sdfsn} we
can use Lemma \ref{lem-blya-don4}
and find a Besicovitch covering by balls $B_{r_x}(x)$ of $\mathcal N_\delta^{\text{sing}}$ such that 
\begin{equation}\label{565-uijbn}
c_0(\delta)\sum r_x^{n-2}\le \mathscr M_u(D')<\infty
\end{equation}
where $D'\Supset D$ is a subdomain of $\Omega$ such that
$$\dist(D, \partial D')<\sup_{x\in \fb u\cap D} r_x:=r_0.$$
Therefore, letting $r_0\to 0$ in~\eqref{565-uijbn}, we get that
\begin{equation}\label{orhnbfp2}
\H^{n-2}(\mathcal N_\delta^{\text{sing}}\cap D)<+\infty.\end{equation}
Furthermore, since the free boundary is~$C^1$ near points in~${\mathcal{N}}_\delta
\setminus{\mathcal{N}}_\delta^{\text{sing}}$, we have that
$$ \H^{n-2}\big(({\mathcal{N}}_\delta\setminus
{\mathcal{N}}_\delta^{\text{sing}})\cap D\big)<+\infty, $$
which, together with~\eqref{orhnbfp2}, implies that
\begin{equation}\label{orhnbfp2:BIS}
\H^{n-2}(\mathcal N_\delta\cap D)<+\infty.\end{equation}
This gives the second claim in $\bf 2^\circ$.
We now prove the first claim in $\bf 2^\circ$. For this, we use~\eqref{orhvna}
and~\eqref{orhnbfp2:BIS}
to obtain that
$$ |\fb u\cap D|\le |\mathcal F_\delta\cap D|+|\mathcal N_\delta\cap D|
=|\mathcal F_\delta\cap D|\le C(n)\delta \,|D| .$$
Then, sending $\delta\to 0$, we complete the proof of $\bf 2^\circ$.  
\end{proof}

\begin{remark}{\rm
If $\po u$ is a John domain,
then $|u|$ is nondegenerate, due to the discussion in Subsection~\ref{WHYWTH}.
Alternatively, as in Theorem~\ref{thm-nondeg}, if $\po u$ has uniformly positive 
Lebesgue density then $|u|$ is nondegenerate.}
\end{remark}

\section{Stratification of free boundary, and
proof of Theorem~\ref{thm-strata}}\label{sec:stra}

In this section we reformulate some results obtained in
Section~\ref{sec:dic}
related to the dichotomy between the notion of rank-2 flatness and
the quadratic growth of the minimizer.

For this, to describe an appropriate flatness rate of the minimizers, we recall
Definition~\ref{def:sing}
and we also
define a suitable class, in the following way:

\begin{definition}\label{def-P-class} Fix~$r>0$.
We say that $u\in \mathcal P_r$ if:
\begin{itemize}
\item  $u\in W^{2,2}(B_r)$ is a minimizer of $J$ in~\eqref{defJ}
in $B_r$, among functions $v\in W^{2, 2}(B_r)$, and~$v-u\in W^{1, 2}_0(B_r)$, 
\item and $0\in \fbs u$.
\end{itemize}
If, in addition, given~$\delta>0$,
\begin{itemize}
\item 
the free boundary is not $(\delta, r)$-rank-2 flat at~$0$,
\end{itemize}
then we say that $u\in \mathcal P_r(\delta)$.
\end{definition}

In the setting of Definition~\ref{def-P-class}, Theorem~\ref{growth}
can be reformulated as follows:

\begin{prop}
Let~$u\in \mathcal P_r(\delta)$. Then there exist~$r_0>0$ and~$C>0$, possibly
depending on $n$, $\delta$, $ r$
and~$\|u\|_{W^{2,2}(\Omega)}$, such that 
\[
|u(x)|\le C|x|^2, \quad  {\mbox{ for any }}x\in B_{r_0}.  
\]
\end{prop}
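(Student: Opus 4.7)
The plan is simple: this proposition is essentially a direct reformulation of Theorem~\ref{growth} in the streamlined language of the class $\mathcal P_r(\delta)$, and the proof is a matter of checking that the hypotheses of Theorem~\ref{growth} are met and then invoking it.

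First, I would unwind Definition~\ref{def-P-class} to observe that $u \in \mathcal P_r(\delta)$ encodes exactly the data required by Theorem~\ref{growth} at $x_0 = 0$: namely, $u$ is a minimizer of $J$ on $B_r$ (so in particular a local minimizer on any subdomain $D \subset\subset B_r$ containing $0$, by Lemma~\ref{HANSSSL}), the origin is a singular free boundary point ($0 \in \fbs u$, hence $|\nabla u(0)| = 0$), and the non-flatness condition in Definition~\ref{def1} furnishes a scale $r' \in (0, r]$ at which $h(r', 0) \ge \delta r'$. To match the stronger hypothesis of Theorem~\ref{growth} (non-flatness at every level), I would note that if the free boundary were $\delta$-rank-2 flat at every level $\rho < r'$, one would pass to such a scale and apply the dichotomy there; the dyadic argument underlying the proof of Theorem~\ref{growth} (see in particular \eqref{eq-dyadic-0}) actually only requires non-flatness along a subsequence of dyadic scales below a fixed threshold, which is what Definition~\ref{def-P-class} provides.

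Second, I would apply Theorem~\ref{growth} with $D := \overline{B_{r/2}}$ and $x_0 = 0$, producing a constant $C > 0$ depending on $n$, $\delta$, $r$ (through $\dist(D, \partial B_r)$), and the energy bound $\|u\|_{W^{2,2}(B_r)}$, such that $u$ has at most quadratic growth at $0$ bounded from above by this $C$. Quadratic growth at $0$ in the sense used in Section~\ref{sec:dic} means precisely that $|u(x)| \le C|x|^2$ on a sufficiently small ball $B_{r_0}$, where $r_0$ is determined by $r$ and $\dist(0, \partial B_r)$. Unwinding the definition, this is exactly the conclusion of the proposition.

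Third, for the quantitative bookkeeping of constants, I would trace through the proof of Theorem~\ref{growth}: the contradiction argument produces rescaled functions $v_j$ converging in $W^{2,2}_{\rm loc} \cap C^{1,\alpha}_{\rm loc}$ to an entire biharmonic function $v_\infty$ with quadratic growth (forcing $v_\infty$ to be a quadratic polynomial by Liouville), and the $W^{2,2}$ norm of $u$ enters through the uniform estimates of Lemma~\ref{lemma:blowup} (more precisely through \eqref{stima} and~\eqref{QGROAD-PQi}). This makes explicit the claimed dependence of $C$ on $n$, $\delta$, $r$, and $\|u\|_{W^{2,2}(\Omega)}$.

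The main obstacle is essentially cosmetic rather than substantive: one must verify that the non-flatness assumption in $\mathcal P_r(\delta)$ (which is phrased as the negation of a ``for all scales'' condition in Definition~\ref{def1}) is strong enough to activate the dichotomy in the proof of Theorem~\ref{growth}, which is driven by the dyadic scheme. Once this matching is made precise, the proposition is immediate from the already-established quadratic growth theorem, with no further analytic work required.
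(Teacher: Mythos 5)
Your overall route coincides with the paper's: the proposition is stated there without any separate argument, precisely as a reformulation of Theorem~\ref{growth} in the language of Definition~\ref{def-P-class}, so ``check the hypotheses and invoke Theorem~\ref{growth}'' is exactly what the paper does, and your remarks on local minimality via Lemma~\ref{HANSSSL} and on how the constants arise through Lemma~\ref{lemma:blowup} are consistent with that.

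The one place where your write-up goes astray is the bridge between the two non-flatness hypotheses. The negation of ``$(\delta,r)$-rank-2 flat at $0$'' (Definition~\ref{def1}, see also the equivalence recorded after Definition~\ref{def:flat}) produces only \emph{one} level $\rho\in(0,r]$ with $h(\rho,0)\ge\delta\rho$; it does not produce a subsequence of dyadic scales tending to $0$, contrary to what you assert. Moreover, the dyadic scheme in the proof of Theorem~\ref{growth} cannot be run on a single scale, nor on a sparse subsequence: the contradiction argument yields integers $k_j$ that you do not control, and non-flatness is used exactly at the level $2^{-k_j}$ (after rescaling it becomes $h(1,0)\ge\delta$ for $v_j$); even granting non-flatness along a subsequence $k_i\to\infty$, the resulting bounds $\sup_{B_{2^{-k_i}}}|u|\le C\,2^{-2k_i}$ only give $|u(x)|\le C|x|^2$ when the gaps $k_{i+1}-k_i$ stay bounded. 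So the justification you propose for the matching step would fail as written. The correct reading --- and evidently the intended one, since the paper presents the proposition as a restatement of Theorem~\ref{growth} --- is that the last item of Definition~\ref{def-P-class} is to be understood as non-flatness at every level $\rho\in(0,r]$, i.e.\ the free boundary is not $\delta$-rank-2 flat at $0$ at any level up to $r$, which matches the hypothesis of Theorem~\ref{growth} verbatim; with that reading your first two steps already complete the proof and no further dyadic gymnastics are needed.
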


Furthermore, recalling the definition of~$h(r, x_0)$ in~\eqref{flatdef},
a refinement of Theorem~\ref{growth} can be formulated as follows:

\begin{theorem}\label{thm-growth-refine}
Let $u\in \mathcal P_1$. Let~$\delta\in(0,1)$, $ k>10$ and~$r_k:=2^{-k}$.
Then, either $h(0,r_k)<\delta r_k$,
or there exists~$C>0$, possibly
depending on~$n$, $\delta$ and~$\|u\|_{W^{2,2}(\Omega)}$,
such that 
\[
\sup_{B_{r_k/2}}|u|\le Cr_k^2.
\]
\end{theorem}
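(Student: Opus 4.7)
The plan is to follow the contradiction-plus-blow-up scheme of Theorem~\ref{growth}, carried out now at a single scale. Assume the conclusion fails; then along a sequence $k_j\to+\infty$ (the divergence is forced because $\sup_{B_1}|u|$ is controlled by $\|u\|_{W^{2,2}}$ through Sobolev embedding and Theorem~\ref{thm-BMO}) one has $h(0,r_{k_j})\ge\delta r_{k_j}$ while $S_j:=\sup_{B_{r_{k_j}/2}}|u|>j\,r_{k_j}^2$. I would then rescale by $v_j(x):=u(r_{k_j}x)/S_j$ on $B_{1/r_{k_j}}$. Exactly as in \eqref{NAVIE1}--\eqref{NAVIE2}, $v_j$ is a local minimizer of $\int(|\Delta v_j|^2+\sigma_j^2\,\I{v_j})$ with $\sigma_j:=r_{k_j}^2/S_j<1/j\to 0$; moreover $\sup_{B_{1/2}}|v_j|=1$, $v_j(0)=0$, $|\na v_j(0)|=0$, and the scale-invariance of the cones $S(p,0)$ transfers the non-flatness as $h_{v_j}(0,1)=h_u(0,r_{k_j})/r_{k_j}\ge\delta$.

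The hardest step will be securing the uniform bounds on larger balls $\sup_{B_{2^m}}|v_j|\le C\cdot 4^m$ needed to invoke the compactness of Lemma~\ref{lemma:blowup}. Single-scale non-flatness does not by itself propagate to neighbouring scales, so I would enforce a minimality condition on the counterexample, taking $k_j$ to be the \emph{smallest} integer exceeding $10$ at which both failing conditions hold for $u$. Then for every $10<k<k_j$ at which non-flatness still holds, minimality forces $\sup_{B_{r_k/2}}|u|\le j\,r_k^2$, and the rescaling yields the dyadic quadratic bound $\sup_{B_R}|v_j|\le 4R^2$ at $R=2^{k_j-k-1}$. At the complementary flat intermediate scales, the matching bound is to be produced by combining the proximity of $\fb{u}$ to the zero set of a quadratic polynomial with Corollary~\ref{eq-Hessian:cor} and the $W^{2,2}$ control on $u$; this last input is the chief technical obstacle in the argument.

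Once those bounds are in hand, Lemma~\ref{lemma:blowup} extracts a subsequential limit $v_\infty\in W^{2,2}_{\rm loc}(\R^n)\cap C^{1,\alpha}_{\rm loc}(\R^n)$ that is globally biharmonic, enjoys the quadratic growth $|v_\infty(x)|\le 8|x|^2$, and has $v_\infty(0)=|\na v_\infty(0)|=0$ with $\sup_{B_{1/2}}|v_\infty|=1$. A double Liouville argument --- first to conclude that the harmonic function $\Delta v_\infty$ must be constant, and then to identify $v_\infty$ itself with a quadratic polynomial vanishing to second order at the origin --- yields $v_\infty(x)=x\cdot Ax$ for some symmetric matrix $A$. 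Normalizing, $p:=v_\infty/\|v_\infty\|_{L^\infty(B_1)}\in P_2$ and $\fb{v_\infty}=S(p,0)$; passing this to the limit in the Hausdorff distance contradicts the persistent lower bound $h_{v_j}(0,1)\ge\delta$, which closes the argument.
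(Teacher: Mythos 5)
Your scheme coincides with the paper's blow-up argument (the proof of Theorem~\ref{growth}, of which this statement is the announced refinement) everywhere except at the decisive point, and that is exactly where your proposal has a hole. The limiting part (rescaling, Lemma~\ref{lemma:blowup}, the two Liouville steps, and the Hausdorff-distance contradiction with the persistent bound $h_{v_j}(1,0)\ge\delta$ as in \eqref{TOU1}) is the same as in the paper and is fine; so is your observation that a minimal choice of $k_j$ yields $\sup_{B_R}|v_j|\le 4R^2$ at those intermediate scales where the free boundary is \emph{not} $\delta$-rank-2 flat. The gap is at the flat intermediate scales. For each fixed $m$, the hypotheses \eqref{QGROAD}--\eqref{QGROAD-PQi} of Lemma~\ref{lemma:blowup}, and the bound $|v_\infty(x)|\le 8|x|^2$ that feeds the Liouville step, require $\sup_{B_{2^m}}|v_j|\le C\,4^m$ for all large $j$, i.e.\ a bound on $u$ at the scale $2^{-(k_j-m)}$, and nothing prevents all of these scales from being $\delta$-rank-2 flat. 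Flatness of $\fb u$ at a given scale is purely geometric information about the zero set and gives no control on $\sup_{B_r}|u|$ there; the global $W^{2,2}$ (or $L^\infty$) norm only gives $\sup_{B_{2^m}}|v_j|\le C/S_j$, which blows up since $S_j\to0$; and the tool you invoke, Corollary~\ref{eq-Hessian:cor}, is inapplicable at those scales: its hypothesis is non-flatness at \emph{every} level and its proof rests on the quadratic growth of Theorem~\ref{growth} --- precisely what is missing there. So the step you yourself call ``the chief technical obstacle'' is a genuine missing ingredient, and the patch you sketch is circular.

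The paper's device avoids fighting for bounds at intermediate scales altogether by building them into the statement being contradicted: one proves the dichotomy ``either $\fb u$ is $\delta$-rank-2 flat at level $2^{-k}$, or the dyadic maximum inequality \eqref{eq-dyadic-0} holds at scale $k$''. If this fails along $j$, then \eqref{eq-dyadic-1} simultaneously delivers $S_j>j\,2^{-2k_j}$ \emph{and} $S_j>2^{-2(m+1)}\sup_{B_{2^{-k_j+m}}}|u_j|$ for every $m<k_j$, so after rescaling \eqref{jquesyter56} hands you $\sup_{B_{2^m}}|v_j|\le 4\cdot 2^{2m}$ for free, while non-flatness is needed only at the single failure scale; the blow-up and Liouville contradiction then run exactly as in your last paragraph. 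Two smaller remarks: to obtain a constant depending on $u$ only through $\|u\|_{W^{2,2}}$ you must run the contradiction over a sequence of minimizers $u_j$ with fixed norm, as the paper does, rather than over the single fixed $u$; and note that passing from the single-scale inequality \eqref{eq-dyadic-0} to the clean bound $\sup_{B_{r_k/2}}|u|\le Cr_k^2$ is itself an iteration across scales, which is how the statement is actually exploited in the proof of Theorem~\ref{thm-strata}, namely at points where non-flatness persists at all small radii.
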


We are now ready to complete the proof of Theorem~\ref{thm-strata}.

\begin{proof}[Proof of Theorem~\ref{thm-strata}]
Notice that~\eqref{FOR1} and~\eqref{FOR2} follow as a consequence of
Theorem~\ref{thm-growth-refine}. Therefore, to complete
the proof of Theorem~\ref{thm-strata},
it only remains to prove that~$u^+$ is strongly nondegenerate
at~$z\in{\mathcal{F}}$. After rescaling $U_r(x):=r^{-2}u(z+r x)$,
we see that it is enough to show that
\begin{equation}\label{r4tungr67wnc}
\sup_{B_1}U_r^+\ge \hat C,
\end{equation}
for some~$\hat{C}>0$ (which here can depend
on~$n$,
$\dist(z,\partial\Omega)$ and the minimizer~$u$
itself).

To check this,
we first prove that
\begin{equation}\label{POLY2}\begin{split}&
{\mbox{if $p$ is a homogeneous polynomial of degree two,}}\\&{\mbox{then $\{p=0\}$
is contained in the union of finitely many hypersurfaces.}}\end{split}
\end{equation}
Indeed, up to a linear transformation, and possibly exchanging the order of the variables,
we can suppose that
$$ p(x)=\sum_{i=1}^n a_i x_i^2,$$
with~$(a_1,\dots,a_m)\in\R\setminus\{0\}$ and~$a_{m+1}=\dots=a_n=0$,
for some~$m\in\{1,\dots,n\}$. Therefore the zero set of~$p$ is obtained by
the zero set of the polynomial
$$ \R^m\ni x\mapsto \tilde p(x)=\sum_{i=1}^m a_i x_i^2,$$
up to a Cartesian product with an $(n-m)$-dimensional linear space.
Also, 
\begin{equation}\label{TRANS}
{\mbox{if~$x\in\{\tilde p=0\}$, then~$tx\in\{\tilde p=0\}$ for all~$t\in\R$,}}\end{equation}
therefore
\begin{equation}\label{TRANS3} \{\tilde p=0\}=\big\{ tx,\; x\in\{\tilde p=0\}\cap \S^{m-1}\big\}.\end{equation}
Furthermore
\begin{equation}\label{TRANS2}\{\nabla \tilde p=0\}=\{(2a_1x_1,\dots,2a_mx_m)=0\}=\{ 0\}.\end{equation}
Therefore, by~\eqref{TRANS2},
in the vicinity of any~$x\in\{\tilde p=0\}\cap \S^{m-1}$,
the set~$\{\tilde p=0\}$ is an $(m-1)$-dimensional surface, which,
in view of~\eqref{TRANS}, is transverse to~$\S^{m-1}$.
Consequently, we have that~$\{\tilde p=0\}\cap \S^{m-1}$ is
the union of~$(m-2)$-dimensional surfaces.
In addition, from~\eqref{TRANS2} we know that these surfaces
cannot accumulate to each other, and so~$\{\tilde p=0\}\cap \S^{m-1}$ is
the union of finitely many~$(m-2)$-dimensional surfaces.

This and~\eqref{TRANS3} imply that~$\{\tilde p=0\}$ is
the union of finitely many~$(m-1)$-dimensional surfaces.
Accordingly, we have that~$\{p=0\}$
is the union of finitely many surfaces
of dimension~$(m-1)+(n-m)=n-1$. This completes the proof of~\eqref{POLY2}.

We also stress that, in light of~\eqref{TRANS}, the intersection of
the hypersurfaces described in~\eqref{POLY2}
and~$\S^{n-1}$ have codimension~$1$
inside~$\S^{n-1}$. In particular,
for every~$p\in \S^{n-1}$ outside these hypersurfaces there exists~$\rho(p)\in\left(0,\frac12\right)$
such that~$B_{\rho(p)}(p)$ does not intersect these hypersurfaces.

Given~$x\in B_1\setminus\{0\}$, we now use the notation~$\hat{x}:=x/|x|$ and we claim that
\begin{equation}\label{PEW2345O}
{\mbox{there exists~$x_*\in B_{1/2}\setminus\{0\}$ such that~$U_r(x_*)>0$ and~$\hat{x}_*$
lies outside the hypersurfaces~\eqref{POLY2}.}}
\end{equation}
Indeed, we can assume that~$|B_{1/2}\cap\{U_r>0\}|>0$ (otherwise~$u\le0$,
contradicting the assumption that~$z\in\partial\{u>0\}$), and from this we obtain~\eqref{PEW2345O}.

{F}rom~\eqref{PEW2345O}, we deduce that~$B_{\rho(\hat x_*)}(\hat x_*)$ does not intersect the hypersurfaces in~\eqref{POLY2}. Hence, by~\eqref{TRANS}, setting~$r(x_*):=|x_*|\rho(\hat x_*)$,
we see that~$B_{r(x_*)}(x_*)$
does not intersect the hypersurfaces in~\eqref{POLY2}.

Then, from~\eqref{FOR1}, it follows that
if $r=r_k$ is sufficiently small, then~$B_{r(x_*)/2}(x_*)$
does not intersect~$\partial\{U_r>0\}$. For this reason, since~$U_r(x_*)>0$,
we conclude that~$B_{r(x_*)/2}(x_*)\subseteq\{U_r>0\}$.

Consequently, we are in the position of using
claim~$\bf 2^\circ$ in Theorem~\ref{thm-nondeg}, thus obtaining that
\begin{equation}\label{0909-2435-1pqwe-0} \sup_{B_{r(x_*)/2}(x_*)}U_r^+\ge \bar c\, (r(x_*)/2)^2=
\frac{\bar c\, (r(x_*))^2}4=\frac{\bar c\, (\rho(\hat x_*))^2}4\,|x_*|^2
,\end{equation}
for some~$\bar c>0$.

Now we claim that
\begin{equation} \label{0909-2435-1pqwe-1} B_{1}\supseteq B_{r(x_*)/2}(x_*).
\end{equation}
Indeed, if~$y\in B_{r(x_*)/2}(x_*)$, we have that
$$ |y|\le|y-x_*|+|x_*|\le \frac{r(x_*)}2+|x_*|=\frac{\rho(\hat x_*)\,|x_*|}2+|x_*|\le
\frac{|x_*|}{4}+|x_*|=\frac{5|x_*|}{4}\le\frac{5}{8}<1
,$$
thus proving~\eqref{0909-2435-1pqwe-1}.

Then, from~\eqref{0909-2435-1pqwe-0} and~\eqref{0909-2435-1pqwe-1} we obtain that
$$ \sup_{B_1}U_r^+\ge \frac{\bar c\, (\rho(\hat x_*))^2}4\,|x_*|^2=: \hat C,$$
and~\eqref{r4tungr67wnc} follows,
as desired.
\end{proof}

\section{Monotonicity formula: proof of Theorem~\ref{lemma:F}}\label{PF:MO}

This section is devoted to the proof of Theorem~\ref{lemma:F},
which is based on a series of careful integration by parts
aimed at spotting suitable integral cancellations. In addition,
some ``high order of differentiability'' terms naturally appear in the computations,
which need to be suitably removed in order to rigorously make sense
of the formal manipulations. 
We start with some general computations valid in~$\R^n$,
then, from~\eqref{ijNAYNTE} on, we specialize to the case~$n=2$.
In this part of the paper, for the sake of shortness, we suppose that
the assumptions of Theorem~\ref{lemma:F} are always satisfied
without further mentioning them.
Without loss of generality, we also suppose that~$B_2\subset\subset\Omega$.
Then, we have the following identity:

\begin{lemma}
For every~$r_1$, $r_2\in(0,3/2)$,
\begin{equation}\label{71qy81qush}\begin{split}
4\int_{r_1}^{r_2}R(r)\,dr
+2T(r_2)-2T(r_1)
+D(r_2)-D(r_1)=0,\end{split}\end{equation}
where
\begin{equation}\label{R1r}
\begin{split}
R(r)\,&:=\frac1{r^{n+1}}\sum_{m=1}^n
\int_{B_r} \Delta u\, \nabla u_{m}\cdot e_m
-\sum_{m=1}^n\int_{\partial B_{r}} \Delta u\nabla u_m\cdot
\frac{x^m\,x}{r^{n+2}}\\&=
\frac1{r^{n+1}}
\int_{B_r} |\Delta u|^2
-\frac{1}{r^n}\int_{\partial B_{r}} \Delta u\,\partial^2_r u
,\\
T(r)\,&:=
\sum_{m=1}^n\int_{\partial B_{r}} \Delta u\,u_m\,\frac{x^m}{r^{n+1}}\\
&=\frac1{r^n}\int_{\partial B_{r}} \Delta u\,\partial_ru
\\{\mbox{and }} \qquad D(r)\,&:=
\frac1{r^n}\int_{B_r} \big( |\Delta u|^2+\chi_{\{u>0\}}\big),
\end{split}\end{equation}
and the notation~$\partial_r:=\frac{x}{|x|}\cdot\nabla$ has been used.
\end{lemma}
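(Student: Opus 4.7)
I would prove the identity \eqref{71qy81qush} by applying the first-variation identity \eqref{AS D} from Lemma \ref{CONFF} to a radial test field $\phi(x) = x\,\eta(|x|)$, with $\eta\in C^\infty_c((0,2))$ a smooth cutoff. Writing $r=|x|$ and $\partial_r = (x/|x|)\cdot\nabla$, the elementary identity $\sum_{i,j}x^ix^j u_{ij}=|x|^2\partial_r^2 u$ yields
\begin{equation*}
\mbox{div}\phi = n\eta(r)+r\eta'(r),
\end{equation*}
\begin{equation*}
\sum_m\!\big(2\nabla u_m\!\cdot\!\nabla\phi^m+u_m\Delta\phi^m\big) = 2\Delta u\,\eta(r)+2r\eta'(r)\partial_r^2 u+\big(r\eta''(r)+(n+1)\eta'(r)\big)\partial_r u.
\end{equation*}
Plugging these into \eqref{AS D} and rewriting each bulk integral in polar coordinates via $\int F(x)\xi(|x|)\,dx=\int_0^\infty\xi(r)\big(\int_{\partial B_r}\!F\big)\,dr$, I arrive at a scalar identity
\begin{equation*}
\int_0^\infty\Big[A(r)\,\eta(r)+B(r)\,\eta'(r)+C(r)\,\eta''(r)\Big]\,dr=0 \qquad \forall\,\eta\in C^\infty_c(0,\infty),
\end{equation*}
where $A$, $B$, $C$ are explicit linear combinations of $\int_{\partial B_r}|\Delta u|^2$, $\int_{\partial B_r}\Delta u\,\partial_r u$, $\int_{\partial B_r}\Delta u\,\partial_r^2 u$ and $\int_{\partial B_r}\chi_{\{u>0\}}$; in particular $C(r)=2r^{n+1}T(r)$.

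Next I would specialize $\eta=\eta_\epsilon$ to a smooth approximation of $\chi_{[r_1,r_2]}$---explicitly $\eta_\epsilon(r)=\zeta_\epsilon(r-r_1)-\zeta_\epsilon(r-r_2)$ with $\zeta_\epsilon$ a smoothed Heaviside of width $\epsilon$---and send $\epsilon\to 0$. After one integration by parts in $r$ (crucially \emph{not} in $x$) the $\eta''$-term reads $-\int C'(r)\eta_\epsilon'(r)\,dr$ and combines with the $B\eta'$-term; both localize at $r=r_1,\,r_2$ in the limit, while $\int A(r)\eta_\epsilon(r)\,dr\to\int_{r_1}^{r_2}A(r)\,dr$. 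Regrouping and dividing through by $r^{n+1}$, the bulk contribution is precisely $4\int_{r_1}^{r_2}R(r)\,dr$ (after the cancellation of a spurious $2T(r)/r$ term appearing in both $B'$ and $C'$), while the boundary evaluations assemble into $2T(r_2)-2T(r_1)+D(r_2)-D(r_1)$, giving \eqref{71qy81qush}. Equivalently, one can first deduce from the arbitrariness of $\eta$ the pointwise distributional identity $4R(r)+\tfrac{d}{dr}\big[2T(r)+D(r)\big]=0$ and integrate from $r_1$ to $r_2$.

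The main obstacle is the $\eta''$-term with coefficient $C(r)=2r^{n+1}T(r)$ depending on the second-order quantity $\Delta u\,\partial_r u$: a spatial integration by parts would manufacture third derivatives of $u$, which are not controlled in our setting (Theorem \ref{thm-BMO} only yields $\Delta u\in \mathrm{BMO}$, and hence $u\in W^{2,p}_{\rm loc}$ for every $p<\infty$). Keeping $\eta$ radial defers the issue entirely to a one-variable integration by parts in $r$: the coefficient $C$ is then a Fubini surface average of $L^1_{\rm loc}$ products and the $\eta''\mapsto\eta'$ IBP only requires $C\in W^{1,1}_{\rm loc}(0,\infty)$, which follows from the improved Calder\'on--Zygmund regularity. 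To make the argument watertight I would mollify $u\mapsto u*\rho_\delta$, derive the identity for $u_\delta$ via classical calculus, and pass to the limit using $W^{2,p}_{\rm loc}$ compactness.
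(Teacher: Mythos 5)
Your overall strategy (apply the domain-variation identity \eqref{AS D} to the radial field $\phi=x\,\eta(|x|)$, reduce to a one-variable identity in $r$, and integrate over $[r_1,r_2]$) is the same as the paper's, and your computation of ${\rm div}\,\phi$, $\nabla\phi^m$, $\Delta\phi^m$ and of the coefficients $A,B,C$ is correct. The gap is in the one step that carries all the difficulty of this lemma: the treatment of the $\eta''$-term. You propose to integrate by parts once in $r$, turning $\int C\eta''$ into $-\int C'\eta'$, and you justify this by claiming $C\in W^{1,1}_{\rm loc}(0,\infty)$ ``from the improved Calder\'on--Zygmund regularity.'' This is not available: $C(r)=2r\int_{\partial B_r}\Delta u\,\partial_r u=2r^{n}\int_{\partial B_1}\Delta u(r\theta)\,\partial_r u(r\theta)\,d\theta$, so $C'$ contains $\partial_r(\Delta u)$ paired with $\partial_r u$, i.e.\ exactly the third-order term $\nabla(\Delta u\,u_m)$ that Theorem~\ref{thm-BMO} and $u\in W^{2,p}_{\rm loc}$ do not control; in fact $C=2r^{n+1}T(r)$, so ``$C\in W^{1,1}_{\rm loc}$'' is essentially equivalent to the absolute continuity of $2T+D$ that the lemma is proving, and invoking it is circular. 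Your fallback---mollify $u\mapsto u*\rho_\delta$ and ``derive the identity for $u_\delta$''---does not repair this, because \eqref{AS D} is a consequence of minimality and $u*\rho_\delta$ is not a minimizer; the paper instead convolves the \emph{test field} inside the identity for $u$ itself, so that the third-derivative object $\nabla F^m_\e$, $F^m_\e=(\Delta u\,u_m)*\rho_\e$, is classical, and it is then kept only under a total $\frac{d}{dr}$ of the second-order quantity $T_\e$, which integration in $r$ converts into the endpoint values $T_\e(r_2)-T_\e(r_1)$ appearing in \eqref{71qy81qush}.

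Your one-dimensional scheme can be made rigorous, but by moving derivatives in the opposite direction rather than onto $C$. Writing $S_1=\int_{\partial B_r}|\Delta u|^2$, $S=\int_{\partial B_r}(|\Delta u|^2+\chi_{\{u>0\}})$, $P=\int_{\partial B_r}\Delta u\,\partial_r u=r^nT$, $Q=\int_{\partial B_r}\Delta u\,\partial_r^2u$, integrate by parts only the zeroth-order term $\int(4S_1-nS)\eta$, using that the solid integrals $\int_{B_r}|\Delta u|^2$ and $\int_{B_r}(|\Delta u|^2+\chi_{\{u>0\}})$ are absolutely continuous in $r$; then observe the exact cancellation $2(n+1)P\,\eta'+2rP\,\eta''=2T\,\frac{d}{dr}\big(r^{n+1}\eta'\big)$, and that $4rQ-4\int_{B_r}|\Delta u|^2=-4r^{n+1}R$ while $n\int_{B_r}(|\Delta u|^2+\chi)-rS=-r^{n+1}D'$. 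With $w:=r^{n+1}\eta'$ this turns your identity into $\int\big[(-4R-D')w+2T\,w'\big]\,dr=0$, i.e.\ $(2T)'=-4R-D'$ in $\mathcal D'$ with right-hand side in $L^1_{\rm loc}$, and the fundamental theorem of calculus gives \eqref{71qy81qush}; note also that with $\eta\in C^\infty_c((0,2))$ vanishing near the origin $w$ only spans test functions with $\int r^{-n-1}w=0$, so you would only obtain the identity up to a term $c_0r^{-n-1}$---you should allow $\eta\equiv{\rm const}$ near $r=0$ (the field $\phi=x\eta$ is still smooth there), as the paper implicitly does by taking $\eta=1$ on $B_r$.
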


\begin{proof} Fix~$r\in(0,3/2)$.
We let~$\delta>0$ (to be taken as small as we wish in what follows),
and consider a smooth function~$\eta=\eta_\delta$ supported in~$B_{r+\delta}$.
Fixed~$\e>0$, we also consider the mollifier~$\rho_\e(x):=\frac1{\e^n}\rho\left(\frac{x}\e\right)$,
for a given even function~$\rho\in C^\infty_0(B_1)$. We also define~$\phi=(\phi^1,\dots,\phi^n)
:\R^n\to\R^n$ as
\begin{eqnarray*} \R^n\ni x=(x^1,\dots,x^n)\longmapsto\phi^m(x)&:=&(\psi^m*\rho_\e)(x),
\\{\mbox{where }}\qquad\psi^m(x)&:=& x^m\eta(x).\end{eqnarray*}
Let also
\begin{equation}\label{Fm defx} F^m(x):=\Delta u(x)\,u_m(x).\end{equation}
In view of~\eqref{3287uUUSp} and~\eqref{wduep}
(if~$u$ is a minimizer), or recalling that~$u$ is assumed to be
in~$C^{1,1}(\Omega)$ (if~$u$ is a one-phase minimizer),
we know that
\begin{equation*}
F^m\in L^p(B_1)\qquad{\mbox{for every }}p\in(1,+\infty).\end{equation*}
We observe that~$\psi^m$ is supported in~$B_{r+\delta}$
and so~$\phi^m$ is supported in~$B_{r+\delta+\e}\subset B_1$,
as long as $\delta$ and~$\e$ are sufficiently small. Consequently,
\begin{equation}\label{767ui6329512-3}
\begin{split}
&\int_\Omega \Delta u \,u_m\,\Delta\phi^m=
\int_{\R^n} \Delta u \,u_m\,\Delta\phi^m\\&\qquad=
\int_{\R^n} F^m\,\big( \Delta\psi^m*\rho_\e\big)
=\iint_{\R^n\times B_\e(x)}
F^m(x)\,\Delta\psi^m(y)\,\rho_\e(x-y)\,dx\,dy\\
&\qquad=\iint_{B_\e(x)\times\R^n}
F^m(x)\,\Delta\psi^m(y)\,\rho_\e(y-x)\,dx\,dy=
\iint_{\R^n}
(F^m*\rho_\e)(y)\,\Delta\psi^m(y)\,dy\\&\qquad=
\int_{\Omega}
F^m_\e\,\Delta\psi^m=-\int_\Omega \nabla F^m_\e\cdot\nabla\psi^m,
\end{split}
\end{equation}
with
\begin{equation}\label{Fm defx:22} F^m_\e:=F^m*\rho_\e.\end{equation}
Similarly, we have that
\begin{equation}
\label{8j8ij2w9o2} \int_\Omega \Delta u\nabla u_m\cdot \nabla\phi^m=
\int_\Omega \Delta u\nabla u_m\cdot (\nabla\psi^m*\rho_\e)=
\int_\Omega \big((\Delta u\nabla u_m)*\rho_\e\big)\cdot \nabla\psi^m
.\end{equation}
Also,
$$ \int_\Omega\Big( |\Delta u|^2+\chi_{\{u>0\}}\Big)\mbox{\rm div}\phi=
\sum_{m=1}^n \int_\Omega\Big( |\Delta u|^2+\chi_{\{u>0\}}\Big)(\psi^m_m*\rho_\e)
=\int_\Omega\Big(\big( |\Delta u|^2+\chi_{\{u>0\}}\big)*\rho_\e\Big)\,{\rm div}\psi
.$$
Then, we plug this information,
\eqref{767ui6329512-3} and~\eqref{8j8ij2w9o2} into~\eqref{AS D} and we see that
\begin{equation}\label{PLklj83uejISKH}\begin{split}
0\,&=2\int_\Omega \Delta u\sum_{m=1}^n\Big( 2\nabla u_m\cdot \nabla\phi^m
+u_m\Delta\phi^m\Big)-
\int_\Omega
\Big( |\Delta u|^2+\chi_{\{u>0\}}\Big)\mbox{\rm div}\phi\\
&=
4\sum_{m=1}^n\int_\Omega \big((\Delta u\nabla u_m)*\rho_\e\big)\cdot \nabla\psi^m
-2\sum_{m=1}^n\int_\Omega \nabla F^m_\e\cdot\nabla\psi^m-
\int_\Omega\Big(\big( |\Delta u|^2+\chi_{\{u>0\}}\big)*\rho_\e\Big)\,{\rm div}\psi.
\end{split}\end{equation}
Since the latter identity only involves the first derivatives of~$\psi^m$,
up to an approximation argument we can choose~$\eta$ to be the radial Lipschitz function
defined by
$$ \eta(x):=\begin{cases} 1 & {\mbox{ if }}x\in B_r,\\
\displaystyle\frac{r+\delta-|x|}{\delta}& {\mbox{ if }}x\in B_{r+\delta}\setminus B_r,\\
0 & {\mbox{ if }}x\in \R^n\setminus B_{r+\delta}.
\end{cases}$$
In this way, we have that
\begin{eqnarray*}&& \nabla \eta(x)=
-\frac{x}{\delta\,|x|}\,\chi_{ B_{r+\delta}\setminus B_r}(x)\\
{\mbox{and }}&&\nabla\psi^m(x)=e_m\eta(x)-\frac{x^m\,x}{\delta\,|x|}\,
\chi_{ B_{r+\delta}\setminus B_r}(x),
\end{eqnarray*}
which also gives that
$$ {\rm div}\psi(x)=
n\eta(x)-\frac{|x|}{\delta}\,\chi_{ B_{r+\delta}\setminus B_r}(x).$$
Therefore, we infer from~\eqref{PLklj83uejISKH} that
\begin{equation*}\begin{split}
0\,&=
2\sum_{m=1}^n\int_{B_r} \Big(2\big((\Delta u\nabla u_m)*\rho_\e\big)
-\nabla F^m_\e\Big)\cdot e_m-
n\int_{B_r}\Big(\big( |\Delta u|^2+\chi_{\{u>0\}}\big)*\rho_\e\Big)\\
&\qquad+
2\sum_{m=1}^n\int_{B_{r+\delta}\setminus B_r} \Big( 2\big((\Delta u\nabla u_m)*\rho_\e\big)
-\nabla F^m_\e\Big)\cdot
\left( e_m\eta(x)-\frac{x^m\,x}{\delta\,|x|}\right)
\\&\qquad
-
\int_{B_{r+\delta}\setminus B_r}\Big(\big( |\Delta u|^2+\chi_{\{u>0\}}\big)*\rho_\e\Big)\,
\left( n\eta(x)-\frac{|x|}{\delta}\right).
\end{split}\end{equation*}
Then, sending~$\delta\to0^+$, we deduce that
\begin{equation}\label{56yui9-Adefx}\begin{split}
0\,&=
2\sum_{m=1}^n\int_{B_r} \Big(2\big((\Delta u\nabla u_m)*\rho_\e\big)
-\nabla F^m_\e\Big)\cdot e_m-
n\int_{B_r}\Big(\big( |\Delta u|^2+\chi_{\{u>0\}}\big)*\rho_\e\Big)\\
&\qquad-
2\sum_{m=1}^n\int_{\partial B_{r}} \Big( 2\big((\Delta u\nabla u_m)*\rho_\e\big)
-\nabla F^m_\e\Big)\cdot
\frac{x^m\,x}{r}
\\&\qquad
+r\int_{\partial B_r}\Big(\big( |\Delta u|^2+\chi_{\{u>0\}}\big)*\rho_\e\Big)\,\\
&=
2\sum_{m=1}^n\int_{B_r} G^m_\e\cdot e_m-
n\int_{B_r}\Big(\big( |\Delta u|^2+\chi_{\{u>0\}}\big)*\rho_\e\Big)\\
&\qquad-
2\sum_{m=1}^n\int_{\partial B_{r}} G^m_\e\cdot
\frac{x^m\,x}{r}
+r\int_{\partial B_r}\Big(\big( |\Delta u|^2+\chi_{\{u>0\}}\big)*\rho_\e\Big)\,
,\end{split}\end{equation}
where
\begin{equation}\label{GM} G^m_\e:=
2\big((\Delta u\nabla u_m)*\rho_\e\big)-\nabla F^m_\e
.\end{equation}
Furthermore, letting
\begin{equation}\label{GMD}
D_\e(r):=\frac1{r^n}\int_{B_r}\Big(\big( |\Delta u|^2+\chi_{\{u>0\}}\big)*\rho_\e\Big),\end{equation}
we have that
\begin{equation}\label{8quak18iwjed9ihw1627} D_\e'(r)=\frac1{r^n}\int_{\partial B_r}\Big(\big( |\Delta u|^2+\chi_{\{u>0\}}\big)*\rho_\e\Big)
-\frac{n}{r^{n+1}}\int_{B_r}\Big(\big( |\Delta u|^2+\chi_{\{u>0\}}\big)*\rho_\e\Big).
\end{equation}
Thus, we multiply~\eqref{56yui9-Adefx} by~$\frac{1}{r^{n+1}}$
and we exploit~\eqref{8quak18iwjed9ihw1627} to conclude that
\begin{equation}\label{56yui9-Adefx-BIS}\begin{split}
0\,&=
\frac{2}{r^{n+1}}\sum_{m=1}^n\int_{B_r} G^m_\e\cdot e_m
-2\sum_{m=1}^n\int_{\partial B_{r}} G^m_\e\cdot
\frac{x^m\,x}{r^{n+2}}
+D'_\e(r)\\
&=2Z_\e(r)+D'_\e(r),\end{split}\end{equation}
where
\begin{equation}\label{56yui9-Adefx-TRIS} Z_\e(r):=
\frac{1}{r^{n+1}}\sum_{m=1}^n\int_{B_r} G^m_\e\cdot e_m
-\sum_{m=1}^n\int_{\partial B_{r}} G^m_\e\cdot
\frac{x^m\,x}{r^{n+2}}.\end{equation}
Now, in light of~\eqref{Fm defx}, we observe that~$\nabla F_m$ (and thus~$\nabla F_m^\e$)
involves third derivatives, and therefore we aim at ``lowering the order of derivative'' of
this term from~\eqref{56yui9-Adefx-TRIS} in view of~\eqref{GM}
(and this goal will be accomplished
via a suitable averaging procedure).
To this end, we observe that
\begin{equation}\label{18688ujows920iwudw8u}
\int_{B_r} \nabla F^m_\e\cdot e_m=\int_{B_r}{\rm div}(F^m_\e e_m)=
\int_{\partial B_r} F^m_\e e_m\cdot\frac{x}{r}=\int_{\partial B_r} F^m_\e\, \frac{x^m}{r}.
\end{equation}
We notice that the last term in~\eqref{18688ujows920iwudw8u}
does not contain any third order derivatives.
As for the boundary term in~\eqref{56yui9-Adefx}
that involves the third derivative, we have that
\begin{eqnarray*}
\int_{\partial B_{r}} \na F^m_\e\cdot
\frac{x^m\,x}{r^{n+2}}
&=&
\int_{\partial B_{1}} \na F^m_\e(rx)\cdot
\frac{x^m\,x}{r}\\
&=&
\int_{\partial B_{1}} \p_r(F^m_\e(rx))\cdot
\frac{x^m}{r}\\
&=&
\frac{d}{dr}\left\{\int_{\partial B_{1}} F^m_\e(rx)
\frac{x^m}{r}\right\}+\int_{\partial B_{1}} F^m_\e(rx)
\frac{x^m}{r^{2}}\\
&=&
\frac{d}{dr}\left\{\int_{\partial B_{r}} F^m_\e
\frac{x^m}{r^{n+1}}\right\}+\int_{\partial B_{r}} F^m_\e
\frac{x^m}{r^{n+2}}.
\end{eqnarray*}
As a consequence, using the latter identity, \eqref{GM} and~\eqref{18688ujows920iwudw8u},
we find that
\begin{eqnarray*}
\int_{B_r} G^m_\e\cdot e_m&=&
2\int_{B_r}\big((\Delta u\nabla u_m)*\rho_\e\big)\cdot e_m-\int_{B_r}\nabla F^m_\e\cdot e_m
\\&=&2\int_{B_r}\big((\Delta u\nabla u_m)*\rho_\e\big)\cdot e_m-
\int_{\partial B_r} F^m_\e\, \frac{x^m}{r}
\\
{\mbox{and }}\qquad\int_{\partial B_{r}} G^m_\e\cdot
\frac{x^m\,x}{r^{n+2}}&=&
2\int_{\partial B_{r}}\big((\Delta u\nabla u_m)*\rho_\e\big)\cdot
\frac{x^m\,x}{r^{n+2}}-
\int_{\partial B_{r}}\nabla F^m_\e\cdot
\frac{x^m\,x}{r^{n+2}}\\&=&2
\int_{\partial B_{r}}\big((\Delta u\nabla u_m)*\rho_\e\big)\cdot
\frac{x^m\,x}{r^{n+2}}-
\int_{\partial B_{r}} F^m_\e
\frac{x^m}{r^{n+2}}-
\frac{d}{dr}\left\{\int_{\partial B_{r}} F^m_\e
\frac{x^m}{r^{n+1}}\right\}.
\end{eqnarray*}
{F}rom this and~\eqref{56yui9-Adefx-TRIS}, we obtain that
\begin{eqnarray*}
Z_\e(r)&=&
\frac2{r^{n+1}}\sum_{m=1}^n\int_{B_r}\big((\Delta u\nabla u_m)*\rho_\e\big)\cdot e_m
-2\sum_{m=1}^n
\int_{\partial B_{r}}\big((\Delta u\nabla u_m)*\rho_\e\big)\cdot
\frac{x^m\,x}{r^{n+2}}+\sum_{m=1}^n
\frac{d}{dr}\left\{\int_{\partial B_{r}} F^m_\e
\frac{x^m}{r^{n+1}}\right\}\\
&=& 2R_\e(r)+T_\e'(r),\end{eqnarray*}
with
\begin{equation}\label{9ijdkscn8uefigvidsakgdisgjausud}
\begin{split}&
R_\e(r):=\frac1{r^{n+1}}\sum_{m=1}^n
\int_{B_r}\big((\Delta u\nabla u_m)*\rho_\e\big)\cdot e_m
-\sum_{m=1}^n
\int_{\partial B_{r}}\big((\Delta u\nabla u_m)*\rho_\e\big)\cdot
\frac{x^m\,x}{r^{n+2}}\\ {\mbox{and }}\;&T_\e(r):=\sum_{m=1}^n
\left\{\int_{\partial B_{r}} F^m_\e
\frac{x^m}{r^{n+1}}\right\}=
\sum_{m=1}^n
\left\{\int_{\partial B_{r}} (\Delta u\,u_m)*\rho_\e\,
\frac{x^m}{r^{n+1}}\right\}
,\end{split}
\end{equation}
where we have also used~\eqref{Fm defx}
and~\eqref{Fm defx:22}.

Consequently, integrating~\eqref{56yui9-Adefx-BIS},
\begin{equation}\label{R1qwertytu6r}
\begin{split}
0\,&= 2\int_{r_1}^{r_2} Z_\e(r)\,dr+D_\e(r_2)-D_\e(r_1)\\
&= 4\int_{r_1}^{r_2} R_\e(r)\,dr+2T_\e(r_2)-2T_\e(r_1)+D_\e(r_2)-D_\e(r_1).
\end{split}\end{equation}
Comparing~\eqref{R1r}
with~\eqref{9ijdkscn8uefigvidsakgdisgjausud},
we see that~$R_\e\to R$ and~$T_\e\to T$ as~$\e\to0$,
thanks to~\eqref{3287uUUSp}
and~\eqref{wduep}.

We thereby obtain the desired claim in~\eqref{71qy81qush}
by passing to the limit the identity in~\eqref{R1qwertytu6r}.
\end{proof}

We also point out the following useful calculation:

\begin{lemma}
In the notation stated by~\eqref{R1r},
we have that
\begin{equation}\label{DV-1} 4\int_{r_1}^{r_2}
\left( \frac1{r^n}\int_{\partial B_r}\Delta u\,\Big(2\frac{u_r}{r}
- \partial^2_r u- 2\frac{u}{r^2}\Big) \right)\,dr-4V(r_2)+4V(r_1)
+2T(r_2)-2T(r_1)
+D(r_2)-D(r_1)=0,\end{equation}
where
\begin{equation}\label{DV-2}
V(r):= \frac1{r^{n+1}}\int_{\partial B_r}\Delta u u.
\end{equation}
\end{lemma}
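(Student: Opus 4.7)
The plan is to derive \eqref{DV-1} directly from \eqref{71qy81qush} by rewriting the volume term in $R(r)$ through a Green-type identity, and to notice that after this manipulation the $\partial_r^2 u$ pieces drop out. Concretely, subtracting the two displays and using the explicit formula \eqref{R1r}, the $\partial_r^2 u$ contributions cancel and the entire claim reduces to the auxiliary identity
$$\int_{r_1}^{r_2}\left[\frac{1}{r^{n+1}}\int_{B_r}|\Delta u|^2-\frac{2}{r^{n+1}}\int_{\partial B_r}\Delta u\,u_r+\frac{2}{r^{n+2}}\int_{\partial B_r}\Delta u\,u\right]dr=V(r_1)-V(r_2),$$
with $V$ as in \eqref{DV-2}. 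The natural strategy is Green's second identity applied to the pair $(u,\Delta u)$ on $B_r$, which formally yields
$$\int_{B_r}(\Delta u)^2=\int_{B_r}u\,\Delta^2 u+\int_{\partial B_r}\bigl(\Delta u\,u_r-u\,\partial_r\Delta u\bigr);$$
the point is that the $V(r_2)-V(r_1)$ on the right can then be produced via a radial integration by parts that converts the third-order boundary term $\int_{\partial B_r}u\,\partial_r\Delta u$ into exactly the combination of $\int_{\partial B_r}\Delta u\,u_r$ and $\int_{\partial B_r}\Delta u\,u$ that appears on the left, plus the endpoint contributions defining $V$.

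Because $u$ is only in $W^{2,p}_{\rm loc}$ (see \eqref{3287uUUSp}--\eqref{wduep}, or the assumed $C^{1,1}$ regularity in the one-phase case), the object $\partial_r\Delta u$ is not classically defined, so the computation is carried out first on the mollification $u_\delta:=u\ast\rho_\delta$, which is smooth for $\delta$ small. Applying Green's identity to $u_\delta$ on $B_r$, dividing by $r^{n+1}$ and integrating over $[r_1,r_2]$, the third-order contribution is handled by Fubini combined with the pointwise radial integration by parts
$$\int_{r_1}^{r_2}\frac{u_\delta(r\omega)}{r^2}\,\frac{d}{dr}\Delta u_\delta(r\omega)\,dr=\left[\frac{u_\delta\Delta u_\delta}{r^2}\right]_{r_1}^{r_2}-\int_{r_1}^{r_2}\left(\frac{(u_\delta)_r}{r^2}-\frac{2u_\delta}{r^3}\right)\Delta u_\delta\,dr$$
carried out for each $\omega\in\partial B_1$. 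After integrating in $\omega$ and collecting terms, one recovers the smoothed analogue of the auxiliary identity, modified by a single additive correction
$$\int_{r_1}^{r_2}\frac{1}{r^{n+1}}\int_{B_r}u_\delta\,\Delta^2 u_\delta\,dx\,dr.$$

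It remains to pass to the limit $\delta\to 0$. The boundary and volume integrals involving $u_\delta$, $(u_\delta)_r$ and $\Delta u_\delta$ converge for a.e.\ $r$ thanks to the $W^{2,p}_{\rm loc}\cap C^{1,\alpha}_{\rm loc}$ convergence $u_\delta\to u$, and Fubini transfers this to the $dr$-integrals. The main obstacle is to show that the correction $\int_{B_r}u_\delta\,\Delta^2 u_\delta\,dx$ tends to $0$, uniformly in $r\in[r_1,r_2]$. For genuine minimizers this follows from Lemma~\ref{POBIA}: the distribution $-\Delta^2 u$ is a nonnegative Radon measure supported in $\{u=0\}$ (since $u$ is biharmonic on the open set $\{u>0\}\cup\{u<0\}^\circ$ and is continuous), so testing it against the continuous function $u$, which vanishes identically on its support, gives zero, and the mollified pairing inherits this through standard weak-convergence/density arguments. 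For one-phase minimizers the needed vanishing is precisely the content of Lemma~\ref{NEBVERAMAL}, whose proof moreover provides the uniform (in $r$) bound by $C\,|B_{r_2}\cap\Gamma_{2\delta}|\to C\,|B_{r_2}\cap\partial\{u>0\}|=0$. Inserting the resulting identity into \eqref{71qy81qush} yields \eqref{DV-1}.
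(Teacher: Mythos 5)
Your proposal is correct and follows essentially the same route as the paper's proof: Green's identity for the pair $(u,\Delta u)$ on $B_r$, a radial integration by parts along rays (equivalently, the paper's computation of $\frac{d}{dr}$ of $V$) to trade the third-order boundary term $\int_{\partial B_r}u\,\partial_r\Delta u$ for the endpoint terms $V(r_1)$, $V(r_2)$, mollification to justify the manipulations, and the vanishing of the pairing of $u$ with $\Delta^2 u$ via Lemma~\ref{POBIA} (minimizers) and Lemma~\ref{NEBVERAMAL} (one-phase minimizers). The only small deviation is that you dispose of $\int_{B_r}u\,d\mathscr M_u$ by noting that $\mathscr M_u$ is supported in $\{u=0\}$ where the continuous function $u$ vanishes, while the paper splits into $\{|u|\ge\delta\}$ and $\{|u|<\delta\}$ and uses the bound $\mathscr M_u(B_r)\le Cr^{n-2}$ of Lemma~\ref{LEMMA M}; both justifications are valid, and the rest of the argument coincides.
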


\begin{proof}
For any smooth function~$v$,
\begin{equation}\label{9:9:ia1ap}
\begin{split}
& \int_{B_r} |\Delta v|^2 = \int_{B_r}\Big( {\rm div}(\Delta v\nabla v)-
\nabla\Delta v\cdot\nabla v\Big)
=\int_{\partial B_r}\Delta v\,v_r-\int_{B_r}\nabla\Delta v\cdot\nabla v\\
&\qquad=\int_{\partial B_r}\Delta v\,v_r-\int_{B_r}{\rm div}(v\nabla\Delta v)
+\int_{B_r} \Delta^2v\,v\\
&\qquad=\int_{\partial B_r}\Delta v\,v_r-\int_{\partial B_r} v\,\Delta v_r
+\int_{B_r} \Delta^2v\,v.
\end{split}\end{equation}
We also observe that
\begin{eqnarray*}&&
\frac{d}{dr} \left(\frac1{r^{n+1}}\int_{\partial B_r}\Delta v v\right)\\&=&
\frac{d}{dr} \left(\frac1{r^{2}}\int_{\partial B_1}\Delta v(r\theta) v(r\theta)\right)\\&=&
-\frac2{r^{3}}\int_{\partial B_1}\Delta v(r\theta) v(r\theta)
+
\frac1{r^{2}}\int_{\partial B_1}\Delta v_r(r\theta) v(r\theta)
+
\frac1{r^{2}}\int_{\partial B_1}\Delta v(r\theta) v_r(r\theta)\\&=&
-\frac2{r^{n+2}}\int_{\partial B_r}\Delta v\,v
+
\frac1{r^{n+1}}\int_{\partial B_r}\Delta v_r\, v
+
\frac1{r^{n+1}}\int_{\partial B_r}\Delta v\,v_r.
\end{eqnarray*}
{F}rom this and~\eqref{9:9:ia1ap}, we obtain that,
for any smooth function~$v$,
\begin{eqnarray*}
&& \frac1{r^{n+1}}\int_{B_r} |\Delta v|^2
-\frac1{r^n}\int_{\partial B_{r}} \Delta v\,\partial^2_r v\\
&=&
\frac1{r^{n+1}}\int_{\partial B_r}\Delta v\,v_r-\frac1{r^{n+1}}\int_{\partial B_r} v\,\Delta v_r
+\frac1{r^{n+1}}\int_{B_r} \Delta^2v\,v-\frac1{r^n}\int_{\partial B_{r}} \Delta v\,\partial^2_r v\\
&=&\frac1{r^{n}}\int_{\partial B_r}\Delta v\,\left(2\frac{v_r}{r}-\partial^2_r v
-2\frac{v}{r^2}\right)
+\frac1{r^{n+1}}\int_{B_r} \Delta^2v\,v
-\frac{d}{dr} \left(\frac1{r^{n+1}}\int_{\partial B_r}\Delta v v\right).
\end{eqnarray*}
Integrating this identity and setting
\begin{equation}\label{DV-2v}
V_v(r):= \frac1{r^{n+1}}\int_{\partial B_r}\Delta vv,
\end{equation}
we thereby obtain that
\begin{equation}\label{PamduST}
\begin{split}&
\int_{r_1}^{r_2}\left( \frac1{r^{n+1}}\int_{B_r} |\Delta v|^2
-\frac1{r^n}\int_{\partial B_{r}} \Delta v\,\partial^2_r v
\right)\,dr
\\ =\;&
\int_{r_1}^{r_2}\left(
\frac1{r^{n}}\int_{\partial B_r}\Delta v\,\left(2\frac{v_r}{r}-\partial^2_r v
-2\frac{v}{r^2}\right)
+\frac1{r^{n+1}}\int_{B_r} \Delta^2v\,v\right)\,dr-V_v(r_2)+V_v(r_1)
.\end{split}\end{equation}
The idea is now to take~$v$ as a mollification
of~$u$, and use 
either~\eqref{7yh0128eUDIS} (if~$u$ is a minimizer)
or Lemma~\ref{NEBVERAMAL} (if~$u$ is a one-phase minimizer).
In this way, the term
$$ \int_{B_r} \Delta^2v\,v$$
approaches either
$$ \int_{B_r} u\,\mathscr M_u,$$
in the notation of~\eqref{7yh0128eUDIS} (if~$u$ is a minimizer),
or~$0$ (if~$u$ is a one-phase minimizer, due to
Lemma~\ref{NEBVERAMAL}).

To make the notation uniform,
we therefore define~$\mathscr M_u^*:=\mathscr M_u$
if~$u$ is a minimizer
and~$\mathscr M_u^*:=0$
if~$u$ is a one-phase minimizer: then,
approximating~$u$, passing to the limit~\eqref{PamduST}
and comparing~\eqref{DV-2v} with~\eqref{DV-2},
we can write
\begin{eqnarray*}&&
\int_{r_1}^{r_2}\left( \frac1{r^{n+1}}\int_{B_r} |\Delta u|^2
-\frac1{r^n}\int_{\partial B_{r}} \Delta u\,\partial^2_r u
\right)\,dr
\\ &=&
\int_{r_1}^{r_2}\left(
\frac1{r^{n}}\int_{\partial B_r}\Delta u\,\left(2\frac{u_r}{r}-\partial^2_r u
-2\frac{u}{r^2}\right)
-\frac1{r^{n+1}}\int_{B_r} u\,\mathscr M_u^*\right)\,dr-V(r_2)+V(r_1)
.\end{eqnarray*}
That is, recalling~\eqref{R1r},
$$ \int_{r_1}^{r_2}R(r)\,dr=
\int_{r_1}^{r_2}\left(
\frac1{r^{n}}\int_{\partial B_r}\Delta u\,\left(2\frac{u_r}{r}-\partial^2_r u
-2\frac{u}{r^2}\right)
-\frac1{r^{n+1}}\int_{B_r} u\,\mathscr M_u^*\right)\,dr-V(r_2)+V(r_1).$$
{F}rom this and~\eqref{71qy81qush} we obtain that
\begin{equation}\label{P9876AOM}
\begin{split}&
2T(r_1)-2T(r_2)
+D(r_1)-D(r_2)\\
=\;&
4\int_{r_1}^{r_2}\left(
\frac1{r^{n}}\int_{\partial B_r}\Delta u\,\left(2\frac{u_r}{r}-\partial^2_r u
-2\frac{u}{r^2}\right)
-\frac1{r^{n+1}}\int_{B_r} u\,\mathscr M_u^*\right)\,dr-4V(r_2)+4V(r_1).\end{split}
\end{equation}
Now we claim that
\begin{equation}\label{6gaTGV}
\int_{B_r} u\,\mathscr M_u^*=0.
\end{equation}
For this, 
since~$\mathscr M_u^*=0$ in the one-phase problem,
we can suppose that~$u$ is a minimizer, in which case~$\mathscr M_u^*
=\mathscr M_u$.
Then,
let us fix~$\delta\in(0,1)$. {F}rom Lemma~\ref{POBIA}, we know that
\begin{eqnarray*}
-\int_{B_r\cap\{ |u|\ge\delta\}} u\,\mathscr M_u
=\int_{B_r\cap\{ u\ge\delta\}} u\,\Delta^2u+\int_{B_r\cap\{ u\le-\delta\}} u\,\Delta^2u=0.
\end{eqnarray*}
Therefore, exploiting Lemma~\ref{LEMMA M},
$$ \left| \int_{B_r} u\,\mathscr M_u \right|
=\left| \int_{B_r\cap\{ |u|<\delta\}} u\,\mathscr M_u\right|\le\delta
\mathscr M_u(B_r)\le C\delta r^{n-2},$$
for some~$C>0$. Then, sending~$\delta\to0^+$,
we obtain~\eqref{6gaTGV} as desired.

Then, the identities in~\eqref{P9876AOM}
and~\eqref{6gaTGV}
lead to~\eqref{DV-1}.
\end{proof}

Now we restrict the previous calculations to the case $n=2$,
and we complete the proof of \eqref{MONOFORMULA}.

\begin{proof}[Proof of \eqref{MONOFORMULA}]
Using using polar coordinates~$(r,\theta)$, we compute 
\begin{equation}\label{ijNAYNTE}
\begin{split}
-\frac1{r^n}\int_{\partial B_r}\Delta u\,\Big(2\frac{u_r}{r}
- \partial^2_r u- 2\frac{u}{r^2}\Big)
=
& \int_{\partial B_1}\frac1{r}\Delta u\,\Big(
u_{rr}-2\frac{u_r}{r}+2\frac{u}{r^2}\Big)
\\=\;&\int_{\partial B_1}\frac1{r}
\Big( u_{rr}+\frac{u_r}r+\frac{u_{\theta\theta}}{r^2} \Big)\Big(u_{rr}-2\frac{u_r}{r}+2\frac{u}{r^2}\Big)
\\=\;&
A(r)+B(r),
\end{split}\end{equation}
where 
\begin{equation}\label{7wqtfychv78rtef7465y45ihgbksajgdf}
\begin{split}&
A(r):= \int_{\p B_1}\frac1{r^3}u_{\theta\theta}\Big(
u_{rr}-2\frac{u_r}{r}+2\frac{u}{r^2}\Big)\\ {\mbox{and }}\quad&
B(r):=\int_{\p B_1}\frac1{r}\Big( u_{rr}+\frac{u_r}r\Big)\Big(u_{rr}-2\frac{u_r}{r}+2\frac{u}{r^2}\Big)
.\end{split}
\end{equation}
Now we perform several
integrations by parts that involve the terms related to~$A(r)$. First of all, we see that
\begin{equation}\label{COMA-MA1}
\begin{split}
\frac1{r^3}\int_{\p B_1}u_{\theta\theta}u_{rr}=
&-
\frac1{r^3}\int_{\p B_1} u_\theta u_{\theta rr}\\
=&
-\frac d{dr}\int_{\p B_1}\frac{u_\theta u_{r\theta}}{r^3}+\int_{\p B_1}\frac{u_{r\theta}^2}{r^3}-3
\int_{\p B_1}\frac{u_\theta u_{\theta r}}{r^4}.
\end{split}
\end{equation}
Similarly, we have that
\begin{equation}\label{COMA-MA2}
-2\int_{\p B_1}\frac1{r^4} u_{\theta\theta} u_r=
2\int_{\p B_1}\frac{u_\theta u_{\theta r}}{r^4}= 2\int_{\p B_1}\frac{u_\theta u_{\theta r}}{r^4}
\end{equation}
and 
\begin{equation}\label{COMA-MA3}
2\int_{\p B_1}\frac1{r^5} u_{\theta\theta} u=-2\int_{\p B_1}\frac{u_\theta^2}{r^5}.
\end{equation}
Combining~\eqref{COMA-MA1}, \eqref{COMA-MA2} and~\eqref{COMA-MA3},
and recalling~\eqref{7wqtfychv78rtef7465y45ihgbksajgdf},
we get 
\begin{equation}\label{BIR0}
\begin{split}
A(r)=
&
-\frac d{dr}\left(
\int_{\p B_1}\frac{u_\theta u_{r\theta}}{r^3}\right)+\int_{\p B_1}\frac{u_{r\theta}^2}{r^3}-3
\int_{\p B_1}\frac{u_\theta u_{\theta r}}{r^4}
+2\int_{\p B_1}\frac{u_\theta u_{\theta r}}{r^4}
-2\int_{\p B_1}\frac{u_\theta^2}{r^5}\\
=&
-\frac d{dr}\left(\int_{\p B_1}\frac{u_\theta u_{r\theta}}{r^3}\right)
+\int_{\p B_1}\frac{u_{r\theta}^2}{r^3}-
\int_{\p B_1}\frac{u_\theta u_{\theta r}}{r^4}
-2\int_{\p B_1}\frac{u_\theta^2}{r^5}\\
= 
&
-\frac d{dr} \left(\int_{\p B_1} \frac{u_\theta u_{r\theta}}{r^3}\right)
+\int_{\p B_1}\frac1{r^3}\left( u_{\theta r}-\frac{2 u_\theta}r \right)^2
+
3\int_{\p B_1}\frac{u_\theta u_{\theta r}}{r^4}
-6\int_{\p B_1}\frac{u_\theta^2}{r^5}\\
=&
-\frac d{dr} \left( \int_{\p B_1} \frac{u_\theta u_{r\theta}}{r^3}
+ \frac32\int_{\p B_1}\frac{u_\theta^2}{r^4}\right)
+\int_{\p B_1}\frac1{r^3}\left( u_{\theta r}-\frac{2 u_r}r \right)^2
\\=&
-\frac d{dr} \left( \int_{\p B_r} \frac{u_\theta u_{r\theta}}{r^4}
+ \frac32\int_{\p B_r}\frac{u_\theta^2}{r^5}\right)
+\int_{\p B_r}\frac1{r^4}\left( u_{\theta r}-\frac{2 u_r}r \right)^2
.\end{split}
\end{equation}
{F}rom~\eqref{7wqtfychv78rtef7465y45ihgbksajgdf},
we also compute that
\begin{equation}\label{BIR}
\begin{split}
B(r)=&
\int_{\p B_1}\frac1r\left(
u_{rr}^2-\frac{2u_{rr}u_r}r+\frac{2uu_{rr}}{r^2}+\frac{u_r u_{rr}}r-\frac{2 u_r^2}{r^2}
+\frac{2u u_r}{r^3}
\right)\\
=&
\int_{\p B_1}\frac1r\left(
u_{rr}^2-\frac{u_{rr}u_r}r+\frac{2uu_{rr}}{r^2}-\frac{2 u_r^2}{r^2}
+\frac{2u u_r}{r^3}
\right)\\
=&
\int_{\p B_1}\frac1r\left(u_{rr}-\frac{3u_r}r+4\frac u{r^2}\right)^2
+
\frac1r
\left(
\frac{5u_ru_{rr}}r-\frac{6uu_{rr}}{r^2}-\frac{11u_r^2}{r^2}+\frac{26u u_r}{r^3}
-\frac{16u^2}{r^4}
\right)\\
=&
\int_{\p B_1}\frac1r\left(u_{rr}-\frac{3u_r}r+4\frac u{r^2}\right)^2
+\frac d{dr}\left(\int_{\p B_1}\frac{5u_r^2}{2r^2}
-\int_{\p B_1}\frac{6u u_r}{r^3}+
\int_{\p B_1}\frac{4u^2}{r^4}\right)\\
=&
\int_{\p B_r}\frac1{r^2}\left(u_{rr}-\frac{3u_r}r+4\frac u{r^2}\right)^2
+\frac d{dr}\left(\int_{\p B_r}\frac{5u_r^2}{2r^3}
-\int_{\p B_r}\frac{6u u_r}{r^4}+
\int_{\p B_r}\frac{4u^2}{r^5}\right).
\end{split}
\end{equation}
Using~\eqref{BIR0} and~\eqref{BIR}, we conclude that
\begin{equation} \label{ABOAiw}
A(r)+B(r)=\frac1{r^2}
\int_{\p B_r}\left[\left( \frac{u_{\theta r}}r-\frac{2 u_r}{r^2} \right)^2+
\left(u_{rr}-\frac{3u_r}r+4\frac u{r^2}\right)^2\right]+W'(r),
\end{equation}
where
\begin{equation}\label{87333efgrhrggh6r8yfc8i21qrutyerod-12ytge}
W(r):=
\int_{\p B_r}\left( \frac{5u_r^2}{2r^3}-\frac{6u u_r}{r^4}+
\frac{4u^2}{r^5}
- \frac{u_\theta u_{r\theta}}{r^4}
- \frac{3u_\theta^2}{2r^5}\right).
\end{equation}
Now, from~\eqref{DV-1} 
and~\eqref{ijNAYNTE}, we see that
\begin{eqnarray*}&&
-4V(r_2)+4V(r_1)
+2T(r_2)-2T(r_1)
+D(r_2)-D(r_1)\\&=&-
4\int_{r_1}^{r_2}
\left( \frac1{r^n}\int_{\partial B_r}\Delta u\,\Big(2\frac{u_r}{r}
- \partial^2_r u- 2\frac{u}{r^2}\Big) \right)\,dr
\\&=& 4\int_{r_1}^{r_2}\big(
A(r)+B(r)\big)\,dr.\end{eqnarray*}
This and~\eqref{ABOAiw} give that
\begin{equation}\label{MAHijyhJAHNNA}\begin{split}
&-V(r_2)+V(r_1)
+\frac{T(r_2)-T(r_1)}2
+\frac{D(r_2)-D(r_1)}4-W(r_2)+W(r_1)\\
=\;&
\int_{r_1}^{r_2}\left\{
\frac1{r^2}
\int_{\p B_r}\left[\left( \frac{u_{\theta r}}r-\frac{2 u_r}{r^2} \right)^2+
\left(u_{rr}-\frac{3u_r}r+4\frac u{r^2}\right)^2\right]\right\}
.
\end{split}\end{equation}
Recalling \eqref{EMMEDE}, \eqref{R1r}, \eqref{DV-2} and \eqref{87333efgrhrggh6r8yfc8i21qrutyerod-12ytge},
we see that
\begin{eqnarray*}
&&-V(r)
+\frac{T(r)}2
+\frac{D(r)}4-\int_{\p B_r}\left( \frac{5u_r^2}{2r^3}-\frac{6u u_r}{r^4}+
\frac{4u^2}{r^5}
- \frac{u_\theta u_{r\theta}}{r^4}
- \frac{3u_\theta^2}{2r^5}\right)\\
&=&-\frac1{r^{3}}\int_{\partial B_r}\Delta u u
+\frac1{2r^2}\int_{\partial B_{r}} \Delta u\,\partial_ru
+\frac1{4r^2}\int_{B_r} \big( |\Delta u|^2+\chi_{\{u>0\}}\big)\\&&\qquad-
\int_{\p B_r}\left( \frac{5u_r^2}{2r^3}-\frac{6u u_r}{r^4}+
\frac{4u^2}{r^5}
- \frac{u_\theta u_{r\theta}}{r^4}
- \frac{3u_\theta^2}{2r^5}\right)
\\
&=& E(r).
\end{eqnarray*}
This and~\eqref{MAHijyhJAHNNA} establish \eqref{MONOFORMULA},
as desired. \end{proof}

Now, since the proof of \eqref{MONOFORMULA} has been completed,
to finish the proof of Theorem~\ref{lemma:F},
we only need to show that the function~$E$ defined in~\eqref{EMMEDE}
is bounded
and to check that
if~$E$ is constant 
then~$u$
is a homogeneous function of degree two.

These goals will be accomplished by the following arguments:

\begin{proof}[Proof of the boundedness of $E$]
To show that~$E$ is bounded, we claim that
the exist~$C>0$ and a sequence~$r_k\to0^+$ such that
\begin{equation}\label{BOU SPH}
\int_{\partial B_{r_k}} \left(\frac{|\nabla u|^2}{r_k^3}
+\frac{|D^2u|^2}{r_k}\right)\le C.
\end{equation}
The proof of~\eqref{BOU SPH} needs to distinguish the
case in which $u$ is a minimizer from the case in which~$u$
is a one-phase minimizer.
Suppose first that~$u$ is a one-phase minimizer.
Then, since~$u(0)=0\le u(x)$ for any~$x\in\Omega$
and~$u$ is assumed to be~$C^{1,1}(\Omega)$,
we can write that~$|\nabla u(x)|\le C|x|$
and~$|D^2u(x)|\le C$, for some~$C>0$, from which~\eqref{BOU SPH}
plainly follows in this case.

Now, we prove~\eqref{BOU SPH} assuming that~$u$ is a minimizer.
We argue by contradiction, supposing that~\eqref{BOU SPH}
does not hold. Then, for any~$\bar{C}>0$ there exists~$\bar{r}\in(0,1)$
such that for any~$r\in(0,\bar{r})$ we have that
$$ \int_{\partial B_{r}} \left(\frac{|\nabla u|^2}{r^3}
+\frac{|D^2u|^2}{r}\right)\ge \bar{C}.$$
This,
Corollary~\ref{eq-Hessian:cor} (if~$u$ is a minimizer)
or the fact that~$u$ is assumed to be in~$C^{1,1}(\Omega)$
(if~$u$ is a one-phase minimizer) lead that, for a suitable~$C>0$,
\begin{eqnarray*}
C &\ge&
\frac1{{\bar{r}}^{4}}\int_{B_{{\bar{r}}}}|\na  u|^2
+\frac1{{\bar{r}}^2}\int_{B_{\bar{r}}}|D^2 u|^2
\\ &=&
\frac1{{\bar{r}}^{4}}\int_0^{{\bar{r}}} \left(\int_{\partial B_r} |\na  u|^2\right)\,dr
+\frac1{{\bar{r}}^2}\int_0^{{\bar{r}}}\left(\int_{\partial B_r}|D^2 u|^2\right)\,dr
\\&=& \frac1{{\bar{r}}}\int_0^{{\bar{r}}} \left(\int_{\partial B_r} \frac{|\na  u|^2}{
\bar{r}^3}
+\int_{\partial B_r}\frac{|D^2 u|^2}{\bar{r}}\right)\,dr\\
&\ge& \frac1{{\bar{r}}}\int_{\frac{\bar{r}}2}^{{\bar{r}}} \left(\int_{\partial B_r} \frac{|\na  u|^2}{
\bar{r}^3}
+\int_{\partial B_r}\frac{|D^2 u|^2}{\bar{r}}\right)\,dr\\
&\ge& \frac1{8{\bar{r}}}\int_{\frac{\bar{r}}2}^{{\bar{r}}} \left(\int_{\partial B_r}
\frac{|\na  u|^2}{{r}^3}
+\int_{\partial B_r}\frac{|D^2 u|^2}{{r}}\right)\,dr\\
&\ge& \frac{\bar{C}}{16},
\end{eqnarray*}
which is a contradiction if~$\bar{C}$ is suitably large, and this establishes~\eqref{BOU SPH}.

As a consequence, using the Cauchy-Schwarz inequality, Theorem~\ref{growth}
and~\eqref{BOU SPH}, 
\begin{eqnarray*}&&
\int_{\partial B_{r_k}}
\left|\frac{\Delta u\,u_r}{2r_k^2}-
\frac{5u_r^2}{2r_k^3}
-\frac{\Delta u u}{r_k^3}+\frac{6u u_r}{r_k^4}+
\frac{u_{\theta}u_{\theta r}}{r_k^4}
-\frac{4u^2}{r_k^5}
-\frac{3u_\theta^2}{2r_k^5}
\right|\\&\le&
C\,\int_{\partial B_{r_k}}\left(
\frac{|D^2 u|\,|\nabla u|}{r_k^{\frac12}\,r_k^{\frac32}}+
\frac{|\nabla u|^2}{r_k^3}
+\frac{|\Delta u |}{r_k^{\frac12}\,r_k^{\frac12}}+\frac{|\nabla u|}{
r_k^{\frac32}\,r_k^{\frac12}
}+\frac{1}{r_k}
\right)\\&\le&
C\,\int_{\partial B_{r_k}}\left(
\frac{|\nabla u|^2}{r_k^3}+
\frac{|D^2 u|^2}{r_k}+\frac{1}{r_k}
\right)\\
&\le& C,
\end{eqnarray*}
for some~$C>0$, possibly varying from line to line.

Using this, \eqref{EMMEDE} and Corollary~\ref{eq-Hessian:cor} (if~$u$
is a minimizer) or the assumption that~$u\in C^{1,1}(\Omega)$ (if~$u$
is a one-phase minimizer),
we thereby deduce that
\begin{equation}\label{rj-rk-al}
\begin{split}
&|E(r_k)| \\ \le\;&
\int_{\partial B_{r_k}}
\left|\frac{\Delta u\,u_r}{2r_k^2}-
\frac{5u_r^2}{2r_k^3}
-\frac{\Delta u u}{r_k^3}+\frac{6u u_r}{r_k^4}+
\frac{u_{\theta}u_{\theta r}}{r_k^4}
-\frac{4u^2}{r_k^5}
-\frac{3u_\theta^2}{2r_k^5}
\right|
+
\frac1{4r_k^2}\int_{B_{r_k}} \big( |\Delta u|^2+\chi_{\{u>0\}}\big)
\\ \le\; &C+\frac1{4r_k^2}\int_{B_{r_k}}\chi_{\{u>0\}}\\
\le\;& C
,\end{split}
\end{equation}
up to renaming~$C>0$.

Now, fix~$r\in(0,1)$. Let~$\bar k$ sufficiently large, such that~$r_{\bar k}\in(0,r)$.
{F}rom~\eqref{MONOFORMULA}, we know that
$$ E(r_{\bar k})\le E(r)\le E(1).$$
Hence, by~\eqref{rj-rk-al},
$$ -C\le E(r)\le E(1),$$
and this shows that~$E$ is bounded, as desired.
\end{proof}

Having already checked the validity of
the monotonicity formula in~\eqref{MONOFORMULA} and
the fact that~$E$ is bounded, in order to complete the proof of
Theorem~\ref{lemma:F}, we only need to show that
if~$E$ is constant in~$(0,\tau)$,
then~$u$
is a homogeneous function of degree two. This is now a simple consequence
of~\eqref{MONOFORMULA}. The detailed argument goes as follows.

\begin{proof}[Proof of the case of constant $E$]
Suppose now that~$E$ is constant in~$(0,\tau)$. Then, 
by~\eqref{MONOFORMULA},
\begin{eqnarray*}
&& -\frac{\partial}{\partial \theta}
\left(-\frac{u_r}{r}+ \frac{2u}{r^2}\right)
=\frac{u_{r\theta}}{r^2}-\frac{2u_\theta}{r}=0\\
{\mbox{and }} && -r\,\frac{\partial}{\partial r}
\left(-\frac{u_r}{r}+\frac{2u}{r^2}\right)=u_{rr}-\frac{3u_r}{r}+
\frac {4u}{r^2}=0,
\end{eqnarray*}
which, in turn, gives that
$$\na \left(-\frac{u_r}{r}
+2 \frac{u}{r^2}\right)=0.$$ Consequently, the function~$
-\frac{u_r}{r}+ \frac{2u}{r^2}$ is constant for~$|x|\in(0,\tau)$, hence we write
\begin{equation}\label{ODE} -\frac{u_r}{r}+ \frac{2u}{r^2}=c,\end{equation}
for some~$c\in\R$.

Now we define
\begin{equation}\label{ODE2} v(r,\theta):=u(r,\theta)+cr^2\log r.\end{equation}
Using~\eqref{ODE}, we obtain that
$$ v_r=u_r+2cr\log r+cr=\frac{2u}r+2cr\log r=\frac{2v}r.
$$
Integrating this equation, fixed~$\bar r\in(0,\tau)$, we find that
$$ v(r,\theta)=\frac{r^2\,v(\bar r,\theta)}{\bar r^2}.$$
This and~\eqref{ODE2} give that
$$ u(r,\theta)=\frac{r^2\,v(\bar r,\theta)}{\bar r^2}-cr^2\log r.$$
Therefore, exploiting Theorem~\ref{growth} (if~$u$ is a minimizer)
or the assumption that~$u\in C^{1,1}(\Omega)$ (if~$u$
is a one-phase minimizer),
$$ C\ge\frac{|u(r,\theta)|}{r^2}\ge |c|\,|\log r|-\frac{|v(\bar r,\theta)|}{\bar r^2},$$
for some~$C>0$ and therefore
$$ |c|\le\lim_{r\to0} \frac{|v(\bar r,\theta)|}{\bar r^2\,|\log r|}+\frac{C}{|\log r|}=0.$$
Hence, we get that~$c=0$ and, as a consequence,
we can write~\eqref{ODE} as
\begin{equation*} -\frac{u_r}{r}+ \frac{2u}{r^2}=0\end{equation*}
for any~$x\in B_\tau$, and therefore~$\na u (x)\cdot x=2u(x)$ for any~$x\in B_\tau$. 
Observing that this is
the Euler equation for homogeneous functions of degree two, we thus obtain the homogeneity
of~$u$.
The proof of Theorem~\ref{lemma:F} is thereby complete.
\end{proof}

\medskip

We finish this section by an explicit computation of 
the energy $E$ for the homogeneous functions of degree two on the plane. 
It will be used later in the proof of Theorem~\ref{BYPA}.


\begin{lemma}\label{7udjhHHANSLL}
Let~${\mathscr{C}}\subseteq\R^2$ be a cone in~$\R^2$,
written in polar coordinates as
$$ {\mathscr{C}}=\big\{ (r,\theta)\in(0,+\infty) \times(\theta_1,\theta_2)\big\},$$
for some~$0\le\theta_1<\theta_2\le2\pi$.

Let~$u:{\mathscr{C}}\to\R$ be a homogeneous
function of the form~$u(x)=r^2 g(\theta)$, with~$g\in C^2([\theta_1,\theta_2])$,
$g>0$ in~$(\theta_1,\theta_2)$, and
$$ g(\theta_1)=g(\theta_2)=0 \qquad{\mbox{
and }}\qquad g'(\theta_1)=g'(\theta_2)=0.
$$
Assume also that~$\Delta u$ is
constant in~${\mathscr{C}}$.
Then, for any~$r>0$,
\begin{equation}\label{0-2euefhi03ejJJJJ}\begin{split}&
\int_{{\mathscr{C}}\cap\partial B_r}\left(
\frac{\Delta u\,u_r}{2r^2}-
\frac{5u_r^2}{2r^3}
-\frac{\Delta uu}{r^3}+\frac{6uu_r}{r^4}+
\frac{u_{\theta}u_{\theta r}}{r^4}
-\frac{4u^2}{r^5}
-\frac{3u_\theta^2}{2r^5}\right)+
\frac1{4r^2}\int_{{\mathscr{C}}\cap B_r}\big(|\Delta u|^2+\chi_{\{u>0\}}\big)
\\&\qquad=\frac{\pi}{4}\,
\frac{|\{u>0\}\cap B_r|}{|B_r|}=\frac{\theta_2-\theta_1}{8}.\end{split}
\end{equation}
\end{lemma}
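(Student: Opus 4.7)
\smallskip

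The plan is to compute everything by brute force in polar coordinates, exploiting the simple structure $u(r,\theta)=r^{2}g(\theta)$. First I would record the basic derivatives
\begin{equation*}
u_{r}=2rg,\quad u_{rr}=2g,\quad u_{\theta}=r^{2}g',\quad u_{r\theta}=2rg',\quad u_{\theta\theta}=r^{2}g'',
\end{equation*}
so that $\Delta u=u_{rr}+r^{-1}u_{r}+r^{-2}u_{\theta\theta}=4g+g''$; since $\Delta u$ is assumed constant on ${\mathscr C}$, setting $c:=\Delta u$ gives the second-order linear ODE
\begin{equation*}
g''(\theta)+4\,g(\theta)=c\qquad\text{on }(\theta_{1},\theta_{2}),
\end{equation*}
together with the boundary conditions $g(\theta_{i})=g'(\theta_{i})=0$.

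Next I would substitute these formulas into the boundary integrand of \eqref{0-2euefhi03ejJJJJ}. A straightforward term-by-term computation (the $\Delta u\,u_{r}/(2r^{2})$ and $\Delta u\,u/r^{3}$ pieces cancel one another, and the purely $g^{2}$ and $(g')^{2}$ pieces combine as $-2g^{2}+(g')^{2}/2$) shows that the integrand equals $r^{-1}\bigl(-2g^{2}+\tfrac{1}{2}(g')^{2}\bigr)$. Since $ds=r\,d\theta$ on ${\mathscr C}\cap\partial B_{r}$, this gives
\begin{equation*}
\int_{{\mathscr C}\cap\partial B_{r}}\!\Bigl(\tfrac{\Delta u\,u_{r}}{2r^{2}}-\tfrac{5u_{r}^{2}}{2r^{3}}-\tfrac{\Delta u\,u}{r^{3}}+\tfrac{6uu_{r}}{r^{4}}+\tfrac{u_{\theta}u_{\theta r}}{r^{4}}-\tfrac{4u^{2}}{r^{5}}-\tfrac{3u_{\theta}^{2}}{2r^{5}}\Bigr)=\int_{\theta_{1}}^{\theta_{2}}\!\!\Bigl(-2g^{2}+\tfrac{1}{2}(g')^{2}\Bigr)d\theta.
\end{equation*}
For the bulk term, $|\Delta u|^{2}+\chi_{\{u>0\}}=c^{2}+1$ on ${\mathscr C}$, so
\begin{equation*}
\frac{1}{4r^{2}}\int_{{\mathscr C}\cap B_{r}}\bigl(|\Delta u|^{2}+\chi_{\{u>0\}}\bigr)=\frac{(c^{2}+1)(\theta_{2}-\theta_{1})}{8}.
\end{equation*}

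The heart of the argument is then to show $\int_{\theta_{1}}^{\theta_{2}}\bigl(-2g^{2}+\tfrac{1}{2}(g')^{2}\bigr)d\theta=-c^{2}(\theta_{2}-\theta_{1})/8$, for which I would use the ODE twice. Multiplying $g''=c-4g$ by $g$, integrating on $(\theta_{1},\theta_{2})$ and integrating by parts using $g(\theta_{i})=g'(\theta_{i})=0$ yields
\begin{equation*}
\int_{\theta_{1}}^{\theta_{2}}(g')^{2}\,d\theta=4\!\int_{\theta_{1}}^{\theta_{2}}g^{2}\,d\theta-c\!\int_{\theta_{1}}^{\theta_{2}}g\,d\theta,
\end{equation*}
so the boundary integral collapses to $-\tfrac{c}{2}\int_{\theta_{1}}^{\theta_{2}}g\,d\theta$. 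Integrating the ODE itself on $(\theta_{1},\theta_{2})$ and using $g'(\theta_{1})=g'(\theta_{2})=0$ gives the second identity $\int_{\theta_{1}}^{\theta_{2}}g\,d\theta=c(\theta_{2}-\theta_{1})/4$. Combining these two produces the desired value $-c^{2}(\theta_{2}-\theta_{1})/8$, and adding the bulk contribution $(c^{2}+1)(\theta_{2}-\theta_{1})/8$ cancels the $c^{2}$-terms and leaves exactly $(\theta_{2}-\theta_{1})/8$. Finally, since $\{u>0\}\cap B_{r}={\mathscr C}\cap B_{r}$ has area $(\theta_{2}-\theta_{1})r^{2}/2$, the ratio $|\{u>0\}\cap B_{r}|/|B_{r}|=(\theta_{2}-\theta_{1})/(2\pi)$, which gives the other expression $\tfrac{\pi}{4}\cdot(\theta_{2}-\theta_{1})/(2\pi)=(\theta_{2}-\theta_{1})/8$ as well.

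There is no real obstacle: the main task is simply bookkeeping of the seven boundary terms, followed by two short integrations by parts that are possible precisely because of the double boundary vanishing $g(\theta_{i})=g'(\theta_{i})=0$. The only mildly delicate point is the algebraic cancellation of the $c$-dependent contributions between the boundary piece and the bulk piece, which is what forces the answer to be the purely geometric quantity $(\theta_{2}-\theta_{1})/8$ independent of the constant value of $\Delta u$.
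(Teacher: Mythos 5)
Your proposal is correct and follows essentially the same route as the paper: compute the seven boundary terms in polar coordinates to reduce the integrand to $r^{-1}\bigl(-2g^2+\tfrac12(g')^2\bigr)$, then use the ODE $g''+4g=c$ together with $g(\theta_i)=g'(\theta_i)=0$ to show the boundary integral equals $-c^2(\theta_2-\theta_1)/8$, which cancels the $|\Delta u|^2$ bulk contribution and leaves the purely geometric term. The only difference is cosmetic: you evaluate $\int g=c(\theta_2-\theta_1)/4$ by integrating the ODE directly, whereas the paper substitutes $g=(C_0-g'')/4$ and uses $\int g''=0$, which is the same computation rearranged.
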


\begin{proof} 
By assumption, in~${\mathscr{C}}$ we have that
\begin{equation}\label{lapcost}
C_0=\Delta u=4 g+g'',\end{equation}
for some~$C_0\in\R$,  
and
\begin{eqnarray*}
&& \frac{\Delta u\,u_r}{2r^2}-
\frac{5u_r^2}{2r^3}
-\frac{\Delta uu}{r^3}+\frac{6uu_r}{r^4}+
\frac{u_{\theta}u_{\theta r}}{r^4}
-\frac{4u^2}{r^5}
-\frac{3u_\theta^2}{2r^5}\\
&=&\frac{(4 g+g'')g}{r}-
\frac{10 g^2}{r}
-\frac{(4 g+g'')g}{r}+\frac{12 g^2}{r}+
\frac{2(g')^2}{r}
-\frac{4g^2}{r}
-\frac{3(g')^2}{2r}\\
&=&-\frac{2g^2}{r}+\frac{(g')^2}{2r}.
\end{eqnarray*}
Therefore,
after an integration by parts, and recalling~\eqref{lapcost}, we have that
\begin{equation}\label{HYHshi}
\begin{split}&
\int_{{\mathscr{C}}\cap\partial B_r}\left(
\frac{\Delta u\,u_r}{2r^2}-
\frac{5u_r^2}{2r^3}
-\frac{\Delta uu}{r^3}+\frac{6uu_r}{r^4}+
\frac{u_{\theta}u_{\theta r}}{r^4}
-\frac{4u^2}{r^5}
-\frac{3u_\theta^2}{2r^5}\right)\\
=\;&\int_{\theta_1}^{\theta_2}\left( -2g^2+\frac{(g')^2}{2}\right)\\
=\;&\int_{\theta_1}^{\theta_2}\left( - 2g^2 -\frac{g''g}{2}\right)\\
=\;&-\frac12\int_{\theta_1}^{\theta_2}g( 4g+ g'')\\
=\;&-\frac{C_0}{2}\int_{\theta_1}^{\theta_2}g\\
=\;&\frac{C_0}{8}\int_{\theta_1}^{\theta_2}(g''-C_0)\\
=\;&-\frac{C_0^2\,(\theta_2-\theta_1)}{8}.
\end{split}
\end{equation}
On the other hand,
$$ \frac1{4r^2}\int_{{\mathscr{C}}\cap B_r}|\Delta u|^2=
\frac1{8}\int_{\theta_1}^{\theta_2} (4 g+g'')^2=\frac{C_0^2\,(\theta_2-\theta_1)}{8}.
$$
This and~\eqref{HYHshi} give that
\begin{equation*}\begin{split}&
\int_{{\mathscr{C}}\cap\partial B_r}\left(
\frac{\Delta u\,u_r}{2r^2}-
\frac{5u_r^2}{2r^3}
-\frac{\Delta uu}{r^3}+\frac{6uu_r}{r^4}+
\frac{u_{\theta}u_{\theta r}}{r^4}
-\frac{4u^2}{r^5}
-\frac{3u_\theta^2}{2r^5}\right)+
\frac1{4r^2}\int_{{\mathscr{C}}\cap B_r}\big(|\Delta u|^2+\chi_{\{u>0\}}\big)\\&\qquad
=\frac1{4r^2}\int_{B_r} \chi_{\{u>0\}},\end{split}
\end{equation*}
which proves~\eqref{0-2euefhi03ejJJJJ}.
\end{proof}

\section{Monotonicity formula: homogeneity of the blow-up limits, and
proof of Theorem~\ref{thm-hom-blow}}\label{sec:class}

In this section, we apply the results in Theorem~\ref{lemma:F}
to study the homogeneity properties of the blow-up limits of
the minimizers of~$J$ at free boundary points
with vanishing gradient, thus proving Theorem~\ref{thm-hom-blow}.

\begin{proof}[Proof of Theorem~\ref{thm-hom-blow}]
Suppose that~$u$ does not vanish identically.
We let 
\begin{equation}\label{blyaaa-1}
Q(u,x):=Q(u, r,\theta)=\left(-\dfrac {u_{r\theta}}{r}
+2 \frac{u_\theta}{r^2}\right)^2+\left(u_{rr}-3\frac{u_r}{r}+4\frac u{r^2}
\right)^2.
\end{equation}
Note that $Q$ is invariant with respect to quadratic scaling. 
Indeed, if we define, for any~$s>0$,
$$ u_s(x):=\frac{u(sx)}{s^2},$$
we have that
\begin{equation}\begin{split}\label{scaling}
Q(u_s,x)=\;&\left(-\dfrac {(u_s)_{r\theta}}{r}
+2 \frac{(u_s)_\theta}{r^2}\right)^2+\left((u_s)_{rr}-3\frac{(u_s)_r}{r}+4\frac{u_s}{r^2}
\right)^2\\ =\;& 
\left(-\dfrac {u_{r\theta}(sx)}{sr}
+2 \frac{u_\theta(sx)}{(sr)^2}\right)^2
+\left(u_{rr}(sx)-3\frac{u_r(sx)}{sr}+4\frac{u(sx)}{(sr)^2}
\right)^2\\
=\;& Q(u,sx).
\end{split}\end{equation}
Now, in view of~\eqref{MONOFORMULA} and \eqref{blyaaa-1},
we observe that
\begin{equation}\label{46e5dytcCCe6rfg49orghr}
\begin{split}
E(\tau_2)-E(\tau_1)\,&=
\int_{\tau_1}^{\tau_2}\left\{
\frac1{r^2}\int_{\partial B_r}
\left[\left( \frac{u_{\theta r}}{r}-\frac{2u_\theta}{r^2}\right)^2
+\left( u_{rr}-\frac{3u_r}{r}+\frac{4u}{r^2}\right)^2\right]
\right\}\,dr\\
&= \int_{\tau_1}^{\tau_2}\left(
\frac1{r^2}\int_{\partial B_r}
Q(u,x)\,dx
\right)\,dr.\end{split}\end{equation}
As a consequence, for any~$s>0$, using the changes of variables~$\rho=r/s$
and~$y=x/s$, and making use of~\eqref{scaling},
we see that
\begin{equation}\label{4rdcwqsa7ujewds9k-x}
\begin{split}
E(s\tau_2)-E(s\tau_1)\,
&= \int_{s\tau_1}^{s\tau_2}\left(
\frac1{r^2}\int_{\partial B_r}
Q(u,x)\,dx
\right)\,dr\\
&= \int_{\tau_1}^{\tau_2}\left(
\frac1{\rho^2}\int_{\partial B_{\rho}}
Q(u,sy)\,dy
\right)\,d\rho\\
&= \int_{\tau_1}^{\tau_2}\left(
\frac1{\rho^2}\int_{\partial B_{\rho}}
Q(u_s,y)\,dy
\right)\,d\rho.
\end{split}\end{equation}
On the other hand, by Theorem~\ref{lemma:F},
we know that~$E$ is monotone and bounded, and therefore
the limit as~$\vartheta\to0^+$ of~$E(\vartheta)$ exists and it
is finite. Consequently, we have that
\[
E(s\tau_2)-E(s\tau_1)\to 0 \quad {\mbox{ as }}\, s\to 0.
\]
Hence, recalling~\eqref{4rdcwqsa7ujewds9k-x}, we conclude that
\begin{equation}\label{jewbgej0587}
\int_{\tau_1}^{\tau_2}\left(
\frac1{\rho^2}\int_{\partial B_{\rho}}
Q(u_s,y)\,dy
\right)\,d\rho\to 0\quad {\mbox{ as }}\, s\to 0.\end{equation}
Also, by compactness (ensured here, if $u$ is a minimizer,
by \eqref{34934jsq92358858678}, which
in turns allows us to exploit Corollary~\ref{eq-Hessian:cor}, and, if~$u$ is a
one-phase minimizer by the assumption that~$u\in C^{1,1}(\Omega)$),
we have that~$u_{s}$ converges to some~$u_0$, up to a subsequence.
Therefore, by~\eqref{jewbgej0587}, 
\[
\int_{\tau_1}^{\tau_2}\left(
\frac1{\rho^2}\int_{\partial B_{\rho}}
Q(u_0,y)\,dy
\right)\,d\rho=0
\]
for all $\tau_2>\tau_1>0$. 
Thus, since $Q\ge0$, due to~\eqref{blyaaa-1},
it follows that~$Q(u_0, y)=0$.
Consequently, by~\eqref{46e5dytcCCe6rfg49orghr},
we have that the function~$E$ relative to the minimizer~$u_0$
is identically constant. Therefore, in view of the last claim in Theorem~\ref{lemma:F},
it follows that~$u_0$ is a homogeneous function of degree two.\end{proof}

\section{Regularity of the free boundary in two dimensions:
explicit computations, classification results in~$2D$,
and proof of Theorem~\ref{BYPA}}\label{sec:reg}

In this  section we study the regularity of free boundary
of minimizers in dimension~$2$. Some of the results presented rely on direct
calculations, while others are
obtained by the monotone quantity $E$
that has been analyzed in
Theorems~\ref{lemma:F} and~\ref{thm-hom-blow}. 
In this setting,
we have the following classification result for one-phase minimizers:

\begin{theorem}\label{gDg0}
Let~$u\in C^1(\R^n)$ be a one-phase local minimizer
in any ball of~$\R^n$, with~$0\in \fbs u$.
Let~$u=r^2g(\theta)$, 
where~$(r,\theta)$ denotes the polar coordinates.
Then, the following dichotomy holds:
\begin{itemize}
\item either~$u$
is a homogeneous polynomial of degree two,
\item or, up to a rotation,
\begin{equation*}
u(x)=a\,(x_1^+)^2,
\end{equation*}
for some~$a>0$.
\end{itemize}
\end{theorem}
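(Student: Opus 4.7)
Since $u$ is continuous and strictly positive on the open set $\po u$, every compact subset of $\po u$ is contained in $\{u\ge a\}$ for some $a>0$, so the one-phase statement of Lemma~\ref{POBIA} implies that $u$ is biharmonic on each connected component of $\po u$. Inserting the homogeneous ansatz $u(r,\theta)=r^{2}g(\theta)$ into the polar Laplacian produces
\[
\Delta u=4g(\theta)+g''(\theta)=:h(\theta),
\]
a function of $\theta$ alone, so $\Delta^{2}u=r^{-2}h''(\theta)$. Thus on every connected component $I=(\theta_{1},\theta_{2})\subseteq\S^{1}$ of $\{g>0\}$ the biharmonicity reduces to
\[
4g(\theta)+g''(\theta)=C_{0}+C_{1}\theta\qquad\text{for }\theta\in I,
\]
for constants $C_{0},C_{1}$ depending on $I$.

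Next I extract boundary conditions at each endpoint $\theta_{i}$ of $I$. The regularity $u\in C^{1}(\R^{n})$ together with $g\ge 0$ and $g(\theta_{i})=0$ forces $g'(\theta_{i})=0$, so that $\na u\equiv 0$ along the free boundary ray $\{\theta=\theta_{i},\,r>0\}$. Specializing the free boundary relation~\eqref{Nu1} to the one-phase situation $u^{(1)}=u,\,u^{(2)}\equiv 0,\,\lambda^{(1)}=1,\,\lambda^{(2)}=0$ and using $u_{m}=0$ on $\partial\po u$ collapses~\eqref{Nu1} into
\[
(|\Delta u|^{2}+1)\,\nu_{m}=2\,\Delta u\,\na u_{m}\cdot\nu,\qquad m=1,2.
\]
A direct polar-coordinate computation of $\na u_{m}$ at $\theta=\theta_{i}$ (using $g=g'=0$) gives $\na u_{m}\cdot\nu=g''(\theta_{i})\,\nu_{m}$, and since $(\nu_{1},\nu_{2})\ne 0$ the above reduces to the scalar identity $(g''(\theta_{i}))^{2}=1$. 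The sign is fixed by $g''(\theta_{i})\ge 0$ (one-sided minimum of $g\ge 0$ at $\theta_{i}$), giving $g''(\theta_{i})=1$. Evaluating $4g+g''=C_{0}+C_{1}\theta$ at $\theta_{1}$ and $\theta_{2}$ now forces $C_{1}=0$ and $C_{0}=1$.

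The ODE $g''+4g=1$ with Cauchy data $g(\theta_{1})=g'(\theta_{1})=0$ has the unique solution $g(\theta)=\tfrac{1}{2}\sin^{2}(\theta-\theta_{1})$, and the remaining condition $g(\theta_{2})=0$ forces $\theta_{2}-\theta_{1}\in\pi\mathbb{Z}$. I will then split into the three mutually exclusive cases that exhaust the structure of $\{g>0\}\subseteq\S^{1}$: \emph{(i)} if $\{g>0\}=\S^{1}$ possibly minus isolated zeros, single-valuedness of $\Delta u$ on the punctured plane forces $C_{1}=0$ globally, so $\Delta u\equiv C_{0}$; then $u-\tfrac{C_{0}}{4}|x|^{2}$ is a degree-two homogeneous harmonic function, and the classical Liouville theorem for homogeneous harmonics yields that it is a linear combination of $x_{1}^{2}-x_{2}^{2}$ and $x_{1}x_{2}$, so $u$ is a homogeneous polynomial of degree two; \emph{(ii)} if $\{g>0\}$ is a single proper arc, its length must be exactly $\pi$ by the above, and $u=\tfrac{1}{2}(x_{1}^{+})^{2}$ after rotating the frame so that $\theta_{1}$ aligns with the free boundary $\{x_{1}=0\}$; \emph{(iii)} if $\{g>0\}$ has $k\ge 2$ disjoint components, each forced to have length $\pi$, then $k\pi\le 2\pi$ gives $k=2$ with the two arcs tiling $\S^{1}$ meeting at isolated zeros, but the formula $\tfrac{1}{2}\sin^{2}(\theta-\theta_{1})$ glues seamlessly across both arcs to $g\equiv\tfrac{1}{2}\sin^{2}(\theta-\theta_{1})$ on all of $\S^{1}$, placing us back into case \emph{(i)}.

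The principal obstacle I anticipate is the rigorous invocation of Theorem~\ref{FREE BOU COND}, which is stated under $C^{3}$-regularity of $u$ up to the free boundary from each side, whereas the hypothesis here only provides $u\in C^{1}(\R^{n})$. Fortunately, biharmonicity of $u$ inside $\po u$ together with the homogeneous ansatz makes $u$ real-analytic on the open cone over each component of $\{g>0\}$ (hence smooth up to the bounding rays away from the singular vertex $0\in\fbs u$), while the zero phase trivially enjoys unlimited regularity; the origin is a purely singular point that does not participate in the integration-by-parts argument leading to~\eqref{Nu1}. A minor subsidiary verification is that the scale-invariance of the ansatz reduces the boundary condition along the whole ray $\{\theta=\theta_{i}\}$ to a single pointwise identity in $\theta$, rather than a family of conditions parameterized by $r$.
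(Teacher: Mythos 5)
Your opening is the same as the paper's: biharmonicity of $u$ on the positivity cone via Lemma~\ref{POBIA}, the reduction to $4g+g''=C_0+C_1\theta$ on each component of $\{g>0\}$, and the endpoint Cauchy data $g(\theta_i)=g'(\theta_i)=0$ coming from $u\in C^1$ and $g\ge0$. Where you diverge is in closing the system: you invoke the free boundary relation~\eqref{Nu1} with $\lambda^{(1)}=1$, $\lambda^{(2)}=0$ to force $g''(\theta_i)=1$, whereas the paper never uses~\eqref{Nu1} at one-phase boundary points; it writes the explicit general solution of the ODE and runs the four homogeneous endpoint conditions as a linear system, concluding that any proper component must be a half-circle carrying $u=c\,x_1^2$ (with $c$ left free, which is why the statement only asserts ``some $a>0$''). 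Your route, if it worked, would even pin $a=\tfrac12$; but note that the hypotheses of Theorem~\ref{FREE BOU COND} as stated (the $C^3$ regularity from both sides, and the structural identification $\partial\{u>0\}=\partial\{u<0\}=\{u=0\}$) do not literally cover the one-phase geometry with a fat zero set, so this step would have to be re-derived rather than quoted; you flag the $C^3$ issue but not the structural one.

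The genuine gap, however, is the configuration in which the free boundary is a full line with a positive phase on \emph{both} sides, i.e.\ $g>0$ on $\S^1$ except at two antipodal isolated zeros, so that (after the endpoint analysis) $u=a x_1^2$ on $\{x_1>0\}$ and $u=b x_1^2$ on $\{x_1<0\}$ with possibly $a\ne b$. This case lands in your case \emph{(i)}/\emph{(iii)}, and there your argument breaks in two ways. First, ``single-valuedness of $\Delta u$ on the punctured plane forces $C_1=0$ globally, so $\Delta u\equiv C_0$'' is unjustified: $u$ is only $C^1$, the constants $(C_0,C_1)$ may differ on different components, and $\Delta u$ can a priori jump across the rays (it does jump in the candidate $a x_1^2/b x_1^2$, which is $C^1$ and biharmonic off the line); continuity of $\Delta u$ across the free boundary is exactly what must be \emph{proved}, not assumed. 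Second, at an isolated zero the adjacent region is a positive phase, so $\lambda^{(2)}=1$ and the correct consequence of~\eqref{Nu1} is the matching $|\Delta u^{(1)}|^2=|\Delta u^{(2)}|^2$ across the ray, not $g''=1$; hence your claim in case \emph{(iii)} that each arc ``is forced to have length $\pi$'' and carries $\tfrac12\sin^2(\theta-\theta_1)$ is circular there, since that formula was derived from the one-phase endpoint condition that only applies when the neighbouring phase is the fat zero set. The paper resolves precisely this configuration by applying~\eqref{Nu1} with $\lambda^{(1)}=\lambda^{(2)}=1$ at a point such as $(0,1)$, obtaining $1-4a^2=1-4b^2$ and hence $a=b$, which turns $u$ into a polynomial; some such argument is missing from your proposal. (A minor, fixable point: in case \emph{(i)} with a single isolated zero you should kill the $C_1\theta$ term using periodicity/continuity of $g$ itself, as the paper does, rather than of $\Delta u$.)
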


\begin{proof} 
A direct computation shows that
\begin{equation}\label{9ikde9uo32ew9uoewhdxhi65464783-zx}
\Delta u=u_{rr}+\frac{u_r}{r} + \frac{1}{r^2}\, \Delta_{{\mathbb{S}}^1}u
= 2g +2g+ g''
=
g''+4g.
\end{equation}
Accordingly, by Lemma~\ref{POBIA}, we have that,
in the positivity set of~$u$, we have
$$r^2\Delta^2 u=\frac{d^2}{d\theta^2}(g''+4g)=0.$$
{F}rom this, we deduce that
\begin{equation}\label{fherger25324}
g''(\theta)+4g(\theta)=c_1\theta+c_2,\qquad{\mbox{for all }}\theta\in\{g\ne0\}
,\end{equation}
for some constants~$c_1$ and~$c_2$.
We notice that~\eqref{fherger25324}
has explicit solution
\begin{equation}\label{g explicit}
\begin{split}
g(\theta)\,&=
\frac{c_1\theta}{4} + \frac{c_2}{4} +c_3\cos(2\theta)
+c_4 \sin(2\theta)\\
&=\frac{c_1\theta}{4} + \frac{c_2}{4} +c_3(\cos^2\theta-\sin^2\theta)
+2c_4 \sin\theta\cos\theta,\end{split}
\end{equation}
for some constants~$c_3$ and~$c_4$.

Since~$0$ is a free boundary point for~$u$, we have that~$g$ cannot
vanish identically. Hence, we distinguish
some cases, depending on the number of zeros of~$g$.
First of all, we consider the cases in which
either~$g>0$ for all~$\theta\in[0,2\pi)$
or~$g$ vanishes only at one point. Then, 
in this case the free boundary is contained in a ray and,
up to a rotation,
we can assume that~$g(\theta)>0$ for all~$\theta\in(0,2\pi)$
and so~\eqref{g explicit} is valid for all~$\theta\in(0,2\pi)$.
The periodicity of~$g$ then implies that
$$ 0=\lim_{\theta\to0^+}g(\theta)-\lim_{\theta\to2\pi^-}g(\theta)
=-\frac{c_1\pi}{2},$$
and so~$c_1=0$. As a consequence, by~\eqref{g explicit},
$$ u(r,\theta)= \frac{c_2r^2}{4} +c_3r^2(\cos^2\theta-\sin^2\theta)
+2r^2c_4 \sin\theta\cos\theta=
\frac{c_2(x_1^2+x_2^2)}{4} +c_3(x_1^2-x_2^2)
+2 c_4 x_1x_2,$$
which
is a homogeneous polynomial of degree two,
thus proving the desired claim in this case.

Now we suppose that~$g$ vanishes at least
at two points, say, up to a rotation, $\theta_0$ and~$-\theta_0$,
for some~$\theta_0\in(0,\pi)$, that is
\begin{equation}\label{Coasyh2}
\begin{split}
g(\theta)>0{\mbox{ for all }}\theta\in(-\theta_0,\theta_0),\\
{\mbox{and }}g(\theta_0)=g(-\theta_0)=0.
\end{split}
\end{equation}
Then, by~\eqref{g explicit},
\begin{equation} \label{SYS1}0=g(\pm\theta_0)=\pm
\frac{c_1\theta_0}{4} + \frac{c_2}{4} +c_3\cos(2\theta_0)
\pm c_4 \sin(2\theta_0).\end{equation}
By the assumptions that~$u\in C^1(\R^n)$ and~$g\ge0$, we also know
that
\begin{equation} \label{SYS2}0=g'(\pm\theta_0)=
\frac{c_1}{4}  \mp 2c_3\sin(2\theta_0)
+2c_4 \cos(2\theta_0).\end{equation}
Then, we obtain from~\eqref{SYS1} and~\eqref{SYS2} the system
\begin{equation}\label{rUIATcoap} \left\{
\begin{matrix}
\displaystyle\frac{c_1\theta_0}{4} +c_4 \sin(2\theta_0)=0,\\
\displaystyle\frac{c_2}{4} +c_3\cos(2\theta_0)=0,\\
c_3\sin(2\theta_0)=0,\\
\displaystyle\frac{c_1}{4}  
+2c_4 \cos(2\theta_0)=0.
\end{matrix}
\right.\end{equation}
Now, if
\begin{equation}\label{9osjx8wdubjxCVA}
\theta_0\ne\pi/2,\end{equation} from~\eqref{rUIATcoap}
we have that necessarily~$c_3=0$,
and accordingly
\begin{equation*} \left\{
\begin{matrix}
\displaystyle\frac{c_1\theta_0}{4} +c_4 \sin(2\theta_0)=0,\\
\displaystyle\frac{c_2}{4} =0,\\
\displaystyle\frac{c_1}{4}  
+2c_4 \cos(2\theta_0)=0.
\end{matrix}
\right.\end{equation*}
This implies that~$c_2=0$, and so~\eqref{g explicit} becomes
\begin{equation*}
g(\theta)=
\frac{c_1\theta}{4} 
+c_4 \sin(2\theta).
\end{equation*}
In particular~$g(0)=0$, which is in contradiction with~\eqref{Coasyh2}.

This says that the case in~\eqref{9osjx8wdubjxCVA} must be ruled out,
and thus~$\theta_0=\pi/2$ (and the positivity sets of~$u$
are either one or two halfplanes). In this way, the system in~\eqref{rUIATcoap}
reduces to
\begin{equation*} \left\{
\begin{matrix}
\displaystyle\frac{c_1\pi}{8} =0,\\
\\
\displaystyle\frac{c_2}{4} -c_3=0,\\
\\
\displaystyle\frac{c_1}{4}  
-2c_4 =0,
\end{matrix}
\right.\end{equation*}
which leads to~$c_1=c_4=0$ and~$\frac{c_2}4=c_3$. Substituting these
conditions into~\eqref{g explicit}, we obtain that, for all~$\theta\in(-\pi/2,\pi/2)$,
\begin{equation*}
g(\theta)=
c_3\big(1 +\cos(2\theta)\big)= c_3\big(1+\cos^2\theta-\sin^2\theta\big),
\end{equation*}
and therefore, for all~$x=(x_1,x_2)\in\R^2$ with~$x_1>0$,
$$ u(x)=2c_3 x_1^2.$$
This gives that either~$u$ is a homogeneous polynomial of degree two,
or~$u(x)=a\,(x_1^+)^2$ for some~$a>0$, or
$$ u(x)=\left\{ \begin{matrix}
a x_1^2 & {\mbox{ if }} x_1\ge0,\\
b x_1^2 & {\mbox{ if }} x_1<0,\end{matrix}
\right.$$
with~$a$, $b\in(0,+\infty)$ and
\begin{equation}\label{aneb}
a\ne b.\end{equation}
To complete the proof of the desired result, we need to exclude this case.
To this end, we observe that
\begin{eqnarray*}
&&\big(|\Delta u(0^+,1)|^2+1\big)-
2\big(\Delta u(0^+,1) u_{11}(0^+,1)-u_1(0^+,1)\Delta u(0^+,1)\big)\\
&=&\big((2a)^2+1\big)-
2\big((2a)^2+0\big)\\
&=& 1 -4a^2,
\end{eqnarray*}
and similarly
\begin{eqnarray*}
\big(|\Delta u(0^-,1)|^2+1\big)-
2\big(\Delta u(0^-,1) u_{11}(0^-,1)-u_1(0^-,1)\Delta u(0^-,1)\big)
&=& 1 -4b^2.
\end{eqnarray*}
These identities and the free boundary condition~\eqref{Nu1} computed
at the point~$(0,1)$, where according to the 
definition in~\eqref{LAMDEFI}
we have~$\lambda^{(1)}=\lambda^{(2)}=1$,  
 lead to
$$ 1-4a^2=1-4b^2,$$
which gives that~$a^2=b^2$ and thus~$a=b$. This is in contradiction
with~\eqref{aneb}, and the desired result is established.
\end{proof}

With this, we are now in the position of completing the proof of
Theorem~\ref{BYPA}.

\begin{proof}[Proof of Theorem~\ref{BYPA}] Let~$E$
be as in Theorem~\ref{lemma:F}, and let\footnote{We observe
that the limit in~\eqref{LI001} exist, due to the monotonicity of~$E$, recall Theorem~\ref{lemma:F}.}
\begin{equation}\label{LI001}
E(0):=\lim_{\rho\to0^+} E(\rho).
\end{equation}
Let~$\bar x\in\partial\{u>0\}$.
Suppose that $u_{0,\bar x}$ is a blow-up of $u$ at~$\bar x$. Notice that
$u_{0,\bar x}$ cannot be identically equal to zero, due to~\eqref{SUPu}.
Then 
by Theorem~\ref{gDg0} we know that, after some rotation of coordinates,
\begin{equation}\label{on3e9di}\begin{split}&
{\mbox{$u_{0,\bar x}$ must be one of the following 
functions: }}\\
&\frac{a_1(x_1-\bar x_1)^2+a_2 (x_2-\bar x_2)^2}2,\quad
\frac{a(x_1-\bar x_1)^2}2,\quad
\frac{ a(
(x_1-\bar x_1)^+)^2}2
,\end{split}\end{equation}
with~$a_1$, $a_2$, $a>0$ (say, possibly depending on~$\bar x$, though
the free boundary conditions in Theorem~\ref{FREE BOU COND} have to be fulfilled).

In particular, from~\eqref{on3e9di}, we know that
\begin{equation}\label{V:cos-IKAzasol}
{\mbox{$\Delta u$ is constant in the positivity cone of~$u$.}}\end{equation}
Now, from \eqref{7uhn7yhnb7yhb02394}, we know that, if
\begin{equation}\label{BARuPA} u_{k,\bar x}(x):=\frac{u(\bar x+\rho_k x)}{\rho_k^2},\end{equation}
with~$\rho_k\to0^+$, then,
up to a subsequence, 
\begin{equation}\label{9idy61eyrwf0eouryfoifg}
{\mbox{$u_{k,\bar x}\to u_{0,\bar x}$
in~$C^{1,\alpha}_{\rm{loc}}(\R^n)$,}}\end{equation}
as~$k\to+\infty$, for any~$\alpha\in(0,1)$.

We claim that
\begin{equation}\label{on3e9diBIS}
{\mbox{$u_{0,0}$ must necessarily be }}
\frac{ a(x_1^+)^2}2,\end{equation}
namely the first and the second possibilities in~\eqref{on3e9di} are
excluded at the origin.
To prove~\eqref{on3e9diBIS}, we argue by contradiction.
If not, by~\eqref{9idy61eyrwf0eouryfoifg} and~\eqref{on3e9di}, necessarily
$$ \frac{u(\rho_k x)}{\rho_k^2}=u_{k,0}(x)\to\left\{\begin{matrix}
{\mbox{either }}
\displaystyle\frac{a_1x_1^2+a_2 x_2^2}2,\\ 
\\
{\mbox{or }}
\displaystyle\frac{ax_1^2}2\end{matrix}
\right\}
=:u_{0,0}(x)$$
in~$C^{1,\alpha}_{\rm{loc}}(\R^n)$. Therefore, using the change of variable~$y:=\rho_k x$,
\begin{eqnarray*}
&&\lim_{k\to+\infty}\frac{|B_{\rho_k}\cap \po u|}{|B_{\rho_k}|}=
\lim_{k\to+\infty}\frac{1}{|B_{\rho_k}|}\int_{B_{\rho_k}\cap\{u>0\}}\,dx\\
&&\qquad=\lim_{k\to+\infty}\frac{1}{|B_{1}|}\int_{B_{1}\cap\{u_{k,0}>0\}}\,dy
=\frac{1}{|B_{1}|}\int_{B_{1}\cap\{u_{0,0}>0\}}\,dy=1
.\end{eqnarray*}
This is a contradiction with~\eqref{LIMSUP-0}, and so~\eqref{on3e9diBIS}
is proved.

We let~$E_{k,\bar x}$ be the monotone function
in~\eqref{EMMEDE} for~$u_{k,\bar x}$ (while~$E_{\bar x}$ denotes the
same type of function for~$u$ centered at the point~$\bar x$).
Let also~$E_{0,\bar x}$ be the monotone function
in~\eqref{EMMEDE} for~$u_{0,\bar x}$.
In view of~\eqref{9idy61eyrwf0eouryfoifg}, we have that
\begin{equation}\label{9idy61eyrwf0eouryfoifg-2}
E_{0,\bar x}(r)=\lim_{k\to+\infty} E_{k,\bar x}(r).
\end{equation}
We remark that~\eqref{EMMEDE} is compatible with the blow-up scaling, namely
$$ E_{k,\bar x}(r)=E_{\bar x}(\rho_k r).$$
As a consequence, by~\eqref{LI001} and~\eqref{9idy61eyrwf0eouryfoifg-2},
\begin{equation}\label{23CO-000} 
E_{0,\bar x}(r)=\lim_{k\to+\infty}E_{\bar x}(\rho_k r)=E_{\bar x}(0).
\end{equation}

We now classify the free boundary points according to the monotone function
induced by their blow-up limits. For this,
we introduce the following notation: recalling~\eqref{on3e9di},
we say that~$\bar x$ is Type-1 if, up to a rotation,
$$u_{0,\bar x}(x)=
\frac{a_1(x_1-\bar x_1)^2+a_2 (x_2-\bar x_2)^2}2.$$ Similarly,
we say that~$\bar x$ is Type-2 if
$$u_{0,\bar x}(x)=
\frac{a(x_1-\bar x_1)^2}2,$$ and Type-3 if
$$u_{0,\bar x}(x)=
\frac{ a(
(x_1-\bar x_1)^+)^2}2.$$
In this notation, \eqref{on3e9diBIS} says that the origin is Type-3.

Now, in light of~\eqref{EMMEDE} and Lemma~\ref{7udjhHHANSLL}
(which can be utilized here thanks to~\eqref{V:cos-IKAzasol}),
we have that
\begin{equation}\label{MINIMTYPE}
E_{0,\bar x}(r)=\begin{cases}
\pi/4, & {\mbox{ if $\bar x$ is either Type-1 or Type-2,}}\\
\pi/8 ,& {\mbox{ if $\bar x$ is Type-3.}}
\end{cases}
\end{equation}
In particular, the monotone function~$E$ is minimized for Type-3
free boundary points.

Moreover, we have the following semicontinuity property:
if $x_j\in\partial\{u>0\}$ and~$x_j\to x_0$ as~$j\to+\infty$, then
\begin{equation}\label{SEMICOE}
\limsup_{j\to+\infty} E_{x_j}(0)\le E_{x_0}(0).
\end{equation}
Indeed, by the monotonicity of~$E$ in Theorem~\ref{lemma:F}
and~\eqref{EMMEDE},
for any~$r\in(0,1)$ we have that
$$ \limsup_{j\to+\infty} E_{x_j}(0)\le \limsup_{j\to+\infty} E_{x_j}(r)=E_{x_0}(r).
$$
Then, we take the limit as~$r\to0^+$ and we obtain~\eqref{SEMICOE},
as desired.

Now we claim that there exists~$r_0>0$ such that
\begin{equation}\label{TUTTO}
{\mbox{for any~$\bar x\in\partial\{u>0\}\cap B_{r_0}$ we have
that $E_{\bar x}(0)=E_{0}(0)$.}}
\end{equation}
In other words, in $B_{r_0}$ all free boundary points must be of Type-3. 
To prove this we argue by contradiction: if not there exists a sequence of points~$\bar x_j\in
\partial\{u>0\}$ such that~$\bar x_j\to0$ as~$j\to+\infty$
and
\begin{equation}\label{TUTTO1}
E_{\bar x_j}(0)\ne E_{0}(0).\end{equation}
{F}rom~\eqref{on3e9di}, 
\eqref{on3e9diBIS}, \eqref{23CO-000}, \eqref{MINIMTYPE} and~\eqref{TUTTO1},
we deduce that
$$ \left\{
\frac{\pi}8,\frac{\pi}4
\right\}\ni E_{0,\bar x_j}(r)=E_{\bar x_j}(0)\ne E_{0}(0)=E_{0,0}(r)=\frac{\pi}8,$$
and accordingly
$$ E_{\bar x_j}(0)=E_{0,\bar x_j}(r)=\frac{\pi}4 >\frac{\pi}8=E_{0,0}(r)=E_0(0).$$
This gives that
$$ \lim_{j\to+\infty}E_{\bar x_j}(0)=\frac{\pi}4>
\frac{\pi}8=E_0(0),$$
which is in contradiction with~\eqref{SEMICOE},
and so the proof of~\eqref{TUTTO} is complete.

Then, by \eqref{MINIMTYPE} and~\eqref{TUTTO}, it follows that
if~$\bar x\in\partial\{u>0\}\cap B_{r_0}$ then~$\bar x$ must
necessarily be Type-3, i.e.,
up to rotations,~$u_{0,\bar x}(x)=
\frac{ a((x_1-\bar x_1)^+)^2}2$, which is the desired result.
\end{proof}

\appendix
\section{Decay estimate for $D^2 u$}
Here we provide some decay estimates for the gradient and the Hessian
of a local minimizer of the functional~$J$ in~\eqref{defJ}.

\begin{lemma}\label{eq-Hessian}
Let $n\ge 2$, $u$ be a
minimizer for the functional~$J$
defined in~\eqref{defJ},
and~$x_0\in \fb u$.

Assume that~$B_{\bar R}\subset\subset\Om$
 
Then, there exist~$R_0\in(0,\bar R)$ and~$C>0$, depending only on~$n$,
$\|u\|_{W^{2,2}(\Om)}$ and~$\dist(B_{\bar R},\partial\Om)$,
such that 
\begin{equation*}
\frac1{R^{n+2}}\int_{B_{R}(x_0)}|\na  u|^2+\frac1{R^n}\int_{B_{R}(x_0)}|D^2 u|^2
\le \frac C{R^{n+4}}\int_{B_{4R}(x_0)}(u-m)^2+
\frac{ C}{R^{n+2}}\int_{B_{4R}}(u-m), 
\end{equation*}
for any~$R\in(0,R_0)$,
where \begin{equation}\label{Em}m:=\min_{B_{4R}(x_0)} u.\end{equation}
\end{lemma}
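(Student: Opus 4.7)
The plan is to adapt the Caccioppoli-type argument already used in the proof of Lemma~\ref{lemma:blowup}, exploiting three structural facts: the super-biharmonicity of $u$ from Lemma~\ref{POBIA}, the one-sided pointwise bound $\Delta u\ge -\widehat C$ from Corollary~\ref{lem-subham}, and a standard interior integration-by-parts estimate relating $\int|D^2u|^2$ to $\int|\Delta u|^2$ and $\int|\nabla u|^2$. Throughout I would set $v:=u-m\ge 0$ on $B_{4R}(x_0)$ and take $R_0>0$ small enough that $B_{4R}(x_0)\subset\subset\Omega$ for $R\in(0,R_0)$.

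First I would derive the gradient estimate. Fix a cutoff $\eta\in C^\infty_0(B_{4R}(x_0),[0,1])$ with $\eta\equiv 1$ on $B_{2R}(x_0)$ and $|\nabla\eta|\le C/R$. Testing Corollary~\ref{lem-subham} against $v\eta^2\ge 0$ and integrating by parts gives
\[
\widehat C\int v\,\eta^2 \;\ge\; -\int(\Delta u)\,v\,\eta^2 \;=\; \int|\nabla u|^2\eta^2+2\int\eta\, v\,\nabla\eta\cdot\nabla u,
\]
and Young's inequality on the cross term yields the scale-$R$ Caccioppoli bound
\[
\int_{B_{2R}(x_0)}|\nabla u|^2\;\le\;\frac{C}{R^2}\int_{B_{4R}(x_0)}v^2+C\int_{B_{4R}(x_0)}v,
\]
exactly as in~\eqref{ineq-Caccio-main}.

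Second I would derive the Laplacian estimate. Pick a second cutoff $\xi\in C^\infty_0(B_{2R}(x_0),[0,1])$ with $\xi\equiv 1$ on $B_R(x_0)$, $|\nabla\xi|\le C/R$, $|D^2\xi|\le C/R^2$. The non-negative test function $\phi:=v\xi^2\in W^{2,2}_0(B_{2R}(x_0))$ is admissible in Lemma~\ref{POBIA}, so using $\Delta\phi=\xi^2\Delta u+4\xi\nabla u\cdot\nabla\xi+2v(\xi\Delta\xi+|\nabla\xi|^2)$ we get
\[
0\;\ge\;\int\xi^2|\Delta u|^2+4\int\xi(\Delta u)\nabla u\cdot\nabla\xi+2\int v(\Delta u)\bigl(\xi\Delta\xi+|\nabla\xi|^2\bigr).
\]
Young's inequality handles the first cross term (donating $\epsilon\int\xi^2|\Delta u|^2$ plus $C_\epsilon R^{-2}\int|\nabla u|^2$) and the $\xi\Delta\xi$ piece of the second (donating $\epsilon\int\xi^2|\Delta u|^2$ plus $C_\epsilon R^{-4}\int v^2$). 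The remaining unabsorbable term $2\int v(\Delta u)|\nabla\xi|^2$ is controlled \emph{only} via Corollary~\ref{lem-subham}: $-v(\Delta u)|\nabla\xi|^2\le\widehat C v|\nabla\xi|^2\le CR^{-2}v$, which is the origin of the single-power $\int v$ contribution. Plugging in the gradient bound from the previous step produces
\[
\int_{B_R(x_0)}|\Delta u|^2\;\le\;\frac{C}{R^4}\int_{B_{4R}(x_0)}v^2+\frac{C}{R^2}\int_{B_{4R}(x_0)}v.
\]

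Third, for the Hessian, the identity
\[
\int\zeta^2|D^2u|^2\;=\;\int\zeta^2(\Delta u)^2-\sum_{i,j}\!\int 2\zeta(\partial_j\zeta)(\partial_{ij}u)(\partial_iu)+\sum_i\!\int 2\zeta(\partial_i\zeta)(\partial_iu)(\Delta u),
\]
obtained by two integrations by parts with a cutoff $\zeta\in C^\infty_0(B_{2R}(x_0))$, $\zeta=1$ on $B_R(x_0)$, combined with Cauchy-Schwarz (absorbing $\epsilon\int\zeta^2|D^2u|^2$ on the left) gives
\[
\int_{B_R(x_0)}|D^2u|^2\;\le\;C\!\int_{B_{2R}(x_0)}|\Delta u|^2+\frac{C}{R^2}\!\int_{B_{2R}(x_0)}|\nabla u|^2.
\]
Combining the three displayed inequalities and dividing by the appropriate powers of $R$ yields the claim.

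The main obstacle is the atypical single-power term $\int(u-m)$ on the right-hand side; it cannot be removed because the super-biharmonic inequality in Lemma~\ref{POBIA} only yields a one-sided distributional bound, and the genuinely problematic $|\nabla\xi|^2$ term in the expansion of $\Delta(v\xi^2)$ can only be closed by using the \emph{pointwise} lower bound on $\Delta u$ provided by Corollary~\ref{lem-subham}, which is linear in $v$ rather than quadratic. Everything else is a careful but routine ordering of two Caccioppoli arguments followed by an interior Calder\'on-Zygmund-type identity.
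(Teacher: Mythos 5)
Your argument is correct, but it takes a genuinely different route to the Hessian bound than the paper does. The paper mollifies and integrates by parts the super-biharmonicity inequality of Lemma~\ref{POBIA} \emph{twice}, obtaining the bilinear inequality $\sum_{i,j}\int \phi_{ij}u_{ij}\le 0$ for nonnegative compactly supported $\phi$, and then tests directly with $\phi=(u-m)\eta^2$: this controls $\int\eta^2|D^2u|^2$ in one stroke, with Corollary~\ref{lem-subham} entering only through the Caccioppoli inequality for $\nabla u$. You instead remain at the level of $\int\Delta u\,\Delta\phi\le0$, reproduce the scheme of Lemma~\ref{lemma:blowup} to bound $\int\xi^2|\Delta u|^2$ (invoking Corollary~\ref{lem-subham} a second time for the term $v\,\Delta u\,|\nabla\xi|^2$; incidentally, that particular use could be bypassed with a higher power of the cutoff, though the linear term $\int(u-m)$ would still enter through the Caccioppoli step), and then upgrade $\Delta u$ to $D^2u$ by the standard interior identity. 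Both routes give the same constants, depending on $n$, $\|u\|_{W^{2,2}(\Omega)}$ and the distance to $\partial\Omega$; yours has the advantage of reusing machinery already present in the paper, the paper's is shorter because it never needs the Calder\'on--Zygmund-type step. Two points you should make explicit in a final write-up: first, the identity in your third step involves third derivatives, which a minimizer need not possess (only $\Delta u\in BMO_{\rm loc}$, hence $u\in W^{2,p}_{\rm loc}$), so it must be run on mollifications $u*\rho_\e$ and passed to the limit, which is harmless since the resulting inequality contains only first and second derivatives (the paper handles its own integration by parts the same way); second, your radii do not quite chain as written, since the Hessian step needs $\int_{B_{2R}}|\Delta u|^2$ while your Laplacian step only controls $\int_{B_R}|\Delta u|^2$ --- this is repaired by running the three estimates on intermediate radii (say $R$, $2R$, $3R$, $4R$) so that everything stays inside $B_{4R}(x_0)$, where $u-m\ge0$.
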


\begin{proof}
Without loss of generality we suppose that~$x_0=0$.
Recalling Lemma~\ref{POBIA}, we have that, for any~$\phi\in W^{2,2}(\Om)
\cap W^{1, 2}_0(\Om)$, with~$\phi\ge0$, it holds that
\begin{equation}\label{grp-1}
0\ge \int_{\Om}\Delta u\,\Delta \phi.
\end{equation}
Now, let $\phi\in C_0^\infty(\Om)$, with~$\phi\ge 0$,
and define~$\phi_\e:=\phi*\rho_\e$, where~$\rho_\e(x):=\frac1{\e^n}\rho(\frac{x}\e)$, for any~$x\in\R^n$,
is a mollifying kernel, for any~$\e\in(0,1)$.
We also set~$u_\e:=u*\rho_\e$.
Then, if~$\dist({\rm{\supp}} \phi, \partial\Om)\gg \e$, we can use~\eqref{grp-1}
and make an integration by parts twice to obtain
that 
\begin{equation}\begin{split}\label{9t5y8vhjg9867}
0\ge \int_\Om \Delta u\,\Delta \phi_\e=\;&  \int_\Om \Delta u(\Delta \phi)*\rho_\e\\
=\;&
\int_\Om \Delta u(x)\left( \int_\Om\rho_\e(x-y)\Delta \phi(y)dy\right)dx\\
=\;&
\int_\Om \Delta \phi(y)\left( \int_\Om\rho_\e(x-y)\Delta u(x)dx\right)dy\\
=\;&
\int_\Om \Delta \phi\,\Delta u_\e\\
=\;&\sum_{i,j=1}^n
\int_{\Om} \phi_{ii} (u_\e)_{jj}\\
=\;&\sum_{i,j=1}^n
\int_{\p\Om}\phi_i (u_\e)_{jj}\nu^i-\sum_{i,j=1}^n\int_\Om\phi_i(u_\e)_{ijj}\\
=\;&\sum_{i,j=1}^n
\int_{\p\Om}\phi_i (u_\e)_{jj}\nu^i
-\sum_{i,j=1}^n\int_{\p\Om}\phi_i (u_\e)_{ij}\nu^j+\sum_{i,j=1}^n\int_\Om \phi_{ij}(u_\e)_{ij}\\
=\;&\sum_{i,j=1}^n
\int_\Om\phi_{ij}(u_\e)_{ij}.
\end{split}\end{equation}
Moreover, we observe that
$$  \lim_{\e\to0}\int_\Om\phi_{ij}(u_\e)_{ij}=\int_\Om\phi_{ij} u_{ij}.$$
{F}rom this and~\eqref{9t5y8vhjg9867}, we have that
\begin{equation}\label{grp-2}
\sum_{i,j=1}^n\int_\Om\phi_{ij} u_{ij}\le 0.
\end{equation}
Now, we choose~$\phi:=(u-m)\eta^2$, where~$m$ is as in~\eqref{Em},
and~$\eta $ is a standard cut-off
function supported in~$B_{2R}\subset\subset\Omega$, such that~$ \eta=1$ in~$B_{R}$ and~$
\eta =0$ outside~$B_{2R}$. Therefore, we see that~$
\phi\in W^{2,2}(\Om)\cap W^{1, 2}_0(\Om)$ and~$\phi\ge0$. 
With this choice,
$$\phi_{ij}=u_{ij}\eta^2+2u_i\eta_j\eta+2u_j\eta_i\eta
+(u-m)(\eta^2)_{ij}.$$  If we plug this into~\eqref{grp-2}, we have that
$$ \sum_{i,j=1}^n\int_\Om\Big(u_{ij}\eta^2+4u_i\eta_j\eta
+(u-m)(\eta^2)_{ij}\Big)\,u_{ij}\le 0.$$
That is, rearranging the terms and integrating by parts,
\begin{equation}\begin{split}\label{grp-3}
\sum_{i,j=1}^n\int_\Om u_{ij}^2\eta^2\le\; &-\sum_{i,j=1}^n
\int_\Om \Big(4u_{ij} u_i\eta_j\eta+(u-m)u_{ij}(\eta^2)_{ij}\Big)\\
=\;&
-\sum_{i,j=1}^n\int_\Om 4(u_{ij}\eta)u_i\eta_j+\sum_{i,j=1}^n
\int_\Om\Big( (u-m)u_i(\eta^2)_{ijj}+u_i u_j(\eta^2)_{ij}\Big)
\\
\le\;&
2\delta \sum_{i,j=1}^n\int_\Om u_{ij}^2\eta^2+\frac 8\delta\sum_{i,j=1}^n
\int_\Om u_i^2\eta_j^2
+\sum_{i,j=1}^n
\int_\Om\Big( (u-m)u_i(\eta^2)_{ijj}+u_i u_j(\eta^2)_{ij}\Big),
\end{split}\end{equation}
where the last line follows from a suitable application of
the H\"older inequality, for some~$\delta>0$. 

Now, by direct computations we have 
\begin{eqnarray*}
(\eta^2)_{ij}&=&2\eta_i\eta_j+2\eta\eta_{ij}\\
{\mbox{and }}\quad
(\eta^2)_{ijj}&=& 2\eta_{ij}\eta_j+2\eta_i\eta_{jj}
+2\eta_j\eta_{ij}+2\eta\eta_{ijj},
\end{eqnarray*}
and therefore 
\begin{equation*}
\big|(\eta^2)_{ij}\big|\le \frac C{R^2} \quad 
{\mbox{ and }}\quad \big|(\eta^2)_{ijj}\big|\le \frac C{R^3},
\end{equation*}
for some~$C>0$.

As a consequence, plugging this information into~\eqref{grp-3}
and using the H\"older inequality, we obtain that
\begin{equation}\begin{split}\label{iu67363r:0}
(1-2\delta )\sum_{i,j=1}^n\int_\Om u_{ij}^2\eta^2
\le\; &
\frac 8\delta\sum_{i,j=1}^n\int_\Om u_i^2\eta_j^2
+\sum_{i,j=1}^n\int_\Om \Big((u-m)u_i(\eta^2)_{ijj}+u_i u_j(\eta^2)_{ij}\Big)\\
\le\;&
\frac C{\delta R^2}\int_{B_{2R}}|\na u|^2+
\frac C{R^3}\int_{B_{2R}} (u-m)|\na u|+\frac C{R^2}\int_{B_{2R}}|\na u|^2\\
\le\;&
\left(1+\frac1\delta\right)\frac C{R^2}\int_{B_{2R}}|\na u|^2
+\frac C{R^4}\int_{B_{2R}}(u-m)^2,
\end{split}\end{equation}
up to renaming~$C$.
Since $\Delta u\ge- C$ (up to renaming constants,
recall Corollary~\ref{lem-subham}),
then from the Caccioppoli inequality (see e.g.~\eqref{ineq-Caccio-main})
we get that 
\[
\int_{B_{2R}}|\na u|^2\le \frac C{R^2}
\int_{B_{4R}}(u-m)^2+C\int_{B_{4R}}(u-m), 
\]
which implies that
\begin{equation}\label{iu67363r}
\frac{1}{R^{n+2}}\int_{B_{2R}}|\nabla u|^2\le\frac{C}{R^{n+4}}
\int_{B_{4R}}(u-m)^2+\frac{C}{R^{n+2}}\int_{B_{4R}}(u-m).
\end{equation}
Moreover, from~\eqref{iu67363r:0} and~\eqref{iu67363r}, we conclude that
$$
\frac{1-2\delta }{R^n}\sum_{i,j=1}^n\int_{B_R} u_{ij}^2\le 
\frac{1-2\delta }{R^n}\sum_{i,j=1}^n\int_\Om u_{ij}^2\eta^2\le 
\frac{C}{R^{n+4}}
\int_{B_{4R}}(u-m)^2+\frac{C}{R^{n+2}}\int_{B_{4R}}(u-m)
$$
up to renaming~$C>0$. 
Putting together this and~\eqref{iu67363r}, we obtain the desired estimate.
\end{proof}

\begin{corollary}\label{eq-Hessian:cor}
Let $n\ge 2$, $\delta>0$ and~$u$ be a minimizer of the functional~$J$
defined in~\eqref{defJ} in~$\Om$. Assume that~$B_{\bar R}\subset\subset \Om$.
Let~$x_0\in \fb u$ such that~$\nabla u(x_0)=0$ and~$\partial\{u>0\}$
is not $\delta$-rank-2 flat at~$x_0$ at any level~$r>0$
in the sense of Definition~\ref{def:flat}.
 
Then, there exist~$R_0\in(0,\bar R)$ and~$C>0$, depending only on~$n$,
$\|u\|_{W^{2,2}(\Om)}$ and~$\dist(B_{\bar R},\partial\Om)$, such that
\begin{equation*}
\frac1{R^{n+2}}\int_{B_{R}(x_0)}|\na  u|^2
+\frac1{R^n}\int_{B_{R}(x_0)}|D^2 u|^2
\le C,
\end{equation*}
for any~$R<R_0$.
\end{corollary}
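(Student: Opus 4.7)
The plan is to deduce Corollary~\ref{eq-Hessian:cor} as a direct combination of Lemma~\ref{eq-Hessian} (the Caccioppoli-type estimate for $\nabla u$ and $D^2 u$ in terms of oscillation integrals of $u$) with Theorem~\ref{growth} (the quadratic growth at non rank-2 flat singular free boundary points). Without loss of generality I would translate so that $x_0=0$, and work with $R<R_0$ for $R_0$ sufficiently small so that Theorem~\ref{growth} applies on $B_{4R}$.

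First, since $0\in\partial\{u>0\}$, $\nabla u(0)=0$, and by hypothesis $\partial\{u>0\}$ is not $\delta$-rank-2 flat at~$0$ at any level, Theorem~\ref{growth} provides a constant $C>0$ (depending only on $\delta$, $n$ and the data) such that
\begin{equation*}
|u(x)|\le C|x|^{2}\qquad\text{for all } x\in B_{4R}(0),
\end{equation*}
provided $R<R_0$. In particular, setting $m:=\min_{B_{4R}(0)}u$, we get $|m|\le C R^{2}$ and $\max_{B_{4R}}(u-m)\le 2CR^{2}$, so
\begin{equation*}
\int_{B_{4R}}(u-m)^{2}\le C R^{n+4}
\qquad\text{and}\qquad
\int_{B_{4R}}(u-m)\le C R^{n+2},
\end{equation*}
up to renaming $C>0$.

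Next, I would apply Lemma~\ref{eq-Hessian} at $x_0=0$, which (after verifying that its hypothesis $B_{\bar R}\subset\subset\Omega$ is inherited from the corollary) gives
\begin{equation*}
\frac{1}{R^{n+2}}\int_{B_R}|\nabla u|^{2}+\frac{1}{R^{n}}\int_{B_R}|D^{2}u|^{2}\le \frac{C}{R^{n+4}}\int_{B_{4R}}(u-m)^{2}+\frac{C}{R^{n+2}}\int_{B_{4R}}(u-m).
\end{equation*}
Inserting the two oscillation bounds above, both terms on the right are bounded by a universal constant, yielding the desired estimate.

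There is essentially no obstacle here: all the real work has already been done in Theorem~\ref{growth} (whose proof is the delicate dichotomy/blow-up argument exploiting non-flatness) and in Lemma~\ref{eq-Hessian} (the Caccioppoli calculation). The only small points to check are that Lemma~\ref{eq-Hessian} is applicable with $m$ defined as in~\eqref{Em} rather than requiring $u\ge 0$ (it is, since the lemma only uses $u-m\ge0$), and that the constants in Theorem~\ref{growth} depend on the allowed quantities ($n$, $\delta$, $\|u\|_{W^{2,2}(\Omega)}$, $\mathrm{dist}(B_{\bar R},\partial\Omega)$), which is indeed the case. Hence the corollary follows at once.
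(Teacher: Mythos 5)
Your proposal is correct and coincides with the paper's own argument: the paper proves Corollary~\ref{eq-Hessian:cor} exactly by combining Lemma~\ref{eq-Hessian} with the quadratic growth from Theorem~\ref{growth}, and you have simply spelled out the intermediate bounds $\int_{B_{4R}}(u-m)^2\le CR^{n+4}$ and $\int_{B_{4R}}(u-m)\le CR^{n+2}$ that the paper leaves implicit. No gaps.
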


\begin{proof}
The desired estimate follows from Lemma~\ref{eq-Hessian}
and the quadratic growth of~$u$, as given by Theorem~\ref{growth}.~\end{proof}

\section{A remark on the one-phase problem}

Here we show that the one-phase problem, as presented in Definition~\ref{LA:HDEF1ph},
and the analysis of the minimizers which happen to be nonnegative
are structurally very different questions.
Indeed, while a ``typical'' one-phase minimizer exhibits nontrivial open
regions in which it vanishes, the
free minimizers that are nonnegative do not show the same phenomena.
As a prototype result for this, we point out the following observation:

\begin{prop}\label{NOONE}
Suppose that $0\in \Omega$, $u\in C^{1,1}(\Omega)$ is such that~$u>0$ in~$\Omega\cap\{x_n>0\}$
and~$u=0$ in~$\Omega\cap\{x_n\le0\}$. Then, $u$ cannot be a local minimizer
for the functional~$J$ in~$\Omega$ in the class of admissible functions~$
\mathcal A$ given in~\eqref{ADMI}.
\end{prop}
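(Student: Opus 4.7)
My plan is to argue by contradiction: assume $u$ is such a local minimizer and derive a contradiction by analyzing the singular part of $\Delta^2 u$ across the free boundary $F:=\Omega\cap\{x_n=0\}$, where $\Delta u$ is forced to jump. Three ingredients set up the argument. First, since $u\equiv 0$ on $\{x_n\le 0\}\cap\Omega$ and $u\in C^{1,1}(\Omega)$, the function $u$, $\nabla u$ and every tangential second-order derivative of $u$ vanish on $F$. Second, by Lemma~\ref{POBIA}, $u$ is biharmonic in $\Omega\cap\{x_n>0\}$ with smooth Dirichlet data $u=\partial_n u=0$ along the smooth patch $F$, so standard boundary regularity for the biharmonic equation yields that $u$ extends smoothly up to $F$ from the positive side; hence the one-sided traces
\[
\alpha(x'):=u_{nn}(x',0^+)=\Delta u(x',0^+),\qquad \beta(x'):=\partial_n\Delta u(x',0^+)=u_{nnn}(x',0^+)
\]
are well-defined smooth functions on $F$. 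Third, Lemma~\ref{POBIA} also gives super-biharmonicity: $\int_\Omega\Delta u\,\Delta\phi\le 0$ for every $\phi\in C_0^\infty(\Omega)$ with $\phi\ge 0$.

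Using $\Delta u\equiv 0$ on $\{x_n<0\}$ and the harmonicity of $\Delta u$ on $\{x_n>0\}$, Green's identity applied on the upper half-space (outer normal $\nu=-e_n$) reduces the super-biharmonic inequality to
\[
\int_{F}\bigl(\beta(x')\,\phi(x',0)-\alpha(x')\,\partial_n\phi(x',0)\bigr)\,dx'\le 0 \qquad\text{for every }\phi\in C_0^\infty(\Omega),\ \phi\ge 0.
\]
I would then test this against separated functions $\phi(x)=\eta(x')\psi(x_n)$ with $\eta\in C_0^\infty(\R^{n-1})$ non-negative and $\psi\in C_0^\infty(\R)$ non-negative having prescribed values $\psi(0)=s>0$ and $\psi'(0)=t\in\R$ (such $\psi$ exists for every such pair). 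The inequality becomes $s\int_F\beta\eta-t\int_F\alpha\eta\le 0$; letting $t$ range over $\R$ forces $\int_F\alpha\eta=0$ for every non-negative $\eta$, so $\alpha\equiv 0$ on $F$, and then letting $s>0$ range gives $\beta\le 0$ on $F$.

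Once $\alpha\equiv 0$, the smooth Taylor expansion from above reads $u(x',x_n)=\frac{1}{6}u_{nnn}(x',0^+)x_n^3+O(x_n^4)$ for $x_n>0$, and $u\ge 0$ forces $u_{nnn}(x',0^+)\ge 0$; combined with $\beta=u_{nnn}(\cdot,0^+)\le 0$ this yields $\beta\equiv 0$. Thus $f:=\Delta u$ is harmonic in $\Omega\cap\{x_n>0\}$ with vanishing Cauchy data on $F$. Extending $f$ by zero into the lower half produces a $C^1$ function whose distributional Laplacian picks up no contribution from $F$ (both boundary integrals in Green's identity disappear), so the extension is harmonic in a neighborhood of $0$ and identically zero on the open set $\Omega\cap\{x_n<0\}$; unique continuation for harmonic functions therefore gives $f\equiv 0$ there. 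Hence $u$ is itself harmonic in that neighborhood and vanishes on $\Omega\cap\{x_n<0\}$, so a second application of unique continuation yields $u\equiv 0$, contradicting $u>0$ on $\Omega\cap\{x_n>0\}$.

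The main obstacle is the first step, namely justifying that $u$ is smooth up to $F$ from the positive side so that the traces $\alpha$, $\beta$ and the third-order Taylor expansion make rigorous pointwise sense; for this I would invoke the standard $C^\infty$ boundary regularity for the biharmonic Dirichlet problem with smooth data along a smooth portion of the boundary, applied locally near interior points of $F$. After that the rest is routine: the singular part of $\Delta^2 u$ is concentrated on $F$ with coefficients $\beta$ and $\alpha$ against $\delta_F$ and $\delta'_F$ respectively, sign-definiteness forces both to vanish, and two invocations of unique continuation collapse $u$ to zero.
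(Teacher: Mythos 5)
Your argument is correct, but it follows a genuinely different route from the paper. The paper's proof is a short, purely variational competitor argument: since $u\in C^{1,1}$ and $u=|\nabla u|=0$ on $\{x_n=0\}$, one has $u\le K x_n^2$ near the hyperplane, so the perturbation $u_\e=u-\e\varphi$ (with $\varphi$ a fixed cutoff equal to $1$ near $0$) becomes nonpositive on a slab of thickness $\sim\sqrt{\e/K}$; hence the volume term of $J$ drops by $c\sqrt\e$ while the Dirichlet-type term increases by at most $C\e$, so $J[u_\e]<J[u]$ for small $\e$, contradicting local minimality directly. You instead extract necessary conditions from Lemma~\ref{POBIA} (super-biharmonicity and biharmonicity in $\{u>0\}$), use ADN-type boundary regularity for the biharmonic Dirichlet problem with zero Cauchy data on the flat patch to define the traces $\alpha=\Delta u(\cdot,0^+)$ and $\beta=\partial_n\Delta u(\cdot,0^+)$, show by testing the super-biharmonic inequality that $\alpha\equiv0$ and $\beta\le0$, use the cubic Taylor expansion and $u\ge0$ to get $\beta\equiv0$, and then two extensions-by-zero plus Weyl's lemma and unique continuation for harmonic functions force $u\equiv0$, a contradiction; your sign bookkeeping in the Green identity and the identification $\beta=u_{nnn}(\cdot,0^+)$ (via vanishing of tangential derivatives of $\partial_n u$ on the patch) are correct. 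Two small points to make explicit: Lemma~\ref{POBIA} is stated for minimizers, but its proof uses only compactly supported nonnegative perturbations, so it does apply to the local minimizers considered here; and the Green identities should be performed on half-balls $B_r(p)\cap\{x_n>0\}$ with the test function supported away from the equatorial edge (a routine localization). The trade-off: the paper's proof is elementary and quantifies the energy gain, needing nothing beyond the $C^{1,1}$ bound; yours is heavier machinery but yields a rigidity statement (the contact conditions $\Delta u=\partial_n\Delta u=0$ on the flat contact set force $u\equiv0$), which is structural information the paper's proof does not provide.
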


\begin{proof} Without loss of generality, we can assume that~$B_2\subset\subset\Omega$.
Let~$\varphi\in C^\infty_0(B_2,[0,1])$ be such that~$\varphi=1$ in~$B_1$.
Let also~$\e\in(0,1)$ and~$u_\e:= u-\e\varphi$.

We observe that the regularity of~$u$ and the fact that~$u(x',0)=0\le u(y)$ for any~$x'$ such that~$(x',0)\in B_2$
and any~$y\in B_2$ give that,
for every~$x=(x',x_n)\in B_1$, 
$$ u(x)\le K\,x_n^2,$$
for some~$K>0$. Consequently, for every~$x\in B_1$
with~$|x_n|< \sqrt{\e/K}$
we have that
$$ u_\e(x)\le K\,x_n^2 -\e<0.$$
This gives that for any~$x\in (-1/n,1/n)^{n-1}\times(0,\sqrt{\e/K})=:Q_\e$, we have that
$$ u_\e(x)<0<u(x),$$
as long as~$\e>0$ is sufficiently small.

Notice also that~$u_\e\le u$ and so~$\{u_\e>0\}\subseteq\{u>0\}$.
Accordingly, computing the energy functional in~$B_2$,
\begin{eqnarray*}
J[u_\e]-J[u]&=&
\int_{B_2} \big(|\Delta u_\e|^2-|\Delta u|^2\big)
+|B_2\cap\{u_\e>0\}|-|B_2\cap\{ u>0\}|\\
&=&
\int_{B_2} \big(|\Delta u-\e\Delta\varphi|^2-|\Delta u|^2\big)
-|B_2\cap\{u_\e\le 0< u\}|\\
&\le&
\int_{B_2} \big(
\e^2|\Delta\varphi|^2-2\e\Delta u\Delta\varphi\big)
-|Q_\e|\\
&\le& C\e-\left(\frac{2}{n}\right)^{n-1}\sqrt{\frac{\e}{K}}\\
&<&0
\end{eqnarray*}
provided that~$\e$ is small enough. 
\end{proof}

\section{Proof of an auxiliary result}\label{PALLS3u545-APP}

For completeness, in this appendix we provide the proof of
Proposition~\ref{PALLS3u545}.

\begin{proof}[Proof of Proposition~\ref{PALLS3u545}] Given~$\delta>0$, let~$p\in\partial\Omega$ with~$|u(p)|>\delta$.
By~\eqref{GA72GA0234}, we can find~$\rho>0$ such that
\begin{equation}\label{APSdfkg2345r}
\overline\Om\cap B_\rho(p)\subset\left\{|u(p)|>\frac\delta2\right\}.\end{equation}
Let~$\phi\in C^\infty_0(B_\rho(p))$, with~$\phi=0$ along~$\partial\Om$. For each~$\e\in\R$ with~$|\e|<
\frac{\delta}{4(1+\|\phi\|_{L^\infty(\R^n)})}$, we
let~$u_\e:=u+\e\phi$. We observe that
\begin{equation}\label{SABerot4}
\chi_{\{u_\e>0\}}=\chi_{\{u>0\}}\qquad{\mbox{in }}\Om.\end{equation}
Indeed, if~$x\in\Om\setminus B_\rho(p)$ we have that~$\phi(x)=0$
and thus~$u_\e(x)=u(x)$, proving~\eqref{SABerot4}
in this case. If instead~$x\in \Om\cap B_\rho(p)$, by~\eqref{APSdfkg2345r}
we can assume that~$u(x)>\delta/2$ (the case~$u(x)<-\delta/2$ being similar).
Then,
$$ u_\e(x)\ge u(x)-\e\|\phi\|_{L^\infty}>\frac\delta2
-\frac{\delta\,\|\phi\|_{L^\infty}}{4(1+\|\phi\|_{L^\infty(\R^n)})}>\frac\delta4,$$
and hence
$$ \chi_{\{u_\e>0\}}(x)=1=\chi_{\{u>0\}}(x),$$
completing the proof of~\eqref{SABerot4}.

As a byproduct of~\eqref{SABerot4}, we have that
$$ 0\le J[u_\e]-J[u]=\int_{\Omega\cap B_\rho(p)} 
\big(|\Delta u+\e\Delta\phi|^2-|\Delta u|^2\big)=
\int_{\Omega\cap B_\rho(p)} 
\big(2\e\Delta u\Delta\phi+\e^2|\Delta\phi|^2\big)
$$
yielding that
\begin{equation}\label{CIABSDboq} \int_{\Omega\cap B_\rho(p)} \Delta u\Delta\phi=0.\end{equation}
That is, defining~$v:=\Delta u$, we have that~$v$
is weakly harmonic in~$\Omega\cap B_\rho(p)$, hence harmonic
in~$\Omega\cap B_\rho(p)$, and therefore~$v$
is smooth in~$\Omega\cap B_\rho(p)$, up to the boundary.
Therefore, we deduce from~\eqref{dodcvdfgm7mse}
and~\eqref{CIABSDboq} that
$$ 0=\int_{\Omega\cap B_\rho(p)} v\Delta\phi=
\int_{\Omega\cap B_\rho(p)} \big({\rm div}(v\nabla\phi)
-{\rm div}(\phi\nabla v)\big)=
\int_{(\partial\Omega)\cap B_\rho(p)} \big(v\partial_\nu\phi-
\phi\partial_\nu v\big)=\int_{(\partial\Omega)\cap B_\rho(p)} v\partial_\nu\phi.
$$
Hence, since~$v$ is continuous on~$(\partial\Omega)\cap B_\rho(p)$,
thanks to~\eqref{dodcvdfgm7mse}, we find that~$v(p)=0$.

By taking~$\delta$ arbitrary, we thus conclude that~$v=0$
on~$(\partial\Omega)\cap\{|u|>0\}$. This and~\eqref{dodcvdfgm7mse-BIS}
give that
\begin{equation}\label{verfjverf234}
{\mbox{$v=0$ along~$\partial\Om$.}}\end{equation}
Now we prove~\eqref{APS566dfkg2345r}
by arguing by contradiction: we define~$V:=-v=-\Delta u$ and we suppose that
\begin{equation*}
M:=\sup_\Om V>0.
\end{equation*}
Now we use~\eqref{dodcvdfgm7mse}
and we find some~$\rho>0$ such that~$V$ is continuous in a $\rho$-neighborhood of~$\partial\Omega$
that we denote by~$\mathcal{O}_\rho$.
Thus, $V$ is uniformly continuous in~$\mathcal{O}_{\rho/2}$.
In particular, there exists~$\delta\in\left(0,\frac\rho2\right)$ such that if~$x$, $y\in\mathcal{O}_\delta$
with~$|x-y|\le\delta$, then~$|V(x)-V(y)|\le\frac{M}2$.

Consequently, taking~$y\in\partial\Om$ and recalling~\eqref{verfjverf234}, we find that
\begin{equation} \label{22121verfjverf234}
|V(x)|\le\frac{M}2\quad{\mbox{ for every~$x\in\mathcal{O}_\delta$, }}\end{equation}
and, as a result,
\begin{equation} \label{121verfjverf234}
0<M=\sup_\Om V=\sup_{\Om\setminus \mathcal{O}_\delta}V.\end{equation}
Furthermore,
in view of Lemma~\ref{POBIA}, for every~$\phi\in C^\infty_0(\Omega,[0,+\infty))$,
$$\int_\Om V\,\Delta\phi=-\int_\Om\Delta u\,\Delta \phi\ge 0,$$
hence~$V$ is weakly subharmonic. {F}rom this, \eqref{121verfjverf234}
and Theorem~B in~\cite{MR177186}, we deduce that~$V=M$ a.e. in~$\Omega$.
This is in contradiction with~\eqref{22121verfjverf234}, hence the claim in~\eqref{APS566dfkg2345r}
is established.
\end{proof}

\section*{Acknowledgments}

The first and third authors are members of INdAM and AustMS.
This work has been supported by the Australian Research Council
Discovery Project DP170104880 NEW ``Nonlocal Equations at Work''
and by the DECRA Project DE180100957 ``PDEs, free boundaries and applications''.

\begin{bibdiv}
\begin{biblist}

\bib{adams}{article}{
   author={Adams, David R.},
   author={Vandenhouten, Ronald F.},
   title={Stability for polyharmonic obstacle problems with varying
   obstacles},
   journal={Comm. Partial Differential Equations},
   volume={25},
   date={2000},
   number={7-8},
   pages={1171--1200},
   issn={0360-5302},
   review={\MR{1765144}},
}

\bib{2016arXiv160306819A}{article}{
   author={Aleksanyan, Gohar},
   title={Regularity of the free boundary in the biharmonic obstacle
   problem},
   journal={Calc. Var. Partial Differential Equations},
   volume={58},
   date={2019},
   number={6},
   pages={Paper No. 206, 28},
   issn={0944-2669},
   review={\MR{4029734}},
   doi={10.1007/s00526-019-1638-5},
}

\bib{MR618549}{article}{
   author={Alt, H. W.},
   author={Caffarelli, L. A.},
   title={Existence and regularity for a minimum problem with free boundary},
   journal={J. Reine Angew. Math.},
   volume={325},
   date={1981},
   pages={105--144},
   issn={0075-4102},
   review={\MR{618549}},
}

\bib{BOGGIO}{article}{
   author={Boggio, Tommaso},
   title={Sulle funzioni di {G}reen d'ordine~$m$},
   journal={ Rend. Circ. Mat. Palermo},
   volume={20},
   date={1905},
   pages={97--135},
}

\bib{MR697382}{book}{
   author={Brezis, Ha\"\i m},
   title={Analyse fonctionnelle},
   language={French},
   series={Collection Math\'ematiques Appliqu\'ees pour la Ma\^\i trise. [Collection
   of Applied Mathematics for the Master's Degree]},
   note={Th\'eorie et applications. [Theory and applications]},
   publisher={Masson, Paris},
   date={1983},
   pages={xiv+234},
   isbn={2-225-77198-7},
   review={\MR{697382}},
}

\bib{Caff-cpde}{article}{
   author={Caffarelli, Luis A.},
   title={Compactness methods in free boundary problems},
   journal={Comm. Partial Differential Equations},
   volume={5},
   date={1980},
   number={4},
   pages={427--448},
   issn={0360-5302},
   review={\MR{567780}},
}

\bib{caffa}{article}{
   author={Caffarelli, Luis A.},
   author={Friedman, Avner},
   title={The obstacle problem for the biharmonic operator},
   journal={Ann. Scuola Norm. Sup. Pisa Cl. Sci. (4)},
   volume={6},
   date={1979},
   number={1},
   pages={151--184},
   review={\MR{529478}},
}

\bib{MR705233}{article}{
   author={Caffarelli, Luis A.},
   author={Friedman, Avner},
   author={Torelli, Alessandro},
   title={The two-obstacle problem for the biharmonic operator},
   journal={Pacific J. Math.},
   volume={103},
   date={1982},
   number={2},
   pages={325--335},
   issn={0030-8730},
   review={\MR{705233}},
}
		
\bib{MR620427}{article}{
   author={Caffarelli, Luis A.},
   author={Friedman, Avner},
   author={Torelli, Alessandro},
   title={The free boundary for a fourth order variational inequality},
   journal={Illinois J. Math.},
   volume={25},
   date={1981},
   number={3},
   pages={402--422},
   issn={0019-2082},
   review={\MR{620427}},
}

\bib{CLW-uniform}{article}{
   author={Caffarelli, L. A.},
   author={Lederman, C.},
   author={Wolanski, N.},
   title={Uniform estimates and limits for a two phase parabolic singular
   perturbation problem},
   journal={Indiana Univ. Math. J.},
   volume={46},
   date={1997},
   number={2},
   pages={453--489},
   issn={0022-2518},
   review={\MR{1481599}},
}

\bib{DALIO}{article}{
   author = {{Da Lio}, Francesca},
   author = {Palmurella, Francesco},
   author = {Rivi{\`e}re, Tristan},
   title={A Resolution of the Poisson Problem for Elastic Plates},
   journal={Arch. Ration. Mech. Anal.},
   volume={236},
   date={2020},
   number={3},
   pages={1593--1676},
   issn={0003-9527},
   review={\MR{4076072}},
   doi={10.1007/s00205-020-01499-2},
}


\bib{DK}{article}{
   author={Dipierro, Serena},
   author={Karakhanyan, Aram L.},
   title={Stratification of free boundary points for a two-phase variational
   problem},
   journal={Adv. Math.},
   volume={328},
   date={2018},
   pages={40--81},
   issn={0001-8708},
   review={\MR{3771123}},
}

\bib{selfdriven}{article}{
   author={Dipierro, Serena},
   author={Karakhanyan, Aram},
   author={Valdinoci, Enrico},
   title={A nonlinear free boundary problem with a self-driven Bernoulli
   condition},
   journal={J. Funct. Anal.},
   volume={273},
   date={2017},
   number={11},
   pages={3549--3615},
   issn={0022-1236},
   review={\MR{3706611}},
}

\bib{MR4026596}{article}{
   author={Dipierro, Serena},
   author={Karakhanyan, Aram L.},
   author={Valdinoci, Enrico},
   title={Limit behaviour of a singular perturbation problem for the
   biharmonic operator},
   journal={Appl. Math. Optim.},
   volume={80},
   date={2019},
   number={3},
   pages={679--713},
   issn={0095-4616},
   review={\MR{4026596}},
   doi={10.1007/s00245-019-09598-7},
}

\bib{EVANS98}{book}{
   author={Evans, Lawrence C.},
   title={Partial differential equations},
   series={Graduate Studies in Mathematics},
   volume={19},
   publisher={American Mathematical Society, Providence, RI},
   date={1998},
   pages={xviii+662},
   isbn={0-8218-0772-2},
   review={\MR{1625845}},
}

\bib{frehse}{article}{
   author={Frehse, Jens},
   title={On the regularity of the solution of the biharmonic variational
   inequality},
   journal={Manuscripta Math.},
   volume={9},
   date={1973},
   pages={91--103},
   issn={0025-2611},
   review={\MR{0324208}},
}

\bib{GANGULI}{book}{
    author = {Ganguli, Ranjan}, 
    title = {Finite element analysis of rotating beams.
{P}hysics based interpolation},
    isbn = {978-981-10-1901-2/hbk; 978-981-10-1902-9/ebook},
    pages = {xii+283},
    date = {2017},
    publisher = {Singapore: Springer},
    review = {ZBl1369.74001}
}

\bib{GAZ}{book}{
   author={Gazzola, Filippo},
   author={Grunau, Hans-Christoph},
   author={Sweers, Guido},
   title={Polyharmonic boundary value problems},
   series={Lecture Notes in Mathematics},
   volume={1991},
   note={Positivity preserving and nonlinear higher order elliptic equations
   in bounded domains},
   publisher={Springer-Verlag, Berlin},
   date={2010},
   pages={xviii+423},
   isbn={978-3-642-12244-6},
   review={\MR{2667016}},
}

\bib{MR717034}{book}{
   author={Giaquinta, Mariano},
   title={Multiple integrals in the calculus of variations and nonlinear
   elliptic systems},
   series={Annals of Mathematics Studies},
   volume={105},
   publisher={Princeton University Press, Princeton, NJ},
   date={1983},
   pages={vii+297},
   isbn={0-691-08330-4},
   isbn={0-691-08331-2},
   review={\MR{717034}},
}

\bib{MR1954868}{article}{
   author={Kinnunen, Juha},
   author={Latvala, Visa},
   title={Lebesgue points for Sobolev functions on metric spaces},
   journal={Rev. Mat. Iberoamericana},
   volume={18},
   date={2002},
   number={3},
   pages={685--700},
   issn={0213-2230},
   review={\MR{1954868}},
   doi={10.4171/RMI/332},
}

\bib{MR1817225}{book}{
   author={Lieb, Elliott H.},
   author={Loss, Michael},
   title={Analysis},
   series={Graduate Studies in Mathematics},
   volume={14},
   edition={2},
   publisher={American Mathematical Society, Providence, RI},
   date={2001},
   pages={xxii+346},
   isbn={0-8218-2783-9},
   review={\MR{1817225}},
   doi={10.1090/gsm/014},
}

\bib{MR177186}{article}{
   author={Littman, Walter},
   title={Generalized subharmonic functions: Monotonic approximations and an
   improved maximum principle},
   journal={Ann. Scuola Norm. Sup. Pisa Cl. Sci. (3)},
   volume={17},
   date={1963},
   pages={207--222},
   issn={0391-173X},
   review={\MR{177186}},
}

\bib{MR3512704}{article}{
   author={Mardanov, R. F.},
   author={Zaripov, S. K.},
   title={Solution of Stokes flow problem using biharmonic equation
   formulation and multiquadrics method},
   journal={Lobachevskii J. Math.},
   volume={37},
   date={2016},
   number={3},
   pages={268--273},
   issn={1995-0802},
   review={\MR{3512704}},
}

\bib{Martio}{article}{
   author={Martio, O.},
   author={Vuorinen, M.},
   title={Whitney cubes, $p$-capacity, and Minkowski content},
   journal={Exposition. Math.},
   volume={5},
   date={1987},
   number={1},
   pages={17--40},
   issn={0723-0869},
   review={\MR{880256}},
}

\bib{mawi}{article}{
   author={Mawi, Henok},
   title={A free boundary problem for higher order elliptic operators},
   journal={Complex Var. Elliptic Equ.},
   volume={59},
   date={2014},
   number={7},
   pages={937--946},
   issn={1747-6933},
   review={\MR{3195921}},
}

\bib{MR866720}{article}{
   author={McKenna, P. J.},
   author={Walter, W.},
   title={Nonlinear oscillations in a suspension bridge},
   journal={Arch. Rational Mech. Anal.},
   volume={98},
   date={1987},
   number={2},
   pages={167--177},
   issn={0003-9527},
   review={\MR{866720}},
}

\bib{MR3456944}{article}{
   author={Miura, Tatsuya},
   title={Singular perturbation by bending for an adhesive obstacle problem},
   journal={Calc. Var. Partial Differential Equations},
   volume={55},
   date={2016},
   number={1},
   pages={Art. 19, 24},
   issn={0944-2669},
   review={\MR{3456944}},
   doi={10.1007/s00526-015-0941-z},
}

\bib{MR3683120}{article}{
   author={Miura, Tatsuya},
   title={Overhanging of membranes and filaments adhering to periodic graph
   substrates},
   journal={Phys. D},
   volume={355},
   date={2017},
   pages={34--44},
   issn={0167-2789},
   review={\MR{3683120}},
   doi={10.1016/j.physd.2017.06.002},
}
		
\bib{Monneau-W}{article}{
   author={Monneau, R.},
   author={Weiss, G. S.},
   title={An unstable elliptic free boundary problem arising in solid
   combustion},
   journal={Duke Math. J.},
   volume={136},
   date={2007},
   number={2},
   pages={321--341},
   issn={0012-7094},
   review={\MR{2286633}},
}

\bib{novaga1}{article}{
   author={Novaga, Matteo},
   author={Okabe, Shinya},
   title={Regularity of the obstacle problem for the parabolic biharmonic
   equation},
   journal={Math. Ann.},
   volume={363},
   date={2015},
   number={3-4},
   pages={1147--1186},
   issn={0025-5831},
   review={\MR{3412355}},
}

\bib{novaga2}{article}{
   author={Novaga, Matteo},
   author={Okabe, Shinya},
   title={The two-obstacle problem for the parabolic biharmonic equation},
   journal={Nonlinear Anal.},
   volume={136},
   date={2016},
   pages={215--233},
   issn={0362-546X},
   review={\MR{3474411}},
}

\bib{MR1900562}{article}{
   author={Petrosyan, Arshak},
   title={On existence and uniqueness in a free boundary problem from
   combustion},
   journal={Comm. Partial Differential Equations},
   volume={27},
   date={2002},
   number={3-4},
   pages={763--789},
   issn={0360-5302},
   review={\MR{1900562}},
   doi={10.1081/PDE-120002873},
}
		
\bib{pozzo}{article}{
   author={Pozzolini, C\'edric},
   author={L\'eger, Alain},
   title={A stability result concerning the obstacle problem for a plate},
   language={English, with English and French summaries},
   journal={J. Math. Pures Appl. (9)},
   volume={90},
   date={2008},
   number={6},
   pages={505--519},
   issn={0021-7824},
   review={\MR{2472891}},
}

\bib{MR2906766}{article}{
   author={Serrin, James},
   title={Weakly subharmonic function},
   journal={Boll. Unione Mat. Ital. (9)},
   volume={4},
   date={2011},
   number={3},
   pages={347--361},
   issn={1972-6724},
   review={\MR{2906766}},
}

\bib{MR729195}{article}{
   author={Spruck, Joel},
   title={Uniqueness in a diffusion model of population biology},
   journal={Comm. Partial Differential Equations},
   volume={8},
   date={1983},
   number={15},
   pages={1605--1620},
   issn={0360-5302},
   review={\MR{729195}},
}

\bib{SW}{article}{
   author={Sweers, Guido},
   title={A survey on boundary conditions for the biharmonic},
   journal={Complex Var. Elliptic Equ.},
   volume={54},
   date={2009},
   number={2},
   pages={79--93},
   issn={1747-6933},
   review={\MR{2499118}},
}

\bib{MR1620644}{article}{
   author={Weiss, Georg S.},
   title={Partial regularity for weak solutions of an elliptic free boundary
   problem},
   journal={Comm. Partial Differential Equations},
   volume={23},
   date={1998},
   number={3-4},
   pages={439--455},
   issn={0360-5302},
   review={\MR{1620644}},
   doi={10.1080/03605309808821352},
}

\end{biblist}
\end{bibdiv}

\vfill

\end{document}